\documentclass[a4paper]{amsart}
\usepackage[francais,english]{babel}
\usepackage{amsmath,amssymb,amsthm}
\usepackage{amsfonts}
\usepackage{vmargin}
\usepackage{pifont}
\usepackage{euscript}
\usepackage{upgreek}
\usepackage{verbatim}
\RequirePackage{mathrsfs}

\sloppy
\pagestyle{headings}

\DeclareMathOperator{\Hom}{Hom}
\DeclareMathOperator{\Ker}{Ker}
\DeclareMathOperator{\GL}{GL}
\DeclareMathOperator{\SL}{SL}
\DeclareMathOperator{\SO}{SO}
\DeclareMathOperator{\End}{End}
\DeclareMathOperator{\spec}{Spec}
\DeclareMathOperator{\card}{card}
\DeclareMathOperator{\codim}{codim}
\DeclareMathOperator{\jet}{jet}

\DeclareMathOperator{\IM}{Im}
\DeclareMathOperator{\RE}{Re}

\DeclareMathOperator{\gal}{Gal}
\DeclareMathOperator{\sh}{sh}
\DeclareMathOperator{\covol}{covol}
\DeclareMathOperator{\NS}{NS}
\DeclareMathOperator{\Pic}{Pic}

\theoremstyle{plain}
\newtheorem{theo}{Th\'eor\`eme}[section]
\newtheorem{lemma}[theo]{Lemme}
\newtheorem{prop}[theo]{Proposition}
\newtheorem{coro}[theo]{Corollaire}

\newtheorem{theoMW}{Th{\'e}or{\`e}me (Masser et W{\"u}stholz, 1993)}

\theoremstyle{definition}
\newtheorem{defi}[theo]{D\'efinition}
\newtheorem{rema}[theo]{Remarque}
\newtheorem{remas}[theo]{Remarques}
\newtheorem{proprietes}[theo]{Propri{\'e}t{\'e}s}

\title[Th\'eor\`eme des p\'eriodes et
isog\'enies]{Th\'eor\`eme des p\'eriodes et degr\'es
minimaux d'isog\'enies}
\author{\'Eric Gaudron et Ga\"el R\'emond}
\date{\today}
\begin{document}

\selectlanguage{francais}

\footnotetext{MSC~$2010$: 11G10 (11J86, 14G40, 14K02).}
\footnotetext{\textbf{Mots-clefs}: Lemme matriciel, th{\'e}or{\`e}me
des p{\'e}riodes, isog{\'e}nie minimale elliptique, probl{\`e}me
d'uniformit{\'e} de Serre, m{\'e}thode de la section auxiliaire,
pente d'Arakelov, lemme d'interpolation analytique. }
\begin{abstract} Nous donnons de nouvelles versions effectives du
th\'eor\`eme des p\'eriodes de Masser et W\"ustholz. Nos \'enonc\'es
sont totalement explicites et 
permettent de raffiner les applications aux th\'eor\`emes
d'isog\'enies elliptiques. Celles-ci entra\^\i nent \`a leur tour la
r\'esolution du probl\`eme d'uniformit\'e de Serre dans le cas des
sous-groupes de Cartan d\'eploy\'es, en conjonction avec les
travaux de Bilu, Parent et Rebolledo.
\\[.2cm] \textsc{Abstract}. We give a new, sharpened version of the period
theorem of Masser and W{\"u}stholz, which is moreover totally
explicit. We also present a new formulation involving all archimedean
places. We then derive new bounds for elliptic isogenies, improving
those of Pellarin. The small numerical constants obtained allow an
application to Serre's uniformity problem in the split Cartan case,
thanks to the work of Bilu, Parent and Rebolledo.

\end{abstract}
\maketitle
\textbf{Coordonn{\'e}es des auteurs~:}

\vspace{0,3cm}

\noindent {\'E}ric \textsc{Gaudron}\\ \ding{41} Universit{\'e}
Grenoble I, Institut Fourier.\\ UMR $5582$, BP $74$\\ $38402$
Saint-Martin-d'H{\`e}res Cedex, France.\\
Courriel~: \texttt{Eric.Gaudron@ujf-grenoble.fr}\\
\ding{37} (33) 04 76 51 45 72\\
Page internet~: \texttt{http://www-fourier.ujf-grenoble.fr/\~{}gaudron}

\vspace{0,3cm}

\noindent Ga{\"e}l \textsc{R{\'e}mond}\\ \ding{41} Universit{\'e}
Grenoble I, Institut Fourier.\\ UMR $5582$, BP $74$\\ $38402$
Saint-Martin-d'H{\`e}res Cedex, France.\\
Courriel~: \texttt{Gael.Remond@ujf-grenoble.fr}\\
\ding{37} (33) 04 76 51 49 86

\newpage
\section{Introduction}
Dans ce texte, nous revisitons le th\'eor\`eme des p\'eriodes de
Masser et W\"ustholz et ses applications aux degr\'es minimaux
d'isog\'enies entre courbes elliptiques. Notre pr\'esentation du
th\'eor\`eme lui-m\^eme diff\`ere des versions ant\'erieures et nous
expliquons ci-dessous ce qui nous a conduit \`a cette
formulation, notamment en lien avec le lemme matriciel, dont nous
utilisons une nouvelle version due {\`a} Autissier. Nous donnerons ensuite les
\'enonc\'es ainsi qu'une application au probl\`eme d'uniformit\'e de
Serre qui repose sur les travaux de Bilu, Parent et Rebolledo.

Dans tout ce texte $A$ est une vari\'et\'e ab\'elienne de dimension
$g$ sur un corps de nombres $k$. Pour parler de p\'eriodes, nous
fixons un plongement complexe $\sigma\colon k\hookrightarrow\mathbf{C}$ et
consid\'erons la vari\'et\'e ab\'elienne complexe $A_\sigma$ obtenue
par extension des scalaires, son espace tangent \`a l'origine
$t_{A_\sigma}$ et son r\'eseau des p\'eriodes $\Omega_{A_\sigma}$.

En $1985$~\cite{lnmmasser}, David Masser a d\'emontr\'e une majoration
des coefficients d'une matrice de p\'eriodes en fonction de la
hauteur d'une vari\'et\'e ab\'elienne principalement polaris\'ee. La
paragraphe de son texte consacr\'e \`a cette estimation portait le nom
de \emph{lemme matriciel} et cette terminologie est rest\'ee pour d\'esigner
ce type d'\'enonc\'es. Une nouvelle approche a \'et\'e introduite par
Bost~\cite{bost2,bost4} en termes de hauteur de Faltings et des versions
effectives ont \'et\'e donn\'ees par Graftieaux \cite{graftieaux}, David et Philippon~\cite{sinnoupph} et le premier auteur \cite{gaudrondeux}.

Si l'on veut s'affranchir de l'hypoth\`ese de polarisation principale,
il est pr\'ef\'erable de consid\'erer qu'un lemme matriciel donne une
minoration de la plus petite p\'eriode non nulle d'une vari\'et\'e
ab\'elienne (en une place donn\'ee). Il s'agit donc d'un premier
prototype d'un th\'eor\`eme des p\'eriodes puisqu'il s'agit de relier
la norme d'une p\'eriode (en l'occurrence la plus petite) \`a divers
invariants de la vari\'et\'e ab\'elienne (ici essentiellement la
hauteur de Faltings).

Un v\'eritable th\'eor\`eme des p\'eriodes, au sens attach\'e \`a ce
terme depuis les travaux fondateurs de Masser et W\"ustholz, doit, lui,
faire intervenir de plus de mani\`ere essentielle un terme de
degr\'e g{\'e}om{\'e}trique. Traditionnellement on l'\'ecrit comme une
majoration du degr\'e de la plus petite sous-vari\'et\'e ab\'elienne
$A_\omega$ de $A$ dont l'espace tangent contient une p\'eriode
donn\'ee $\omega$ en fonction de la norme de $\omega$ et de la hauteur
de $A$.

Nous utilisons dans nos \'enonc\'es la hauteur de Faltings stable
$h_{F}(A)$ d'une vari\'et\'e ab\'elienne sur un corps de nombres. Nous
fixons une polarisation $L$ sur $A$. La forme de Riemann de $L_\sigma$
munit $t_{A_\sigma}$ d'une norme hermitienne que nous notons
$\|\cdot\|_{L,\sigma}$ (les d{\'e}finitions pr{\'e}cises de ces objets
sont donn{\'e}es dans la partie suivante). On tire alors de
\cite{masserwustholz1993} l'\'enonc\'e suivant.

\begin{theoMW} Il existe une constante $c>0$, qui ne
d{\'e}pend que de $g$, $[k:\mathbf{Q}]$ et $\deg_LA$, et une constante
$\kappa>0$, qui ne d{\'e}pend que de $g$, telles que
$$\deg_LA_{\omega}\le c\max{(1,h_F(A),\|\omega\|_{L,\sigma}^2)}^\kappa.$$
De plus l'on peut choisir $\kappa=(g-1)4^{g}g!$ et
$c=c_0[k:\mathbf{Q}]^{\kappa}(\deg_{L}A)^{1+g\kappa}$ o\`u $c_0$
est une constante qui ne d\'epend que de $g$.\end{theoMW}

Nous proposons ici de voir un tel th\'eor\`eme plut\^ot comme une
minoration de la norme de $\omega$ en fonction du degr\'e de
$A_\omega$ et de la hauteur. En fait, dans cette approche, la
vari\'et\'e ab\'elienne $A_\omega$ joue le r\^ole principal et la
vari\'et\'e $A$ initiale est rel\'egu\'ee au second plan. Si nous
l'oublions compl\`etement, nous sommes en train de dire qu'un
th\'eor\`eme des p\'eriodes n'est autre que la minoration de la norme
de la plus petite p\'eriode $\omega$ de $A$ telle que $A_\omega=A$. En
d'autres termes encore, nous consid\'erons le minimum des normes des
p\'eriodes de $A$ qui ne sont p\'eriodes d'aucune sous-vari\'et\'e
ab\'elienne stricte de $A$.

Vu ainsi, le lemme matriciel devient une minoration d'un minimum
absolu $\rho$ du r\'eseau des p\'eriodes (minimum sur tous les \'el\'ements
non nuls) tandis que le th\'eor\`eme des p\'eriodes vise \`a minorer
un minimum essentiel $\updelta$ de ce m\^eme r\'eseau (minimum sur les
\'el\'ements transverses ou non d\'eg\'en\'er\'es au sens des
sous-vari\'et\'es ab\'eliennes). Bien entendu, ici $\rho\le\updelta$ et il
ne faut pas perdre de vue que la minoration souhait\'ee de $\updelta$ est
plus grande que celle de $\rho$ puisque sa caract\'eristique
principale est de cro{\^\i}tre avec le degr\'e de $A$. Notons aussi que
dans cette approche il est possible que $\updelta$ soit infini : cela
signifie simplement que la vari\'et\'e ab\'elienne $A$ consid\'er\'ee
n'est pas de la forme $(A')_{\omega'}$ pour un couple $(A',\omega')$.

Ce nouvel \'eclairage sur le th\'eor\`eme des p\'eriodes pr\'esente
plusieurs avantages. D'une part, on sait depuis Bost que l'on peut
exprimer naturellement le lemme matriciel en faisant intervenir toutes
les places. De mani\`ere pr\'ecise, on note $\rho(A_\sigma,L_\sigma)$
la valeur minimale de $\|\omega\|_{L,\sigma}$ pour
$\omega\in\Omega_{A_\sigma}$ non nul et nous utilisons dans ce texte la
forme suivante du lemme matriciel (qui raffine les versions
\'evoqu\'ees plus haut), tir{\'e}e du travail d'Autissier~\cite{autissier}.

\begin{theo}\label{matint} Si $(A,L)$ est une vari\'et\'e ab\'elienne
polaris\'ee de dimension $g$ sur un corps de nombres $k$ nous
avons $$\frac{1}{[k:\mathbf{Q}]}\sum_{\sigma\colon k\hookrightarrow\mathbf{C}}
\rho(A_\sigma,L_\sigma)^{-2}\le14\max(1,h_F(A),\log\deg_LA).$$\end{theo}

Ainsi il devient naturel
de formuler notre th\'eor\`eme des p\'eriodes comme une majoration
d'une moyenne de la forme $(1/D)\sum_\sigma1/\updelta_\sigma^2$ (par,
r\'ep\'etons-le, 
une puissance n\'egative du degr\'e de $A$) et nous constatons
effectivement que c'est une telle quantit\'e qui appara\^it
dans la preuve. Conform\'ement \`a ce qui pr\'ec\`ede, notons
$\updelta_0(A_\sigma,L_\sigma)$ la valeur minimale de $\|\omega\|_{L,\sigma}$ pour
$\omega\in\Omega_{A_\sigma}\setminus\bigcup_{\mathsf B}\Omega_{\mathsf B}$ o\`u
l'union porte sur les sous-vari\'et\'es ab\'eliennes strictes $\mathsf
B$ de $A_\sigma$. Si cet ensemble est vide, nous posons
$\updelta_0(A_\sigma,L_\sigma)=+\infty$. Nous verrons en fait bient\^ot
qu'une quantit\'e $\updelta(A_\sigma,L_\sigma)$, toujours finie et plus
petite que $\updelta_0(A_\sigma,L_\sigma)$, peut intervenir. La formulation
suivante, forme simplifi\'ee du r\'esultat principal de cet article,
est valable pour les deux variantes.

\begin{theo}\label{perint} Si $(A,L)$ est une vari\'et\'e ab\'elienne
polaris\'ee de dimension $g$ sur un corps de nombres $k$ nous
avons $$\frac{1}{[k:\mathbf{Q}]}\sum_{\sigma\colon k\hookrightarrow\mathbf{C}}
{(\deg_LA)^{1/g}\over\updelta(A_\sigma,L_\sigma)^2}\le50g^{2g+6}
\max(1,h_F(A),\log\deg_LA).$$\end{theo}

Un autre avantage, un peu plus technique, de notre pr\'esentation, est
de mettre en lumi\`ere le r\^ole des sous-vari\'et\'es ab\'eliennes
auxiliaires qui interviennent dans la d\'emonstration. On s'aper\c
coit en effet que la condition sur $\omega$ n'est utilis\'ee que pour
une vari\'et\'e en particulier. Ceci nous conduit \`a affiner la
d\'efinition en introduisant pour une sous-vari\'et\'e ab\'elienne
$\mathsf B$ de $A_\sigma$ le \emph{minimum d'\'evitement} de $\mathsf B$ not\'e
$\updelta(A_\sigma,L_\sigma,\mathsf B)$ : la plus petite distance non
nulle d'une p\'eriode de $A_\sigma$ \`a l'espace tangent de $\mathsf
B$. Nous \'ecrivons alors la preuve avec cette quantit\'e et le
th\'eor\`eme ci-dessus d\'ecoule d'un choix particulier de $\mathsf B$
en chaque place (techniquement celui qui minimise la quantit\'e
$(\deg_{L_\sigma}\mathsf B/\deg_LA)^{1/\codim\mathsf B}$). M\^eme si
nous n'avons pas d'application pour d'autres choix de $\mathsf B$, il nous
semble tout de m\^eme plus int\'eressant d'\'ecrire la majoration sous
cette forme (voir le th\'eor\`eme~\ref{thmimportant}) : d'une part
cela renforce encore les liens avec le lemme matriciel (dont le
minimum absolu correspond maintenant simplement au choix de $\sf B=0$),
ensuite nous ne manipulons pas de quantit\'e infinie et donc nous
obtenons toujours un r\'esultat m\^eme si $A$ ne s'\'ecrit pas sous la
forme $(A')_{\omega'}$ et enfin cela affine le
th\'eor\`eme~\ref{perint} : il est valable avec
$$\updelta(A_\sigma,L_\sigma)=\sup_\mathsf B\updelta(A_\sigma,L_\sigma,\mathsf
B)$$ o\`u, comme ci-dessus, $\mathsf B$ parcourt les sous-vari\'et\'es
ab\'eliennes strictes de $A_\sigma$. Accessoirement la quantit\'e
$\updelta(A_\sigma,L_\sigma)$ (toujours finie) est plus facile \`a majorer que
$\updelta_0(A_\sigma,L_\sigma)$ (lorsque celle-ci est finie, voir
proposition~\ref{majmu}).

Disons enfin qu'il est un cas o\`u th\'eor\`eme des p\'eriodes et
lemme matriciel deviennent identiques~: c'est celui des courbes
elliptiques. En effet on a toujours $\rho=\updelta$
(autrement dit seul $\sf B=0$ intervient) et comme toute polarisation est
puissance de la polarisation principale le degr\'e n'intervient pas
(voir aussi le paragraphe~\ref{redell}).

Nous pouvons maintenant d\'eduire du th\'eor\`eme~\ref{perint} un
\'enonc\'e ayant la forme de celui de Masser et W\"ustholz. Nous
revenons pour cela au cadre o\`u $\omega$ est une p\'eriode, pour un
plongement fix\'e $\sigma_0$, de la vari\'et\'e ab\'elienne $A$ et
nous appliquons notre th\'eor\`eme \`a la vari\'et\'e ab\'elienne
$A_\omega$. Nous en d\'eduisons alors facilement le th\'eor\`eme
suivant, o\`u l'on note $k'$ une extension de $k$
sur laquelle est d\'efinie $A_\omega$ ; on sait que l'on peut choisir
$[k':k]\le3^{16g^4}$.

\begin{theo}\label{thmintro} Si $\omega\ne0$, nous avons
$$(\deg_LA_{\omega})^{1/\dim A_{\omega}}\le
195g^{2g+9}[k':\mathbf{Q}]\|\omega\|_{L,\sigma_{0}}^{2}\max{(1,h_F(A),\log
[k':\mathbf{Q}]\|\omega\|_{L,\sigma_{0}}^{2})}.$$\end{theo}

Il convient de signaler qu'\`a l'occasion d'un
cours donn\'e \`a une \'ecole d'\'et\'e en 2009 \`a
Rennes~\cite{drennes}, David a pr{\'e}sent{\'e} une version de cet
\'enonc\'e dans le cas d'une polarisation principale et sans
expliciter la d\'ependance en $g$. En particulier, on lui doit le
premier r\'esultat avec une constante $\kappa$ optimale
(en rempla\c cant $L$ par une puissance dans le
th\'eor\`eme~\ref{thmintro} on voit que $\kappa<\dim A_\omega$ est
impossible).

Comme dernier th\`eme abord\'e dans cet article, nous nous
int\'eressons \`a l'application du th\'eor\`eme des p\'eriodes aux
th\'eor\`emes d'isog\'enie. Nous nous limitons ici au cas
elliptique. Le probl\`eme consiste alors, \'etant donn\'e deux
courbes elliptiques $E_1$ et $E_2$ isog\`enes, toutes deux d\'efinies
sur un corps de nombres $k$, \`a majorer le degr\'e minimal d'une
isog\'enie entre $E_1$ et $E_2$. On peut faire remonter cette question
aux travaux des fr\`eres Chudnovsky~\cite{chud} (cas d'un corps r\'eel
; on consultera \`a ce sujet l'historique pr\'esent\'e par Pellarin
dans~\cite{torino}) mais elle trouve toute son importance depuis
l'article de Masser et W\"ustholz~\cite{masserwustholz1990} qui ont
donn\'e une borne de la forme $c\max(1,h_F(E_1))^4$ pour une constante
$c$ non explicit\'ee. David~\cite{david} puis Pellarin~\cite{pellarin}
ont obtenu les premiers r\'esultats explicites. Ce dernier
d\'emontre l'existence d'une isog\'enie de degr\'e au plus
$$10^{78}[k:\mathbf{Q}]^4\max(\log[k:\mathbf{Q}],1)^2\max(h_F(E_1),1)^2.$$

Nous am\'eliorons ici \`a la fois l'exposant du degr\'e et la
constante num\'erique. \'Etant donn\'e un corps
$k$, on note $\overline{k}$ une cl\^oture alg\'ebrique de $k$.

\begin{theo}\label{isoprin} Soient $k$ un corps de nombres, $E_1$ et
$E_2$ deux courbes elliptiques d\'efinies sur $k$. Si $E_1$ et $E_2$
sont isog\`enes (sur $\overline k$), il existe une isog\'enie entre
elles (sur $\overline k$) de degr\'e au
plus $$10^7[k:\mathbf{Q}]^2\left(\max(h_F(E_1),985)+4\log[k:\mathbf{Q}]\right)^2$$
ce que l'on peut majorer
par $$10^{13}[k:\mathbf{Q}]^2\max(h_F(E_1),\log[k:\mathbf{Q}],1)^2.$$ Lorsque
$E_1$ (et donc $E_2$) admet des multiplications complexes, la borne
ci-dessus peut \^etre remplac\'ee par
$$3,4\times10^4[k:\mathbf{Q}]^2\max\left(h_F(E_1)+\frac{1}{2}\log[k:\mathbf{Q}],1\right)^2.$$
Si $E_1$ et $E_2$ n'ont pas de multiplications complexes et si $k$ a
une place r\'eelle, elle peut \^etre remplac\'ee par
$$3583[k:\mathbf{Q}]^2\max\left(h_F(E_1),\log[k:\mathbf{Q}],1\right)^2.$$
\end{theo}

Dans le cas g\'en\'eral, ce th\'eor\`eme s'obtient en appliquant le
th\'eor\`eme des p\'eriodes~\ref{perint} \`a la vari\'et\'e
ab\'elienne $E_1^2\times E_2^2$ tandis que, pour les deux derni\`eres
bornes, les hypoth\`eses suppl\'ementaires permettent d'utiliser
$A=E_1\times E_2$ : dans ce cas, $A_\omega$ est une courbe elliptique
et le th\'eor\`eme des p\'eriodes se r\'eduit \`a un lemme matriciel
(de la forme du th\'eor\`eme~\ref{matint}).

Avec \cite{parentbilu,parentbilurebolledo} le
th\'eor\`eme~\ref{isoprin} s'applique au probl\`eme d'uniformit\'e de
Serre~\cite{Serredeux} (ci-dessous $E[p]$ d{\'e}signe le groupe
des points de $p$-torsion de la courbe $E$).

\begin{coro}\label{pabi} Pour tout nombre premier $p>p_0=3,1\times10^6$
et toute courbe elliptique $E$ d\'efinie sur $\mathbf{Q}$ sans
multiplications complexes, l'image de la repr\'esentation galoisienne
naturelle $\rho_{E,p}\colon\gal(\overline{\mathbf{Q}}/\mathbf{Q})\to\GL(E[p])$
n'est pas contenue dans le normalisateur d'un sous-groupe de Cartan
d\'eploy\'e.\end{coro}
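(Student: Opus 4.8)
The strategy is to combine the isogeny bound of Theorem~\ref{isoprin} with the results of Bilu--Parent--Rebolledo on the split Cartan modular curves. Recall from \cite{parentbilu,parentbilurebolledo} that if $E/\mathbf{Q}$ has no complex multiplication and the image of $\rho_{E,p}$ lies in the normalizer of a split Cartan subgroup, then $E$ gives rise to a rational point on the modular curve $X_{\mathrm{sp}}^+(p)$, and for $p$ large enough such a point must be a cusp or a CM point; the only way to exclude these is to show that the associated elliptic curve would have too small a conductor or, equivalently, that the degree of the cyclic $p$-isogeny forced by the Cartan structure contradicts a lower bound. Concretely, being in the split Cartan normalizer means $E$ admits two independent cyclic $p$-isogenies, hence is $p^2$-isogenous (in fact isogenous via degree dividing $p^2$) to a quadratic twist situation; following Bilu--Parent--Rebolledo one extracts that $E$ is isogenous over a field of degree at most $2$ to a curve $E'$ with $h_F(E')$ controlled, while the \emph{minimal} isogeny degree between $E$ and $E'$ is a multiple of $p$ (or of $p^2$ after accounting for the Cartan structure).

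The core of the argument is then quantitative: apply Theorem~\ref{isoprin} to the pair $(E,E')$ over the relevant number field $k$ with $[k:\mathbf{Q}]\le 2$ (or $=1$ in the split, non-twisted sub-case), to get that \emph{some} isogeny between them has degree at most $10^{13}[k:\mathbf{Q}]^2\max(h_F(E),\log[k:\mathbf{Q}],1)^2$, and in fact using the sharper first bound $10^7[k:\mathbf{Q}]^2(\max(h_F(E),985)+4\log[k:\mathbf{Q}])^2$. On the other hand the Cartan structure forces the minimal isogeny degree to be divisible by $p$, so $p$ divides that degree, giving $p\le 10^7\cdot 4\cdot(\max(h_F(E),985)+4\log 2)^2$ up to the precise bookkeeping of the factor from $[k:\mathbf{Q}]\le2$. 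The delicate point is that $h_F(E)$ is not bounded a priori — so one must feed in the \emph{other} direction a bound for $h_F(E)$ in terms of $p$ coming from the fact that $E$ is (up to bounded-degree isogeny and bounded-degree base change) a model of a rational point on $X_{\mathrm{sp}}^+(p)$; the height of such a point, hence $h_F(E)$, grows at most polynomially (indeed like a constant times $\log p$ or better via Runge-type or isogeny-height arguments) so that the inequality $p\le C(\log p)^2$ becomes self-defeating for $p$ large, and tracing constants yields the explicit threshold $p_0=3.1\times10^6$.

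The main obstacle will be the honest propagation of constants through this last comparison: one needs (i) the precise shape of the conclusion of Bilu--Parent--Rebolledo (which field of degree $\le 2$, which twist, and whether the forced isogeny degree is $p$ or $p^2$), (ii) an explicit bound $h_F(E)\le a\log p+b$ or similar for the curves attached to rational points on $X_{\mathrm{sp}}^+(p)$ — this is where a result on heights of points on modular curves, or an isogeny-height estimate applied to the $p$-isogeny itself, enters — and (iii) solving $p\le 10^7\cdot[k:\mathbf{Q}]^2\cdot(\max(a\log p+b,985)+4\log[k:\mathbf{Q}])^2$ for $p$, checking that the largest solution is below $3.1\times10^6$. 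Steps (i) and (ii) are essentially quotations from the cited literature combined with Theorem~\ref{isoprin} reinvested in itself (applying the isogeny bound to the $p$-isogeny gives $h_F$ control once one knows the minimal degree is $\ge p$ and uses the standard comparison $|h_F(E)-h_F(E')|\le \tfrac12\log(\deg)$ for an isogeny of degree $\deg$); step (iii) is the elementary but constant-sensitive endgame. I expect the CM and real-place refinements of Theorem~\ref{isoprin} to be irrelevant here since $E$ is assumed CM-free and defined over $\mathbf{Q}$, which does have a real place — so in fact one could plausibly use the sharper constant $3583[k:\mathbf{Q}]^2$ in the non-CM real case, tightening the threshold; but the statement as given uses the general bound to be safe about the field $k$ of degree $\le 2$, which may be imaginary.
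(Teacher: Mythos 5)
Your plan is directionally right (combine Bilu--Parent--Rebolledo with the explicit isogeny theorem and close a self-defeating inequality in $p$), and you correctly anticipate the need to bound $h_F$ in terms of $p$ on the other side. But two quantitative inputs you propose are wrong in ways that would prevent the argument from closing, and a third choice loses the sharp constant.

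First, the height bound. You write that $h_F(E)$ ``grows at most polynomially (indeed like a constant times $\log p$ or better)'' for the curves coming from rational points on $X_{\mathrm{sp}}^+(p)$. This is not what Bilu--Parent--Rebolledo supply. Their input is the integrality of $j$ together with a Runge-type upper bound of the shape $\log|j|\le 2\pi\sqrt p+6\log p+21(\log p)^2/\sqrt p$. Via the comparison $h(E)\le\tfrac1{12}h(j)+2{,}95$ (Lemma~\ref{hetj}), this gives $h_F(E)\lesssim(\pi/6)\sqrt p$, i.e., a bound of order $\sqrt p$, not $\log p$. The discrepancy is not cosmetic: it changes which side of the inequality wins.

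Second, the degree of the forced isogeny. You conclude only that the minimal isogeny degree $\Delta$ between the two auxiliary curves is \emph{divisible by $p$}, and then argue $p\le\Delta\le C\,[k:\mathbf{Q}]^2\max(h_F,\ldots)^2$. With the correct height scaling $H\lesssim(\pi/6)\sqrt p$, this yields $p\lesssim C' p$ --- no contradiction at all, regardless of how large $p$ is. What the paper actually uses (and what the Cartan structure genuinely provides, via Part~5 of \cite{parentbilu} together with Lemma~7.3 showing that cyclic isogenies between non-CM curves are minimal) is that the minimal isogeny between the auxiliary pair $(E_1,E_2)$ is cyclic of degree \emph{exactly} $p^2$, i.e., $\sqrt\Delta=p$. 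Then the comparison reads $p=\sqrt\Delta\lesssim D\cdot H\lesssim D\cdot(\pi/6)\sqrt p$, which is self-defeating: $\sqrt p$ is bounded. That the isogeny degree is precisely $p^2$, not merely a multiple of $p$, is the crucial arithmetic input.

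Third, even with the above repaired, feeding in the packaged bound $\Delta\le10^7[k:\mathbf{Q}]^2(\max(h_F,985)+4\log[k:\mathbf{Q}])^2$ of Theorem~\ref{isoprin} gives a threshold on the order of $10^7$, not $3{,}1\times10^6$. To reach the stated $p_0$ the paper goes back to the \emph{unsimplified} inequality~\eqref{racinedelta}, $\sqrt\Delta\le1778D\sqrt{2/3}(H+\tfrac12\log H+2\log\Delta+2{,}4)$, applied with $D=2$ and $k'=k$ quadratic (one also uses the refinement $3583D^2$ when $k$ is real, which turns out even stronger). You would also want the intermediate curve $E_1$, $p$-isogenous to $E$, so that $h(E_1)\le h(E)+\tfrac12\log p$: the isogeny theorem is applied to the pair $(E_1,E_2)$, not directly to $E$.

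In short: the skeleton is the same (BPR integrality $\Rightarrow$ explicit $\log|j|$ bound $\Rightarrow$ $h_F\lesssim\sqrt p$; isogeny theorem applied to an explicit auxiliary pair; compare), but you need $\Delta=p^2$ (not $p\mid\Delta$) and $h_F\lesssim\sqrt p$ (not $\log p$) for the inequality to close, and you need the pre-simplification form of the isogeny bound to hit the advertised constant.
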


Signalons qu'\`a partir de ce r\'esultat et avec des calculs
informatiques pour les petits premiers ($p<p_0$), Bilu, Parent et
Rebolledo \cite{parentbilurebolledo} montrent que l'\'enonc\'e
pr\'ec\'edent vaut en fait pour tout $p\not\in\{2,3,5,7,13\}$. Nous
renvoyons \`a leur texte pour les d\'etails.
\vskip3mm
{\bf Remerciements.}
Nous remercions Yuri Bilu, Pierre Parent et Marusia Rebolledo pour
nous avoir signal\'e l'application des th\'eor\`emes d'isog\'enies au
probl\`eme de Serre. Leur commande fut notre principale motivation
pour obtenir des constantes num\'eriques aussi petites que possible.
Nous remercions Pascal Autissier pour ses remarques sur une premi\`ere
version de ce texte et pour nous avoir communiqu\'e son remarquable lemme matriciel. Nous remercions aussi Mathilde Herblot
et Guillaume Maurin de nous avoir fourni leurs notes du cours de
Sinnou David \cite{drennes}.

\tableofcontents

Passons \`a pr\'esent en revue rapidement les ingr\'edients principaux
de notre preuve et les travaux dont elle s'inspire.
Les m\'ethodes que nous employons remontent pour une grande part
au s\'eminaire Bourbaki~\cite{bost2} dans lequel
Bost a jet\'e un nouvel \'eclairage sur le th\'eor{\`e}me des
p\'eriodes en introduisant la \emph{m\'ethode des pentes}. Ce
travail, rendu un peu plus explicite par Viada~\cite{viada}, apporte
l'effectivit\'e de la constante $c_0$ dans le th\'eor\`eme de Masser
et W\"ustholz. Il a eu \'egalement \'enorm\'ement d'impact sur la
mani{\`e}re de pr\'esenter la d\'emonstration en conservant au maximum
l'aspect intrins{\`e}que des donn\'ees. Il a aussi ouvert un champ
d'application naturel {\`a} la g\'eom\'etrie d'Arakelov. Par exemple,
les m\'ethodes de Bost ont permis au premier auteur d'obtenir des
minorations de formes lin\'eaires de logarithmes de vari\'et\'es
ab\'eliennes, totalement explicites, pour des logarithmes qui ne
sont pas des p\'eriodes (en un sens assez fort)~\cite{gaudrondeux}.
D'un autre c\^ot\'e, la preuve de David est extraite de la
d\'emonstration g\'en\'erale pour les formes lin\'eaires de
logarithmes (m\'ethode de Philippon et
Waldschmidt~\cite{pph-miw}). {\`A} cela rien de surprenant puisque
nous sommes dans les m\^emes conditions~: on dispose d'un
logarithme $\omega$ d'un point alg\'ebrique $0_{A}\in A(k)$ et d'un
sous-espace vectoriel $t_{A_{\omega}}$ de $t_{A}$. Comme $\omega\in
t_{(A_{\omega})_{\sigma_{0}}}$, nous sommes dans le \og{}cas
d\'eg\'en\'er\'e\fg{} o{\`u} le logarithme appartient au
sous-espace. La d\'emonstration de Philippon et Waldschmidt
fonctionne encore dans ce cas mais au lieu de fournir une minoration
de la distance du logarithme au sous-espace, elle montre l'existence
d'une sous-vari\'et\'e ab\'elienne stricte $B$ de $A$ avec
$\omega\in t_{B_{\sigma_{0}}}$ et $\deg_{L}B$ major\'e
essentiellement comme dans le th\'eor{\`e}me de David (des
bornes pour $\deg_{L}B$ se trouvent par exemple
dans~\cite{david,gaudronun,villanithese}). Pour assurer
$B=A_\omega$, l'id\'ee de David est de travailler avec
$A_\omega$ d\`es le d\'epart. La sous-vari\'et\'e $B$ ne
peut pas exister (par minimalit\'e de $A_\omega$ relativement
\`a l'hypoth\`ese $\omega\in t_{(A_{\omega})_{\sigma_{0}}}$) mais
la d\'emonstration donne malgr\'e tout quelque chose, \`a savoir
une majoration de $\deg_LA_\omega$. Cette observation a permis
\`a David d'obtenir en une seule \'etape la sous-vari\'et\'e
$A_\omega$, sans avoir \`a faire de r\'ecurrence sur $g$,
r\'ecurrence tr\`es co\^uteuse pour les constantes $c$ et
$\kappa$ et qui explique leur caract\`ere exponentiel en $g$
chez Masser et W\"ustholz.\par 
Si notre borne du degr\'e de $A_{\omega}$ est proche de celle de David, la
d\'emonstration n'utilise pas les m{\^{e}}mes outils. Elle s'inscrit
encore dans le sch\'ema g\'en\'eral de la m\'ethode de
Philippon et Waldschmidt (cas p\'eriodique) mais elle utilise
largement le formalisme des pentes de Bost, comme
dans~\cite{gaudrondeux}. Toutefois il n'y a pas de m\'ethode des
pentes proprement dite. Cette derni{\`e}re est remplac\'ee par la
\emph{m\'ethode de la section auxiliaire} que le premier auteur a
introduite dans~\cite{gaudronhuit}. Il s'agit d'une variante
intrins{\`e}que de la m\'ethode classique des fonctions auxiliaires
en transcendance. Ici l'adjectif intrins{\`e}que signifie
essentiellement que nous n'aurons recours ni {\`a} une base explicite
des fonctions th\^eta de $\mathrm{H}^{0}(A,L^{\otimes n})$, ni
\`a une base de Shimura de l'espace tangent $t_{A}$. Outre la
clart\'e apport\'ee par cette approche g\'eom\'etrique, la
d\'emonstration met en \'evidence l'int\'egralit\'e des jets
de sections qui apparaissent. Cet avantage tactique autorise un
param\`etre \`a tendre vers $+\infty$ (ce qui est exceptionnel
dans une preuve de transcendance) en \'eliminant au passage
plusieurs quantit\'es parasites. Un autre atout de ce passage {\`a}
la limite est la diminution des constantes num\'eriques.\par En ce qui
concerne notre th\'eor\`eme d'isog\'enie, l'aspect
intrins\`eque du th\'eor\`eme des p\'eriodes sur lequel il
s'appuie \'evite naturellement le recours \`a des mod\`eles de
Weierstrass des courbes elliptiques (et donc \`a la notion
d'isog\'enie normalis\'ee) qui apparaissaient dans les travaux
ant\'erieurs. Dans le m\^eme ordre d'id\'ee, Pellarin devait
consid\'erer des sous-vari\'et\'es ab\'eliennes \emph{exceptionnelles}
et exclure un {\em cas d\'eg\'en\'er\'e}~\cite[hypoth\`ese 3 page
212]{pellarin}. Nous avons simplifi\'e l'analyse en montrant que ces
subtilit\'es n'ont plus lieu d'\^etre et que la seule consid\'eration
de $A_{\omega}$ suffit \`a extraire l'information sur le degr\'e
d'isog\'enie (voir th\'eor\`eme~\ref{lien}).

\section{Pr\'eliminaires}

\subsection{Polarisation} Lorsque $A$ est une vari\'et\'e ab\'elienne,
nous rappelons qu'une \emph{polarisation} sur $A$ est l'image d'un
faisceau inversible ample dans le groupe de N\'eron-Severi
$\NS(A)=\Pic(A)/\Pic^{0}(A)$. C'est
cette notion qui intervient la plupart du temps dans cet article : par
exemple le degr\'e $\deg_{L}A$ ou la forme de Riemann d'un faisceau
inversible ample $L$ ne d\'ependent que de la polarisation d\'efinie
par $L$. Lorsque nous souhaitons parler d'un faisceau repr\'esentant
la polarisation nous en choisissons toujours un sym\'etrique. Ceci
n'induit qu'une ind\'etermination finie car un \'el\'ement
sym\'etrique de $\Pic^{0}(A)$ est un \'el\'ement de $2$-torsion. En
particulier si $L$ sym\'etrique repr\'esente une polarisation alors
$L^{\otimes 2}$ est uniquement d\'efini. Rappelons aussi que sur une courbe
elliptique il existe une unique polarisation principale et toute
polarisation en est une puissance (car $\NS(A)=\mathbf{Z}$).

\subsection{Vari{\'e}t{\'e} ab{\'e}lienne orthogonale}
\label{subsecvaortho} Soit $A$ une vari{\'e}t{\'e}
ab{\'e}lienne sur un corps quelconque, munie d'une polarisation $L$. Soit
$B$ une sous-vari{\'e}t{\'e} ab{\'e}lienne de $A$. La
sous-vari{\'e}t{\'e} ab{\'e}lienne orthogonale $B^{\perp}$ de $B$ dans
$A$ est d{\'e}finie de la mani{\`e}re suivante~: soit
$\varphi_{L}:A\to\widehat{A}$ l'isog{\'e}nie dans la vari{\'e}t{\'e}
duale $\widehat{A}$ induite par $L$. Soit
${}^{\mathrm{t}}i:\widehat{A}\to\widehat{B}$ le morphisme dual {\`a}
l'inclusion $i:B\hookrightarrow A$. Alors $B^{\perp}$ est la
composante neutre du noyau de la compos{\'e}e
${}^{\mathrm{t}}i\circ\varphi_{L}$. On montre alors que le morphisme
d'addition $B\times B^{\perp}\to A$ est une isog{\'e}nie de degr{\'e}
$\mathsf{b}$ au plus $$\frac{\mathrm{h}^{0}(B,L)\mathrm{h}^{0}
(B^{\perp},L)}{\mathrm{h}^{0}(A,L)}\le\mathrm{h}^{0}(B,L)^{2}$$(voir
par exemple~\cite[th{\'e}or{\`e}me~$3$]{Bertrand}). De plus, si le
corps de base est $\mathbf{C}$, pour toute p{\'e}riode
$\omega\in\Omega_{A}$, il existe $\omega_{1}\in\Omega_{B}$ et
$\omega_{2}\in\Omega_{B^{\perp}}$ tels
que $\mathsf{b}\omega=\omega_{1}+\omega_{2}$ (voir lemme~$1.4$
de~\cite{masserwustholz1993}).

\subsection{Hauteur de Faltings}
Lorsque $A$ est une vari\'et\'e ab\'elienne d{\'e}finie sur un corps
de nombres $k$, nous d{\'e}finissons sa hauteur $h(A)$ de la
mani{\`e}re suivante~: soit $K$ une extension finie de $k$ sur
laquelle $A$ est d{\'e}finie et admet r{\'e}duction
semi-ab{\'e}lienne. Soient $\pi:\mathcal{A}\to\spec\mathcal{O}_{K}$ un
mod{\`e}le semi-ab{\'e}lien de $A$ et
$\epsilon:\spec\mathcal{O}_{K}\to\mathcal{A}$ sa section
nulle. Notons $\omega_{\mathcal{A}/\mathcal{O}_{K}}$ le faisceau
inversible
$\epsilon^{*}\Omega_{\mathcal{A}/\spec\mathcal{O}_{K}}^{g}$ sur
$\spec\mathcal{O}_{K}$. Ce fibr{\'e} devient un fibr{\'e} en droites
hermitien $\overline{\omega}_{\mathcal{A}/\mathcal{O}_{K}}$ sur
$\spec\mathcal{O}_{K}$ lorsqu'on le munit pour chaque plongement
complexe $\sigma\colon K\hookrightarrow\mathbf{C}$ de la
norme \begin{equation*}\forall
s\in\omega_{\mathcal{A}/\mathcal{O}_{K}}\otimes_{\sigma}\mathbf{C}\simeq
\mathrm{H}^0(A_\sigma,\Omega_{A_\sigma}^g),\quad
\Vert s\Vert_{\overline{\omega}_{\mathcal{A}/\mathcal{O}_{K}},\sigma}^2:=
\frac1{(2\pi)^g}\int_{A_\sigma}{\vert
s\wedge\overline{s}\vert}.\end{equation*}\begin{defi} La hauteur
$h(A)$ de $A$ est le degr{\'e} d'Arakelov normalis{\'e} de
$\overline{\omega}_{\mathcal{A}/\mathcal{O}_{K}}$.\end{defi} Cette
d{\'e}finition est ind{\'e}pendante des choix de $K$ et de
$\mathcal{A}$. Cette quantit\'e $h(A)$ est celle d\'enomm\'ee
\emph{hauteur de Faltings}
dans~\cite{bost2,bost4,gaudrondeux,graftieaux} mais ce
n'est pas la convention adopt\'ee par tous les auteurs. En particulier
la
d\'efinition originale $h_{F}(A)$ de Faltings~\cite{faltings} ne fait
pas appara{\^\i}tre le $\pi$
dans la d\'efinition de la norme ci-dessus et donc on
a $$h_F(A)=h(A)-\frac{g}{2}\log\pi.$$
La hauteur $h(A)$ est donc plus grande que la hauteur de Faltings
originale. Dans la suite, nous employons $h(A)$ mais nous avons
pr\'ef\'er\'e utiliser $h_F(A)$ dans l'introduction pour faciliter
l'emploi de nos \'enonc\'es. Ce choix a l'avantage que les \'enonc\'es
(majorations) sont vrais pour les deux hauteurs. D'autres auteurs
utilisent encore une notion diff\'erente. Par exemple
Colmez~\cite{colmez} travaille avec la hauteur
$h(A)-(g/2)\log2\pi$. Quelle que soit la normalisation, rappelons que
cette hauteur satisfait au th{\'e}or{\`e}me de
Faltings~\cite{faltings}: si $\varphi:A\to A'$ est une isog{\'e}nie
alors \begin{equation*}\label{raynaud2006}h(A')\le
h(A)+\frac{1}{2}\log\deg\varphi\end{equation*}ainsi qu'aux
propri{\'e}t{\'e}s $h(A_{1}\times A_{2})=h(A_{1})+h(A_{2})$ et
$h(\widehat{A})=h(A)$ (corollaire~$2.1.3$ de~\cite{raynaud}). De plus
la hauteur d'une sous-vari\'et\'e ab\'elienne $B$ de $A$ est
contr{\^{o}}l{\'e}e par celle de $A$~:
\begin{equation*}\label{sva} h(B)\le
h(A)+g\log(\sqrt{2\pi}\mathrm
h^0(B,L)^{2})\end{equation*}(voir~\cite[proposition~$4.9$]{gaudrondeux}).

\subsection{Forme de Riemann}\label{formeR}

Soit $\mathsf A$ une vari\'et\'e ab\'elienne complexe. D'apr\`es le
th\'eor\`eme d'Appell-Humbert (voir~\cite[p. 20]{mumford}
ou~\cite[p. 32]{lange}), le groupe de Picard $\mathrm{Pic}(\mathsf A)$
s'identifie au groupe des couples $(H,\chi)$ o\`u $H$ est une forme
hermitienne (lin{\'e}aire {\`a} droite) sur $t_{\sf A}$ telle que $\IM H(\Omega_{\sf
A},\Omega_{\sf A})\subset\mathbf{Z}$ et $\chi$ une application $\Omega_{\sf
A}\to\{z\in\mathbf{C}\mid|z|=1\}$ telle que
$\chi(\omega_1+\omega_2)\chi(\omega_1)^{-1}\chi(\omega_2)^{-1}=\exp(i\pi\IM
H(\omega_1,\omega_2))$ pour tous $\omega_1,\omega_2\in\Omega_{\sf
A}$. Lorsqu'un tel couple correspond \`a
$\mathsf{L}\in\mathrm{Pic}(\sf A)$, nous dirons que $(H,\chi)$ est la
donn\'ee d'Appell-Humbert de $\sf L$ et la premi\`ere composante $H$
s'appelle la {\em forme de Riemann} de $\sf L$. Celle-ci ne d\'epend
que de l'image de $\sf L$ dans $\rm{NS}(\sf A)$.

La forme de Riemann de $\sf L$ est d\'efinie positive si et seulement
si $\sf L$ est ample (autrement dit si $\sf L$ d\'efinit une polarisation ;
certains auteurs r\'eservent l'emploi du terme forme de Riemann \`a ce
cas). Ainsi une polarisation $\sf L$ permet de munir l'espace tangent
$t_{\sf A}$ d'une norme hermitienne not\'ee $\|.\|_{\mathsf{L}}$ : on
pose simplement $\|z\|^2_{\sf L}=H(z,z)$ pour $z\in t_{\sf A}$ lorsque
$H$ est la forme de Riemann de $\sf L$. C'est la norme utilis\'ee dans
l'introduction et dans toute la suite de ce texte. Elle permet par
exemple de d\'efinir le minimum du r\'eseau des p\'eriodes d\'ej\`a
rencontr\'e et qui fera l'objet des lemmes matriciels de la partie
suivante : $$\rho{(\sf A,L)}=\min\{\|\omega\|_{\sf L}\,;\, \omega\in\Omega_{\sf
A}\setminus\{0\}\}.$$Ce nombre r{\'e}el se rencontre aussi dans la
litt\'erature sous l'appellation \emph{diam\`etre d'injectivit\'e} car
c'est le diam\`etre de la plus grande boule sur laquelle l'exponentielle
$\exp_{\sf A}\colon t_{\mathsf{A}}\to\mathsf{A}$ est injective.

Lorsque $(A,L)$ est une vari\'et\'e ab\'elienne polaris\'ee sur un
corps de nombres $k$ et $\sigma\colon k\hookrightarrow\mathbf{C}$ un
plongement, nous noterons $\|\cdot\|_{L,\sigma}$ la norme induite par
$L_\sigma$ (au lieu de $\|\cdot\|_{L_\sigma}$).

\subsection{Fonctions th\^eta}\label{fnth} Soit $\sf L$ un faisceau
inversible sur une vari\'et\'e ab\'elienne complexe $\sf A$. On
d\'efinit son facteur d'automorphie canonique $a_{\sf
L}\colon\Omega_{\sf A}\times t_{\sf A}\to\mathbf{C}$ \`a l'aide de sa donn\'ee
d'Appell-Humbert $(H,\chi)$ : si $\omega\in\Omega_{\sf A}$ et $z\in
t_{\sf A}$ on pose $$a_{\sf L}(\omega,z)=\chi(\omega)\exp{\left(\pi
H(\omega,z)+\frac\pi2H(\omega,\omega)\right)}.$$
Ce facteur permet de d\'efinir les {\em fonctions th\^eta} associ\'ees
\`a $\sf L$ : ce sont les fonctions holomorphes
$\vartheta\colon t_{\sf A}\to\mathbf{C}$ qui v\'erifient
$\vartheta(z+\omega)=a_{\sf L}(\omega,z)\vartheta(z)$ pour tous
$\omega\in\Omega_{\sf A}$ et $z\in t_{\sf A}$. Elles trouvent leur
raison d'\^etre dans l'isomorphisme naturel entre $\mathrm{H}^0(\sf
A,L)$ et l'espace vectoriel des fonctions th\^eta associ\'ees \`a $\sf
L$ (voir~\cite[p. 25]{mumford}).

En particulier, lorsque $\sf L$ est tr\`es ample, elles fournissent
une \'ecriture explicite d'un plongement projectif associ\'e \`a $\sf
L$ : si $\vartheta_0,\ldots,\vartheta_m$ est une base des fonctions
th\^eta alors l'application
$z\mapsto(\vartheta_0(z):\cdots:\vartheta_m(z))$ d\'efinit un
morphisme $t_{\sf A}\to\mathbf{P}^m_\mathbf{C}$ qui se factorise \`a travers
$\exp_{\sf A}$ pour donner une immersion $\sf A\hookrightarrow\mathbf{P}^m_\mathbf{C}$.

\subsection{Changement de base par la conjugaison complexe}
\label{subsecconju}
Soit \`a nouveau une vari\'et\'e ab\'elienne complexe $\sf A$.
Notons $\tau$ la conjugaison complexe.
On d\'efinit $\overline{\sf A}$ par le carr\'e cart\'esien :
$$\begin{array}{ccc}\overline{\sf A}&
\mathop{\longrightarrow}\limits^{\sf f}&\sf A\\
\Big\downarrow&\Box&\Big\downarrow\\\spec\mathbf{C}&
\mathop{\longrightarrow}\limits^{\spec\tau}&\spec\mathbf{C}\rlap{\,.}\end{array}$$
Nous obtenons une vari\'et\'e ab\'elienne complexe mais il faut
prendre garde au fait que le morphisme de sch\'emas $\sf f$ n'est pas
un morphisme de $\mathbf{C}$-sch\'emas. C'est en revanche un morphisme de
$\mathbf{R}$-sch\'emas (entre $\mathbf{R}$-sch\'emas de dimension $2\dim\sf A$) que
l'on peut \'egalement voir comme un morphisme de vari\'et\'es
analytiques r\'eelles entre les tores $t_{\overline{\sf
A}}/\Omega_{\overline{\sf A}}$ et $t_{\sf A}/\Omega_{\sf A}$.

\begin{prop}\label{conjugue} L'isomorphisme $\sf f$ se rel\`eve en un
isomorphisme antilin\'eaire ${\rm d\sf f}\colon t_{\overline{\sf
A}}\to t_{\sf A}$ tel que $\rm d\sf f(\Omega_{\overline{\sf A}})
=\Omega_{\sf A}$. De plus si l'on m\'etrise les espaces tangents par
les formes des Riemann de $\sf L$ et $\sf f^*L$ alors $\rm d\sf f$ est
une isom\'etrie.\end{prop}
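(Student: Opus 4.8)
The plan is to build the antilinear lift explicitly from the analytic description of $\overline{\sf A}$ and then check the two compatibility statements (with the lattice, and with the Riemann forms) by direct computation. First I would choose a linear isomorphism $t_{\sf A}\simeq\mathbf{C}^g$ carrying $\Omega_{\sf A}$ onto a lattice $\Lambda$, so that $\sf A=\mathbf{C}^g/\Lambda$ analytically. The fibre product defining $\overline{\sf A}$ is, on $\mathbf{C}$-points, the same set of points but with the $\mathbf{C}$-structure twisted by $\tau$; concretely $\overline{\sf A}$ is the complex torus $\overline{\mathbf{C}^g}/\overline{\Lambda}$ where $\overline{\mathbf{C}^g}$ denotes $\mathbf{C}^g$ with scalar multiplication $\lambda\cdot v:=\bar\lambda v$, and $\sf f$ is the identity on underlying sets. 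Thus the map $\mathrm d\sf f\colon t_{\overline{\sf A}}\to t_{\sf A}$ is, as a map of real vector spaces, the identity $\mathbf{C}^g\to\mathbf{C}^g$; because the source carries the conjugate complex structure, this real-linear map is \emph{antilinear} for the genuine complex structures, and it obviously sends $\overline{\Lambda}$ (which is $\Lambda$ as a set) onto $\Lambda=\Omega_{\sf A}$. That $\mathrm d\sf f$ is a lift of $\sf f$ in the sense of commuting with the exponential maps is immediate since on both tori the exponential is the quotient projection and $\mathrm d\sf f$ is the identity on the universal cover.

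For the isometry statement I would recall from \S\ref{formeR} that if $(H,\chi)$ is the Appell--Humbert datum of $\sf L$ then $\sf f^*\sf L$ has Appell--Humbert datum obtained by pulling back along $\sf f$. Since $\sf f$ corresponds under the above identifications to the identity on $\mathbf{C}^g$ but intertwines the two complex structures, the pulled-back hermitian form $H'$ on $t_{\overline{\sf A}}=\overline{\mathbf{C}^g}$ is given by $H'(v,w)=\overline{H(v,w)}=H(w,v)$, which is exactly the hermitian form for the conjugate complex structure with the same underlying real symmetric part; equivalently $H'(v,v)=H(v,v)$ for all $v$. Therefore $\|\mathrm d\sf f(v)\|_{\sf L}^2=H(v,v)=H'(v,v)=\|v\|_{\sf f^*\sf L}^2$, so $\mathrm d\sf f$ is an isometry. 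One should also verify that $\IM H'(\overline\Lambda,\overline\Lambda)\subset\mathbf{Z}$ and that the character part $\chi'=\bar\chi$ satisfies the cocycle relation, so that $(H',\chi')$ really is the Appell--Humbert datum of $\sf f^*\sf L$ and not merely of some line bundle with the right Riemann form; this is a routine check from the formulas in \S\ref{formeR}.

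The only genuine subtlety — and the step I expect to cost the most care — is pinning down precisely what ``$\mathrm d\sf f$ lifts $\sf f$'' means at the level of schemes rather than of analytic tori, i.e.\ relating the scheme-theoretic cotangent/tangent space of the fibre product $\overline{\sf A}=\sf A\times_{\spec\tau}\spec\mathbf{C}$ to the conjugate of $t_{\sf A}$. Here one uses that for a $\mathbf{C}$-scheme $X$ and the base change along $\spec\tau$, there is a canonical identification $t_{X\times_{\spec\tau}\spec\mathbf{C}}\simeq t_X\otimes_{\mathbf{C},\tau}\mathbf{C}$, and $t_X\otimes_{\mathbf{C},\tau}\mathbf{C}$ is precisely $t_X$ with scalars acting through $\tau$ — that is, $\overline{t_X}$. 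The map $\mathrm d\sf f$ is then the natural $v\otimes z\mapsto \bar z\,v$, which is antilinear over $\mathbf{C}$ and is an isomorphism because $\tau$ is. Once this identification is in place everything above goes through verbatim, and the compatibility of $\mathrm d\sf f$ with the period lattices follows from the functoriality of the exponential map under base change, while the isometry assertion follows from the functoriality of the Riemann form (it depends only on the class in $\NS$, and $\sf f^*$ is compatible with the Appell--Humbert description) together with the elementary identity $\overline{H(v,w)}=H(w,v)$.
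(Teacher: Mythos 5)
Your plan follows the same conceptual path as the paper's: realize $t_{\overline{\sf A}}$ as the complex conjugate of $t_{\sf A}$ (same real vector space, same lattice, conjugated scalar action), observe that the Riemann form becomes $\overline H$, and read off the isometry from $\overline H(v,v)=H(v,v)$. You also correctly flag the delicate point, namely identifying the analytic uniformization of the fibre product $\overline{\sf A}=\sf A\times_{\spec\tau}\spec\mathbf{C}$ with the conjugate complex torus. Where you part ways with the paper is in how that identification is justified, and this is the one place your argument is not yet a proof. Your tangent-space computation $t_{X\times_{\spec\tau}\spec\mathbf{C}}\simeq t_X\otimes_{\mathbf{C},\tau}\mathbf{C}$ is fine, but it only handles the infinitesimal picture; for the lattice and the lift of $\sf f$ you invoke ``functoriality of the exponential map under base change,'' and that is precisely what one cannot cite for free. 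The exponential is a transcendental construction and analytification does not commute with base change of $\spec\mathbf{C}$ in general; that it does so for $\tau$ (yielding the conjugate complex structure on the same underlying real-analytic torus) is a genuine fact that has to be established, not assumed.

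The paper's proof avoids this appeal entirely by keeping everything algebraic. After reducing to $\sf L$ very ample, it takes a theta-basis $\vartheta_0,\ldots,\vartheta_m$, checks that the complex conjugates $\overline{\vartheta_0},\ldots,\overline{\vartheta_m}$ are holomorphic theta functions on $V/U$ (with $V$ the conjugate of $t_{\sf A}$ and $U=\Omega_{\sf A}$) for the Appell--Humbert datum $(\overline H,\overline\chi)$, so that they define an algebraic projective embedding $p\colon V/U\hookrightarrow\mathbf{P}^m_{\mathbf{C}}$. It then fits $p$ into a commutative square whose right-hand square is the base change of $\mathbf{P}^m_{\mathbf{C}}\to\spec\mathbf{C}$ along $\tau$; since the vertical maps are isomorphisms, the left-hand square is automatically Cartesian, which simultaneously identifies $V/U$ with $\overline{\sf A}$, identifies $q$ with $\sf f$, and shows $\sf f^*\sf L=p^*\mathcal O(1)$ with Riemann form $\overline H$. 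That one diagram yields the lift, the lattice statement, and the isometry at once, without any functoriality of $\exp$. So your outline is pointed in the right direction, but the step you yourself flagged as requiring ``the most care'' is the one you should replace by the paper's projective-embedding argument rather than by a functoriality you haven't established.
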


\begin{proof} Il n'y a pas de restriction \`a supposer que $\sf L$ est
tr\`es ample. Notons alors comme plus haut
$\vartheta_0,\ldots,\vartheta_m$ une base des fonctions th\^eta
associ\'ees. Nous d\'esignons par $V$ l'espace vectoriel complexe conjugu\'e
de $t_{\sf A}$ : le groupe ab\'elien sous-jacent est $V=t_{\sf A}$ mais la loi
$\bullet$ de $V$ de multiplication par un scalaire est donn\'ee par
$z\bullet v=\overline zv$ pour
$z\in\mathbf{C}$ et $v\in V$ (o\`u, \`a droite, on utilise la loi usuelle de
$t_\mathsf{A}$). Pour clarifier nous notons aussi $U$ le r\'eseau $\Omega_{\sf
A}$ lorsque nous le voyons comme r\'eseau de $V$. Ainsi $V/U$ est un
tore complexe. En outre les fonctions $\overline{\vartheta_j}$ pour
$0\le j\le m$ sont {\em holomorphes} sur $V$. La forme $\overline H$
est quant \`a elle une forme hermitienne d\'efinie positive sur
$V\times V$ et elle v\'erifie $\IM\overline H(U,U)\subset\mathbf{Z}$. De
m\^eme l'application $\overline\chi\colon U\to\{z\in\mathbf{C}\mid|z|=1\}$ satisfait
$\overline\chi(u_1+u_2)\overline\chi(u_1)^{-1}\overline\chi(u_2)^{-1}
=\exp(i\pi\IM\overline H(u_1,u_2))$ pour tous $u_1,u_2\in U$. Tout
ceci nous montre que $V/U$ est une vari\'et\'e ab\'elienne, que
$(\overline H,\overline\chi)$ est une donn\'ee d'Appell-Humbert sur
celle-ci et que $\overline\vartheta_0,\ldots,\overline\vartheta_m$
forment une base des fonctions th\^eta associ\'ees \`a cette
donn\'ee. En particulier elles d\'efinissent un plongement projectif
$p\colon V/U\hookrightarrow\mathbf{P}^m_\mathbf{C}$. Enfin appelons $q$
l'application $V/U\to\sf A$ induite par l'identit\'e $V\to t_{\sf A}$
(qui est antilin\'eaire). En suivant les constructions, nous avons
alors un diagramme commutatif : $$\begin{array}{*5c}V/U&
\mathop{\longrightarrow}\limits^{p}&\mathbf{P}^m_\mathbf{C}&\longrightarrow&\spec\mathbf{C}\\
\Big\downarrow\rlap{$\vcenter{\hbox{$\scriptstyle q$}}$}&&\Big\downarrow&\Box
&\Big\downarrow\rlap{$\vcenter{\hbox{$\scriptstyle\tau$}}$}\\
\sf A&\longrightarrow&\mathbf{P}^m_\mathbf{C}&\longrightarrow&\spec\mathbf{C}\rlap{\,.}\end{array}$$
Comme les fl\`eches verticales sont des isomorphismes, le carr\'e de
gauche est automatiquement cart\'esien et nous pouvons donc identifier
$V/U$ \`a $\overline{\mathsf{A}}$ et $q$ \`a $\mathsf{f}$. Dans cette
identification $V=t_{\overline{\mathsf{A}}}$ et $\mathrm{d}\mathsf{f}$
correspond \`a l'identit\'e $V\to t_{\sf A}$. Le diagramme montre
encore que $\sf f^*L$ co\"\i ncide avec $p^*\mathcal O(1)$ et a donc
pour forme de Riemann $\overline H$. Toutes les assertions de
l'\'enonc\'e d\'ecoulent imm\'ediatement de ces faits.\end{proof}

\`A titre d'exemple nous avons donc $\rho(\overline{\sf A},\sf f^*\sf
L)=\rho(\sf A,\sf L)$.

Dans le cas o\`u $(A,L)$ est une vari\'et\'e ab\'elienne polaris\'ee
sur un corps de nombres $k$ et $\sigma\colon k\hookrightarrow\mathbf{C}$ un
plongement, nous notons $\overline\sigma=\tau\circ\sigma$. Alors on a
$\overline{A_\sigma}=A_{\overline\sigma}$ et
$L_{\overline\sigma}=\mathsf{f}^{*}L_\sigma$. Avec la proposition ceci nous
permet de v\'erifier que les minima associ\'es aux couples
$(A_\sigma,L_\sigma)$ et $(A_{\overline\sigma},L_{\overline\sigma})$
co{\"\i}ncident.

Bien entendu, tous les faits de ce paragraphe sont {\em faux} pour un
automorphisme $\mathbf{C}\to\mathbf{C}$ autre que $\tau$ ou $\rm
id_\mathbf{C}$ (et donc non continu) et il n'y a
aucune relation en g\'en\'eral entre les minima de
$(A_\sigma,L_\sigma)$ et $(A_{\sigma'},L_{\sigma'})$ pour deux
plongements $\sigma$ et $\sigma'$ distincts et non conjugu\'es.

\section{Autour du lemme matriciel}

Dans cette partie nous donnons plusieurs versions du lemme matriciel
au sens donn\'e plus haut. Elles d{\'e}coulent toutes d'un nouvel
{\'e}nonc{\'e} d{\^{u}} {\`a} Autissier~\cite{autissier}. Notre
motivation est multiple : d'une part elles am\'eliorent les constantes
donn\'ees dans \cite{sinnoupph,gaudrondeux,graftieaux}; d'autre part
nous \'ecrivons le r\'esultat sans hypoth\`ese de polarisation
principale contrairement \`a ces textes. Ensuite nous nous
int{\'e}ressons plus particuli{\`e}rement au cas de la dimension
$g=1$ : il s'agit ici v\'eritablement d'un th\'eor\`eme des p\'eriodes
donc c'est une partie de la d\'emonstration de notre th\'eor\`eme
principal. Par ailleurs, l'obtention de bonnes constantes dans ce cas
nous permettra aussi d'\^etre plus efficace dans l'application aux
th\'eor\`emes d'isog\'enies de courbes elliptiques. Dans un second
temps, nous {\'e}non{\c{c}}ons des majorations de pentes maximales
dues {\`a} Graftieaux, qui reposent elles-m{\^{e}}mes sur des lemmes
matriciels. 

\subsection{Th{\'e}or{\`e}me d'Autissier et cons{\'e}quences}
Commen{\c{c}}ons par {\'e}noncer le lemme matriciel
d'Autissier~\cite{autissier} (voir le \S~\ref{formeR} pour la
d{\'e}finition de $\rho(A_{\sigma},L_{\sigma})$). 

\begin{theo}\label{thmpascal} Soit $(A,L)$ une vari{\'e}t{\'e}
ab{\'e}lienne principalement polaris{\'e}e, d{\'e}finie sur un corps
de nombres $k$. Pour tout plongement complexe $\sigma\colon
k\hookrightarrow\mathbf{C}$, notons
$\rho_{\sigma}:=\min{(\rho(A_{\sigma},L_{\sigma}),\sqrt{\pi/3g})}$.
Alors on
a $$\frac{1}{[k:\mathbf{Q}]}\sum_{\sigma\colon
k\hookrightarrow\mathbf{C}}{\left(\frac{\pi}{6\rho_{\sigma}^{2}}
+g\log\rho_{\sigma}\right)}\le 
h(A)+\frac{g}{2}\log\frac{2\pi^{2}e}{3g}.$$\end{theo}

\subsubsection{} Donnons une premi{\`e}re cons{\'e}quence de ce
th{\'e}or{\`e}me pour les courbes elliptiques, qui nous servira plus
loin dans les estimations de degr\'es d'isog\'enies.

\begin{prop}\label{ell} Soit $A$ une courbe elliptique, munie de
sa polarisation principale $L$. Soit
$$T={1\over[k:\mathbf{Q}]}\sum_{\sigma\colon
k\hookrightarrow\mathbf{C}}{\rho(A_{\sigma},L_{\sigma})^{-2}}.$$ Alors
pour tout nombre r{\'e}el $\delta$ dans l'intervalle
$[3/\pi,\max{(T,3/\pi)}]$, on a $$\pi\delta\le
3\log\delta+6h(A)+8,66.$$ En particulier on a $T\le 6,45\max{(h(A),1)}$
et $T\le 1,92\max(h(A),1000)$. 
\end{prop}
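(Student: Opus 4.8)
The plan is to specialize Theorem \ref{thmpascal} to the case $g=1$ and then massage the resulting inequality into the stated linear bound. First I would record what Theorem \ref{thmpascal} gives for an elliptic curve $A$ with principal polarization $L$: with $\rho_\sigma=\min(\rho(A_\sigma,L_\sigma),\sqrt{\pi/3})$, we have
$$\frac1{[k:\mathbf{Q}]}\sum_\sigma\left(\frac{\pi}{6\rho_\sigma^2}+\log\rho_\sigma\right)\le h(A)+\frac12\log\frac{2\pi^2e}{3}.$$
The right-hand side is a numerical constant plus $h(A)$; I would compute $\tfrac12\log(2\pi^2 e/3)\approx 1.44$ to keep track of it. The key analytic point is that for each embedding $\sigma$ the function $x\mapsto \pi/(6x)+\tfrac12\log x$ (in the variable $x=\rho_\sigma^2$) is convex and decreasing on the relevant range, and $\rho_\sigma^2\le\pi/3$ by truncation, so that the truncation only helps: replacing $\rho(A_\sigma,L_\sigma)$ by $\rho_\sigma$ does not increase $\rho_\sigma^{-2}$, hence $T\le \tfrac1{[k:\mathbf{Q}]}\sum_\sigma\rho_\sigma^{-2}$ is \emph{not} automatic — one must be slightly careful, since truncation \emph{increases} $\rho_\sigma^{-2}$. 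So instead I would work directly with the truncated quantity $\widetilde T:=\tfrac1{[k:\mathbf{Q}]}\sum_\sigma\rho_\sigma^{-2}\ge T$ and prove the inequality $\pi\delta\le 3\log\delta+6h(A)+8{,}66$ for $\delta\in[3/\pi,\max(\widetilde T,3/\pi)]$, which is stronger.

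**The convexity/Jensen step.** With $x_\sigma=\rho_\sigma^{-2}\ge 3/\pi$, Theorem \ref{thmpascal} reads $\tfrac1{[k:\mathbf{Q}]}\sum_\sigma\big(\tfrac{\pi}{6}x_\sigma-\tfrac12\log x_\sigma\big)\le h(A)+1{,}44$. The function $\phi(x)=\tfrac\pi6 x-\tfrac12\log x$ is convex on $(0,\infty)$, so by Jensen's inequality applied to the average of the $x_\sigma$ (which is exactly $\widetilde T$),
$$\phi(\widetilde T)\le \frac1{[k:\mathbf{Q}]}\sum_\sigma\phi(x_\sigma)\le h(A)+1{,}44,$$
i.e. $\tfrac\pi6\widetilde T-\tfrac12\log\widetilde T\le h(A)+1{,}44$, which rearranges to $\pi\widetilde T\le 3\log\widetilde T+6h(A)+8{,}66$. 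Since for $\delta\le\widetilde T$ in the stated range the map $\delta\mapsto \pi\delta-3\log\delta$ is increasing (its derivative $\pi-3/\delta>0$ once $\delta>3/\pi$), the inequality $\pi\delta\le 3\log\delta+6h(A)+8{,}66$ follows for all such $\delta$, and in particular the case $\delta=\max(\widetilde T,3/\pi)$ with $\widetilde T\ge T$ finishes the main assertion. The only genuine subtlety is the direction of the truncation: I must make sure Jensen is applied to $\widetilde T$ (the average of the truncated values), not to $T$, and then use $T\le\widetilde T$ at the end.

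**Extracting the numerical bounds on $T$.** From $\pi\delta\le 3\log\delta+6h(A)+8{,}66$ with $\delta=\max(T,3/\pi)$, I would consider two cases. If $T\le 3/\pi$, then certainly $T\le 6{,}45\max(h(A),1)$ and $T\le 1{,}92\max(h(A),1000)$, since $3/\pi<1$. Otherwise $\delta=T$, and I solve the transcendental inequality $\pi T\le 3\log T+6h(A)+8{,}66$ for an explicit linear majorant. For the bound $T\le 6{,}45\max(h(A),1)$: set $M=\max(h(A),1)$ and suppose for contradiction $T>6{,}45M$; then $\pi T>6{,}45\pi M\approx 20{,}27 M$, while $3\log T+6h(A)+8{,}66\le 3\log(6{,}45M)+6M+8{,}66$, and one checks $3\log(6{,}45M)+6M+8{,}66<20{,}27M$ for all $M\ge1$ (at $M=1$ the left side is $\approx 3(1{,}86)+6+8{,}66\approx 20{,}24<20{,}27$, and the gap only widens since the derivative comparison $20{,}27$ vs $3/M+6\le 9$ is favorable). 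For the sharper constant $1{,}92$ valid against $\max(h(A),1000)$, the same argument runs with $M'=\max(h(A),1000)\ge1000$: assuming $T>1{,}92M'$ gives $\pi T>6{,}03M'$ while $3\log(1{,}92M')+6h(A)+8{,}66$; here $h(A)$ can be much smaller than $M'$ only when $h(A)<1000$, in which case $6h(A)+8{,}66<6008{,}66$ and $3\log(1{,}92M')$ is negligible against $6{,}03M'\ge 6030$ when $M'=1000$ — so the inequality holds — and when $h(A)\ge1000$ one has $M'=h(A)$ and needs $6{,}03h(A)>3\log(1{,}92h(A))+6h(A)+8{,}66$, i.e. $0{,}03h(A)>3\log(1{,}92h(A))+8{,}66$, which holds for $h(A)\ge1000$ since $0{,}03\cdot1000=30>3\log1920+8{,}66\approx 31{,}3$ — here I would need to be a touch more careful and perhaps use $h(A)$ slightly larger, or sharpen the constant $1{,}92$; the honest approach is to verify the crossover point numerically and pick the threshold ($1000$) so the inequality is clean. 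I expect this last numerical verification to be the main obstacle, purely in that it requires care with the constants rather than any conceptual difficulty; everything upstream is just convexity of $\phi$ plus the plain statement of Theorem \ref{thmpascal}.
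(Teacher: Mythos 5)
Your approach is identical to the paper's: specialize Theorem~\ref{thmpascal} to $g=1$, average, and apply Jensen. Your framing via ``convexity of $\phi(x)=\tfrac{\pi}{6}x-\tfrac12\log x$'' is the same step as the paper's appeal to the concavity of the logarithm, since the linear piece averages exactly and only the $\log$ term needs Jensen; there is no conceptual divergence.

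Two execution points are worth flagging. First, the truncation requires no caution: $\rho_\sigma\le\rho(A_\sigma,L_\sigma)$ gives $\rho_\sigma^{-2}\ge\rho(A_\sigma,L_\sigma)^{-2}$ and hence $T\le T'$ automatically; your sentence claiming truncation ``does not increase $\rho_\sigma^{-2}$'' has the inequality backwards, though you immediately contradict it and then reason correctly, so this is only a slip of phrasing. Second, and more substantively, in the contradiction argument for $T\le6{,}45\max(h(A),1)$ you write from $T>6{,}45M$ that $3\log T\le3\log(6{,}45M)$ --- that inequality goes the wrong way. What rescues you is applying the main inequality at $\delta=6{,}45M$ (permissible since then $6{,}45M\le T$), which is precisely what your final numerical check validates; but as written the chain is broken. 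The paper avoids this by using $\log T\le T(\log Y)/Y$ for $T\ge Y\ge e$, which linearizes the transcendental inequality in $T$ and reduces both numerical claims to the single verification $6Z+8{,}66\le\pi Y-3\log Y$ for $(Y,Z)=(6{,}45,1)$ and $(Y,Z)=(1920,1000)$. This also makes transparent how tight the second bound is: $\pi\cdot1920-3\log1920\approx6009{,}18$ against $6008{,}66$, a margin of roughly half a unit. That is exactly why your rounding $1{,}92\pi\approx6{,}03$ loses the inequality at $h(A)=1000$; retaining $1{,}92\pi\approx6{,}032$ recovers it, as you suspected. In short the method is correct and coincides with the paper's; what needs repair is one misdirected inequality and one extra decimal of precision.
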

\begin{proof}Appliquons le th{\'e}or{\`e}me~\ref{thmpascal} {\`a}
$(A,L)$. En {\'e}crivant
$\log\rho_{\sigma}=-(1/2)\log(1/\rho_{\sigma}^{2})$ et en utilisant la
concavit{\'e} du logarithme, on a $$\frac{\pi}{6}T'-\frac{1}{2}\log
T'\le h(A)+\frac{1}{2}\log\frac{2\pi^{2}e}{3}\quad\text{avec}\quad
T':=\frac{1}{[k:\mathbf{Q}]}\sum_{\sigma\colon
k\hookrightarrow\mathbf{C}}{\rho_{\sigma}^{-2}}.$$La premi{\`e}re
in{\'e}galit{\'e} de la proposition~\ref{ell} d{\'e}coule alors de
la croissance de la fonction $x\mapsto (\pi/6)x-(1/2)\log x$ pour
$x\ge3/\pi$, de l'encadrement $T'\ge\max{(T,3/\pi)}\ge\delta\ge
3/\pi$ et du calcul $3\log(2\pi^{2}e/3)\le 8,66$. En ce qui concerne
la premi\`ere majoration de $T$, on proc{\`e}de de la mani{\`e}re
suivante. Posons $Y=6,45$ et $Z=1$. Si $T\le Y$, l'in{\'e}galit{\'e}
est d{\'e}montr{\'e}e. Sinon, comme $Y\ge e$, on a $\log T\le T(\log
Y)/Y$ et donc, par la premi{\`e}re partie de la proposition avec
$\delta=T$,  $$T\le \frac{Y}{Z}\frac{6Z+8,66}{\pi Y-3\log
  Y}\max{(h(A),Z)}.$$On v{\'e}rifie num{\'e}riquement que
$6Z+8,66\le\pi Y-3\log Y$ et ceci donne le r{\'e}sultat. Pour la
derni\`ere majoration, on utilise le couple $(Y,Z)=(1920,1000)$. 
\end{proof}

\begin{rema}\label{yrho} Soit $\tau_{\sigma}$ l'{\'e}l{\'e}ment du
domaine fondamental de Siegel pour lequel la courbe elliptique
$A_{\sigma}$ est isomorphe {\`a}
$\mathbf{C}/(\mathbf{Z}+\tau_{\sigma}\mathbf{Z})$. La m{\'e}trique
sur $t_{A_{\sigma}}$ d{\'e}finie par la polarisation $L_{\sigma}$
correspond {\`a} la norme $\Vert z\Vert^2=\vert
z\vert^2/\IM\tau_{\sigma}$ pour $z\in\mathbf{C}$. Pour
$(a,b)\in\mathbf{Z}^{2}\setminus\{(0,0)\}$ on a 
$\|a+b\tau\|^2=|a+b\tau|^2/\IM\tau_{\sigma}\ge1/\IM\tau_{\sigma}$ avec \'egalit\'e
si $(a,b)=(1,0)$. On trouve ainsi
$\rho(A_\sigma,L_\sigma)^{-2}=\mathrm{Im}\tau_\sigma$. Par suite la
proposition~\ref{ell} peut \^etre utilis\'ee pour donner des 
estimations de $T=[k:\mathbf
Q]^{-1}\sum_\sigma\mathrm{Im}\tau_\sigma$.  
\end{rema}

\subsubsection{} Nous allons maintenant nous affranchir de l'hypoth\`ese de
polarisation principale du th{\'e}or{\`e}me~\ref{thmpascal} et en
donner une forme plus maniable. Nous \'etudions dans un premier temps
la variation de $\rho$ par isog\'enie.

\begin{lemma}\label{varrho} Soient $f\colon\mathsf{A}\to\mathsf{B}$
une isog\'enie entre vari\'et\'es ab\'eliennes complexes et
$\mathsf{L}$ une polarisation sur $\mathsf{B}$. Alors
$\rho(\mathsf{B},\mathsf{L})\le\rho(\mathsf{A},f^*\mathsf{L})\le(\deg
f)\rho(\mathsf{B},\mathsf{L})$.\end{lemma}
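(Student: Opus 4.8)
The plan is to work directly with periods and use the pullback of the metric. Write $f\colon\mathsf A\to\mathsf B$ for the isogeny and recall that $\mathrm df\colon t_{\mathsf A}\to t_{\mathsf B}$ is a $\mathbf C$-linear isomorphism sending $\Omega_{\mathsf A}$ into $\Omega_{\mathsf B}$ (with finite index $\deg f$), and that by definition the Riemann form of $f^*\mathsf L$ is the pullback under $\mathrm df$ of that of $\mathsf L$; hence $\mathrm df$ is an isometry for the norms $\|\cdot\|_{f^*\mathsf L}$ and $\|\cdot\|_{\mathsf L}$. So I may as well identify $t_{\mathsf A}=t_{\mathsf B}$ as a normed complex vector space, with $\Omega_{\mathsf A}\subset\Omega_{\mathsf B}$ a sublattice of index $n=\deg f$, and the two quantities become $\rho(\mathsf A,f^*\mathsf L)=\min\{\|\omega\|\,;\,\omega\in\Omega_{\mathsf A}\setminus\{0\}\}$ and $\rho(\mathsf B,\mathsf L)=\min\{\|\omega\|\,;\,\omega\in\Omega_{\mathsf B}\setminus\{0\}\}$.

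\textbf{The left inequality.} Since $\Omega_{\mathsf A}\setminus\{0\}\subset\Omega_{\mathsf B}\setminus\{0\}$, the minimum defining $\rho(\mathsf A,f^*\mathsf L)$ is taken over a smaller set than the one defining $\rho(\mathsf B,\mathsf L)$, so $\rho(\mathsf B,\mathsf L)\le\rho(\mathsf A,f^*\mathsf L)$. This is immediate and needs no further comment.

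\textbf{The right inequality.} Let $\omega\in\Omega_{\mathsf B}$ with $\|\omega\|=\rho(\mathsf B,\mathsf L)$. The quotient group $\Omega_{\mathsf B}/\Omega_{\mathsf A}$ has order $n$, so the element $\omega\bmod\Omega_{\mathsf A}$ has order $d$ dividing $n$; in particular $d\le n$ and $d\omega\in\Omega_{\mathsf A}$. Moreover $d\omega\neq0$ because $\omega\neq0$ and we are in a torsion-free ambient group (a lattice). Hence $d\omega$ is a nonzero element of $\Omega_{\mathsf A}$, so $\rho(\mathsf A,f^*\mathsf L)\le\|d\omega\|=d\,\|\omega\|\le n\,\rho(\mathsf B,\mathsf L)=(\deg f)\,\rho(\mathsf B,\mathsf L)$, which is the claimed bound.

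\textbf{Main obstacle.} There is essentially no obstacle here: the only point requiring a word of care is the identification of the metrics under $\mathrm df$, i.e. invoking that the Riemann form of $f^*\mathsf L$ is the $\mathrm df$-pullback of that of $\mathsf L$ (the functoriality of the Appell--Humbert data), which lets us transport everything to a single normed space and reduces the statement to the elementary lattice inequality $\rho_{\text{sublattice}}\le d\cdot\rho_{\text{lattice}}$ for the order $d\mid\deg f$ of a shortest vector modulo the sublattice. One could even sharpen $\deg f$ to the exponent of $\Omega_{\mathsf B}/\Omega_{\mathsf A}$, but the stated form suffices for later use.
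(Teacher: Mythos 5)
Your proof is correct and follows essentially the same route as the paper: identify $t_{\mathsf A}$ with $t_{\mathsf B}$ isometrically via $\mathrm df$, so the problem reduces to comparing the shortest vectors of a lattice $\Omega_{\mathsf B}$ and a finite-index sublattice $\mathrm df(\Omega_{\mathsf A})$. The paper phrases the right inequality through the lattice inclusion $(\deg f)\Omega_{\mathsf B}\subset\mathrm df(\Omega_{\mathsf A})$ (the exponent of the quotient divides its order), whereas you apply the same observation pointwise to a shortest $\omega\in\Omega_{\mathsf B}$; these are the same argument.
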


\begin{proof} L'application $f$ se rel\`eve en un isomorphisme
$\mathrm{d}f\colon t_{\mathsf{A}}\to
t_{\mathsf{B}}$ tel que
$\mathrm{d}f(\Omega_\mathsf{A})\subset\Omega_\mathsf{B}$. Comme le
conoyau de cette inclusion est de cardinal $\deg f$ on a aussi
$\Omega_\mathsf{B}\subset(\deg f)\mathrm{d}f(\Omega_\mathsf{A})$. Par
ailleurs la forme de Riemann de $f^*\mathsf{L}$ s'obtient en composant
la forme de Riemann de $\mathsf{L}$ avec $\mathrm{d}f$ donc pour tout
$x\in t_\mathsf{A}$ nous avons
$\|x\|_{f^*\mathsf{L}}=\Vert\mathrm{d}f(x)\Vert_{\mathsf{L}}$. Nous en
d\'eduisons $\rho(\mathsf{A},f^*\mathsf{L})=\min\{\Vert
x\Vert_{\mathsf{L}}\,;\,x\in\mathrm{d}f(\Omega_\mathsf{A})\setminus\{0\}\}$.
L'\'enonc\'e d\'ecoule alors imm\'ediatement des deux inclusions de
r\'eseaux ci-dessus.\end{proof}

Rappelons un lemme classique.

\begin{lemma}\label{quotpp} Soit $(A,L)$ une vari\'et\'e ab\'elienne
polaris\'ee sur un corps alg\'ebrique\-ment clos. Il existe une
isog\'enie $f\colon A\to B$ et une polarisation principale $M$ sur $B$
telles que $L=f^*M$ et $\deg f=\mathrm{h}^{0}(A,L)$.\end{lemma}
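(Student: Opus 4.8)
The statement is the classical fact that any polarization on an abelian variety factors through a principal polarization after an isogeny, with the degree of the isogeny equal to $\mathrm{h}^0(A,L)$. I would work over the algebraically closed base field and use the structure of the isogeny $\varphi_L\colon A\to\widehat A$ attached to $L$ (as already recalled in \S\ref{subsecvaortho}). The plan is to take $B=A/K$ where $K$ is a suitable subgroup of $\Ker\varphi_L$ on which the commutator pairing $e^L$ (the Weil pairing associated with $L$) vanishes, i.e.\ a maximal isotropic subgroup for $e^L$. Then I would check that $L$ descends to an ample line bundle $M$ on $B$ and that $M$ is principal, and finally count degrees.

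**Key steps.** First, recall that $\Ker\varphi_L$ is a finite group scheme carrying a nondegenerate alternating pairing $e^L\colon\Ker\varphi_L\times\Ker\varphi_L\to\mathbf{G}_m$, and that its order is $(\mathrm{h}^0(A,L))^2$ (this is the standard relation $\deg\varphi_L=\chi(L)^2=\mathrm{h}^0(A,L)^2$ for $L$ ample). Second, choose a maximal isotropic subgroup $K\subset\Ker\varphi_L$; by nondegeneracy of $e^L$ its order is exactly $\mathrm{h}^0(A,L)$, and $K$ equals its own orthogonal $K^\perp$. Set $f\colon A\to B=A/K$, so $\deg f=\mathrm{h}^0(A,L)$. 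Third, invoke the descent criterion (Mumford, \emph{Abelian Varieties}, \S23, or the theorem of the square machinery): a line bundle $L$ on $A$ descends along $f\colon A\to A/K$ precisely when $K$ lies in $\Ker\varphi_L$ \emph{and} the canonical lift of the $K$-action on the total space of $L$ is trivial, which holds when $K$ is isotropic for $e^L$; hence there is $M$ on $B$ with $f^*M=L$. Fourth, $M$ is ample since $L=f^*M$ is ample and $f$ is finite surjective. Fifth, compute: $\deg\varphi_M=(\deg f)^{-2}\deg\varphi_L$ — more precisely one shows $f^*\varphi_M f = \varphi_L$ composed appropriately, giving $\mathrm{h}^0(A,L)=(\deg f)\,\mathrm{h}^0(B,M)$, whence $\mathrm{h}^0(B,M)=1$, i.e.\ $M$ is a principal polarization. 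Over $\mathbf{C}$ one can alternatively argue with the elementary divisors of the Riemann form: diagonalize $\IM H$ on $\Omega_A$ with elementary divisors $d_1\mid\cdots\mid d_g$, take the sublattice making all divisors $1$; this is the analytic incarnation of the same construction and may be cleaner to present given the complex-analytic setup of the paper.

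**Main obstacle.** The delicate point is the descent step: producing $M$ with $f^*M=L$ is not automatic from $K\subset\Ker\varphi_L$ alone — one genuinely needs the isotropy of $K$ under the pairing $e^L$ (equivalently, the vanishing of a certain class in $\mathrm{H}^1(K,\mathcal O^*)$ or the triviality of the theta-group's restriction to $K$). I would cite Mumford's descent theorem rather than reprove it. A secondary technical point is the degree bookkeeping linking $\mathrm{h}^0(A,L)$, $\mathrm{h}^0(B,M)$ and $\deg f$; the clean way is the multiplicativity of Euler characteristics under finite flat pullback, $\chi(f^*M)=(\deg f)\chi(M)$, combined with $\chi(L)=\mathrm{h}^0(A,L)$ for ample $L$. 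Given the paper's style and that this is flagged as "un lemme classique", I expect the actual write-up to be three or four lines citing Mumford; my proposal is simply to make those lines rest on: (1) maximal isotropic $K$ of order $\mathrm{h}^0(A,L)$, (2) Mumford descent, (3) Euler characteristic multiplicativity.
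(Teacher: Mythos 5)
Your proposal is correct and matches the paper's own (one-line) proof, which simply quotients by a Lagrangian subgroup of $K(L)=\Ker\varphi_L$ and cites Mumford, \emph{Abelian Varieties}, pp.~233--234. You have merely spelled out the same argument --- maximal isotropic subgroup, Mumford descent, Euler-characteristic bookkeeping --- in more detail.
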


\begin{proof} On fait le quotient par un sous-groupe lagrangien de $K(L)$,
voir~\cite{mumford} pages 233--234.\end{proof}

Ces lemmes permettent de donner la forme suivante du th\'eor\`eme d'Autissier.

\begin{prop}\label{coromatnet} Soit $(A,L)$ une vari\'et\'e
ab\'elienne polaris{\'e}e de dimension $g$ sur un corps de nombres
$k$. On a 
$${1\over[k:\mathbf{Q}]}\sum_{\sigma\colon k\hookrightarrow\mathbf{C}}
\rho(A_\sigma,L_\sigma)^{-2}\le11\max(1,h(A),\log\deg_{L}A).$$\end{prop}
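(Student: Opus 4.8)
The goal is to remove the principal-polarization hypothesis from Theorem~\ref{thmpascal} (equivalently, from its clean reformulation Theorem~\ref{matint}) and obtain a bound $\tfrac1{[k:\mathbf{Q}]}\sum_\sigma\rho(A_\sigma,L_\sigma)^{-2}\le11\max(1,h(A),\log\deg_LA)$. The natural strategy is to reduce to the principally polarized case via Lemma~\ref{quotpp}: there is an isogeny $f\colon A\to B$ over a finite extension of $k$ and a principal polarization $M$ on $B$ with $L=f^*M$ and $\deg f=\mathrm h^0(A,L)$. By Lemma~\ref{varrho} applied over $\mathbf{C}$ at each place, $\rho(A_\sigma,L_\sigma)\ge\rho(B_\sigma,M_\sigma)$, hence $\rho(A_\sigma,L_\sigma)^{-2}\le\rho(B_\sigma,M_\sigma)^{-2}$; so it suffices to bound the average of $\rho(B_\sigma,M_\sigma)^{-2}$. (One should note $\rho$ is insensitive to finite field extensions since the set of embeddings just gets multiplied uniformly, so we may freely enlarge $k$ so that $B$ and $M$ are defined over it.)

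\textbf{Key steps.} First, I would invoke Lemma~\ref{quotpp} and Lemma~\ref{varrho} as above to replace $(A,L)$ by the principally polarized $(B,M)$, recording that $\deg f=\mathrm h^0(A,L)$ and that $\mathrm h^0(A,L)=\deg_LA/g!$ while $\mathrm h^0(B,M)=1$. Next, apply Theorem~\ref{thmpascal} to $(B,M)$: with $\rho_\sigma:=\min(\rho(B_\sigma,M_\sigma),\sqrt{\pi/3g})$ one gets $\tfrac1{[k:\mathbf{Q}]}\sum_\sigma\bigl(\tfrac{\pi}{6\rho_\sigma^2}+g\log\rho_\sigma\bigr)\le h(B)+\tfrac g2\log\tfrac{2\pi^2e}{3g}$. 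The term $h(B)$ must be controlled by $h(A)$: by the Faltings isogeny inequality stated in the excerpt, $h(B)\le h(A)+\tfrac12\log\deg f=h(A)+\tfrac12\log\mathrm h^0(A,L)\le h(A)+\tfrac12\log\deg_LA$. Then, exactly as in the proof of Proposition~\ref{ell}, write $g\log\rho_\sigma=-\tfrac g2\log(1/\rho_\sigma^2)$, use concavity of $\log$ to pass from the average of $\log(1/\rho_\sigma^2)$ to $\log$ of the average $T':=\tfrac1{[k:\mathbf{Q}]}\sum_\sigma\rho_\sigma^{-2}$, and obtain $\tfrac\pi6T'-\tfrac g2\log T'\le h(B)+\tfrac g2\log\tfrac{2\pi^2e}{3g}$. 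Finally solve this for $T'$ (using monotonicity of $x\mapsto\tfrac\pi6x-\tfrac g2\log x$ for $x\ge 3g/\pi$, and the elementary bound $\log x\le x(\log Y)/Y$ for $x\ge Y\ge e$ to linearize), converting to a bound of the form $T'\le C\max(1,h(B),\dots)$, and then bound $\tfrac1{[k:\mathbf{Q}]}\sum_\sigma\rho(B_\sigma,M_\sigma)^{-2}$ in terms of $T'$ by splitting into the places where $\rho(B_\sigma,M_\sigma)\le\sqrt{\pi/3g}$ (where the two agree) and those where it exceeds $\sqrt{\pi/3g}$ (where $\rho(B_\sigma,M_\sigma)^{-2}<3g/\pi$, a harmless additive constant $\le 3g/\pi$ per place, absorbed in the $\max(1,\dots)$). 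Tracking the numerics should yield the constant $11$; the appearance of $\log\deg_LA$ on the right comes precisely from the $h(B)\le h(A)+\tfrac12\log\deg_LA$ step and from the $\tfrac g2\log\tfrac{2\pi^2e}{3g}$ term (note $g\le\deg_LA$ so $\log g$ is dominated).

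\textbf{Main obstacle.} The analytic reduction is routine; the delicate point is purely numerical bookkeeping — arranging the passage from the mixed inequality $\tfrac\pi6T'-\tfrac g2\log T'\le h(B)+\tfrac g2\log\tfrac{2\pi^2e}{3g}$ to a clean linear bound $T'\le 11\max(1,h(A),\log\deg_LA)$ uniformly in $g$, while correctly accounting for the switch from $T'$ (built from the truncated $\rho_\sigma$) back to the genuine $\sum\rho(B_\sigma,M_\sigma)^{-2}$, and then back up through the isogeny via Lemma~\ref{varrho} where only the favourable inequality $\rho(A_\sigma,L_\sigma)\ge\rho(B_\sigma,M_\sigma)$ is needed. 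One must check that the various constants ($\tfrac g2\log\tfrac{2\pi^2e}{3g}$, the $\log\deg_LA$ from Faltings, the $3g/\pi$ truncation correction, the linearization loss) genuinely fit under the single coefficient $11$; this is the kind of verification done numerically as in Proposition~\ref{ell}, and is where all the work lies.
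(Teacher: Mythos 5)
Your plan is exactly the paper's: reduce to a principal polarization via Lemmas~\ref{quotpp} and~\ref{varrho}, apply Theorem~\ref{thmpascal} together with Faltings' isogeny bound $h(B)\le h(A)+\frac12\log\mathrm{h}^0(A,L)$, then use concavity of $\log$ and linearization exactly as in Proposition~\ref{ell} to reach an inequality $c_1(g)T\le c_2(g)\max(1,h(A),\log\deg_LA)$ with $c_2(g)\le 11\,c_1(g)$. Two small points you need not worry about: the truncation requires no additive correction, since $\rho_\sigma\le\rho(B_\sigma,M_\sigma)$ already gives $T'\ge\frac{1}{[k:\mathbf{Q}]}\sum_\sigma\rho(B_\sigma,M_\sigma)^{-2}\ge\frac{1}{[k:\mathbf{Q}]}\sum_\sigma\rho(A_\sigma,L_\sigma)^{-2}$ and the monotonicity step then applies directly; and it pays to keep $\mathrm{h}^0(A,L)=\deg_LA/g!$ rather than bounding it by $\deg_LA$, as the $-\frac12\log g!$ is what lets the paper's $c_2(g)$ stabilise at $3/2$ for large $g$ and fit under the coefficient~$11$.
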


\begin{proof} Notons $T$ le membre de gauche. Vu
l'assertion {\`a} d{\'e}montrer, nous pouvons supposer $T\ge
11\max{(1,\log g!)}$. Si $(A,L)$ est principalement polaris{\'e}e,
nous raisonnons comme dans le cas elliptique avec $T'\ge T\ge3g/\pi$ pour obtenir
\begin{equation*}\frac{\pi T}{6}-\frac{g}{2}\log T\le
h(A)+\frac{g}{2}\log\left(\frac{2\pi^{2}e}{3g}\right).\end{equation*}
Posons $$c_{1}(g):=\frac{\pi}{6}-\frac{g}2\frac{\log(11\max{(1,\log
g!)})}{11\max{(1,\log g!)}}.$$ Par d{\'e}croissance de la
fonction $x\mapsto (\log x)/x$ pour $x\ge e$, on en
d{\'e}duit \begin{equation*}c_{1}(g)T\le
h(A)+\frac{g}{2}\log\left(\frac{2\pi^{2}e}{3g}\right).\end{equation*} Le
passage {\`a} une polarisation quelconque s'effectue \textit{via}
les deux lemmes pr{\'e}c{\'e}dents. En effet, la moyenne qui
d{\'e}finit $T$ est 
invariante par extension finie du corps de base $k$. Ceci nous permet
de supposer que l'isog\'enie donn\'ee par le lemme~\ref{quotpp} est
d\'efinie sur $k$. On a alors $h(B)\le h(A)+(1/2)\log\mathrm{h}^{0}(A,L)$ et
$\rho(A_\sigma,L_\sigma)^{-2}\le\rho(B_\sigma,M_\sigma)^{-2}$ par le
lemme~\ref{varrho}, pour tout plongement $\sigma$ de $k$ dans
$\mathbf{C}$. Ainsi de la majoration ci-dessus appliqu\'ee
{\`a} $(B,M)$ d{\'e}coulent les
estimations \begin{equation*}\begin{split}c_{1}(g)T&\le
h(A)+\frac{1}{2}\log\mathrm{h}^{0}(A,L)+\frac{g}{2}\log
\left(\frac{2\pi^{2}e}{3g}\right)\\
& \le h(A)+\frac{1}{2}\log\deg_{L}A-\frac{1}{2}\log
g!+\frac{g}{2}\log\left(\frac{2\pi^{2}e}{3g}\right)\\ &\le
c_{2}(g)\max(1,h(A),\log\deg_{L}A)\end{split}\end{equation*}
o{\`u} $$c_{2}(g):=\frac{3}{2}+\frac{\max{\left(0,(g/2)\log
(2\pi^{2}e/(3g))-(1/2)\log g!\right)}}{\max{(1,\log g!)}}.$$
Pour conclure, on v{\'e}rifie que $c_{2}(g)\le 11c_{1}(g)$ pour
tout $g\ge 1$. Pour cela, on peut proc{\'e}der de la mani{\`e}re
suivante~: on montre l'in\'egalit\'e par calcul direct si
$g\le5$. Pour $g\ge6$, on a $c_2(g)=3/2$ (car
$3gg!^{1/g}\ge18\times6!^{1/6}\ge2\pi^2e$) et 
$c_{1}(g)=\pi/6-g\log(11\log g!)/(22\log g!)\ge\pi/6-(\log 11+2\log
g)/(22\log g-22)$ (en utilisant $\log\log g!\le2\log g$ et $\log g!\ge
g\log (g/e)$). Cette derni\`ere fonction est croissante pour $g\ge6$
et sup\'erieure \`a $3/22$ si $g=6$ d'o\`u le r\'esultat.\end{proof} 
Le th\'eor\`eme~\ref{matint} se d\'emontre exactement comme ci-dessus
en utilisant $h(A)=h_F(A)+g\log\sqrt\pi$ (le $11$ dans $c_1(g)$ est
remplac{\'e} par $14$, le $\pi^2$ dans $c_2(g)$ devient $\pi^3$ et
$c_2(g)=3/2$ pour $g\ge12$).

\subsection{Pente maximale}\label{subsecpentemax}
Nous introduisons ici la pente maximale qui jouera un r\^ole essentiel
dans la preuve du th\'eor\`eme-clef ci-dessous
(th\'eor\`eme~\ref{thmimportant}) et dont l'estimation repose sur une
version du lemme matriciel.

Lorsque $(A,L)$ est une vari\'et\'e ab\'elienne polaris\'ee sur un
corps de nombres $k$, on munit l'espace tangent $t_A$ d'une structure
de $k$-fibr\'e vectoriel hermitien not\'ee $\overline{t_{(A,L)}}$ ou,
la plupart du temps, $\overline{t_A}$, lorsque la polarisation
sous-entendue est claire d'apr\`es le contexte. Pour ce faire, nous
utilisons la structure enti\`ere donn\'ee par l'espace tangent du
mod\`ele de N\'eron de $A$ (qui donne donc, dans le langage des
fibr\'es ad\'eliques de \cite{gaudronsept}, des normes en toutes les
places finies de $k$). En une place infinie $v$, nous utilisons la
m\'etrique induite par la forme de Riemann, d\'ecrite au
paragraphe~\ref{formeR}. Il n'y a pas d'ambigu\"\i t\'e car si
$\sigma$ et $\overline{\sigma}$ sont deux plongements complexes correspondant
tous deux \`a $v$ alors les normes $\|\cdot\|_{L,\sigma}$ et
$\|\cdot\|_{L,\overline{\sigma}}$ co{\"\i}ncident comme nous l'avons rappel\'e
au \S~\ref{subsecconju}.

Lorsque l'on dispose d'un fibr\'e ad\'elique $\overline E$ sur $k$
nous pouvons lui associer \`a la suite de Bost deux {\em pentes} :
d'une part sa pente (toujours normalis\'ee) $\widehat\mu(\overline E)$
d\'efinie page 62 de \cite{gaudronsept} et d'autre part sa
pente maximale $\widehat{\mu}_{\mathrm{max}}(\overline{E})$
qui est le maximum des pentes des sous-fibr\'es non nuls de $\overline{E}$.

Nous souhaitons donc \'evaluer la pente et la pente maximale de
$\overline{t_A}$ mais il faut faire attention que ces quantit\'es ne
sont pas pr\'eserv\'ees {\em a priori} par extension des scalaires. Au
contraire, nous souhaitons ne manipuler que des quantit\'es
invariantes par une telle extension. Nous r\'esolvons ce probl\`eme
comme dans le paragraphe 5.1.1 de \cite{bost2} : pour calculer ces
quantit\'es, nous ferons toujours d'abord une extension de corps de
sorte que $A$ ait r\'eduction semi-stable. Sous cette condition, nos
pentes ne d\'ependent plus du corps choisi (c'est la m{\^e}me
convention que pour la hauteur de Faltings {\em stable}).

Chaque fois que nous parlerons de la pente ou de la pente maximale de
$\overline{t_A}$ (ou de son dual, de leurs puissances
sym\'etriques,\ldots) nous ferons donc r\'ef\'erence aux pentes de
$\overline{t_{A_K}}$ pour une extension finie $K$ de $k$ telle que
$A_K$ admette un mod\`ele semi-stable. En pratique, ce l\'eger abus
d'\'ecriture n'engendrera pas de confusions car, lorsque nous ferons
appel explicitement \`a la structure hermitienne sur $t_A$, nous aurons
toujours au pr\'ealable fait une extension des scalaires assurant la
condition de semi-stabilit\'e. Surtout, cette convention nous
permettra de donner des \'enonc\'es invariants sans modifier notre
corps de base (et donc nous n'aurons pas \`a estimer le degr\'e d'une
extension sur laquelle $A$ acqui\`ere r\'eduction semi-stable).

Rappelons le fait suivant.

\begin{lemma}\label{pente} Soit $(A,L)$ une vari\'et\'e
ab\'elienne polaris\'ee sur un corps de nombres. On
a $$g\widehat\mu(\overline{t_{(A,L)}})=-h(A)-
\frac{1}{2}\log\mathrm{h}^{0}(A,L)+\frac{g}{2}\log\pi.$$\end{lemma}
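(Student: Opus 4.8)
The statement relates the normalized Arakelov slope $\widehat\mu(\overline{t_{(A,L)}})$ to the Faltings height $h(A)$ and the dimension of global sections $\mathrm h^0(A,L)$. The natural strategy is to identify the hermitian line bundle $\det\overline{t_A}$ explicitly. Indeed, by definition of the normalized slope one has $g\,\widehat\mu(\overline{t_A}) = \widehat{\deg}\,\overline{t_A} = \widehat{\deg}\,\bigl(\det\overline{t_A}\bigr)$, so everything reduces to computing the Arakelov degree of the top exterior power of $\overline{t_A}$. The plan is therefore: first, relate $\det\overline{t_A}$ to the dual of the Hodge bundle $\overline\omega_{\mathcal A/\mathcal O_K}$ used to define $h(A)$; second, track the difference between the two metrics (the one coming from the Riemann form on $t_A$ versus the one used in the definition of $h(A)$, which involves $(2\pi)^{-g}\int_{A_\sigma}|s\wedge\bar s|$); third, assemble the Arakelov degrees.

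\textbf{Key steps.} First I would note that, after a finite base extension $K/k$ ensuring semistable reduction (as licensed by the convention fixed just above the lemma), the integral structure on $t_A$ is that of the tangent space at the origin of the N\'eron model, whose top exterior power is dual to $\omega_{\mathcal A/\mathcal O_K} = \epsilon^*\Omega^g_{\mathcal A/\mathcal O_K}$. Hence as a line bundle $\det t_{A_K}$ is canonically $\omega_{\mathcal A/\mathcal O_K}^{\vee}$, and thus $\widehat{\deg}\,\det\overline{t_A} = -\,\widehat{\deg}\,\overline\omega_{\mathcal A/\mathcal O_K}$ up to the metric discrepancy, i.e. up to $-h(A)$ plus a correction. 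Second, at each complex place the metric on $\det t_{A_\sigma}$ induced by the Riemann form $H$ of $L_\sigma$ must be compared with the metric on $\omega_{\mathcal A/\mathcal O_K,\sigma}$: for a generator $s$ of $\mathrm H^0(A_\sigma,\Omega^g_{A_\sigma})$ the latter has squared norm $(2\pi)^{-g}\int_{A_\sigma}|s\wedge\bar s|$, and the standard computation of this integral in terms of the Riemann form gives a factor involving $\det H$ on a period basis, which is exactly $\mathrm h^0(A,L) = \sqrt{\det(\mathrm{Im}\,H \text{ on }\Omega)}$ (the Pfaffian of the polarization), together with a power of $2\pi$ from the normalization. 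Third, I collect the archimedean corrections: the $(2\pi)^{-g}$ in the definition of the Faltings norm contributes $\tfrac g2\log 2\pi$ after taking $-\log$ and summing, which combines with the volume-of-fundamental-domain factor $\mathrm h^0(A,L)$ coming from relating $|s\wedge\bar s|$-integration to the Riemann-form volume, producing the $-\tfrac12\log\mathrm h^0(A,L) + \tfrac g2\log\pi$ on the right-hand side (the $\log 2\pi$ splitting as $\log 2 + \log\pi$, with the $\log 2$ absorbed). The finite places contribute nothing beyond what is already packaged into $h(A)$, since there the two integral structures agree.

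\textbf{Main obstacle.} The delicate point is the archimedean metric comparison: one must carefully carry out the classical computation expressing $\int_{A_\sigma}|s\wedge\bar s|$ for a holomorphic $g$-form $s$ in terms of the covolume of the period lattice and the Riemann form, and then match the resulting powers of $2$, $\pi$ and the Pfaffian of the polarization against the clean expression $-\tfrac12\log\mathrm h^0(A,L)+\tfrac g2\log\pi$. This is a standard but bookkeeping-heavy calculation (essentially the one underlying the Faltings--Silverman-type formulas for $h(A)$); I would cite it from the literature rather than redo it, then simply combine it with the tautological identity $g\,\widehat\mu(\overline{t_A}) = \widehat{\deg}\,\det\overline{t_A} = -\,\widehat{\deg}\,\overline\omega_{\mathcal A/\mathcal O_K} + (\text{archimedean correction}) = -h(A) + (\text{archimedean correction})$ to conclude. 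Invariance under the choice of semistable $K$ is automatic since both $h(A)$ (the stable height) and, by our convention, the slope of $\overline{t_A}$ are defined that way.
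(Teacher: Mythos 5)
Your outline is correct and follows the same route as the sources the paper cites: identify $\det\overline{t_A}$ with the dual of $\overline\omega_{\mathcal A/\mathcal O_K}$ (the finite places contributing nothing once the semistable convention is in force), then compute the archimedean discrepancy from $\int_{A_\sigma}|s\wedge\bar s|=2^g\covol(\Omega_{A_\sigma})=2^g\mathrm{h}^0(A,L)$, the $2^g$ cancelling the one in $(2\pi)^g$ and yielding exactly $-\tfrac12\log\mathrm{h}^0(A,L)+\tfrac g2\log\pi$. The paper's own proof is simply a citation to the statement (D.1) of \cite{bost2} and to page~715 of \cite{gaudrondeux}, which carries out precisely the bookkeeping you describe.
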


\begin{proof} Ceci suit facilement des d\'efinitions : voir
l'\'enonc\'e (D.1) de \cite{bost2} et une preuve page 715 de
\cite{gaudrondeux}.\end{proof}

On peut tirer d'un lemme matriciel une estimation de la pente maximale
du dual de $\overline{t_A}$. Un \'enonc\'e explicite est donn\'e par
Graftieaux comme suit :

\begin{lemma} Si $L$ est une polarisation principale on a
$$\widehat{\mu}_{\mathrm{max}}(\overline{t_A^{\mathsf{v}}})
\le(g+1)h(A)+2g^5\log2.$$\end{lemma}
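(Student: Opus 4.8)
The plan is to deduce this bound on $\widehat{\mu}_{\mathrm{max}}(\overline{t_A^{\mathsf v}})$ from the matricial lemma (Theorem~\ref{thmpascal}, or its consequence Proposition~\ref{coromatnet}) applied not to $A$ itself but to suitable products $A^n$ and their sub-abelian varieties. The point is that the pente maximale of $\overline{t_A^{\mathsf v}}$, by definition, is a maximum over sub-$k$-fibrés; such a sub-fibré of $t_A^{\mathsf v}$ is $t_A^{\mathsf v}/W^{\perp}$ for some subspace $W\subset t_A$, and after an extension of scalars (which does not change the pentes once $A$ has semistable reduction) one may assume $W$ is the tangent space of an abelian subvariety $B\subset A$ — indeed the subspaces cut out by abelian subvarieties are cofinal enough, or one passes through the saturation. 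So it suffices to bound $\widehat{\mu}(\overline{t_B^{\mathsf v}})$, hence $-\widehat{\mu}(\overline{t_{(B,L|_B)}})$, uniformly over abelian subvarieties $B$ of $A$.

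**Key steps.** First I would use Lemma~\ref{pente}: for the polarized abelian variety $(B,M)$ with $M=L|_B$, one has $(\dim B)\,\widehat{\mu}(\overline{t_{(B,M)}})=-h(B)-\tfrac12\log\mathrm h^0(B,M)+\tfrac{\dim B}2\log\pi$, so controlling $\widehat{\mu}(\overline{t_B^{\mathsf v}})=-\widehat{\mu}(\overline{t_{(B,M)}})$ reduces to lower-bounding $\widehat{\mu}(\overline{t_{(B,M)}})$, equivalently upper-bounding $h(B)+\tfrac12\log\mathrm h^0(B,M)$ in terms of $h(A)$. Here the height comparison quoted in the preliminaries, $h(B)\le h(A)+g\log(\sqrt{2\pi}\,\mathrm h^0(B,L)^2)$, together with a bound $\mathrm h^0(B,L)\le\deg_L A$ (or a cruder combinatorial estimate), already gives $h(B)$ polynomially controlled, but this only bounds the pente of one sub-fibré at a time and does not by itself give the pente maximale, because a sub-fibré of $\overline{t_A}$ need not be $\overline{t_B}$ metrically. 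The correct route, and the one Graftieaux uses, is to invoke a matricial lemma directly for the pente maximale: the pente maximale of $\overline{t_A^{\mathsf v}}$ is governed by the smallest period, via the fact that a section of $L^{\otimes n}$ (a theta function) vanishing to high order at the origin forces, through its Taylor expansion along a line spanned by a small period, a contradiction unless the pente maximale is small — this is exactly the slope-method incarnation of $\rho(A_\sigma,L_\sigma)$ bounding $\widehat{\mu}_{\mathrm{max}}$.

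**Carrying it out.** Concretely I would: (i) reduce to $A$ with principal polarization (the hypothesis of the lemma) and semistable reduction; (ii) take a nonzero period $\omega\in\Omega_{A_\sigma}$ realizing $\rho(A_\sigma,L_\sigma)$ at some well-chosen place $\sigma$, or rather work with the average over $\sigma$ as in Proposition~\ref{coromatnet}; (iii) for any sub-$k$-fibré $\overline F\subset\overline{t_A^{\mathsf v}}$ of maximal slope, pair it against the evaluation-jet morphism $\overline{\mathrm H^0(A,L^{\otimes D})}\to\bigoplus_{j<T}\mathrm{Sym}^j\overline{t_A^{\mathsf v}}\otimes(\text{fibre of }L^{\otimes D}\text{ at }0)$ and use the slope inequality: the left side has slope $\ge -(D/2)\log\mathrm h^0+\ldots$ controlled by $h(A)$, while the right side's pente maximale is $\le T\cdot\widehat{\mu}_{\mathrm{max}}(\overline{t_A^{\mathsf v}})+\ldots$; injectivity of the jet map for $T$ of size comparable to $\mathrm h^0(A,L^{\otimes D})^{1/g}$ (which holds because a section of $L^{\otimes D}$ vanishing to that order is zero) forces the stated inequality after optimizing $D$. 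The archimedean contribution to the jet-map norm is exactly where $\rho(A_\sigma,L_\sigma)$ — bounded on average by $h(A)$ via Theorem~\ref{thmpascal} — enters, producing the coefficient $(g+1)$ in front of $h(A)$; the constant $2g^5\log 2$ absorbs the combinatorial factors from $\mathrm{Sym}^j$, the cardinality of the jet space, and the passage from theta-function sup-norms to $L^2$-norms (Cauchy estimates on a polydisc of radius $\sim\rho$).

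**Main obstacle.** The delicate point is tracking the archimedean metrics through the jet evaluation: one must estimate $\|{\cdot}\|$ of the map $\mathrm H^0(A_\sigma,L_\sigma^{\otimes D})\to\mathrm{Sym}^{<T}t_{A_\sigma}^{\mathsf v}$ sending a section to its Taylor jet at $0$, and this requires a Cauchy-type bound on a ball whose radius is the injectivity diameter $\rho(A_\sigma,L_\sigma)$ — too small a $\rho$ at one place would ruin the estimate, which is precisely why the \emph{average} form of the matricial lemma (Theorem~\ref{thmpascal}) is the right tool rather than a pointwise bound. Getting the numerical constant down to $2g^5\log 2$ and the linear coefficient exactly $g+1$ is then a matter of carefully choosing $D$ and $T$ and being slightly wasteful only in the lower-order terms; since the statement is quoted from Graftieaux, I would simply cite his computation for these explicit values rather than reprove the optimization.
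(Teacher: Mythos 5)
The paper's entire proof of this lemma is the one-line citation ``Voir la proposition~2.14 de \cite{graftieaux}'', which is exactly where your proposal ends up after a longer detour. Your preliminary sketch of what Graftieaux does (jet evaluation maps, slope inequalities, Cauchy estimates on a disc of radius controlled by the injectivity diameter, after correctly discarding the naive reduction to abelian subvarieties) is a plausible reconstruction of the reference but is not required here, since the paper itself simply defers to it.
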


\begin{proof} Voir la proposition 2.14 de \cite{graftieaux}.\end{proof}

Nous passons au cas g\'en\'eral par isog\'enie comme ci-dessus.

\begin{lemma}\label{varmu} Soient $f\colon A\to B$ une isog\'enie entre
vari\'et\'es ab\'eliennes sur $k$ et $L$ une polarisation sur
$B$. Alors $$\widehat{\mu}_{\mathrm{max}}(\overline{t_{(A,f^*L)}})
\le\widehat{\mu}_{\mathrm{max}}(\overline{t_{(B,L)}}).$$\end{lemma}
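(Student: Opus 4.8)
The plan is to mimic the proof of Lemma~\ref{varrho} at the level of tangent spaces, now keeping track of the adelic (hermitian) structures that define the pentes. Since $f\colon A\to B$ is an isogeny over $k$, the differential $\mathrm{d}f\colon t_A\to t_B$ is an isomorphism of $k$-vector spaces, and the pull-back metric on $t_{(A,f^*L)}$ at each archimedean place is obtained by transporting the metric of $t_{(B,L)}$ along $\mathrm{d}f$, exactly as noted in the proof of Lemma~\ref{varrho}. So at the infinite places $\mathrm{d}f$ is an isometry. The only place where the two hermitian structures $\overline{t_{(A,f^*L)}}$ and $\overline{t_{(B,L)}}$ can differ is at the finite places: the integral structure on $t_A$ comes from the tangent space of the N\'eron model of $A$, and that on $t_B$ from the N\'eron model of $B$, and $\mathrm{d}f$ need not match these lattices.

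First I would reduce to the case where both $A$ and $B$ have semi-stable reduction over $k$, which is legitimate by the convention fixed just before Lemma~\ref{pente} (the pentes are computed after such a base change and do not depend on it). Next I would observe that, under this hypothesis, $\mathrm{d}f$ sends the integral tangent lattice of $A$ into that of $B$ at every finite place: indeed an isogeny of semi-abelian schemes over $\mathcal{O}_k$ induces a morphism on the N\'eron models, hence on their tangent spaces along the zero section. In the language of adelic fibr\'es of \cite{gaudronsept}, this means that $\mathrm{d}f\colon\overline{t_{(A,f^*L)}}\to\overline{t_{(B,L)}}$ is a morphism of adelic fibr\'es which is an isomorphism of the underlying vector spaces, is an isometry at every infinite place, and is norm-decreasing (operator norm $\le1$) at every finite place. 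In particular it has operator norm at most $1$ at all places.

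From here the conclusion is a formal property of the pente maximale: if $\varphi\colon\overline{E}\to\overline{F}$ is an isomorphism of the underlying $k$-spaces whose operator norm is $\le1$ at every place, then $\widehat{\mu}_{\mathrm{max}}(\overline{E})\le\widehat{\mu}_{\mathrm{max}}(\overline{F})$. Concretely, take a sub-$k$-space $E'\subseteq E$ realising $\widehat{\mu}_{\mathrm{max}}(\overline{E})$; its image $F'=\varphi(E')$ is a sub-space of $F$ of the same dimension, and since $\varphi$ contracts norms everywhere one gets $\widehat{\deg}(\overline{F'})\ge\widehat{\deg}(\overline{E'})$ (the local contributions to the Arakelov degree only increase, as the covolumes do not grow), whence $\widehat{\mu}(\overline{F'})\ge\widehat{\mu}(\overline{E'})=\widehat{\mu}_{\mathrm{max}}(\overline{E})$ and a fortiori $\widehat{\mu}_{\mathrm{max}}(\overline{F})\ge\widehat{\mu}_{\mathrm{max}}(\overline{E})$. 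Applying this with $\overline{E}=\overline{t_{(A,f^*L)}}$, $\overline{F}=\overline{t_{(B,L)}}$ and $\varphi=\mathrm{d}f$ gives the lemma.

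The main obstacle is the finite-place comparison of integral structures: one must be sure that an isogeny, once both varieties have semi-stable reduction, really does induce a morphism of N\'eron models carrying one tangent lattice into the other. This is where the semi-stability reduction is essential (otherwise the N\'eron models need not be functorial in the right way), and it is the point I would justify carefully, probably by citing the functoriality of N\'eron models for semi-abelian schemes; everything else is bookkeeping with the definition of the pente maximale and of the Arakelov degree of an adelic fibr\'e.
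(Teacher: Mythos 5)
Your proposal is correct and follows essentially the same route as the paper: reduce to semi-stable models, observe that $\mathrm{d}f$ is an isometry at archimedean places and norm-contracting at finite places by functoriality of N\'eron models, then conclude by a slope inequality. The only difference is cosmetic: the paper cites a general slope inequality (lemme~6.4 of \cite{gaudronsept}) where you unwind the argument by hand via the image subspace and the behaviour of Arakelov degrees under a norm-contracting isomorphism.
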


\begin{proof} Apr\`es nous \^etre plac\'es sur une extension de corps
convenable o\`u $A$ et $B$ ont des mod\`eles semi-stables, nous
consid\'erons l'isomorphisme d'espaces vectoriels $\mathrm{d}f\colon t_A\to
t_B$. Pour chaque plongement $\sigma$, l'application $(\mathrm{d}f)_\sigma$ est
une isom\'etrie avec les normes relatives \`a $(f^*L)_\sigma$ et
$L_\sigma$ (voir la d\'emonstration du lemme~\ref{varrho}). D'autre
part, $\mathrm{d}f$ pr\'eserve les structures enti\`eres puisque $f$ s'\'etend
en un morphisme entre les mod\`eles de N\'eron de $A$ et $B$. Par
suite la norme de $\mathrm{d}f$ en une place ultram\'etrique quelconque est
inf\'erieure \`a 1. Le r\'esultat suit alors par in\'egalit\'e de
pentes (voir par exemple le lemme 6.4 de~\cite{gaudronsept}).\end{proof}
Nous en d\'eduisons l'\'enonc\'e suivant.

\begin{prop}\label{mumax} Si $(A,L)$ est une vari\'et\'e ab\'elienne
polaris\'ee on a $$\widehat{\mu}_{\mathrm{max}}(\overline{t_{(A,L)}^{\mathsf{v}}})
\le(g+1)(h(A)+\frac{1}{2}\log\mathrm{h}^{0}(A,L))+2g^5\log2.$$\end{prop}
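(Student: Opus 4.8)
The plan is to reduce to a principal polarisation via Lemma~\ref{quotpp}, where Graftieaux's bound is available, and then to carry the estimate back along the isogeny, tracking the finite-place contribution sharply enough that it cancels against the corresponding term of the Faltings height. So write $L=f^{*}M$ with $f\colon A\to B$ an isogeny, $M$ a principal polarisation on $B$ and $\deg f=\mathrm{h}^{0}(A,L)$ (Lemma~\ref{quotpp}); after a finite extension of $k$, which alters none of the quantities in the statement, we may assume $f$, $A$ and $B$ all defined over $k$ with $A$ and $B$ semistable. The differential $\mathrm{d}f\colon t_{A}\to t_{B}$ is a $k$-linear isomorphism, an isometry at each archimedean place for the metrics of $(f^{*}M)_{\sigma}=L_{\sigma}$ and $M_{\sigma}$ (as computed in the proof of Lemma~\ref{varrho}), and it maps the tangent lattice of the N\'eron model of $A$ into that of $B$ at each finite place. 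Dualising, $\phi:=((\mathrm{d}f)^{\mathsf{v}})^{-1}\colon t_{(A,L)}^{\mathsf{v}}\to t_{(B,M)}^{\mathsf{v}}$ is again a $k$-linear isomorphism and an archimedean isometry, but at the finite places it sends the tangent-dual lattice of $A$ \emph{onto} a lattice containing that of $B$, with some finite index $N$; put $\ell:=(\log N)/[k:\mathbf{Q}]\ge 0$.

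Two facts now follow. First, since $\phi$ is an archimedean isometry enlarging the tangent-dual lattice by the index $N$, for every nonzero $k$-subspace $W$ of $t_{(A,L)}^{\mathsf{v}}$ the two lattice structures on $W$ (the one pulled back from $A$ and the one pulled back from $B$ through $\phi$) differ by an index dividing $N$; comparing Arakelov degrees and taking the supremum over $W$ gives $\widehat{\mu}_{\mathrm{max}}(\overline{t_{(A,L)}^{\mathsf{v}}})\le\widehat{\mu}_{\mathrm{max}}(\overline{t_{(B,M)}^{\mathsf{v}}})+\ell$ (this is the slope inequality of the Arakelov formalism, exactly as in the proof of Lemma~\ref{varmu}, now applied to $\phi$). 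Second, for the same reason $\ell$ equals the difference of normalised Arakelov degrees $\widehat{\deg}(\overline{t_{(A,L)}^{\mathsf{v}}})-\widehat{\deg}(\overline{t_{(B,M)}^{\mathsf{v}}})$, which by Lemma~\ref{pente} applied to $(A,L)$ and to $(B,M)$ (recall $\mathrm{h}^{0}(B,M)=1$) equals $h(A)-h(B)+\tfrac12\log\mathrm{h}^{0}(A,L)$. Plugging in Graftieaux's estimate $\widehat{\mu}_{\mathrm{max}}(\overline{t_{(B,M)}^{\mathsf{v}}})\le(g+1)h(B)+2g^{5}\log 2$ we obtain
$$\widehat{\mu}_{\mathrm{max}}(\overline{t_{(A,L)}^{\mathsf{v}}})\le (g+1)h(B)+\ell+2g^{5}\log 2=g\,h(B)+h(A)+\tfrac12\log\mathrm{h}^{0}(A,L)+2g^{5}\log 2,$$
and the Faltings inequality $h(B)\le h(A)+\tfrac12\log\deg f=h(A)+\tfrac12\log\mathrm{h}^{0}(A,L)$ bounds the right-hand side by $(g+1)\bigl(h(A)+\tfrac12\log\mathrm{h}^{0}(A,L)\bigr)+2g^{5}\log 2$, which is the claim.

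The delicate part — and what forces the coefficient $g+1$ to be exactly what it is — is this finite-place bookkeeping: the defect $\ell$ of the isogeny is \emph{added} when the maximal slope is transported from $B$ to $A$, but the very same $\ell$ is \emph{subtracted} inside $h(B)$ through the degree identity of Lemma~\ref{pente}, so that after combining with Graftieaux's bound only the harmless term $-g\ell\le 0$ remains. Using Lemma~\ref{varmu} (or a dual of it) together with the Faltings inequality merely as a black box would leave a spurious extra $\log\mathrm{h}^{0}(A,L)$, so one genuinely needs the sharp degree identity rather than just the inequality $h(B)\le h(A)+\tfrac12\log\mathrm{h}^{0}(A,L)$.
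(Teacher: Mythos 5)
Your proof is correct and is what the paper's one-line proof (``combiner les deux lemmes pr\'ec\'edents avec le lemme~\ref{quotpp}'') must be gesturing at; you have filled in the part the paper leaves unsaid. You are also right that this is not a mere black-box combination: the dual of $\mathrm{d}f$ transports $\widehat{\mu}_{\mathrm{max}}$ of the cotangent bundles in the \emph{wrong} direction, so one really does need the isogeny defect $\ell$ of $((\mathrm{d}f)^{\mathsf{v}})^{-1}$ at the finite places, and the exact identity $\ell=h(A)+\tfrac12\log\mathrm{h}^{0}(A,L)-h(B)$ coming from Lemma~\ref{pente} (rather than just a crude bound on $\ell$, which would leave an extra $\log\mathrm{h}^{0}(A,L)$). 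The bookkeeping and the final arithmetic, with $h(B)\le h(A)+\tfrac12\log\mathrm{h}^{0}(A,L)$ (equivalently $\ell\ge0$), come out exactly to $(g+1)\bigl(h(A)+\tfrac12\log\mathrm{h}^{0}(A,L)\bigr)+2g^{5}\log 2$ as required.
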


\begin{proof} Il suffit de combiner les deux lemmes pr\'ec\'edents
avec le lemme~\ref{quotpp}.\end{proof}

\section{Minimum essentiel}

Nous {\'e}non{\c{c}}ons le th{\'e}or{\`e}me principal qui
entra{\^{\i}}ne les th{\'e}or{\`e}mes cit\'es dans l'introduction.
\subsection{Minimum d'\'evitement}
Soient $\mathsf{A}$ une vari{\'e}t{\'e} ab{\'e}lienne
complexe et $\mathsf{L}$ une polarisation sur
$\mathsf{A}$. Soient $\Vert.\Vert_{\mathsf{L}}$ la norme sur l'espace
tangent $t_{\mathsf{A}}$ induite par $\mathsf{L}$ (voir
\S~\ref{formeR}) et $\mathrm{d}_{\mathsf{L}}$ la distance associ\'ee.
\begin{defi}
Soit $\mathsf{B}$ une sous-vari{\'e}t{\'e} ab{\'e}lienne de
$\mathsf{A}$. Le
\emph{minimum d'{\'e}vitement} de $\mathsf{B}$, relatif {\`a}
$(\mathsf{A},\mathsf{L})$, est le nombre
r{\'e}el $$\updelta(\mathsf{A},\mathsf{L},\mathsf{B}):=
\min{\{\mathrm{d}_{\mathsf{L}}(\omega,t_{\mathsf{B}})\,;\
\omega\in\Omega_{\mathsf{A}}\setminus\Omega_{\mathsf{B}}\}}.$$Le
\emph{minimum essentiel} de $(\mathsf{A},\mathsf{L})$ est
$\updelta(\mathsf{A},\mathsf{L}):=\sup_{\mathsf{B}}{
\updelta(\mathsf{A},\mathsf{L},\mathsf{B})}$
(la borne sup{\'e}rieure est prise sur toutes les
sous-vari{\'e}t{\'e}s ab{\'e}liennes $\mathsf{B}$ de $\mathsf{A}$,
diff{\'e}rentes de $\mathsf{A}$). 
\end{defi}
Si $\mathsf{B}=\{0\}$, on retrouve le minimum absolu
$\updelta(\mathsf{A},\mathsf{L},\{0\})=\rho(\mathsf{A},\mathsf{L})$. Voici
quelques relations {\'e}l{\'e}mentaires auxquelles satisfont ces minima.
\begin{proprietes}Soient $\mathsf{B},\mathsf{C}$ des
sous-vari{\'e}t{\'e}s ab{\'e}liennes de $\mathsf{A}$.
\begin{enumerate}
\item Si $\mathsf{C}\ne\{0\}$ et si $\mathsf{B}\cap\mathsf{C}$ est
fini alors on a
$\updelta(\mathsf{A},\mathsf{L},\mathsf{B})\le\rho(\mathsf{C},\mathsf{L})$.
\item On a
$\updelta(\mathsf{A},\mathsf{L}_{1}\otimes\mathsf{L}_{2},\mathsf{B})^{2}\ge
\updelta(\mathsf{A},\mathsf{L}_{1},\mathsf{B})^{2}+\updelta(\mathsf{A},
\mathsf{L}_{2},\mathsf{B})^{2}$ et, pour tout entier $N\ge 1$, on
a $$\updelta(\mathsf{A},\mathsf{L}^{\otimes
N},\mathsf{B})=\sqrt{N}\updelta(\mathsf{A},\mathsf{L},\mathsf{B}).$$
\item Pour $i\in\{1,2\}$, soit
$(\mathsf{A}_{i},\mathsf{L}_{i},\mathsf{B}_{i})$ comme ci-dessus. On
a $$\updelta(\mathsf{A}_{1}\times\mathsf{A}_{2},\mathsf{L}_{1}\boxtimes
\mathsf{L}_{2},\mathsf{B}_{1}\times\mathsf{B}_{2})=\min_{i\in\{1,2\}}
{\updelta(\mathsf{A}_{i},\mathsf{L}_{i},\mathsf{B}_{i})}.$$
\end{enumerate}
\end{proprietes}
Nous avons toujours
$\rho(\mathsf{A},\mathsf{L})\le\updelta(\mathsf{A},\mathsf{L})$ mais il
n'est pas vrai en g{\'e}n{\'e}ral que
$\rho(\mathsf{A},\mathsf{L})\le\updelta(\mathsf{A},\mathsf{L},\mathsf{B})$
(prendre $\sf A=E\times E$ pour une courbe elliptique $\sf E$ avec une
polarisation produit $\sf L=L_0\boxtimes L_0$ puis $\sf B$ la diagonale ; on
a alors $\updelta(\mathsf{A,L,B})=\rho(\mathsf{A,L})/\sqrt2$).

\begin{prop}\label{propminima}Soit $\mathsf{B}$ une
sous-vari{\'e}t{\'e} ab{\'e}lienne stricte de $\mathsf{A}$. Soit
$\mathsf{b}$ le degr{\'e} de l'isog{\'e}nie d'addition
$\mathsf{B}\times\mathsf{B}^{\perp}\to\mathsf{A}$. Alors on a
$\rho(\mathsf{B}^{\perp},\mathsf{L})/\mathsf{b}\le\updelta
(\mathsf{A},\mathsf{L},\mathsf{B})$. En particulier on a
$$\frac{\rho(\mathsf{A},\mathsf{L})}{(\deg_{\mathsf{L}}
\mathsf{B})^{2}}\le\updelta(\mathsf{A},\mathsf{L},\mathsf{B}).$$\end{prop}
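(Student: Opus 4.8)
The plan is to bound $\mathrm{d}_{\mathsf{L}}(\omega,t_{\mathsf{B}})$ from below for an arbitrary period $\omega\in\Omega_{\mathsf{A}}\setminus\Omega_{\mathsf{B}}$ and then take the minimum over such $\omega$. The structural input is the description of $\mathsf{B}^{\perp}$ from \S\ref{subsecvaortho}: since the addition map $\mathsf{B}\times\mathsf{B}^{\perp}\to\mathsf{A}$ is an isogeny, its differential is an isomorphism $t_{\mathsf{B}}\times t_{\mathsf{B}^{\perp}}\to t_{\mathsf{A}}$, so $t_{\mathsf{A}}=t_{\mathsf{B}}\oplus t_{\mathsf{B}^{\perp}}$; and unwinding the definition of $\mathsf{B}^{\perp}$ as the neutral component of $\ker({}^{\mathrm{t}}i\circ\varphi_{\mathsf{L}})$ identifies $t_{\mathsf{B}^{\perp}}$ with the orthogonal complement of $t_{\mathsf{B}}$ for the Riemann form $H$ of $\mathsf{L}$. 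Hence this is an orthogonal direct sum, and writing $\omega=\omega'+\omega''$ with $\omega'\in t_{\mathsf{B}}$ and $\omega''\in t_{\mathsf{B}^{\perp}}$ gives $\mathrm{d}_{\mathsf{L}}(\omega,t_{\mathsf{B}})=\|\omega''\|_{\mathsf{L}}$, the norm $\|\cdot\|_{\mathsf{L}}$ restricted to $t_{\mathsf{B}^{\perp}}$ being exactly the one attached to $\mathsf{L}|_{\mathsf{B}^{\perp}}$.

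Then I would invoke the period fact of \S\ref{subsecvaortho}: there exist $\omega_{1}\in\Omega_{\mathsf{B}}$ and $\omega_{2}\in\Omega_{\mathsf{B}^{\perp}}$ with $\mathsf{b}\omega=\omega_{1}+\omega_{2}$. Writing $\mathsf{b}\omega=\mathsf{b}\omega'+\mathsf{b}\omega''$ and using uniqueness of the decomposition $t_{\mathsf{A}}=t_{\mathsf{B}}\oplus t_{\mathsf{B}^{\perp}}$, one gets $\omega_{1}=\mathsf{b}\omega'$ and $\omega_{2}=\mathsf{b}\omega''$. The point that genuinely uses the hypothesis $\omega\notin\Omega_{\mathsf{B}}$ — and is the only subtle step — is that $\omega_{2}\neq0$: otherwise $\mathsf{b}\omega=\omega_{1}\in t_{\mathsf{B}}$, so $\omega\in t_{\mathsf{B}}$ (dividing by $\mathsf{b}$ in the $\mathbf{C}$-vector space $t_{\mathsf{A}}$), whence $\omega\in\Omega_{\mathsf{A}}\cap t_{\mathsf{B}}=\Omega_{\mathsf{B}}$, a contradiction. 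Thus $\omega_{2}$ is a nonzero period of $\mathsf{B}^{\perp}$, so $\|\omega_{2}\|_{\mathsf{L}}\ge\rho(\mathsf{B}^{\perp},\mathsf{L})$, i.e. $\mathsf{b}\,\mathrm{d}_{\mathsf{L}}(\omega,t_{\mathsf{B}})=\mathsf{b}\|\omega''\|_{\mathsf{L}}=\|\omega_{2}\|_{\mathsf{L}}\ge\rho(\mathsf{B}^{\perp},\mathsf{L})$. Taking the minimum over $\omega$ yields the first assertion $\rho(\mathsf{B}^{\perp},\mathsf{L})/\mathsf{b}\le\updelta(\mathsf{A},\mathsf{L},\mathsf{B})$.

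For the last inequality I would assemble two elementary bounds. First, $\mathsf{B}$ being strict, $\mathsf{B}^{\perp}\neq\{0\}$, and every nonzero period of $\mathsf{B}^{\perp}$ is a nonzero period of $\mathsf{A}$ with the same $\mathsf{L}$-norm, so $\rho(\mathsf{A},\mathsf{L})\le\rho(\mathsf{B}^{\perp},\mathsf{L})$ (equivalently, this is item (1) of the properties above applied to the subvariety $\mathsf{B}^{\perp}$). Second, the bound recalled in \S\ref{subsecvaortho} gives $\mathsf{b}\le\mathrm{h}^{0}(\mathsf{B},\mathsf{L})^{2}$, and $\mathrm{h}^{0}(\mathsf{B},\mathsf{L})=\deg_{\mathsf{L}}\mathsf{B}/(\dim\mathsf{B})!\le\deg_{\mathsf{L}}\mathsf{B}$, so $\mathsf{b}\le(\deg_{\mathsf{L}}\mathsf{B})^{2}$. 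Plugging both into the first assertion gives $\rho(\mathsf{A},\mathsf{L})/(\deg_{\mathsf{L}}\mathsf{B})^{2}\le\updelta(\mathsf{A},\mathsf{L},\mathsf{B})$. Apart from the non-vanishing of $\omega_{2}$, every step is a direct consequence of the facts collected in \S\ref{subsecvaortho} and of the definitions, so no serious obstacle is expected.
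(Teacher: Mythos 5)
Your argument is correct and follows exactly the paper's route: use the orthogonal direct sum $t_{\mathsf{A}}=t_{\mathsf{B}}\oplus t_{\mathsf{B}^{\perp}}$ and the period fact $\mathsf{b}\omega=\omega_1+\omega_2$ from \S\ref{subsecvaortho} to identify $\mathrm{d}_{\mathsf{L}}(\omega,t_{\mathsf{B}})=\|\omega_2\|_{\mathsf{L}}/\mathsf{b}$, note $\omega_2\ne0$, and finish with $\rho(\mathsf{B}^{\perp},\mathsf{L})\ge\rho(\mathsf{A},\mathsf{L})$ and $\mathsf{b}\le\mathrm{h}^0(\mathsf{B},\mathsf{L})^2\le(\deg_{\mathsf{L}}\mathsf{B})^2$. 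You merely spell out the uniqueness-of-decomposition step that the paper leaves implicit.
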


\begin{proof}
Soit $\omega$ une p\'eriode de $\mathsf{A}$ qui n'appartient pas \`a
$t_{\mathsf{B}}$. Consid{\'e}rons $\omega_{1},\omega_{2}$
comme au \S~ \ref{subsecvaortho} attach{\'e}s {\`a} $\omega$ et {\`a}
$\mathsf{B}$. On a $\updelta(\mathsf{A},\mathsf{L},\mathsf{B})
=\mathrm{d}_{\mathsf{L}}(\omega,t_{\mathsf{B}})=\Vert\omega_{2}
\Vert_{\mathsf{L}}/\mathsf{b}$ car les espaces $t_{\mathsf{B}}$ et
$t_{\mathsf{B}^{\perp}}$ sont orthogonaux. Par hypoth{\`e}se, on a
$\omega_{2}\ne 0$ et donc $\Vert\omega_{2}\Vert_{\mathsf{L}}
\ge\rho(\mathsf{B}^{\perp},\mathsf{L})\ge\rho(\mathsf{A},\mathsf{L})$. La
deuxi{\`e}me in{\'e}galit{\'e} de la proposition d{\'e}coule alors de
la majoration $\mathsf{b}\le(\deg_{\mathsf{L}}\mathsf{B})^{2}$.\end{proof}
{\'E}tant donn{\'e} une sous-vari{\'e}t{\'e} ab{\'e}lienne
$\mathsf{B}$ de $\mathsf{A}$, de codimension $\mathsf{t}\ge 1$, on
pose $$x(\mathsf{B}):=\left(\frac{\deg_{\mathsf{L}}\mathsf{B}}
{\deg_{\mathsf{L}}\mathsf{A}}\right)^{1/\mathsf{t}}.$$ 

\begin{prop}\label{majmu} Pour toute sous-vari{\'e}t{\'e}
ab{\'e}lienne stricte $\mathsf{B}$ de $\mathsf{A}$, on a
$$x(\mathsf{B})\updelta(\mathsf{A},\mathsf{L},\mathsf{B})^{2}\le2/\sqrt3$$
(si $\dim\sf A\ge2$ on peut remplacer $2/\sqrt3$ par 1).\end{prop}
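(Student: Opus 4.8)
The plan is to recognise $\updelta(\mathsf{A},\mathsf{L},\mathsf{B})$ as the first minimum of the period lattice of the quotient abelian variety $\mathsf{C}:=\mathsf{A}/\mathsf{B}$, and then to apply Hermite's inequality from the geometry of numbers. Set $\mathsf{t}=\codim\mathsf{B}$, let $p\colon\mathsf{A}\to\mathsf{C}$ be the quotient morphism and $\mathrm{d}p\colon t_{\mathsf{A}}\to t_{\mathsf{C}}$ its differential at the origin; it identifies $t_{\mathsf{C}}$ with $t_{\mathsf{A}}/t_{\mathsf{B}}$, maps $\Omega_{\mathsf{A}}$ onto $\Omega_{\mathsf{C}}$ and has kernel $t_{\mathsf{B}}$, which meets $\Omega_{\mathsf{A}}$ exactly in $\Omega_{\mathsf{B}}$. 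Endow $t_{\mathsf{C}}$ with the quotient norm $\|\cdot\|'$ deduced from $\|\cdot\|_{\mathsf{L}}$. By the very definition of the quotient norm, $\mathrm{d}_{\mathsf{L}}(\omega,t_{\mathsf{B}})=\|\mathrm{d}p(\omega)\|'$ for every $\omega\in t_{\mathsf{A}}$, and $\omega\in\Omega_{\mathsf{B}}$ exactly when $\mathrm{d}p(\omega)=0$. Hence $\updelta(\mathsf{A},\mathsf{L},\mathsf{B})=\min\{\|\overline\omega\|'\,;\ \overline\omega\in\Omega_{\mathsf{C}}\setminus\{0\}\}$.

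The next step is to evaluate $\covol(\Omega_{\mathsf{C}},\|\cdot\|')$. The one ingredient here that is not purely formal is the classical identity $\covol(\Omega_{\mathsf{X}},\|\cdot\|_{\mathsf{M}})=\mathrm{h}^{0}(\mathsf{X},\mathsf{M})=\deg_{\mathsf{M}}\mathsf{X}/(\dim\mathsf{X})!$ for a complex polarised abelian variety $(\mathsf{X},\mathsf{M})$ (the Riemannian volume form attached to the Riemann form is the normalised representative of $c_{1}(\mathsf{M})$; for $\dim\mathsf{X}=1$ it is precisely the computation of Remark~\ref{yrho}). Applying it to $(\mathsf{A},\mathsf{L})$ and to $(\mathsf{B},\mathsf{L}|_{\mathsf{B}})$, whose Riemann form is the restriction to $t_{\mathsf{B}}$ of that of $\mathsf{L}$, and using the multiplicativity of covolumes in the exact sequence $0\to t_{\mathsf{B}}\to t_{\mathsf{A}}\to t_{\mathsf{C}}\to0$ (restricted norm on the subspace, quotient norm on the quotient), I get $\covol(\Omega_{\mathsf{C}},\|\cdot\|')=\big((g-\mathsf{t})!/g!\big)\,x(\mathsf{B})^{-\mathsf{t}}$. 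Hermite's inequality $\lambda_{1}(\Lambda)^{2}\le\gamma_{n}\covol(\Lambda)^{2/n}$ for a lattice $\Lambda$ of rank $n$, applied to $\Lambda=\Omega_{\mathsf{C}}$ with $n=2\mathsf{t}$, then gives
$$x(\mathsf{B})\,\updelta(\mathsf{A},\mathsf{L},\mathsf{B})^{2}\le\gamma_{2\mathsf{t}}\left(\frac{(g-\mathsf{t})!}{g!}\right)^{1/\mathsf{t}},$$
where $\gamma_{n}$ denotes the Hermite constant in dimension $n$.

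It then remains to bound the right-hand side by a short case analysis. If $\mathsf{B}=\{0\}$ then $\mathsf{t}=g$ and the bound reads $\gamma_{2g}/(g!)^{1/g}$: this is $\gamma_{2}=2/\sqrt{3}$ for $g=1$, is $\gamma_{4}/\sqrt{2}=1$ for $g=2$ and $\gamma_{6}/6^{1/3}<1$ for $g=3$ (using the known values $\gamma_{4}=\sqrt{2}$ and $\gamma_{6}=(64/3)^{1/6}$), and for $g\ge4$ one finishes with Blichfeldt's bound $\gamma_{2g}\le\frac{2}{\pi}\big((g+1)!\big)^{1/g}$, which gives $\frac{2}{\pi}(g+1)^{1/g}\le\frac{2}{\pi}\,5^{1/4}<1$. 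If $\mathsf{B}\ne\{0\}$ then $\mathsf{t}<g$, hence $g!/(g-\mathsf{t})!=g(g-1)\cdots(g-\mathsf{t}+1)\ge(\mathsf{t}+1)!$; for $\mathsf{t}=1$ this yields $\gamma_{2}/2=1/\sqrt{3}<1$, and for $\mathsf{t}\ge2$ Blichfeldt's bound yields $\gamma_{2\mathsf{t}}\big((\mathsf{t}+1)!\big)^{-1/\mathsf{t}}\le2/\pi<1$. In all cases the quantity is $\le2/\sqrt{3}$, and it is $\le1$ as soon as $\dim\mathsf{A}=g\ge2$. The main (indeed, essentially the only) non-formal point is thus the covolume identity together with the compatibility of the Riemann form with restriction to an abelian subvariety and with passage to the quotient; everything else is standard geometry of numbers.
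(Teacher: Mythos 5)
Your proof is correct and follows essentially the same route as the paper: Minkowski's first theorem applied to the quotient lattice $\Omega_{\mathsf{A}}/\Omega_{\mathsf{B}}$ with the quotient metric, the Bertrand--Philippon identity $\covol(\Omega_{\mathsf{X}})=\mathrm{h}^0(\mathsf{X},\mathsf{M})$ applied to $\mathsf{A}$ and to $\mathsf{B}$, and the same Hermite-constant values together with Blichfeldt's bound. The only differences are cosmetic --- you package the quotient lattice as the period lattice of $\mathsf{A}/\mathsf{B}$ and organize the final case analysis by $\mathsf{B}=\{0\}$ versus $\mathsf{B}\ne\{0\}$ rather than by the value of $\mathsf{t}$, which incidentally makes the parenthetical improvement for $\dim\mathsf{A}\ge2$ slightly more transparent than in the paper.
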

\begin{proof}Notons $\mathsf{t}$ la codimension de $\mathsf{B}$ dans
$\mathsf{A}$. La quantit{\'e}
$\updelta(\mathsf{A},\mathsf{L},\mathsf{B})$ est la plus petite 
norme d'un {\'e}l{\'e}ment non nul du r{\'e}seau
$\Omega_{\mathsf{A}}/\Omega_{\mathsf{B}}$ de
$t_{\mathsf{A}}/t_{\mathsf{B}}$ (vu comme $\mathbf{R}$-espace
vectoriel), muni de la norme quotient. Par cons{\'e}quent, le
premier th{\'e}or{\`e}me de Minkowski donne
l'estimation $$\updelta(\mathsf{A},\mathsf{L},\mathsf{B})^{2}\le
\gamma_{2\mathsf{t}}\covol(\Omega_{\mathsf{A}}/
\Omega_{\mathsf{B}})^{1/\mathsf{t}}$$($\gamma_{2\mathsf{t}}$
est la constante d'Hermite). Le covolume du r{\'e}seau quotient est
le quotient des covolumes et, d'apr{\`e}s~\cite{berpph}, l'on sait
que $\covol(\Omega_{\mathsf{A}})$ est {\'e}gal {\`a}
$\mathrm{h}^{0}(\mathsf{A},\mathsf{L})$ (idem pour $\mathsf{B}$). En
revenant aux degr{\'e}s, la borne de Minkowski donne
donc $$x(\mathsf{B})\updelta(\mathsf{A},\mathsf{L},\mathsf{B})^{2}
\le\gamma_{2\mathsf{t}}\left(\frac{(g-\mathsf{t})!}{g!}\right)^{1/\mathsf{t}}.$$
Si $\mathsf{t}=1$, la valeur $\gamma_{2}=\frac{2}{\sqrt{3}}$ donne la
majoration voulue. Si $\mathsf{t}\ge 2$, on 
majore $(g-\mathsf{t})!/g!$ par $1/\mathsf{t}!$. Si
$\mathsf{t}\in\{2,3\}$, on conna{\^{\i}}t la valeur explicite de
$\gamma_{2t}$~: $$\gamma_{4}=\sqrt{2},\quad\gamma_{6}=\frac{2}{3^{1/6}},$$avec
laquelle on v{\'e}rifie ais{\'e}ment que
$\gamma_{2\mathsf{t}}\mathsf{t}!^{-1/\mathsf{t}}\le 1$. En
g{\'e}n{\'e}ral, on dispose de la borne de Blichfeldt
\cite[th\'eor\`eme 2, p. 387]{gruker}~:
$$\gamma_{2\mathsf{t}}\mathsf{t!}^{-1/\mathsf{t}}\le
\frac{2}{\pi}(\mathsf{t}+1)^{1/\mathsf{t}}.$$ L'on
peut alors conclure en observant que, si $\mathsf{t}\ge4$, on a
$(1+\mathsf{t})^{1/\mathsf{t}}\le\pi/2$.
\end{proof}

\subsection{Th\'eor\`eme-clef}

Soit $(A,L)$ une vari{\'e}t{\'e} ab{\'e}lienne polaris\'ee sur un
corps de nombres $k$. 
Pour un plongement complexe $\sigma\colon k\hookrightarrow\mathbf{C}$ et une
sous-vari\'et\'e ab\'elienne stricte $\sf B$ de $A_\sigma$, nous avons
d\'efini ci-dessus une quantit\'e $x(\sf B)$. Nous posons \`a
pr\'esent $$x:=\min{\{x({\sf B})\,;\ {\sf B}\subsetneq A_\sigma\}}$$
qui ne d\'epend pas du choix du plongement $\sigma$ mais seulement du
couple $(A,L)$.
Notons d'ores et d\'ej\`a $$(\deg_LA)^{-1}\le x\le(\deg_LA)^{-1/g}$$
comme on le voit avec $\deg_{L_\sigma}\sf B\ge1$ et $x\le x(\{0\})$.

\begin{theo}\label{thmimportant}Consid{\'e}rons pour chaque plongement
complexe $\sigma\colon k\hookrightarrow\mathbf{C}$ une sous-vari{\'e}t{\'e}
ab{\'e}lienne $B[\sigma]$ de $A_{\sigma}$, diff{\'e}rente de
$A_{\sigma}$. On suppose que $B[\sigma]$ et $B[\overline{\sigma}]$ se
correspondent \textit{via} l'isomorphisme ${\sf f}\colon 
A_{\sigma}\simeq A_{\overline{\sigma}}$ du
\S~\ref{subsecconju}. Soient \begin{equation*}\updelta_{\sigma}:=
\updelta(A_{\sigma},L_{\sigma},B[\sigma])\quad\text{et}\quad\xi_{\sigma}:= 
\left(\frac{x}{x(B[\sigma])}\right)^{\codim B[\sigma]}.\end{equation*}
Alors on a\begin{equation*}\frac{1}{[k:\mathbf{Q}]}
\sum_{\sigma\colon k\hookrightarrow\mathbf{C}}\left(\xi_\sigma\over
\updelta_\sigma\right)^2\le
131g^{2g+6}x\max{\left(1,h(A),\log\deg_{L}A,\frac{1}{[k:\mathbf{Q}]}
\sum_{\sigma\colon k\hookrightarrow\mathbf{C}}{\log\deg_{L_{\sigma}}B[\sigma]}\right)}.
\end{equation*}De plus, si $g=1$ ou si $x\le1/2141$ alors on peut
remplacer la constante num{\'e}rique $131$ par $23$.\end{theo}
Notons que la condition sur les $B[\sigma]$ entra{\^\i}ne $\updelta_{\sigma}
=\updelta_{\overline{\sigma}}$ et $\xi_{\sigma}=\xi_{\overline{\sigma}}$.
La constante $1/2141$ qui appara{\^{\i}}t provient de la
d{\'e}monstration de la cons{\'e}quence suivante.
\begin{coro}\label{corollaireclef}{\'E}tant donn{\'e} une
vari{\'e}t{\'e} ab{\'e}lienne polaris{\'e}e $(A,L)$ sur un corps de
nombres $k$, on a\begin{equation*}{\sqrt3\over2}x\le \frac{1}{[k:\mathbf{Q}]} 
\sum_{\sigma\colon k\hookrightarrow\mathbf{C}}{\updelta(A_{\sigma},L_{\sigma})^{-2}}\le
23g^{2g+6}x\max{\left(1,h(A),\log\deg_{L}A\right)}.
\end{equation*}
\end{coro}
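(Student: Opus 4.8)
The corollary should follow from Theorem~\ref{thmimportant} by making, at each complex place, the \emph{optimal} choice of auxiliary abelian subvariety. The plan is to take, for each embedding $\sigma$, a strict abelian subvariety $B[\sigma]$ of $A_\sigma$ realising the minimum defining $x$, i.e.\ with $x(B[\sigma])=x$; one has to check that such a minimiser can be chosen compatibly with complex conjugation, which is immediate since $\mathsf f$ preserves degrees and codimensions, so $B[\overline\sigma]=\mathsf f(B[\sigma])$ is again a minimiser for $\overline\sigma$. With this choice $\xi_\sigma=1$ for every $\sigma$, and the right-hand side of Theorem~\ref{thmimportant} collapses — one needs to control the parasitic term $\frac1{[k:\mathbf Q]}\sum_\sigma\log\deg_{L_\sigma}B[\sigma]$ by $\max(1,h(A),\log\deg_LA)$, which holds because $\deg_{L_\sigma}B[\sigma]\le\deg_{L_\sigma}A_\sigma=\deg_LA$ (the degree of a subvariety is bounded by that of the ambient variety, up to the combinatorial factor already absorbed). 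That yields
$$\frac1{[k:\mathbf Q]}\sum_\sigma\updelta(A_\sigma,L_\sigma,B[\sigma])^{-2}\le 23g^{2g+6}x\max(1,h(A),\log\deg_LA),$$
and since by definition $\updelta(A_\sigma,L_\sigma)=\sup_{\mathsf B}\updelta(A_\sigma,L_\sigma,\mathsf B)\ge\updelta(A_\sigma,L_\sigma,B[\sigma])$, the upper bound of the corollary follows term by term.

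For the upper bound I must also verify that the hypothesis ``$g=1$ or $x\le 1/2141$'' of the improved constant $23$ is met, or rather circumvented. The point is that when $g\ge 2$ and $x>1/2141$, the minimiser $B[\sigma]$ can be taken to be $\{0\}$: indeed $x\le x(\{0\})=(\deg_LA)^{-1/g}$ always, and if moreover $x>1/2141$ then $\deg_LA$ is bounded (by $2141^g$), so the essential minimum differs from the absolute minimum $\rho$ only by a controlled amount, and one falls back on Proposition~\ref{coromatnet} (the constant $11$ there being comfortably below $23g^{2g+6}$). Alternatively, and more cleanly, one observes that if $x\le 1/2141$ the theorem already gives constant $23$ with $B[\sigma]$ the $x$-minimiser, while if $x>1/2141$ one simply uses $\updelta(A_\sigma,L_\sigma)\ge\rho(A_\sigma,L_\sigma)$ and Proposition~\ref{coromatnet}, noting that $x\ge(\deg_LA)^{-1}$ forces $11\max(1,h(A),\log\deg_LA)\le 11\cdot 2141^g\, x\max(\cdots)\le 23g^{2g+6}x\max(\cdots)$ for all $g\ge1$. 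Combining the two regimes gives the stated bound uniformly.

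The lower bound $\frac{\sqrt3}{2}x\le\frac1{[k:\mathbf Q]}\sum_\sigma\updelta(A_\sigma,L_\sigma)^{-2}$ is the elementary half: it suffices to treat a single place. For any $\sigma$, pick a strict $\mathsf B\subsetneq A_\sigma$ with $x(\mathsf B)=x$; by Proposition~\ref{majmu} one has $x\,\updelta(A_\sigma,L_\sigma,\mathsf B)^2\le 2/\sqrt3$, hence $\updelta(A_\sigma,L_\sigma)^{-2}\le\updelta(A_\sigma,L_\sigma,\mathsf B)^{-2}\le\frac{\sqrt3}{2}x^{-1}\cdot$\,? — wait, that inequality goes the wrong way for a \emph{lower} bound on $\sum\updelta^{-2}$. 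Instead, the lower bound comes from bounding $\updelta(A_\sigma,L_\sigma)$ \emph{above}: Proposition~\ref{majmu} applied to the $x$-minimiser $\mathsf B$ gives $\updelta(A_\sigma,L_\sigma,\mathsf B)^2\le \frac{2}{\sqrt3 x}$, but $\updelta(A_\sigma,L_\sigma)$ is a \emph{sup} over all $\mathsf B$, so this does not directly bound it. The correct route: the sup defining $\updelta(A_\sigma,L_\sigma)$ is attained, and for whichever $\mathsf B_0$ attains it, property~(1) of the elementary relations (or a direct Minkowski argument on $\Omega_{A_\sigma}/\Omega_{\mathsf B_0}$ exactly as in the proof of Proposition~\ref{majmu}) gives $x(\mathsf B_0)\,\updelta(A_\sigma,L_\sigma)^2\le 2/\sqrt3$; since $x\le x(\mathsf B_0)$ this yields $\updelta(A_\sigma,L_\sigma)^2\le 2/(\sqrt3\,x)$, i.e.\ $\updelta(A_\sigma,L_\sigma)^{-2}\ge \sqrt3\,x/2$, and averaging over $\sigma$ gives the lower bound. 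The main obstacle is purely bookkeeping — checking the conjugation-compatibility of the minimiser and that the $2141$-threshold patches the two constant regimes — since the analytic content is entirely in Theorem~\ref{thmimportant} and Propositions~\ref{coromatnet} and~\ref{majmu}.
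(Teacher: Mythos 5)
Your approach is essentially the same as the paper's: use Proposition~\ref{coromatnet} (the matrix lemma) when $x$ is not too small, use Theorem~\ref{thmimportant} with $B[\sigma]$ an $x$-minimiser when $x$ is small, and get the lower bound from Proposition~\ref{majmu}. The conjugation-compatibility check for the minimiser, which you mention, is indeed needed and is implicit in the paper.

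There is, however, a genuine numerical slip in your bridge for the large-$x$ regime. You claim that $x>1/2141$ together with $x\ge(\deg_LA)^{-1}$ forces $11\le 11\cdot 2141^g\,x\le 23g^{2g+6}x$ ``for all $g\ge1$''. The inequality $11\cdot 2141^g\le 23g^{2g+6}$ is false for every small $g$: already for $g=2$ the left side is about $5\times 10^7$ while the right side equals $23\cdot 2^{10}=23552$. What you actually have from $x>1/2141$ is simply $1\le 2141\,x$ (not $2141^g x$), which gives $11\le 23551\,x\le 23g^{2g+6}x$ for $g\ge 2$; the case $g=1$ is already covered unconditionally by the constant-$23$ clause of Theorem~\ref{thmimportant}. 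The paper avoids this bookkeeping altogether by using the threshold $x\ge 11/(23g^{2g+6})$, under which Proposition~\ref{coromatnet} immediately gives the bound, and checking that the complement (for $g\ge2$) lands in $x\le 1/2141$.

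Two smaller remarks. First, you do not need the supremum defining $\updelta(A_\sigma,L_\sigma)$ to be attained: Proposition~\ref{majmu} gives $\updelta(A_\sigma,L_\sigma,\mathsf B)^2\le 2/(\sqrt3\,x(\mathsf B))\le 2/(\sqrt3\,x)$ for every strict $\mathsf B$, so the supremum itself is bounded by $2/(\sqrt3\,x)$ without any attainment argument. Second, the preliminary idea in your text that when $x>1/2141$ one could take $B[\sigma]=\{0\}$ as minimiser is not justified (and not needed); you correctly discard it in favour of the matrix-lemma fallback.
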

\begin{proof} La minoration de la moyenne des
$\updelta(A_{\sigma},L_{\sigma})^{-2}$ 
est une simple application de la proposition~\ref{majmu} (en minorant
$x(B[\sigma])$ par $x$). Pour la majoration, observons que l'on a
toujours $$\frac{1}{[k:\mathbf{Q}]}\sum_{\sigma\colon k\hookrightarrow\mathbf{C}}
{\updelta(A_{\sigma},L_{\sigma})^{-2}}\le\frac{1}{[k:\mathbf{Q}]}
\sum_{\sigma\colon k\hookrightarrow\mathbf{C}}{\rho(A_{\sigma},L_{\sigma})^{-2}}$$ car
$\rho(A_{\sigma},L_{\sigma})\le\updelta(A_{\sigma},L_{\sigma})$. Par
cons{\'e}quent, si $x\ge11/(23g^{2g+6})$, le lemme matriciel de la
proposition~\ref{coromatnet} permet de conclure
imm{\'e}diatement. Dans 
le cas contraire, on a ou bien $g=1$ ou $x\le11/(23g^{2g+6})\le1/2141$.
Pour tout plongement $\sigma$ nous
choisissons une sous-vari{\'e}t{\'e} ab{\'e}lienne $B[\sigma]$ de
$A_{\sigma}$ telle que $x(B[\sigma])=x$ (et donc $\xi_\sigma=1$). Dans
ce cas on a $\deg_{L_{\sigma}}B[\sigma]\le\deg_LA$ et, par
d{\'e}finition, $\updelta(A_\sigma,L_\sigma)^{-2}\le
\updelta(A_{\sigma},L_{\sigma},B[\sigma])^{-2}$. Le
th\'eor\`eme~\ref{thmimportant} donne alors le r{\'e}sultat
voulu. 
\end{proof}
\subsection{Premi\`eres r\'eductions} Nous montrons ici que, pour
\'etablir le th\'eor\`eme~\ref{thmimportant}, nous pouvons supposer
$g\ge2$ et faire une extension finie du corps $k$.
\subsubsection{Courbes elliptiques}\label{redell} Lorsque $g=1$, nous avons
automatiquement $B[\sigma]=0$ et $x=x(0)=(\deg_LA)^{-1}$. Par suite
$\xi_\sigma=1$ tandis que $\updelta_\sigma=\rho(A_\sigma,L_\sigma)$. En
outre, la polarisation $L$ est une puissance de l'unique polarisation
principale de $A$, disons $L_0$. Ainsi $L=L_0^{\otimes\deg_LA}$ et donc
$\rho(A_\sigma,L_\sigma)^2=(\deg_LA)\rho(A_\sigma,(L_0)_\sigma)^2$. Finalement
la formule \`a d\'emontrer se simplifie donc en $$\frac{1}{[k:\mathbf{Q}]}
\sum_{\sigma\colon k\hookrightarrow\mathbf{C}}\rho(A_\sigma,(L_0)_\sigma)^{-2}\le
23\max(1,h(A),\log\deg_LA)$$ et elle d\'ecoule alors facilement du lemme
matriciel pour $(A,L_0)$ (par exemple la proposition~\ref{coromatnet}
suffit). Nous supposons d\'esormais $g\ge2$.
\subsubsection{Variation du corps}\label{ssredext}
Les donn\'ees $(A,L,(B[\sigma])_{\sigma\colon k\hookrightarrow\mathbf{C}})$
admettent une notion naturelle d'extension des scalaires : si $K$ est
une extension finie de $k$ alors on d\'efinit
$(A_K,L_K,(B[\sigma'])_{\sigma'\colon K\hookrightarrow\mathbf{C}})$ en posant
simplement $B[\sigma']:=B[\sigma'|_k]$. Alors le th\'eor\`eme est
invariant par extension des scalaires. Ainsi dans la suite nous
pourrons faire librement une extension finie du corps $k$.

\subsection{Strat{\'e}gie}
La d{\'e}monstration du th{\'e}or{\`e}me~\ref{thmimportant} repose sur
une construction de transcendance, qui s'inspire du \emph{cas
p{\'e}riodique} de la th{\'e}orie des formes lin{\'e}aires de
logarithmes. Plus pr{\'e}cis{\'e}ment, nous utilisons la variante de
la m{\'e}thode de Gel'fond-Baker propos{\'e}e par Philippon et
Waldschmidt~\cite{pph-miw}, variante qui permet d'extrapoler sur les
d{\'e}rivations (dans une direction bien choisie) plut{\^{o}}t que sur
les points. \par Sch{\'e}matiquement, cette m{\'e}thode consiste {\`a}
construire une fonction auxiliaire qui est petite en l'origine de
$t_{A_{\sigma}}$ dans toutes les directions (jusqu'{\`a} un certain
ordre $gT$) \emph{sauf une} (en substance celle donn{\'e}e par un
\'el\'ement qui r{\'e}alise le minimum $\updelta_{\sigma}$) pour laquelle
l'ordre est bloqu{\'e} {\`a} $T_{\sigma}\ll T\xi_{\sigma}$. Par le
biais d'un lemme d'interpolation analytique (en \emph{une} variable),
on montre alors que l'on peut s'affranchir de cette derni{\`e}re
restriction, quitte {\`a} remplacer $gT$ par $T$, ce qui fournit des
bornes (dites \emph{fines}) de la \og{}premi{\`e}re\fg{}
d{\'e}riv{\'e}e non nulle de la fonction auxiliaire en
l'origine. Apr{\`e}s renormalisation {\'e}ventuelle, cette
d{\'e}riv{\'e}e est un nombre alg{\'e}brique et un lemme de
multiplicit{\'e}s assure qu'il est non nul. Ce nombre satisfait alors
{\`a} la formule du produit. La majoration de ses valeurs absolues en
les places $p$-adiques du corps de nombres ambiant $k$ conduit par
comparaison avec les estimations archim{\'e}diennes fines {\`a} une
in{\'e}galit{\'e} brute de laquelle est extraite l'information voulue
(ici la majoration de la moyenne des
$(\xi_{\sigma}/\updelta_{\sigma})^{2}$). Consid{\'e}rer toutes les places
de $k$ au lieu d'une seule avec, en outre, des $\xi_{\sigma}$ non
n{\'e}cessairement {\'e}gaux {\`a} $1$ est une des
caract{\'e}ristiques originales de notre d{\'e}monstration.
\par Nous avons perfectionn{\'e} ce canevas sous trois angles~: (i) nous
avons introduit la \emph{m{\'e}thode de la section auxiliaire},
{\'e}labor{\'e}e dans~\cite{gaudronhuit}, qui remplace celle des
fonctions auxiliaires, avec les avantages d{\'e}j{\`a}
{\'e}voqu{\'e}s {\`a} la fin de l'introduction, (ii) nous apportons
un nouveau lemme d'interpolation analytique, d'int{\'e}r\^{e}t
ind{\'e}pendant, qui fera l'objet de la partie suivante, (iii) nous
{\'e}valuons de mani{\`e}re quasi-optimale les rangs asymptotiques
des syst{\`e}mes lin{\'e}aires avec lesquels est b{\^{a}}tie la
section auxiliaire. Ces {\'e}volutions permettent de travailler dans
un cadre plus agr{\'e}able qui {\'e}limine naturellement certaines
difficult{\'e}s techniques (par exemple, il n'y a plus \og{}d'astuce
d'Anderson-Baker-Coates\fg{}), tout en conduisant {\`a} de bien
meilleures constantes num{\'e}riques qu'auparavant.
\section{Pr\'elude \`a l'extrapolation analytique}
\label{secextrapolation}
Dans cette partie, nous \'etablissons le r\'esultat crucial pour
extrapoler sur les d\'eriv\'ees dans la d\'emonstration du
th\'eor\`eme~\ref{thmimportant}. Il s'agit d'un lemme de Schwarz approch\'e, de
facture assez classique. On en trouvera par exemple une formulation
plus g\'en\'erale dans l'article de Cijsouw et Waldschmidt
\cite{cijsouwmiw}. Nous avons cependant besoin d'une version significativement
plus fine en vue des calculs explicites de constantes. Pour cela, nous
modifions la trame de la preuve de \cite{cijsouwmiw} de trois fa{\c c}ons~: en
premier lieu, puisque nous ne souhaitons extrapoler qu'en 0, nous ne
majorons le module de notre fonction analytique que sur un petit
disque (de rayon 1 au lieu de $2S$, dans les notations ci-dessous) ;
ensuite, nous rempla\c cons en fait ce disque par un domaine plus
compliqu\'e le contenant (voir figure), pour \'eviter au mieux les contours
d'int\'egration, l\'eg\`erement contract\'es, qui apparaissent dans
nos calculs de r\'esidus (formule d'interpolation d'Hermite) ; enfin
nous estimons de mani\`ere tr\`es pr\'ecise les extrema du polyn\^ome
auxiliaire de la dite formule (voir lemme~\ref{lemmecinqdeux}).\par Voici notre
r\'esultat, d\'eclin\'e en une forme brute et une forme l\'eg\`erement
plus faible que nous utiliserons plus bas. Si $R$ est un
nombre r\'eel positif et si $D(0,R)$ d\'esigne le disque
ferm\'e $\{z\in\mathbf{C};\ \vert z\vert\le R\}$, on note
$\vert f\vert_R$ la borne sup\'erieure des $\vert
f(z)\vert$ pour $z\in D(0,R)$.
\begin{prop}\label{lana} Soient $S$ et $T$ deux entiers naturels non
nuls, $\varepsilon$ un nombre r\'eel tel que $0<\varepsilon<1/2$ et
$f\colon\mathbf{C}\to\mathbf{C}$ une fonction holomorphe. Alors on a :
$$\vert
f\vert_1\le4\left(\frac{(S-1)!^2\sh\pi}{\pi(2S-1)!}\right)^T\vert
f\vert_{S}+\frac{ST}{\varepsilon}\left(\frac{\sh\pi}{\cos\pi
\varepsilon}\right)^{T}\max_{\genfrac{}{}{0cm}{}{j\in\mathbf{Z},\
\vert j\vert<S}{\ell\in\mathbf{N},\ \ell<T}}\left\vert\frac{1}
{2^\ell\ell!}f^{(\ell)}(j)\right\vert.$$ En particulier, on a aussi
$$\vert f\vert_1\le4\left(\frac{10}{4^S}\right)^T\vert f\vert_S
+12ST(12)^T\max_{\genfrac{}{}{0cm}{}{j\in\mathbf{Z},\ \vert j\vert<S}
{\ell\in\mathbf{N},\ \ell<T}}{\left\vert\frac{1}{2^\ell\ell!}
f^{(\ell)}(j)\right\vert}.$$\end{prop}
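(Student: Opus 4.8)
The plan is to follow the classical scheme of Cijsouw and Waldschmidt \cite{cijsouwmiw}, pushing every estimate to its sharp form. I would set $P(z)=\prod_{|j|<S}(z-j)$, a polynomial of degree $2S-1$, and let $Q$ be the Hermite interpolant of $f$ at the $2S-1$ integer points $j$, $|j|<S$, each counted with multiplicity $T$; thus $\deg Q<(2S-1)T$, one has $Q^{(\ell)}(j)=f^{(\ell)}(j)$ for $|j|<S$ and $\ell<T$, and
$$f(z)-Q(z)=\frac{P(z)^{T}}{2\pi i}\oint_{|w|=S}\frac{f(w)}{P(w)^{T}(w-z)}\,dw.$$
One may assume $S\ge2$: for $S=1$ the first term claimed on the right-hand side of the proposition already exceeds $|f|_{1}$ (the coefficients $\sh\pi/\pi$ and $10/4$ both being $>1$), so both inequalities hold trivially.

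To bound the remainder — the first term — I would use only two elementary estimates for the \emph{auxiliary polynomial} $P$. Writing $P(z)=z\prod_{j=1}^{S-1}(z^{2}-j^{2})$, from $|z^{2}-j^{2}|\le j^{2}+1$ on $D(0,1)$ and the Euler product $\prod_{j\ge1}(1+1/j^{2})=\sh\pi/\pi$ one gets $|P|_{1}\le(S-1)!^{2}\sh\pi/\pi$; and from $|w^{2}-j^{2}|\ge S^{2}-j^{2}$ on the circle $|w|=S$ one gets $\min_{|w|=S}|P(w)|\ge(2S-1)!$ (with equality at $w=S$). Since $|w-z|\ge S-1$ for $|z|\le1$ and the circle has length $2\pi S$, these bounds give $|f(z)-Q(z)|\le\tfrac{S}{S-1}\bigl((S-1)!^{2}\sh\pi/(\pi(2S-1)!)\bigr)^{T}|f|_{S}$ on $D(0,1)$, and $S/(S-1)\le2\le4$ for $S\ge2$.

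The heart of the matter is to bound $|Q|_{1}$ by $\max_{|j|<S,\,\ell<T}|f^{(\ell)}(j)/(2^{\ell}\ell!)|$ with the constant $ST/\varepsilon\,(\sh\pi/\cos\pi\varepsilon)^{T}$. Expanding the residues at the nodes of $f(w)(P(w)^{T}-P(z)^{T})/((w-z)P(w)^{T})$ — the residue at $w=z$ vanishes, the numerator doing so — one writes $Q(z)=\sum_{|j|<S}\sum_{\ell<T}c_{j,\ell}(z)f^{(\ell)}(j)$, where each Hermite basis polynomial has the representation
$$c_{j,\ell}(z)=-\frac{P(z)^{T}}{2\pi i\,\ell!}\oint_{\gamma_{j}}\frac{(w-j)^{\ell}}{(w-z)P(w)^{T}}\,dw$$
for any loop $\gamma_{j}$ around $j$ enclosing no other node. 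Taking $\gamma_{j}$ to be (essentially) the circle of radius $\tfrac12-\varepsilon$ about $j$ — loops about distinct nodes are then $2\varepsilon$ apart — one needs sharp \emph{lower} bounds for $|P|$ on $\gamma_{j}$, and the same reverse-triangle-plus-$\sin$-product computation as above furnishes precisely the factor $\sin(\pi(\tfrac12-\varepsilon))=\cos\pi\varepsilon$ in the denominator; together with $|P|_{1}$ this bounds $(|P(z)|/\min_{\gamma_{j}}|P|)^{T}$ by $(\sh\pi/\cos\pi\varepsilon)^{T}$, while $|w-j|^{\ell}=(\tfrac12-\varepsilon)^{\ell}$ absorbs the factor $2^{\ell}$. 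Summing $2^{\ell}\ell!\,|c_{j,\ell}(z)|\le\tfrac12\,\mathrm{dist}(z,\gamma_{j})^{-1}(\sh\pi/\cos\pi\varepsilon)^{T}$ over the $\le2S$ nodes and the $T$ orders $\ell$, with $\mathrm{dist}(z,\gamma_{j})\ge\varepsilon$, then produces the second term. The main obstacle is that a loop of radius $\tfrac12-\varepsilon$ around one of the nodes $0,\pm1$ may enclose the evaluation point $z\in D(0,1)$, destroying the bound $\mathrm{dist}(z,\gamma_{j})\ge\varepsilon$; this is exactly why one works not over the bare disk $D(0,1)$ but over a slightly larger, more intricate domain (the region pictured), rerouting the contour so that it still encircles the inner nodes while staying at distance $\ge\varepsilon$ from $z$, and disposing separately of $z$ very close to a node, where $Q$ agrees with $f$ to order $T$ and the bound is essentially automatic. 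Making all of these pieces fit under the single coefficient $ST/\varepsilon$ is the delicate bookkeeping, carried out through the precise estimates of the extrema of $P$ in Lemma~\ref{lemmecinqdeux}.

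Finally, the cruder inequality is the special case $\varepsilon=\tfrac1{12}$: then $ST/\varepsilon=12ST$ and $\sh\pi/\cos(\pi/12)\le12$, whereas $(S-1)!^{2}\sh\pi/(\pi(2S-1)!)\le10/4^{S}$ is the inequality $4^{S}(S-1)!^{2}\sh\pi\le10\pi(2S-1)!$, which one checks by hand for $S\ge2$ (the only tight case being $S=2$) and which then holds a fortiori for larger $S$.
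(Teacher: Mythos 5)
Your plan reproduces the paper's scheme almost exactly: the same auxiliary polynomial $P$, the same big circle $\Gamma$ of radius $S$ and the same small circles $\Gamma_j$ of radius $\tfrac12-\varepsilon$, the Hermite interpolation formula, and the sharp estimates of Lemma~\ref{lemmecinqdeux}. The small organizational difference — splitting $f$ into the Hermite interpolant $Q$ plus a remainder and bounding the remainder directly on $D(0,1)$ (which gives the factor $S/(S-1)\le2$, slightly better than the paper's $4$) while bounding $Q$ separately — is a legitimate and mildly sharper variant. The crude form and the $S=1$ reduction are handled as in the paper.

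The one genuine gap is in how you propose to cope with evaluation points $z\in D(0,1)$ that lie inside one of the little circles $\gamma_{0},\gamma_{\pm1}$. You correctly recognize that the "slightly larger, more intricate domain" is the key, but you then describe the fix as \emph{rerouting the contours} around the inner nodes and as \emph{disposing separately} of $z$ near a node. Neither of these is what the enlarged domain is for, and both would be awkward to make rigorous: a rerouted contour that avoids a nearby $z$ would change the length of $\gamma_j$ and the lower bound for $|P|$ on it, while the "$Q$ agrees with $f$ to order $T$" remark bounds $|f-Q|$, not the quantity you actually need, which is $|Q|$. The paper's mechanism is cleaner and worth internalizing: one never moves the contours and never treats any $z$ specially. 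Instead one applies the Hermite formula only for $z$ on the \emph{boundary} of the region $\{\min(|z|,2|z-1|,2|z+1|)\le1\}=D(0,1)\cup D(1,\tfrac12)\cup D(-1,\tfrac12)$, on which every point satisfies $|z-j|\ge\tfrac12$ for all integers $j$, hence $\mathrm{dist}(z,\gamma_j)\ge\varepsilon$ automatically; the maximum principle then transfers the resulting bound for $|f|$ from this boundary to all of $D(0,1)$. In your split version you would apply the maximum principle only to the polynomial $Q$ over the enlarged region, which is equally clean and preserves your improved factor $2$ on the first term. Lemma~\ref{lemmecinqdeux}(3) gives the bound for $|P|$ on precisely this region, and Lemma~\ref{lemmecinqdeux}(4) with $\rho=\tfrac12-\varepsilon$ gives the lower bound $(S-1)!^{2}\pi^{-1}\cos\pi\varepsilon$ on each $\gamma_j$ — both are needed and you use them correctly; it is only the mechanism of "why $z$ stays away from $\gamma_j$" that needs to be stated accurately.

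Your final remark on $4^{S}(S-1)!^{2}\operatorname{sh}\pi\le10\pi(2S-1)!$ for $S\ge2$ "a fortiori for larger $S$" is correct but should be justified by the monotonicity $u_{S}/u_{S+1}=1+1/(2S)>1$ for $u_{S}=4^{S}(S-1)!^{2}/(2S-1)!$, as the paper does.
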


On comparera avec \cite[p.~179--180]{cijsouwmiw} en prenant
$\delta=1$, $k=2S-1$,
$E=\{1-S,\ldots,S-1\}$, $r=S$, $R=8S$ qui donne la m\^eme puissance
$2^{-2ST}$ dans le premier terme mais au prix de remplacer $|f|_S$ par
$|f|_{8S}$ ; dans le second terme la puissance de l'ordre de $81^{ST}$
devient $12^T$ ; bien s\^ur, rappelons que nous majorons seulement
$|f|_1$ et non $|f|_{2S}$ mais cela ne fait que peu de diff{\'e}rence
lorsqu'il s'agit d'estimer les d{\'e}riv{\'e}es en 0.

Commen\c cons par un lemme pr{\'e}liminaire.

\begin{lemma}\label{lemmecinqdeux} Soient $S$ un entier naturel non nul et
$P=\prod_{j=1-S}^{S-1}(X-j)\in\mathbf{Z}[X]$.\begin{itemize}\item[$(1)$]
$P(S)=(2S-1)!=-P(-S)$.\item[$(2)$] Si $t\in\mathbf{R}$ et $|t|\le S$ alors
$|P(t)|\ge(S-1)!^2\pi^{-1}|\sin(\pi t)|$.\item[$(3)$] Si $z\in\mathbf{C}$ et
$\min(|z|,2|z-1|,2|z+1|)\le1$ alors
$|P(z)|\le(S-1)!^2\pi^{-1}\sh(\pi)$.\item[$(4)$] Si $k\in\mathbf{Z}$ et
$\rho\in\mathbf{R}^{+}$ alors $$\min{\{\vert P(z)\vert\,;\
z\in\mathbf{C}\ \textrm{et}\ \vert
z-k\vert=\rho\}}=\min(|P(k+\rho)|,|P(k-\rho)|).$$\end{itemize}\end{lemma}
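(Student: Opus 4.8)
All four assertions will flow from one identity. Recalling $P(X)=X\prod_{j=1}^{S-1}(X^{2}-j^{2})$ and Euler's product $\sin(\pi z)/(\pi z)=\prod_{n\ge1}(1-z^{2}/n^{2})$, I would first record that for every $z\in\mathbf{C}$
\[
P(z)=(-1)^{S-1}\,\frac{(S-1)!^{2}}{\pi}\cdot\frac{\sin\pi z}{\prod_{n\ge S}(1-z^{2}/n^{2})},
\]
the apparent poles of the right-hand side at $z=\pm n$ ($n\ge S$) being cancelled by the zeros of $\sin\pi z$; equivalently $|P(z)|=(S-1)!^{2}\pi^{-1}|\sin\pi z|\big/\big|\prod_{n\ge S}(1-z^{2}/n^{2})\big|$ whenever the denominator is nonzero. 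For $(1)$ I would reindex the defining product by $m=S-j$, obtaining $P(S)=\prod_{m=1}^{2S-1}m=(2S-1)!$ and $P(-S)=\prod_{m=1}^{2S-1}(-m)=(-1)^{2S-1}(2S-1)!=-(2S-1)!$. For $(2)$, if $t\in\mathbf{Z}$ the inequality is trivial since $\sin\pi t=0$; if $t\notin\mathbf{Z}$ with $|t|\le S$ then $0<1-t^{2}/n^{2}\le1$ for all $n\ge S$, so the denominator above lies in $(0,1]$ and $|P(t)|\ge(S-1)!^{2}\pi^{-1}|\sin\pi t|$.

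For $(4)$ I would argue by convexity. The case $\rho=0$ is trivial, so assume $\rho>0$ and write $z=k+\rho e^{i\theta}$; then
\[
|P(z)|^{2}=\prod_{j=1-S}^{S-1}\big((k-j)^{2}+\rho^{2}+2(k-j)\rho\cos\theta\big),
\]
a product of affine functions of $u=\cos\theta$, each nonnegative on $[-1,1]$ (their values at $u=\pm1$ are $((k-j)\pm\rho)^{2}\ge0$, and by the inequality of arithmetic and geometric means none of them vanishes for $|u|<1$). Hence $u\mapsto\log|P(k+\rho e^{i\theta})|^{2}$ is, on $(-1,1)$, a finite sum of concave functions, so it is concave and attains its minimum over $[-1,1]$ at $u=1$ or $u=-1$, that is at $z=k+\rho$ or $z=k-\rho$ (a degenerate endpoint value $0$ being immediately consistent with the claim). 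Since $|P(k+\rho e^{i\theta})|$ depends on $\theta$ only through $\cos\theta$, this is precisely $(4)$.

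The real difficulty is $(3)$, and I expect it to be the main obstacle: the constant $\sh\pi$ is essentially optimal (it is approached as $z\to\pm i$), so crude bounds on $\prod_{j}|z-j|$ lose the factor and one must play $|\sin\pi z|$ off against the tail product. Using the Taylor estimate $|\sin\pi z|\le\sh(\pi|z|)$ throughout, I would treat the three disks making up the region separately. On $\{|z|\le1\}$ I would split on the sign of $\RE(z^{2})$: if $\RE(z^{2})\le0$ then $|1-z^{2}/n^{2}|^{2}=(1-\RE(z^{2})/n^{2})^{2}+(\IM(z^{2})/n^{2})^{2}\ge1$ for every $n$, so the denominator has modulus $\ge1$ and $|P(z)|\le(S-1)!^{2}\pi^{-1}\sh(\pi|z|)\le(S-1)!^{2}\pi^{-1}\sh\pi$; if $\RE(z^{2})>0$ then $(\RE z)^{2}>(\IM z)^{2}$, hence $(\IM z)^{2}<\tfrac12$, so $|\sin\pi z|^{2}=\sin^{2}(\pi\RE z)+\sh^{2}(\pi\IM z)\le1+\sh^{2}(\pi/\sqrt2)$, while $|1-z^{2}/n^{2}|\ge1-|z|^{2}/n^{2}\ge1-1/n^{2}$ for $n\ge S$ yields a denominator of modulus $\ge\prod_{n\ge S}(1-1/n^{2})=(S-1)/S\ge\tfrac12$ when $S\ge2$, after which it suffices to check numerically that $4\big(1+\sh^{2}(\pi/\sqrt2)\big)\le\sh^{2}\pi$ (the case $S=1$ being trivial since then $P(z)=z$, $|P(z)|\le1$). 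On the disk $|z-1|\le\tfrac12$, and symmetrically $|z+1|\le\tfrac12$ using $P(-z)=P(z)$, I would use $\sin\pi z=-\sin\pi(z-1)$ to get $|\sin\pi z|\le\sh(\pi/2)$; for $S\ge3$, from $|z|\le\tfrac32$ one has $|1-z^{2}/n^{2}|\ge1-\tfrac94 n^{-2}$ for $n\ge S$, whence the denominator has modulus $\ge\prod_{n\ge3}(1-\tfrac94 n^{-2})$, an explicit positive constant (evaluate Euler's product at $3/2$ and divide off the first two factors), and one checks $\sh(\pi/2)\le\sh(\pi)\prod_{n\ge3}(1-\tfrac94 n^{-2})$; the residual cases $S=1$ and $S=2$ on this disk follow from the crude bound $|P(z)|\le\tfrac32\cdot\tfrac12\cdot\tfrac52=\tfrac{15}{8}<\sh(\pi)/\pi$. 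Assembling the three pieces gives $(3)$.
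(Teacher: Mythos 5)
Your proof is correct, and differs from the paper's in both $(3)$ and $(4)$; $(1)$ and $(2)$ are essentially identical to what the paper does. For $(4)$, the paper splits the index set $\{k-S+1,\ldots,k+S-1\}$ into its symmetric part $F$ and the remainder $G$, observing that the $F$-product is even in $x=\RE(z-k)$ and decreasing in $|x|$ while the $G$-product (indices all of one sign) is monotone in $x$, so each is minimised at $|x|=\rho$. Your concavity argument --- that $u\mapsto\log|P(k+\rho e^{i\theta})|^{2}$ is concave in $u=\cos\theta$, being a sum of logarithms of affine functions positive on $(-1,1)$, and hence minimised at an endpoint --- reaches the same conclusion with less bookkeeping and is arguably the cleaner route. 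For $(3)$, the paper works directly with $|z|\prod_{j=1}^{S-1}|1-z^{2}/j^{2}|\le\prod_{j\ge1}(1+1/j^{2})$: after reducing by the symmetry $z\mapsto -z$ to $|z-1|\le1/2$, it bounds the $j=1$ factor crudely and proves $|1-z^{2}/j^{2}|\le1+1/j^{2}$ for $j\ge2$ by squaring and using $\RE(z^{2})\ge0$ (which follows from $|\mathrm{Arg}\,z|\le\pi/4$ on that disc). You instead exploit the identity relating $P$ to $\sin\pi z$ over the tail product $\prod_{n\ge S}(1-z^{2}/n^{2})$, majorise $|\sin\pi z|$ by $\sh(\pi|z|)$, and lower-bound the tail by a case split on the sign of $\RE(z^{2})$ and on the side discs. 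Both are valid; the paper's version is shorter and needs only the trivial checks $(3/2)^{4}\le8$ and $15/8\le2$, whereas yours requires genuine numerical verifications such as $4\bigl(1+\sh^{2}(\pi/\sqrt{2})\bigr)\le\sh^{2}\pi$ and $\sh(\pi/2)\le\sh(\pi)\cdot128/(105\pi)$, but it has the merit of making visible why $\sh\pi$ is the sharp constant, approached as $z\to\pm i$.
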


\begin{proof} L'assertion (1) se passe de commentaires. Pour (2) et
(3) \'ecrivons
$P(X)=X\prod_{j=1}^{S-1}{(X^2-j^2)}=(S-1)!^2X\prod_{j=1}^{S-1}{(X^2/j^2-1)}$.
On rappelle aussi que $$\sin\pi t=\pi
t\prod_{j=1}^{\infty}{\left(1-\frac{t^2}{j^2}\right)}\quad\textrm{et}
\quad\sh\pi=\pi\prod_{j=1}^{\infty}{\left(1+\frac{1}{j^2}\right)}.$$
La relation (2) se r{\'e}duit
donc \`a $\prod_{j=S}^{\infty}{|1-t^2/j^2|}\le1$ qui d{\'e}coule bien de
$|t|\le S$. Pour (3) nous devons montrer
$|z|\prod_{j=1}^{S-1}{\vert1-z^2/j^2\vert}\le\prod_{j=1}^{\infty}{(1+1/j^2)}$.
Cette formule \'etant claire pour $\vert z\vert\le1$ et invariante
sous $z\mapsto-z$, nous pouvons supposer $\vert z-1\vert\le1/2$. Nous
avons alors $|z||1-z^2|\le(3/4)\vert 1+z\vert\le15/8\le2=1+1/1^2$ et
il suffit donc de v{\'e}rifier $|1-z^2/j^2|\le1+1/j^2$ pour
$j\ge2$. En {\'e}levant au carr{\'e} et en simplifiant, ceci
{\'e}quivaut \`a $|z|^4-1\le2j^2(1+\mathrm{Re}(z^2))$. Enfin nous avons
$|z|^4\le(3/2)^4\le8\le2j^2$ et $\mathrm{Re}(z^2)\ge0$ car, par
exemple, $|\mathrm{Arg}(z)|\le\pi/4$. Passons \`a (4). Si $|z-k|=\rho$
et $x=\mathrm{Re}(z-k)$
alors $$|P(z)|^2=\prod_{j=k-S+1}^{k+S-1}{(j^2+\rho^2+2jx)}.$$ Scindons
$E=\{k-S+1,\ldots,k+S-1\}$ en $F=\{j\in E\,;\ -j\in E\}$ et $G=E\setminus
F$ (chacun pouvant \^etre vide). On remarque que tous les {\'e}l{\'e}ments
de $G$ ont le m\^eme signe donc la fonction $x\mapsto\prod_{j\in
G}{(j^2+\rho^2+2jx)}$ est monotone (tous les facteurs sont positifs car
$-\rho\le x\le\rho$) et elle atteint son minimum en $\rho$ ou
$-\rho$. D'autre part, si $F\ne\emptyset$, on a $0\in F$
et $$\prod_{j\in F}{(j^2+\rho^2+2jx)}=\rho^2\prod_{j\in F,\
j\ge1}{((j^2+\rho^2)^2-4j^2x^2)}.$$ Nous obtenons donc une fonction paire
minimale en $x=\rho$ et en $x=-\rho$. En faisant le produit, nous
voyons que $\vert P(z)\vert$ est minimal en l'un des deux points donn{\'e}s par
$\vert x\vert=\rho$. C'est le r{\'e}sultat.\end{proof}

\begin{proof}[D\'emonstration de la proposition~\ref{lana}] Si $S=1$,
l'\'enonc\'e est tautologique donc nous supposons $S\ge2$. Notons
$\Gamma=\{\zeta\in\mathbf{C}\,;\ \vert\zeta\vert=S\}$ et
$\Gamma_j=\{\zeta\in\mathbf{C}\,;\
\vert\zeta-j\vert=(1/2)-\varepsilon\}$ pour $|j|<S$
ainsi que $Q=P^T$ avec la notation $P$ du lemme~\ref{lemmecinqdeux}. Comme dans
\cite{cijsouwmiw}, nous partons de la formule d'interpolation d'Hermite
$$\frac{f(z)}{Q(z)}=\frac{1}{2i\pi}\int_\Gamma{\frac{f(\zeta)}{Q(\zeta)}
\frac{\mathrm{d}\zeta}{\zeta-z}}-\frac{1}{2i\pi}\sum_{j=1-S}^{S-1}{\sum_{
\ell=0}^{T-1}{\frac{f^{(\ell)}(j)}{\ell!}\int_{\Gamma_{j}}
{\frac{(\zeta-j)^\ell}{Q(\zeta)}\frac{\mathrm{d}\zeta}{\zeta-z}}}}$$
valable pour $z\in\mathbf{C}$ v{\'e}rifiant
$\vert z\vert<S$ et $|z-j|>(1/2)-\varepsilon$ pour $1-S\le j\le S-1$. Nous
l'appliquons pour $z$ tel que $\min(|z|,2|1-z|,2|1+z|)=1$ :

\setlength{\unitlength}{0.5mm}
\begin{picture}(60,90)(-55,-30)
\put(20,20){\circle{16}}
\put(40,20){\circle{16}}
\put(60,20){\circle{16}}
\put(80,20){\circle{16}}
\put(100,20){\circle{16}}
\put(-10,20){\line(1,0){140}}

\qbezier(30,20)(30,31)(42,29.5)
\qbezier(30,20)(30,9)(42,10.5)
\qbezier(90,20)(90,31)(78,29.5)
\qbezier(90,20)(90,9)(78,10.5)
\qbezier(42,29.5)(60,50)(78,29.5)
\qbezier(42,10.5)(60,-10)(78,10.5)

\put(20,19){\line(0,1){2}}
\put(17,16.5){${}_{-2}$}
\put(40,19){\line(0,1){2}}
\put(37,16.5){${}_{-1}$}
\put(60,19){\line(0,1){2}}
\put(58.5,16.5){${}_0$}
\put(80,19){\line(0,1){2}}
\put(78.5,16.5){${}_1$}
\put(100,19){\line(0,1){2}}
\put(98.5,16.5){${}_2$}

\put(-30,-10){Trac{\'e} approximatif de la courbe $\min(|z|,2|1-z|,2|1+z|)=1$.}
\put(-15,-20){\small Elle est toujours distante d'au moins $\varepsilon$ des
petits cercles.}\end{picture}

Dans l'int{\'e}grale le long de $\Gamma$, nous avons $|\zeta-z|\ge
S-(3/2)\ge S/4$. De plus, les assertions (1) et (4) du
lemme~\ref{lemmecinqdeux} avec $k=0$ et $\rho=S$ donnent
$|Q(\zeta)|\ge(2S-1)!^T$ ; comme $\Gamma$ est de longueur $2\pi S$, il
vient $$\left\vert\frac{1}{2i\pi}\int_\Gamma{\frac{f(\zeta)}{Q(\zeta)}
\frac{\mathrm{d}\zeta}{\zeta-z}}\right\vert\le\frac{4}{(2S-1)!^T}\vert
f\vert_S.$$ Dans l'int{\'e}grale le long de $\Gamma_j$, nous estimons
$\vert\zeta-j\vert=(1/2)-\varepsilon\le1/2$ et
$\vert\zeta-z\vert\ge\varepsilon$ tandis que
$|Q(\zeta)|\ge\min(|Q(j-(1/2)+\varepsilon)|,|Q(j+(1/2)-\varepsilon)|)
\ge(S-1)!^{2T}\pi^{-T}(\cos\pi\varepsilon)^T$ par le
lemme~\ref{lemmecinqdeux}, (2) et (4). 
Comme $\Gamma_j$ est de longueur $\pi-2\pi\varepsilon\le\pi$, nous trouvons
\begin{equation*}\begin{split}&\left\vert\frac{1}{2i\pi}
\sum_{j=1-S}^{S-1}{\sum_{\ell=0}^{T-1}
{\frac{f^{(\ell)}(j)}{\ell!}\int_{\Gamma_j}{\frac{(\zeta-j)^\ell}
{Q(\zeta)}\frac{\mathrm{d}\zeta}{\zeta-z}}}}\right\vert\\
&\quad\le\frac{ST}{\varepsilon}\left(\frac{\pi}{(S-1)!^2
\cos\pi\varepsilon}\right)^{T}\max_{j\in\mathbf{Z},\  |j|<S,\ \ell
\in\mathbf{N},\ \ell<T}\left\vert\frac{1}{2^\ell\ell!}f^{(\ell)}(j)
\right\vert.\end{split}\end{equation*} Pour obtenir la premi\`ere
majoration de l'{\'e}nonc{\'e} il reste \`a utiliser
$|Q(z)|\le(S-1)!^{2T}\pi^{-T}\sh(\pi)^T$ d'apr\`es le
lemme~\ref{lemmecinqdeux}, (3), et {\`a} rappeler que le principe du
maximum donne $$|f|_1\le\sup\{\vert f(z)\vert\,;\ z\in\mathbf{C}\
\textrm{et}\ \min(|z|,2|1-z|,2|1+z|)=1\}.$$ Afin de passer \`a la
seconde formulation, nous {\'e}crivons
$u_S=4^S(S-1)!^2(2S-1)!^{-1}$. Un calcul imm{\'e}diat fournit
$u_S/u_{S+1}=1+1/(2S)$ donc $u_S$ d{\'e}cro{\^\i}t puis $u_S\le
u_2=8/3$. Nous avons donc $$\frac{(S-1)!^2\sh\pi}{\pi(2S-1)!}\le
\frac{8\sh\pi}{3\pi}4^{-S}.$$
Parall\`element nous utilisons $\varepsilon=1/12$ et nous terminons par les
estimations num\'e\-riques $$\frac{8\sh\pi}{3\pi}\le10\qquad\textrm{et}
\qquad\frac{\sh\pi}{\cos(\pi/12)}\le12.$$\end{proof}

\begin{remas} Nous pourrions, comme dans \cite{cijsouwmiw}, supprimer
\`a la fois le $T$ de $ST/\varepsilon$ et le $2^\ell$ en utilisant
$\sum_{\ell=0}^{T-1}{2^{-\ell}}<2$. Ceci n'a aucune influence pour notre
application car $T$ tendra vers l'infini et seule importera la limite
de $(1/T)\log\vert f\vert_1$. Pour cette m\^eme raison, nous pourrions garder
$\varepsilon$ dans la formule et le faire tendre vers 0 \textit{in fine}.

En supposant $S\ge175$ le premier terme pourrait \^etre remplac{\'e} par
$4^{-ST}\vert f\vert_S$. Alternativement nous pourrions {\'e}crire
$4(15S^{-1/2}4^{-S})^T\vert f\vert_S$ en majorant $u_S$ plus finement.

\end{remas}

\section{D{\'e}monstration du th{\'e}or{\`e}me-clef}

\subsection{Choix des m{\'e}triques}\label{subsectionchoix}
Soit $(A,L)$ la vari{\'e}t{\'e} ab{\'e}lienne polaris{\'e}e du
th{\'e}or{\`e}me-clef~\ref{thmimportant}. Pour chaque plongement
complexe $\sigma\colon k\hookrightarrow\mathbf{C}$, il existe une unique
m{\'e}trique sur $L_{\sigma}$, dite \emph{m{\'e}trique
cubiste}, de forme de courbure invariante par translation et
rigidifi{\'e}e {\`a} l'origine~:
$0_{A_{\sigma}}^{*}L_{\sigma}\simeq\mathcal{O}_{\spec\mathbf{C}}$ est une
isom{\'e}trie (avec la m{\'e}trique triviale sur
$\mathcal{O}_{\spec\mathbf{C}}$). Quitte {\`a} faire une extension finie
(voir~\ref{ssredext}), l'on peut supposer que le
couple $(A,L)$ poss{\`e}de un mod{\`e}le de Moret-Bailly
$(\mathcal{A},\overline{\mathcal{L}})$ sur $k$, au sens
suivant~:\begin{enumerate}
\item[$\bullet$] il existe un sch{\'e}ma en groupes
$\mathcal{A}\to\spec\mathcal{O}_{k}$ semi-stable (donc lisse), de
fibr{\'e} g{\'e}n{\'e}rique isomorphe {\`a} $A$ (ce
sch{\'e}ma en groupes est le mod{\`e}le de N{\'e}ron de $A$),
\item[$\bullet$] il existe un fibr{\'e} hermitien cubiste
$\overline{\mathcal{L}}:=(\mathcal{L},(\Vert.
\Vert_{\mathrm{cub},\sigma})_{\sigma\colon k\hookrightarrow\mathbf{C}})$
sur $\mathcal{A}$, de fibre g{\'e}n{\'e}rique $L$ (le terme cubiste
signifie que la m{\'e}trique $\Vert.\Vert_{\mathrm{cub},\sigma}$ sur
$\mathcal{L}\otimes_{\sigma}\mathbf{C}$ est cubiste pour tout
$\sigma\colon k\hookrightarrow\mathbf{C}$).
\end{enumerate} L'existence d'un tel mod{\`e}le est d{\'e}montr{\'e}e
au \S~$4.3$ de~\cite{bostduke}. Soulignons que la d{\'e}finition de
fibr{\'e} cubiste implique que $\overline{\mathcal{L}}$ est
rigidifi{\'e} {\`a} l'origine. Pour tout entier $n\ge 1$, le 
$k$-espace vectoriel $H_{n}:=\mathrm{H}^{0}(A,L^{\otimes n})$ des
sections globales poss\`ede une structure de fibr\'e ad\'elique
hermitien
$\overline{H_{n}}=(H_{n},(\Vert.\Vert_{\overline{H_{n}},v})_{v})$ sur
$k$\,; la structure enti\`ere est donn\'ee par
$\mathrm{H}^{0}(\mathcal{A},\mathcal{L}^{\otimes n})$~: pour toute
place ultram{\'e}trique $v$ de $k$, pour tout
$s\in\mathrm{H}^{0}(A,L^{\otimes n})\otimes_{k}k_{v}$, on
a \begin{equation}\label{formnormeultra}\Vert
s\Vert_{\overline{H_{n}},v}:=\min{\left\{\vert\lambda\vert_{v}\,;\
\lambda\in k_{v}\setminus\{0\}\ \text{et}\
s/\lambda\in\mathrm{H}^{0}(\mathcal{A},\mathcal{L}^{\otimes
n})\otimes_{\mathcal{O}_{k}}\mathcal{O}_{v}\right\}}\end{equation}($k_{v}$
est le compl{\'e}t{\'e} de $k$ en la place $v$ et $\mathcal{O}_{v}$
son anneau de valuation). La structure archim{\'e}dienne de
$\overline{H_{n}}$ est donn{\'e}e par int{\'e}gration des normes
cubistes~: pour tout $\sigma\colon k\hookrightarrow\mathbf{C}$, pour tout
$s\in\mathrm{H}^{0}(A,L^{\otimes n})\otimes_{\sigma}\mathbf{C}$, $$\Vert
s\Vert_{\overline{H_{n}},\sigma}:=\left(\int_{A_{\sigma}}{\Vert
s(x)\Vert_{\mathrm{cub},\sigma}^{2}\mathrm{d}x}\right)^{1/2}$$o{\`u}
$\mathrm{d}x$ est la mesure de Haar normalis{\'e}e sur
$A_{\sigma}$. Muni de ces normes, $\overline{H_{n}}$ a une
structure de fibr{\'e} ad{\'e}lique hermitien et sa pente d'Arakelov
normalis{\'e}e a {\'e}t{\'e} calcul{\'e}e par Bost (voir le
th{\'e}or{\`e}me~$4.10$, (v),
de~\cite{bostduke})~:\begin{equation}\label{formulepenterestriction}
\widehat{\mu}(\overline{H_{n}})=-\frac{1}{2}h(A)
+\frac{1}{4}\log\left(\frac{n^{g}\mathrm{h}^{0}(A,L)}{(2\pi)^{g}}\right).
\end{equation}

Par ailleurs, comme nous l'avons vu au \S~\ref{subsecpentemax},
l'espace tangent $t_{A}$ de $A$ poss{\`e}de lui-m{\^{e}}me une
structure de fibr{\'e} ad{\'e}lique hermitien
$\overline{t_{A}}=(t_{A},(\Vert.\Vert_{L,\sigma})_{\sigma\colon k\hookrightarrow\mathbf{C}})$
(dont nous pourrons utiliser la pente sans risque en vertu de
l'hypoth\`ese de semi-stabilit\'e faite ci-dessus). Il
existe un lien entre la m\'etrique cubiste et la m\'etrique
$\Vert.\Vert_{L,\sigma}$.
Si $\vartheta$ est la fonction th\^eta (voir \S~\ref{fnth})
associ{\'e}e {\`a} $s\in\mathrm{H}^{0}(A,L^{\otimes
n})\otimes_{\sigma}\mathbf{C}$ alors, pour tout $x=\exp_{A_\sigma}(z)\in
A_{\sigma}$, on a \begin{equation}\label{formulesectiontheta}\Vert
s(x)\Vert_{\mathrm{cub},\sigma}=\vert\vartheta(z)\vert
\exp{\left(-\frac{\pi}{2}n\Vert
z\Vert_{L,\sigma}^{2}\right)}\cdotp\end{equation}

\subsection{Choix des param{\`e}tres}\label{secchoixpara}
Soit $\mathsf{n}$ un nombre r{\'e}el $\ge 1$ tel que $n:=x\mathsf{n}$
soit un entier. On pose $T:=[\mathsf{n}]+1$. Nous introduisons aussi
le nombre r\'eel $\theta=(\log2)/\pi$ et, pour chaque plongement complexe
$\sigma\colon k\hookrightarrow\mathbf{C}$, le r\'eel
$\varepsilon_\sigma=(6\sqrt2-8)g^{-g}\xi_\sigma$. Nous signalons toutefois que
ces valeurs exactes ne seront utilis\'ees qu'au
paragraphe~\ref{secconcl}. D'ici l\`a, nous n'aurons besoin que de
$\theta>0$ et $0<\varepsilon_\sigma<1$ pour tout $\sigma$.

Notre derni\`ere famille de param{\`e}tres (entiers) est d\'efinie par
$T_{\sigma}:=[\varepsilon_{\sigma}\mathsf{n}]$ pour tout plongement
$\sigma$. Notons $T_\sigma\le T$. Le param{\`e}tre $\mathsf{n}$ va
tendre vers $+\infty$ en fin de d{\'e}monstration. En particulier l'on
peut supposer que $T,n$ et les $T_{\sigma}$ ne sont pas nuls. Le choix
de $x$ assure le r{\'e}sultat suivant.
\begin{prop}\label{propmultiplicites}
Il n'existe aucune section non nulle de $\mathrm{H}^{0}(A,L^{\otimes
n})$ qui s'annule {\`a} l'ordre $gT$ le long de $t_{A}$ en $0_{A}$.
\end{prop}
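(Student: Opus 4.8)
The plan is to derive this from a multiplicity estimate (zero lemma) for abelian varieties, combined with the defining property of $x$. Suppose for contradiction that some nonzero $s\in\mathrm{H}^{0}(A,L^{\otimes n})$ vanishes to order $gT$ along $t_{A}$ at the origin $0_{A}$. First I would recall the standard zero estimate in the form due to Philippon (or Masser--Wüstholz, or the version in~\cite{gaudronun}): if a nonzero section of $L^{\otimes n}$ vanishes at $0_{A}$ to order $\geq S$ along a subspace $W\subseteq t_{A}$ of dimension $d$, then there exists a proper abelian subvariety $\mathsf{B}\subsetneq A$ (possibly $\mathsf{B}=0$), stable in a suitable sense, with $t_{\mathsf{B}}\subseteq W$, such that
\[
\binom{S-1+d-\dim\mathsf{B}}{d-\dim\mathsf{B}}\,\deg_{L}\mathsf{B}\;\leq\;C(g)\,n^{\,\dim A-\dim\mathsf{B}}\,\deg_{L}A,
\]
or more simply $\deg_{L}\mathsf{B}\cdot S^{\operatorname{codim}\mathsf{B}}\leq C(g)\,n^{\operatorname{codim}\mathsf{B}}\deg_{L}A$ (up to combinatorial factors). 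Here $W=t_{A}$, so the constraint $t_{\mathsf{B}}\subseteq W$ is automatic and $d=g$; the point is only to produce \emph{some} $\mathsf{B}\subsetneq A$ with small degree relative to $S=gT$ and $n$.

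Next I would plug in the parameters. With $S=gT$ and $n=x\mathsf{n}$, $T=[\mathsf{n}]+1\geq\mathsf{n}$, so $S=gT\geq g\mathsf{n}=gn/x$. Writing $\mathsf{t}=\operatorname{codim}\mathsf{B}\geq1$, the zero estimate gives
\[
\deg_{L}\mathsf{B}\;\leq\;C(g)\Bigl(\frac{n}{S}\Bigr)^{\mathsf{t}}\deg_{L}A\;\leq\;C(g)\Bigl(\frac{x}{g}\Bigr)^{\mathsf{t}}\deg_{L}A,
\]
hence $x(\mathsf{B})=(\deg_{L}\mathsf{B}/\deg_{L}A)^{1/\mathsf{t}}\leq C(g)^{1/\mathsf{t}}\,x/g$. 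The crucial input is the definition $x=\min\{x(\mathsf{B}'):\mathsf{B}'\subsetneq A_{\sigma}\}$, so $x(\mathsf{B})\geq x$ for every proper abelian subvariety, giving $x\leq C(g)^{1/\mathsf{t}}x/g$, i.e. $g\leq C(g)^{1/\mathsf{t}}\leq C(g)$. This is a contradiction provided the combinatorial constant $C(g)$ in the zero estimate is chosen (or known) to be strictly smaller than $g$ after the binomial factor $\binom{gT-1+g-\dim\mathsf{B}}{g-\dim\mathsf{B}}\geq (gT)^{\mathsf{t}}/\mathsf{t}!\geq (g\mathsf{n})^{\mathsf{t}}/\mathsf{t}!$ is moved to the left-hand side — this absorbs the $g^{\mathsf{t}}$ and leaves room. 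In other words, the content is that the order of vanishing $gT$ is by design a factor of $g$ larger than what the zero lemma permits for \emph{any} proper subvariety, and $x$ being the minimum of the normalized degrees is exactly what prevents the lemma from producing a valid $\mathsf{B}$.

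The main obstacle is getting the constants to line up: I would need to invoke a version of the multiplicity estimate with an explicit numerical constant (the literature provides several; e.g. the bound with $C(g)$ polynomial in $g$, or Philippon's sharp form), and then verify that, after extracting the binomial coefficient $\geq (gT)^{\mathsf{t}}/\mathsf{t}!$ and using $n\le xT$, the resulting inequality $\deg_{L}\mathsf{B}<\deg_{L}A\cdot x^{\mathsf{t}}$ strictly contradicts $\deg_{L}\mathsf{B}\geq x^{\mathsf{t}}\deg_{L}A$. One subtlety is that the zero estimate may allow $\mathsf{B}=0$: then $\mathsf{t}=g$, $\deg_{L}\mathsf{B}$ should be read as $\deg_{L}\{0\}=1$, and the bound reads $1\leq C(g)(n/gT)^{g}\deg_{L}A\leq C(g)(x/g)^{g}\deg_{L}A$; since $x\leq(\deg_{L}A)^{-1/g}$ we get $1\leq C(g)g^{-g}$, again absurd for the standard $C(g)$. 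So in every case the strict inequality $n<gT$ coming from $T=[\mathsf{n}]+1$ together with the extremal choice of $x$ closes the argument; I would just need to check the one borderline combinatorial constant carefully.
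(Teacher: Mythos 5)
The approach is the same as the paper's (a multiplicity estimate combined with the extremal property of $x$), but there is a real gap in the way you set it up, concentrated in the line ``this is a contradiction provided the combinatorial constant $C(g)$ in the zero estimate is chosen (or known) to be strictly smaller than $g$.'' The paper invokes Nakamaye's multiplicity lemma, which, for a nonzero section of $L^{\otimes n}$ vanishing to order $gT$ along $t_A$ at the origin, produces $A'\subsetneq A$ with $T^{\codim A'}\deg_L A'\le n^{\codim A'}\deg_L A$. In your notation this is $C(g)=g^{\codim\mathsf B}$ exactly: there is no slack in the constant, and your requested strict inequality $C(g)^{1/\mathsf t}<g$ is false. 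The contradiction does not come from the constant at all; it comes from the integrality trick $T=[\mathsf n]+1>\mathsf n$. Rewriting Nakamaye's conclusion via $x(A')$ gives $\frac{[\mathsf n]+1}{\mathsf n}\cdot\frac{x(A')}{x}\le 1$, which is absurd because $[\mathsf n]+1>\mathsf n$ and $x(A')\ge x$. Your sketch has all the right pieces and would go through once you replace the unspecified ``zero estimate with constant $C(g)$'' by Nakamaye's precise statement and observe that the strictness is in $T/\mathsf n>1$ rather than in $C(g)$; as written, though, the argument as you have framed it (hoping for $C(g)<g$) would not close, and the ``borderline combinatorial constant'' you defer checking is in fact not borderline — it is sharp, and a different source of strictness is required.
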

\begin{proof}
Dans le cas contraire, le lemme de multiplicit{\'e}s de
Nakamaye~\cite{nakamaye} assure l'existence d'une sous-vari{\'e}t{\'e}
ab{\'e}lienne $A'$ de $A$, avec $A'\ne A$ et $A'$ d{\'e}finie sur
$\overline{k}$, telle que $T^{g-\dim
A'}\deg_{L}A'\le(\deg_{L}A)n^{g-\dim A'}$. En {\'e}crivant cette
in{\'e}galit{\'e} au moyen de $x(A')$ on
trouve $$\frac{[\mathsf{n}]+1}{\mathsf{n}}\cdot\frac{x(A')}{x}\le 1$$
qui est impossible puisque $x(A')\ge x$.
\end{proof}
\label{subsectioncomplements}
Soient $\sigma\colon k\hookrightarrow\mathbf{C}$ un plongement complexe de $k$ et
$\omega_{\sigma}\in(\Omega_{A_{\sigma}}+t_{B[\sigma]})\setminus t_{B[\sigma]}$ de
norme {\'e}gale {\`a} $\updelta_{\sigma}$. Cette condition implique
que $\omega_{\sigma}$ appartient {\`a} l'orthogonal de $t_{B[\sigma]}$
dans $(t_{A_{\sigma}},\Vert.\Vert_{L,\sigma})$. Il est donc possible
de fixer une base orthonorm{\'e}e
$f_{\sigma}:=(f_{1,\sigma},\ldots,f_{g,\sigma})$ de $t_{A_{\sigma}}$
ayant les propri{\'e}t{\'e}s
suivantes~:\begin{itemize}\item[(i)] $(f_{1,\sigma},\ldots,f_{\dim
B[\sigma],\sigma})$ est une base de $t_{B[\sigma]}$,\item[(ii)]
$f_{g,\sigma}:=\omega_{\sigma}/\Vert\omega_{\sigma}\Vert_{L,\sigma}$.\end{itemize}

\subsection{Fibr{\'e} ad{\'e}lique des sections auxiliaires}\label{secaux}
Au \S~\ref{subsectionchoix}, nous avons muni le $k$-espace vectoriel
$H_{n}=\mathrm{H}^{0}(A,L^{\otimes n})$ d'une structure ad{\'e}lique
hermitienne
$\overline{H}_{n}=(H_{n},(\Vert.\Vert_{\overline{H}_{n},v})_{v})$. L'objectif
de ce paragraphe est de munir $H_{n}$ d'une structure hermitienne
diff{\'e}rente en certaines places archim{\'e}diennes de $k$,
structure obtenue par d{\'e}formation de
$\Vert.\Vert_{\overline{H}_{n},v}$. Une fois ce fibr{\'e} ad{\'e}lique
tordu d{\'e}fini, nous estimerons sa pente d'Arakelov.\par
On pose $\nu:=\dim_{k}H_{n}=n^{g}\mathrm{h}^{0}(A,L)$. Soit $\EuScript{V}$
l'ensemble des plongements complexes $\sigma$ de $k$ tels que
$\updelta_{\sigma}^{2}/\varepsilon_{\sigma}\le\theta/x$. On notera que
$\EuScript{V}$ est stable par conjugaison complexe. {\`A} chaque
plongement complexe $\sigma\colon k\hookrightarrow\mathbf{C}$ qui appartient {\`a}
$\EuScript{V}$, l'on associe l'entier $S_{\sigma}\ge 1$, qui ne
d{\'e}pend pas de $\mathsf{n}$, d{\'e}fini
par \begin{equation*}S_{\sigma}:=\left[\frac{\theta
\varepsilon_{\sigma}}{x\updelta_{\sigma}^{2}}\right]\end{equation*}et
le nombre r{\'e}el $\alpha_{\sigma}:=4^{T_{\sigma}S_{\sigma}}$. Ce
nombre $\alpha_{\sigma}$ ne d{\'e}pend que de la place $v$ de $k$
sous-jacente {\`a} $\sigma$. Soit $(s_{1},\ldots,s_{\nu})$ une base
orthonorm{\'e}e de
$(H_{n}\otimes_{\sigma}\mathbf{C},\Vert.\Vert_{\overline{H}_{n},\sigma})$. Pour
tout $i\in\{1,\ldots,\nu\}$, soit $\vartheta_{i}:t_{A_{\sigma}}\to\mathbf{C}$
la fonction th{\^{e}}ta associ{\'e}e {\`a} $s_{i}$ (voir
\S~\ref{fnth}). {\'E}tant donn{\'e} une base
$\mathsf{e}=(e_{1},\ldots,e_{g})$ de $t_{A_{\sigma}}$, un multiplet
$\tau=(\tau_{1},\ldots,\tau_{g})\in\mathbf{N}^{g}$ et un vecteur
$z=z_{1}e_{1}+\cdots+z_{g}e_{g}\in t_{A_{\sigma}}$, on note
$\frac{1}{\tau !}\mathrm{D}_{\mathsf{e}}^{\tau}\vartheta(z)$ la
d{\'e}riv{\'e}e divis{\'e}e
$\frac{1}{\tau_{1}!\cdots\tau_{g}!}\left(\frac{\partial}{\partial
z_{1}}\right)^{\tau_{1}}\cdots\left(\frac{\partial}{\partial
z_{g}}\right)^{\tau_{g}}\vartheta(z_{1}e_{1}+\cdots+z_{g}e_{g})$.\par
Soit $\Upsilon_{\sigma}$ l'ensemble des couples
$(m,\tau)\in\mathbf{Z}\times\mathbf{N}^{g}$ v{\'e}rifiant les propri{\'e}t{\'e}s
suivantes~:\begin{enumerate}\item[(i)]
$m\in\{1-S_{\sigma},\ldots,S_{\sigma}-1\}$, \item[(ii)] si $\tau$
s'{\'e}crit $(\tau_{1},\ldots,\tau_{g})$ alors
$\vert\tau\vert:=\tau_{1}+\cdots+\tau_{g}\le
gT+T_{\sigma}-1$, \item[(iii)] $\tau_{g}\le
T_{\sigma}-1$.\end{enumerate}Soit $\upsilon_{\sigma}$ le cardinal de
$\Upsilon_{\sigma}$. On a l'estimation triviale
$\upsilon_{\sigma}\le(4gS_{\sigma}T)^{g}$. Rappelons que
$f_{\sigma}=(f_{1,\sigma},\ldots,f_{g,\sigma})$ d{\'e}signe la base
orthonorm{\'e}e de $t_{A_{\sigma}}$ introduite au
\S~\ref{subsectioncomplements} et consid{\'e}rons la matrice complexe
$\mathsf{a}_{\sigma}$ de taille $\upsilon_{\sigma}\times\nu$, de
coefficients~:\begin{equation*}\mathsf{a}_{\sigma}[(m,\tau),i]:=
\left(\frac{1}{\tau!}\mathrm{D}_{f_{\sigma}}^{\tau}
\vartheta_{i}(m\omega_{\sigma})\right)\exp{\left\{-\frac{\pi}{2}n\Vert
m\omega_{\sigma}\Vert_{L,\sigma}^{2}\right\}}
\end{equation*}pour tous $(m,\tau)\in\Upsilon_{\sigma}$ et
$i\in\{1,\ldots,\nu\}$. Dans la suite, on notera $\varrho_{\sigma}$ le
rang de la matrice $\mathsf{a}_{\sigma}$.
\begin{defi}\label{definormesmodif}Posons
$\alpha:=(\alpha_{\sigma})_{\sigma\in\EuScript{V}}$. Le fibr{\'e}
ad{\'e}lique hermitien $\overline{H}_{n,\alpha}$ sur $k$ est le
fibr{\'e} vectoriel ad{\'e}lique d'espace vectoriel sous-jacent
$H_{n}$ et dont les normes sont les suivantes~: en une place $v$ de
$k$ qui n'induit aucun plongement $k\hookrightarrow\mathbf{C}$ appartenant
{\`a} $\EuScript{V}$, on pose
$\Vert.\Vert_{\overline{H}_{n,\alpha},v}:=\Vert.\Vert_{\overline{H}_{n},v}$;
si $v$ est une place archim{\'e}dienne de $k$ tel qu'un plongement
complexe $\sigma\colon k\hookrightarrow\mathbf{C}$ associ{\'e} appartienne {\`a}
$\EuScript{V}$, la norme $\Vert.\Vert_{\overline{H}_{n,\alpha},v}$
est d{\'e}finie par $$\Vert
\mathtt{x}_{1}s_{1}+\cdots+\mathtt{x}_{\nu}s_{\nu}\Vert_{\overline{H}_{n,\alpha},v}
:=\left(\vert\mathtt{x}\vert_{2}^{2}+\vert\alpha_{\sigma}
\mathsf{a}_{\sigma}(\mathtt{x})\vert_{2}^{2}\right)^{1/2}$$pour tout
$\mathtt{x}={}^{\mathrm{t}}(\mathtt{x}_{1},\ldots,\mathtt{x}_{\nu})\in\mathbf{C}^{\nu}$
(la norme $\vert.\vert_{2}$ est la norme hermitienne usuelle sur
$\mathbf{C}^{\nu}$ ou $\mathbf{C}^{\upsilon_{\sigma}}$).
\end{defi}
Aux places archim{\'e}diennes, la norme ainsi d{\'e}finie ne
d{\'e}pend pas du choix de $\sigma$ associ{\'e} {\`a}
$v$. L'estimation de la pente de $\overline{H}_{n,\alpha}$ requiert le
lemme suivant, variante de l'in{\'e}galit{\'e} de Cauchy pour les
fonctions holomorphes.
\begin{lemma}\label{lemmecauchy}Soit $\sigma\colon k\hookrightarrow\mathbf{C}$ un
plongement complexe  de $k$. Soient $s\in\mathrm{H}^{0}(A,L^{\otimes
n})\otimes_{\sigma}\mathbf{C}$ et $\vartheta$ la fonction th{\^{e}}ta
associ{\'e}e. Soit $\mathsf{e}=(e_{1},\ldots,e_{g})$ une base
orthonorm{\'e}e de $(t_{A_{\sigma}},\Vert.\Vert_{L,\sigma})$. Alors,
pour tout $z\in t_{A_{\sigma}}$, pour tout
$\tau=(\tau_{1},\ldots,\tau_{g})\in\mathbf{N}^{g}$, on
a\begin{equation*}\left\vert\frac{1}{\tau!}
\mathrm{D}_{\mathsf{e}}^{\tau}\vartheta(z)\right\vert
\exp{\left\{-\frac{\pi}{2}n\Vert
z\Vert_{L,\sigma}^{2}\right\}}\le\Vert
s\Vert_{\infty,\sigma}\exp{\left\{\frac{\pi}{2}n(1+2\Vert
z\Vert_{L,\sigma})\right\}}\end{equation*}o{\`u} $\Vert
s\Vert_{\infty,\sigma}:=\sup{\{\Vert
s(x)\Vert_{\mathrm{cub},\sigma};\ x\in A_{\sigma}\}}$.
\end{lemma}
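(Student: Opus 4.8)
The idea is to combine the identity~\eqref{formulesectiontheta} with the Cauchy integral formula for the entire function $\vartheta$, written in the orthonormal frame $\mathsf{e}$. First I would turn~\eqref{formulesectiontheta} into a global majoration of $\vartheta$. The map $x\mapsto\Vert s(x)\Vert_{\mathrm{cub},\sigma}$ is continuous on the compact complex torus $A_{\sigma}$, hence bounded above by $\Vert s\Vert_{\infty,\sigma}$; and since $\exp_{A_{\sigma}}\colon t_{A_{\sigma}}\to A_{\sigma}$ is surjective, \eqref{formulesectiontheta} gives at once
\begin{equation*}\forall w\in t_{A_{\sigma}},\qquad\vert\vartheta(w)\vert\le\Vert s\Vert_{\infty,\sigma}\exp\left(\tfrac{\pi}{2}n\Vert w\Vert_{L,\sigma}^{2}\right).\end{equation*}

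Next, I would write a vector of $t_{A_{\sigma}}$ in the coordinates $(z_{1},\ldots,z_{g})$ attached to $\mathsf{e}$ and apply the multivariate Cauchy formula for divided derivatives on the polydisc of polyradius $(\varrho,\ldots,\varrho)$ centred at $z$:
\begin{equation*}\frac{1}{\tau!}\mathrm{D}_{\mathsf{e}}^{\tau}\vartheta(z)=\frac{1}{(2i\pi)^{g}}\oint_{\vert w_{1}\vert=\varrho}\!\!\!\cdots\oint_{\vert w_{g}\vert=\varrho}\frac{\vartheta(z+w_{1}e_{1}+\cdots+w_{g}e_{g})}{w_{1}^{\tau_{1}+1}\cdots w_{g}^{\tau_{g}+1}}\,\mathrm{d}w_{1}\cdots\mathrm{d}w_{g},\end{equation*}
which after a trivial estimate of the integrand yields
\begin{equation*}\left\vert\frac{1}{\tau!}\mathrm{D}_{\mathsf{e}}^{\tau}\vartheta(z)\right\vert\le\varrho^{-\vert\tau\vert}\sup\Big\{\vert\vartheta(z+w)\vert\ ;\ w=\textstyle\sum_{j}w_{j}e_{j},\ \vert w_{j}\vert=\varrho\ \text{pour tout }j\Big\}.\end{equation*}
It is here that the orthonormality of $\mathsf{e}$ enters decisively: for such a $w$ one has $\Vert w\Vert_{L,\sigma}^{2}=\sum_{j}\vert w_{j}\vert^{2}=g\varrho^{2}$, hence by the triangle inequality $\Vert z+w\Vert_{L,\sigma}\le\Vert z\Vert_{L,\sigma}+\varrho\sqrt{g}$ and therefore $\Vert z+w\Vert_{L,\sigma}^{2}\le\Vert z\Vert_{L,\sigma}^{2}+2\varrho\sqrt{g}\,\Vert z\Vert_{L,\sigma}+g\varrho^{2}$.

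To conclude I would calibrate the polyradius: taking $\varrho=g^{-1/2}$ makes the displacement contribute exactly $1+2\Vert z\Vert_{L,\sigma}$ to the squared norm, so that, feeding the pointwise bound of the first step into the Cauchy estimate and multiplying through by $\exp(-\tfrac{\pi}{2}n\Vert z\Vert_{L,\sigma}^{2})$, one arrives at an inequality of exactly the announced shape, the exponent $\tfrac{\pi}{2}n(1+2\Vert z\Vert_{L,\sigma})$ on the right being precisely what the choice $\varrho=g^{-1/2}$ produces. The argument is otherwise entirely routine; the only point requiring care is this choice of polyradius, which has to keep the $\Vert\cdot\Vert_{L,\sigma}$-size of the polydisc under control while taming the Cauchy kernel factor $\varrho^{-\vert\tau\vert}$, and no genuine obstacle arises.
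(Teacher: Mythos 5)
Your overall strategy is the same as the paper's: turn~\eqref{formulesectiontheta} into the global bound $\vert\vartheta(w)\vert\le\Vert s\Vert_{\infty,\sigma}\exp(\tfrac{\pi}{2}n\Vert w\Vert_{L,\sigma}^{2})$ and then use a Cauchy-type estimate for the holomorphic function $\vartheta$ at $z$. But the concluding step contains a genuine gap: with polyradius $\varrho=g^{-1/2}$ the Cauchy kernel factor is $\varrho^{-\vert\tau\vert}=g^{\vert\tau\vert/2}$, which for $g\ge2$ is strictly greater than $1$ and grows exponentially with $\vert\tau\vert$; nothing in your argument makes it disappear, yet no such factor appears in the lemma's conclusion. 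Choosing instead $\varrho=1$ kills the kernel factor but enlarges the polydisc to $\Vert\cdot\Vert_{L,\sigma}$-radius $\sqrt{g}$ and hence replaces the exponent $\tfrac{\pi}{2}n(1+2\Vert z\Vert_{L,\sigma})$ by the weaker $\tfrac{\pi}{2}n(g+2\sqrt{g}\Vert z\Vert_{L,\sigma})$. For $g\ge2$, no choice of isotropic polyradius reconciles the two requirements, so ``no genuine obstacle arises'' is not justified.

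The paper avoids this tension by stating Cauchy's inequality not on a polydisc but directly on the Hermitian ball, $\vert\tfrac{1}{\tau!}\mathrm{D}_{\mathsf{e}}^{\tau}\vartheta(z)\vert\le r^{-\vert\tau\vert}\sup\{\vert\vartheta(z+y)\vert\,;\ \Vert y\Vert_{L,\sigma}\le r\}$, and taking $r=1$. If you wish to keep the polydisc route you would need to carry the extra factor $g^{\vert\tau\vert/2}$ through the later estimates (it contributes only $O(\vert\tau\vert\log g)=O(\mathsf{n})$ after taking logarithms, hence changes constants but not the shape of the result), or else justify the Hermitian-ball form of Cauchy's inequality; note that the latter, as a naked inequality without a combinatorial correction, is not the standard polydisc statement and needs an argument of its own (for instance $f(z_{1},z_{2})=z_{1}z_{2}$, $\tau=(1,1)$, already shows that some multiplicative constant depending on $\tau$ is generally unavoidable on $\ell^{2}$-balls).
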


\begin{proof}
L'in{\'e}galit{\'e} de Cauchy pour la fonction holomorphe $\vartheta$
se traduit par la majoration $$\left\vert\frac{1}{\tau!}
\mathrm{D}_{\mathsf{e}}^{\tau}\vartheta(z)\right\vert\le
\frac{1}{r^{\vert\tau\vert}}\sup{\{\vert\vartheta(z+y)\vert\,;\
y\in t_{A_{\sigma}}\ \text{et}\ \Vert y\Vert_{L,\sigma}\le
r\}}$$valide pour tout nombre r{\'e}el $r>0$. La
relation~\eqref{formulesectiontheta} entre $s$ et $\vartheta$ fournit
l'estimation $$\vert\vartheta(z+y)\vert\exp{\left\{-\frac{\pi}{2}n\Vert
z\Vert_{L,\sigma}^{2}\right\}}\le\Vert
s\Vert_{\infty,\sigma}\exp{\left\{\frac{\pi}{2}n(r^{2}+2r\Vert
z\Vert_{L,\sigma})\right\}},$$ce qui d{\'e}montre le lemme, en
choisissant $r=1$. \end{proof}
Un \emph{lemme de Gromov} assure l'existence d'une constante $c>0$,
qui ne d{\'e}pend que de $(A,L)$, telle que, pour tout $s\in
H_{n}\otimes_{\sigma}\mathbf{C}$, on a $\Vert s\Vert_{\infty,\sigma}\le
n^{c}\Vert s\Vert_{\overline{H}_{n},\sigma}$
(voir~\cite[lemme~$30$]{gilletsoule}). De ces r{\'e}sultats d{\'e}coule la
proposition suivante (rappelons que $\varrho_{\sigma}$ d\'esigne le
rang de la matrice $\mathsf{a}_{\sigma}$).
\begin{prop}\label{propositionconstruction}Il existe une constante
$c>0$, qui ne d{\'e}pend pas de $\mathsf{n}$, telle que la pente
d'Arakelov normalis{\'e}e
$\widehat{\mu}(\overline{H}_{n,\alpha})$ de
$\overline{H}_{n,\alpha}$ est minor{\'e}e
par\begin{equation*}-\sum_{\sigma\in\EuScript{V}}{\frac{\varrho_{\sigma}}
{[k:\mathbf{Q}]\nu}\left(\log\alpha_{\sigma}+\frac{\pi}{2}n(1+2S_{\sigma}\updelta_{\sigma})
\right)}-c\log\mathsf{n}. \end{equation*}\end{prop}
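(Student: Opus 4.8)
The plan is to apply the slope inequality (in its general adelic form) to the identity map $\mathrm{id}\colon\overline{H_n}\to\overline{H}_{n,\alpha}$, or rather to compare the two adelic structures place by place. Since the two fibré structures differ only at the finitely many archimedean places $\sigma\in\EuScript{V}$, the normalized Arakelov degree satisfies
$$\widehat{\deg}\,\overline{H}_{n,\alpha}=\widehat{\deg}\,\overline{H_n}-\frac{1}{[k:\mathbf{Q}]}\sum_{\sigma\in\EuScript{V}}\log\frac{\mathrm{vol}(B_{\overline{H_n},\sigma})}{\mathrm{vol}(B_{\overline{H}_{n,\alpha},\sigma})},$$
so that $\widehat{\mu}(\overline{H}_{n,\alpha})=\widehat{\mu}(\overline{H_n})-(\text{sum of local volume distortions})/([k:\mathbf{Q}]\nu)$. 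I would then invoke formula~\eqref{formulepenterestriction} for $\widehat{\mu}(\overline{H_n})$; since this equals $-\frac12 h(A)+\frac14\log(n^g\mathrm{h}^0(A,L)/(2\pi)^g)$, it is $O(\log\mathsf{n})$ (as $n=x\mathsf{n}$ and $\nu=n^g\mathrm h^0(A,L)$, the relevant contribution to $\widehat\mu$ grows like $\frac{g}{4}\log n$), hence absorbed into $-c\log\mathsf{n}$ after dividing by the appropriate normalization — I should be slightly careful here but this term is benign.

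The real content is bounding, for each $\sigma\in\EuScript{V}$, the local volume distortion by $\varrho_\sigma(\log\alpha_\sigma+\frac{\pi}{2}n(1+2S_\sigma\updelta_\sigma))+O(\log\mathsf n)$. Here I would argue as follows. At such a place the new norm is $\Vert\mathtt x\Vert_{\overline H_{n,\alpha},v}^2=\vert\mathtt x\vert_2^2+\vert\alpha_\sigma\mathsf a_\sigma(\mathtt x)\vert_2^2$, which is the norm associated to the Gram matrix $I_\nu+\alpha_\sigma^2\,{}^{\mathrm t}\overline{\mathsf a_\sigma}\mathsf a_\sigma$. The ratio of covolumes (equivalently $\frac12\log\det$ of this Gram matrix relative to the old one, which is $I_\nu$) is $\frac12\sum_{i=1}^{\varrho_\sigma}\log(1+\alpha_\sigma^2\lambda_i)$ where $\lambda_1,\dots,\lambda_{\varrho_\sigma}$ are the nonzero eigenvalues of ${}^{\mathrm t}\overline{\mathsf a_\sigma}\mathsf a_\sigma$ (there are exactly $\varrho_\sigma$ of them since that is the rank of $\mathsf a_\sigma$). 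I then bound each $\log(1+\alpha_\sigma^2\lambda_i)\le 2\log\alpha_\sigma+\log(1+\lambda_i)\le 2\log\alpha_\sigma+\lambda_i$ and need an upper bound for $\sum\lambda_i=\mathrm{Tr}({}^{\mathrm t}\overline{\mathsf a_\sigma}\mathsf a_\sigma)=\sum_{(m,\tau),i}\vert\mathsf a_\sigma[(m,\tau),i]\vert^2$, the Frobenius norm squared of $\mathsf a_\sigma$. Each entry $\mathsf a_\sigma[(m,\tau),i]$ is exactly a divided derivative of a theta function times the exponential factor, so Lemma~\ref{lemmecauchy} (applied with $z=m\omega_\sigma$, $\Vert z\Vert_{L,\sigma}=|m|\updelta_\sigma\le S_\sigma\updelta_\sigma$) gives
$$\vert\mathsf a_\sigma[(m,\tau),i]\vert\le\Vert s_i\Vert_{\infty,\sigma}\exp\!\Big(\tfrac{\pi}{2}n(1+2S_\sigma\updelta_\sigma)\Big),$$
and the Gromov lemma bounds $\Vert s_i\Vert_{\infty,\sigma}\le n^c\Vert s_i\Vert_{\overline{H_n},\sigma}=n^c$ (orthonormality). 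Summing over the $\upsilon_\sigma\le(4gS_\sigma T)^g$ pairs $(m,\tau)$ and the $\nu$ indices $i$, the logarithm of the resulting bound is $\pi n(1+2S_\sigma\updelta_\sigma)+O(\log\mathsf n)$ (the combinatorial and Gromov factors contribute only $O(\log\mathsf n)$ since $S_\sigma$ is fixed, $T\le\mathsf n+1$, and $\nu=O(\mathsf n^g)$). Hence $\sum_i\lambda_i\le\varrho_\sigma\cdot(\text{this})$ up to reindexing — more cleanly, $\frac12\sum_{i=1}^{\varrho_\sigma}(2\log\alpha_\sigma+\lambda_i)\le\varrho_\sigma\log\alpha_\sigma+\frac12\sum_i\lambda_i\le\varrho_\sigma\log\alpha_\sigma+\frac12\mathrm{Tr}$, and since $\frac12\mathrm{Tr}\le\varrho_\sigma\cdot\frac12\cdot\frac{\pi}{2}n(1+2S_\sigma\updelta_\sigma)\cdot$(const)$+O(\log\mathsf n)$ — I would organize the constants so the stated $\varrho_\sigma(\log\alpha_\sigma+\frac{\pi}{2}n(1+2S_\sigma\updelta_\sigma))$ comes out, folding slack into $c\log\mathsf n$.

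Dividing by $[k:\mathbf Q]\nu$ and summing over $\sigma\in\EuScript V$ yields exactly the asserted lower bound for $\widehat\mu(\overline H_{n,\alpha})$. \textbf{The main obstacle} is the bookkeeping of the error term: one must check that the genuinely $\mathsf n$-dependent quantities are only $\log\alpha_\sigma$ (which is $\mathsf n$-independent by construction, since $T_\sigma,S_\sigma$ do not depend on $\mathsf n$) and $\frac{\pi}{2}n(1+2S_\sigma\updelta_\sigma)$, while everything else — the Gromov exponent $n^c$, the cardinality $\upsilon_\sigma$, the dimension $\nu$, and the dimension-dependent constants in $\widehat\mu(\overline{H_n})$ — contributes at most $O(\log\mathsf n)$ after the division by $\nu=n^g\mathrm h^0(A,L)\asymp\mathsf n^g$. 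One also checks that taking only $\varrho_\sigma$ eigenvalues (rather than all $\nu$) is legitimate precisely because $\mathrm{rk}\,\mathsf a_\sigma=\varrho_\sigma$, so the Gram matrix $I+\alpha_\sigma^2\,{}^{\mathrm t}\overline{\mathsf a_\sigma}\mathsf a_\sigma$ has eigenvalue $1$ with multiplicity $\nu-\varrho_\sigma$, contributing nothing to the log-determinant.
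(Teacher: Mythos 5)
Your overall structure is sound and matches what the paper does (via its reference to Proposition 4.0.7 of \cite{gaudronhuit}): comparing the two adelic structures place by place, writing the local volume distortion as $\tfrac12\log\det\bigl(I_\nu+\alpha_\sigma^2\,{}^{\mathrm t}\overline{\mathsf a_\sigma}\mathsf a_\sigma\bigr)=\tfrac12\sum_{i=1}^{\varrho_\sigma}\log(1+\alpha_\sigma^2\lambda_i)$ over the $\varrho_\sigma$ nonzero eigenvalues, controlling $\mathsf a_\sigma$ via Hilbert--Schmidt, Lemma~\ref{lemmecauchy} and Gromov, and absorbing $\widehat\mu(\overline{H_n})$ into $c\log\mathsf n$ by~\eqref{formulepenterestriction}.

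However there is a genuine error in the final step. You bound $\log(1+\alpha_\sigma^2\lambda_i)\le 2\log\alpha_\sigma+\lambda_i$ and then claim $\tfrac12\operatorname{Tr}\bigl({}^{\mathrm t}\overline{\mathsf a_\sigma}\mathsf a_\sigma\bigr)\le\varrho_\sigma\cdot\tfrac{\pi}{4}n(1+2S_\sigma\updelta_\sigma)\cdot(\text{const})+O(\log\mathsf n)$. This confuses the trace with its logarithm. By your own entrywise estimate, each $\vert\mathsf a_\sigma[(m,\tau),i]\vert$ is as large as $n^{c}e^{\frac{\pi}{2}n(1+2S_\sigma\updelta_\sigma)}$, so $\operatorname{Tr}=\sum\lambda_i$ grows like $e^{\pi n(1+2S_\sigma\updelta_\sigma)}$ (times a polynomial in $\mathsf n$), not linearly in $n$. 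The inequality $\log(1+\lambda_i)\le\lambda_i$ therefore loses exponential control and cannot give the stated bound.

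The fix is to keep the logarithm outside: write instead $\log(1+\alpha_\sigma^2\lambda_i)\le\log(1+\alpha_\sigma^2)+\log\max(1,\lambda_i)$ (valid since $\alpha_\sigma\ge1$), and bound $\lambda_i\le\Vert\mathsf a_\sigma\Vert_{\mathrm{op}}^2\le\Vert\mathsf a_\sigma\Vert_{\mathrm{HS}}^2$. Summing over the $\varrho_\sigma$ nonzero eigenvalues then gives
$$\tfrac12\sum_{i=1}^{\varrho_\sigma}\log(1+\alpha_\sigma^2\lambda_i)\le\varrho_\sigma\bigl(\tfrac12\log(1+\alpha_\sigma^2)+\log\max(1,\Vert\mathsf a_\sigma\Vert_{\mathrm{op}})\bigr),$$
and it is $\log\Vert\mathsf a_\sigma\Vert_{\mathrm{HS}}\le\tfrac12\log(\nu\upsilon_\sigma)+\log\max\vert\mathsf a_\sigma[(m,\tau),i]\vert$ that carries the term $\tfrac{\pi}{2}n(1+2S_\sigma\updelta_\sigma)$, the $\tfrac12\log(\nu\upsilon_\sigma)$ and the Gromov exponent going into $c\log\mathsf n$. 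With $\tfrac12\log(1+\alpha_\sigma^2)=\log\alpha_\sigma+O(1)$ this yields exactly the asserted bound. This is precisely what the cited proposition of~\cite{gaudronhuit} packages.
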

\begin{proof}
En vertu de la proposition~$4.0.7$ de~\cite{gaudronhuit}, la
diff{\'e}rence des pentes
$\widehat{\mu}(\overline{H}_{n,\alpha})-\widehat{\mu}(\overline{H}_{n})$
est minor{\'e}e
par $$-\sum_{\sigma\in\EuScript{V}}{\frac{\varrho_{\sigma}}{[k:\mathbf{Q}]\nu}
\left(\log(1+\alpha_{\sigma}^{2})^{1/2}+\log\max{\{1,\Vert
\mathsf{a}_{\sigma}\Vert_{\mathrm{op}}\}}\right)}$$o{\`u}
$\Vert\mathsf{a}_{\sigma}\Vert_{\mathrm{op}}$ d{\'e}signe la norme
d'op{\'e}rateur de $\mathsf{a}_{\sigma}:(\mathbf{C}^{\nu},
\vert.\vert_{2})\to(\mathbf{C}^{\upsilon_{\sigma}},\vert.\vert_{2})$. La
comparaison de cette norme avec celle de Hilbert-Schmidt conduit {\`a}
la majoration $$\Vert\mathsf{a}_{\sigma}\Vert_{\mathrm{op}}
\le(\nu\upsilon_{\sigma})^{1/2}\max{\left\{\vert\mathsf{a}_{\sigma}
[(m,\tau),i]\vert\,;\ (m,\tau)\in\Upsilon_{\sigma},\ 1\le i\le\nu\right\}}\
.$$D'apr{\`e}s le lemme~\ref{lemmecauchy} et \textit{via} la majoration
de Gromov, le maximum qui appara{\^{\i}}t ci-dessus est plus petit que
$\exp{\left\{\frac{\pi}{2}n(1+2S_{\sigma}\updelta_{\sigma})\right\}}n^{c'}$
($c'$ constante qui ne d{\'e}pend pas de $\mathsf{n}$). La partie
$(\nu\upsilon_{\sigma})^{1/2}n^{c'}$ entre dans le $c\log\mathsf{n}$
de la proposition, ainsi que la diff{\'e}rence entre
$\log(1+\alpha_{\sigma}^{2})^{1/2}$ et $\log\alpha_{\sigma}$. Quant
{\`a} la pente $\widehat{\mu}(\overline{H}_{n})$, la
formule~\eqref{formulepenterestriction} montre qu'elle fait partie
elle aussi de $c\log\mathsf{n}$. \end{proof}
\subsection{Estimation de rangs}
Dans la proposition~\ref{propositionconstruction} du paragraphe
pr{\'e}c{\'e}dent est apparu le rang $\varrho_{\sigma}$ de la matrice
$\mathsf{a}_{\sigma}$. Pour que cette proposition soit utilisable dans
la suite, il est important d'avoir une estimation soigneuse de
$\varrho_{\sigma}$, plus pr{\'e}cise que
$\varrho_{\sigma}\le\min{\{\nu,\upsilon_{\sigma}\}}$. Comme l'ont
montr{\'e} Philippon et Waldschmidt~\cite{pph-miw}, le choix de $x$ et
son incorporation dans le param{\`e}tre $n=x\mathsf{n}$ vont permettre
de faire en sorte que $\varrho_{\sigma}/\nu<1$. {\'E}tant donn{\'e} un
nombre r{\'e}el $\varepsilon$, on
note $$r(g,\varepsilon):=(g+\varepsilon)^g-g^g.$$
Si $\varepsilon\le1$, nous avons facilement
$r(g,\varepsilon)\le g^g\varepsilon(1-\varepsilon)^{-1}$.
\begin{prop}\label{propositionrang} Pour tout plongement
$\sigma\colon k\hookrightarrow\mathbf{C}$ appartenant {\`a} $\EuScript{V}$, le
quotient $\varrho_{\sigma}/\nu$ du rang $\varrho_{\sigma}$ de la
matrice $\mathsf{a}_{\sigma}$ par la dimension $\nu$ de
$\mathrm{H}^{0}(A,L^{\otimes n})$ est plus petit que
$r(g,\varepsilon_{\sigma})/\xi_{\sigma}+\mathrm{o}(1)$ o{\`u}
$\mathrm{o}(1)$ d{\'e}signe une fonction qui tend vers $0$ lorsque
$\mathsf{n}$ tend vers $\infty$.
\end{prop}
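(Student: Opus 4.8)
The plan is to follow the method of Philippon and Waldschmidt~\cite{pph-miw} for bounding the rank of a matrix of derivatives of theta functions, inserting the gain coming from the choice of $x$. Since the factor $\exp\{-\tfrac{\pi}{2}n\Vert m\omega_{\sigma}\Vert_{L,\sigma}^{2}\}$ depends only on $m$ and is non-zero, dividing each row of $\mathsf{a}_{\sigma}$ by it leaves the rank unchanged; thus $\varrho_{\sigma}$ is the dimension of the span, inside the dual of $H_{n}\otimes_{\sigma}\mathbf{C}$, of the linear forms $\ell_{m,\tau}\colon s\mapsto\frac{1}{\tau!}\mathrm{D}_{f_{\sigma}}^{\tau}\vartheta_{s}(m\omega_{\sigma})$, where $(m,\tau)$ runs over $\Upsilon_{\sigma}$ and $\vartheta_{s}$ denotes the theta function attached to $s$. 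Equivalently $\varrho_{\sigma}=\nu-\dim W_{\sigma}$, with $W_{\sigma}$ the subspace of sections whose theta function has all these jets vanishing, so that one wants either to exhibit many such sections or, dually, to control the span of the $\ell_{m,\tau}$.

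The crucial observation is that the rows attached to different values of $m$ are largely redundant. Here $\omega_{\sigma}$ is the orthogonal projection onto $t_{B[\sigma]^{\perp}}$ of a period $\omega$ realising $\updelta_{\sigma}$, so $\mathsf{b}\omega_{\sigma}$ is the $B[\sigma]^{\perp}$-component of $\mathsf{b}\omega$ and hence lies in $\Omega_{B[\sigma]^{\perp}}\subset\Omega_{A_{\sigma}}$ (see~\S\ref{subsecvaortho}); consequently $\exp_{A_{\sigma}}(\omega_{\sigma})$ is a torsion point, of some order $q$ dividing $\mathsf{b}$. Whenever $(m-m')\omega_{\sigma}$ is a period, the quasi-periodicity relation $\vartheta_{s}(z+(m-m')\omega_{\sigma})=a_{L^{\otimes n}}((m-m')\omega_{\sigma},z)\vartheta_{s}(z)$ shows that the Taylor expansion of $\vartheta_{s}$ at $m\omega_{\sigma}$ is the product of its Taylor expansion at $m'\omega_{\sigma}$ with that of the nowhere-vanishing function $z\mapsto a_{L^{\otimes n}}((m-m')\omega_{\sigma},m'\omega_{\sigma}+z)$, whose logarithm is affine in $z$. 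In the $\tau$-th coefficient of such a product only the coefficients of $\vartheta_{s}$ of index $\tau'\le\tau$ occur, so the constraints $\tau_{g}\le T_{\sigma}-1$ and $|\tau|\le gT+T_{\sigma}-1$ defining $\Upsilon_{\sigma}$ are preserved, and, the multiplier being invertible, this operation on jets is an isomorphism onto its image. It follows that every $\ell_{m,\tau}$ with $(m,\tau)\in\Upsilon_{\sigma}$ lies in the span of the $\ell_{m_{0},\tau}$, where $m_{0}$ runs through a set of $p:=\min(2S_{\sigma}-1,q)$ representatives of $\{1-S_{\sigma},\dots,S_{\sigma}-1\}$ modulo $q$, whence $\varrho_{\sigma}\le p\,N_{\sigma}$ with $N_{\sigma}:=\#\{\tau\in\mathbf{N}^{g}\,;\ |\tau|\le gT+T_{\sigma}-1,\ \tau_{g}\le T_{\sigma}-1\}$.

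It remains to compare $pN_{\sigma}$ with $\nu$. Fixing $\tau_{g}$ and summing (hockey stick identity) gives $N_{\sigma}=\frac{1}{g!}\bigl((gT+T_{\sigma})^{g}-(gT)^{g}\bigr)+\mathrm{O}(\mathsf{n}^{g-1})$; since $T=[\mathsf{n}]+1$, $T_{\sigma}=[\varepsilon_{\sigma}\mathsf{n}]$, while $p$ and $S_{\sigma}$ do not depend on $\mathsf{n}$, this yields $N_{\sigma}=\frac{r(g,\varepsilon_{\sigma})}{g!}\mathsf{n}^{g}(1+\mathrm{o}(1))$, whereas $\nu=n^{g}\mathrm{h}^{0}(A,L)=\frac{x^{g}\deg_{L}A}{g!}\mathsf{n}^{g}$. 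Dividing, $\varrho_{\sigma}/\nu\le\frac{p\,r(g,\varepsilon_{\sigma})}{x^{g}\deg_{L}A}+\mathrm{o}(1)$, so the proposition will follow from the inequality $p\le\deg_{L_{\sigma}}B[\sigma]\cdot x^{g-\codim B[\sigma]}$, that is, $p/(x^{g}\deg_{L}A)\le1/\xi_{\sigma}$. This last point is precisely where the choices of \S\ref{secchoixpara} get calibrated, and I expect it to be the main obstacle: one must combine $p\le 2S_{\sigma}-1\le2\theta\varepsilon_{\sigma}/(x\updelta_{\sigma}^{2})$ and $p\le q$ with the estimates of \S\ref{subsecvaortho} and of Propositions~\ref{propminima} and \ref{majmu}, which control $q$ and $\updelta_{\sigma}$ through $\mathsf{b}$, $\deg_{L_{\sigma}}B[\sigma]$ and $\rho(A_{\sigma},L_{\sigma})$, together with the defining minimality $x=\min_{\mathsf{B}}x(\mathsf{B})$, in order to see that the number of genuinely distinct torsion points among the $\exp_{A_{\sigma}}(m\omega_{\sigma})$, $|m|<S_{\sigma}$, cannot exceed $\deg_{L_{\sigma}}B[\sigma]\cdot x^{g-\codim B[\sigma]}$; this is what forces $\theta$ and the $\varepsilon_{\sigma}$ to be taken small in terms of $g$. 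The term $\mathrm{o}(1)$ absorbs only the $\mathrm{O}(\mathsf{n}^{g-1})$ from the count and the rounding in $T$, $T_{\sigma}$, which is harmless since $\mathsf{n}\to+\infty$ at the end of the argument.
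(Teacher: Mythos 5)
Your approach is genuinely different from the paper's, but it has a fatal structural defect that no amount of calibration of $\theta$ and $\varepsilon_\sigma$ can repair.

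The paper does not count distinct torsion points among the $\exp_{A_\sigma}(m\omega_\sigma)$ at all. It bounds $\varrho_\sigma$ by $\dim E-\dim F$ where $E=\mathrm H^0(A_\sigma,L_\sigma^{\otimes n})$ and $F$ is the subspace of theta functions $\vartheta$ such that $D_{f_\sigma}^{\tau'}\vartheta$ vanishes \emph{identically on} $t_{B[\sigma]}$ for every $\tau'$ supported in the last $g-g_\sigma$ coordinates (with the same degree constraints). The inclusion $F\subset G$ is exactly your quasi-periodicity argument (the $m\omega_\sigma$ are congruent, modulo $\Omega_{A_\sigma}$, to points of $t_{B[\sigma]}$, and derivatives in the $B[\sigma]$-directions are automatic once you vanish along $t_{B[\sigma]}$). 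Then the filtration $F_{-1}=E\supset F_0\supset\cdots\supset F_{gT+T_\sigma-1}=F$ and the injections $F_{\ell-1}/F_\ell\hookrightarrow\mathrm H^0(B[\sigma],L_\sigma^{\otimes n})^{\oplus}$ give
$\varrho_\sigma\le\mathtt X\,\mathrm h^0(B[\sigma],L_\sigma^{\otimes n})$,
where $\mathtt X$ counts only multi-indices $\tau'\in\mathbf N^{g-g_\sigma}$. The crucial gain is that each of the $\mathtt X\sim\mathsf n^{g-g_\sigma}$ conditions is paid for by $\mathrm h^0(B[\sigma],L_\sigma^{\otimes n})\sim n^{g_\sigma}\deg_{L_\sigma}B[\sigma]/g_\sigma!$, and $n^{g_\sigma}=x^{g_\sigma}\mathsf n^{g_\sigma}$: the factor $x^{g_\sigma}$ is exactly the improvement over a naive count.

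You instead count all $g$-tuples $\tau$, getting $N_\sigma\sim r(g,\varepsilon_\sigma)\mathsf n^g/g!$, and try to compensate with the number $p$ of distinct classes of $m\pmod q$. To match the proposition you would need $p\le x^{g_\sigma}\deg_{L_\sigma}B[\sigma]$, and you flag this as the remaining obstacle. But this inequality is simply false in general: $p$ is a positive integer while the right-hand side can be $<1$. Concretely, take $A=E_1\times E_2$ with $E_1$, $E_2$ non-isogenous elliptic curves, $L$ the product principal polarization, $B[\sigma]=E_1\times\{0\}$ (and $E_2$ with $\mathrm{Im}\,\tau_{E_2}$ large, so that $\sigma\in\EuScript V$). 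Then $\deg_LA=2$, $\deg_{L_\sigma}B[\sigma]=1$, $g_\sigma=1$, $x=x(B[\sigma])=1/2$, $\xi_\sigma=1$, and your required bound reads $p\le 1/2$; yet $\mathsf b=1$ forces $q=1$, hence $p=1$. In this case your method already yields only $\varrho_\sigma/\nu\lesssim 2\,r(g,\varepsilon_\sigma)$, twice what the proposition demands. The loss, asymptotically $p/(x^{g_\sigma}\deg_{L_\sigma}B[\sigma])\ge 1/(x^{g_\sigma}\deg_{L_\sigma}B[\sigma])$, is intrinsic to counting all $g$ derivative directions instead of only the $g-g_\sigma$ transverse ones, and it cannot be recovered by further bounding $p$.

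So the gap is not a missing estimate on $p$: it is a missing idea. The correct way to exploit the sub-abelian variety $B[\sigma]$ is to discard the derivatives tangent to $B[\sigma]$ and trade them for the fact that the restrictions $D^{\tau'}_{f_\sigma}\vartheta|_{t_{B[\sigma]}}$ live in the much smaller space $\mathrm H^0(B[\sigma],L_\sigma^{\otimes n})$. This is what replaces your $\mathsf n^{g_\sigma}$ worth of tangent derivatives with $n^{g_\sigma}=x^{g_\sigma}\mathsf n^{g_\sigma}$, and the $m$-direction then costs nothing.
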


\begin{proof} Soit $g_{\sigma}:=\dim B[\sigma]$. L'id\'ee de
Philippon et Waldschmidt est de majorer $\varrho_{\sigma}$ par
$\dim E-\dim F$ o\`u $E$ est l'espace des fonctions th\^eta associ\'ees \`a
$L_\sigma^{\otimes n}$ et $F$ le sous-espace form\'e des fonctions
dont toutes les d\'eriv\'ees $D_{f_{\sigma}}^{\tau}\vartheta$ sont
identiquement nulles sur $t_{B[\sigma]}$ pour
$\tau=(0,\ldots,0,\tau_{g_{\sigma}+1},\ldots,\tau_{g})$ de longueur
$\le gT+T_{\sigma}-1$ avec, de plus, $\tau_{g}\le T_{\sigma}-1$. Pour
$-1\le\ell\le gT+T_\sigma-1$ on note aussi $F_\ell$ le sous-espace de
$E$ d\'efini de m\^eme en limitant la condition aux indices de
longueur au plus $\ell$. Nous avons donc
$F=F_{gT+T_\sigma-1}\subset F_{gT+T_\sigma-2}\subset\cdots\subset
F_0\subset F_{-1}=E$. D'un autre c\^ot\'e, $\varrho_\sigma=\dim E-\dim
G$ o\`u $G$ est le sous-espace form\'e des fonctions telles que 
$D_{f_{\sigma}}^{\tau}\vartheta(m\omega_\sigma)=0$ pour tous
$(m,\tau)\in\Upsilon_\sigma$. L'in\'egalit\'e $\varrho_\sigma\le\dim
E-\dim F$ d\'ecoule donc de $F\subset G$ : si $\vartheta\in F$ et
$(m,\tau)\in\Upsilon_\sigma$, on \'ecrit
$m\omega_\sigma\in\omega+t_{B[\sigma]}$ pour
$\omega\in\Omega_{A_\sigma}$ et l'on applique la d\'erivation
$D^\tau_{f_\sigma}$ \`a la formule
$\vartheta(z+\omega)=a_{L_\sigma}(z,\omega)^n\vartheta(z)$.
Maintenant si $\tau$ est un indice intervenant
dans la d\'efinition de $F_\ell$ et si $\vartheta\in F_{\ell-1}$ alors
la d\'eriv\'ee $D_{f_{\sigma}}^{\tau}\vartheta$ d\'efinit une fonction
th\^eta sur $t_{B[\sigma]}$ : en effet, comme pr\'ec\'edemment,
lorsque l'on d\'erive par la formule de Leibniz l'\'egalit\'e
$\vartheta(z+\omega)=a_{L_\sigma}(z,\omega)^n\vartheta(z)$ pour
$\omega\in\Omega_{B[\sigma]}$ et $z\in t_{B[\sigma]}$ alors toutes les
autres d\'eriv\'ees apparaissant sont nulles par d\'efinition de
$F_{\ell-1}$. Par suite on d\'efinit une injection de
$F_{\ell-1}/F_\ell$ dans une somme de copies de
$\mathrm H^0(B[\sigma],L_\sigma^{\otimes n})$. En sommant sur $\ell$ et en
calculant le nombre total $\mathtt X$ de copies, nous trouvons
$\varrho_{\sigma}\le\dim E-\dim F\le\texttt{X}\mathrm
h^0(B[\sigma],L_\sigma^{\otimes n})=\texttt{X}
(\deg_{L_{\sigma}}B[\sigma])n^{g_\sigma}/g_\sigma!$ o\`u
\begin{equation*}\begin{split}\mathtt{X}&:=\card\left\{\tau'=
(\tau_{g_{\sigma}+1},\ldots,\tau_{g})\in\mathbf{N}^{g-g_{\sigma}}\,;\
\vert\tau'\vert\le gT+T_{\sigma}-1\ \mathrm{et}\ \tau_{g}\le
T_{\sigma}-1\right\}\\&=\binom{gT+T_{\sigma}-1+g-g_{\sigma}}
{g-g_{\sigma}}-\binom{gT+g-g_{\sigma}}{g-g_{\sigma}}.\end{split}
\end{equation*} Si $\mathsf{y}\in\mathbf{N}$ alors le coefficient binomial
$\binom{\mathsf{x}+\mathsf{y}}{\mathsf{y}}$ est {\'e}quivalent {\`a}
$\mathsf{x}^{\mathsf{y}}/\mathsf{y}!$ lorsque $\mathsf{x}$ tend vers
$\infty$. En divisant $\varrho_{\sigma}$ par
$\nu=n^{g}(\deg_{L}A)/g!$ et gr{\^{a}}ce au choix des param{\`e}tres
$T=[\mathsf{n}]+1$, $T_{\sigma}=[\varepsilon_{\sigma}\mathsf{n}]$ et
$n=x\mathsf{n}$, nous obtenons
alors \begin{equation*}\label{borneinterquo}\frac{\varrho_{\sigma}}{\nu}
\le\binom{g}{g_{\sigma}}\times\left
((g+\varepsilon_{\sigma})^{g-g_{\sigma}}-g^{g-g_{\sigma}}\right)\times
\frac{\deg_{L_{\sigma}}B[\sigma]}{x^{g-g_{\sigma}}\deg_{L}A}+\mathrm{o}(1)
\end{equation*}lorsque
$\mathsf{n}\to+\infty$. Dans ce majorant, le dernier quotient vaut
exactement $\xi_{\sigma}^{-1}$ tandis que le facteur qui le
pr\'ec\`ede est major\'e par $r(g,\varepsilon_\sigma)$ (en utilisant
${g\choose g_\sigma}\le g^{g_\sigma}$).\end{proof}
\subsection{Construction d'une section auxiliaire}
Si $\overline{E}=(E,(\Vert.\Vert_{\overline{E},v})_{\text{$v$ place de
$k$}})$ est un fibr{\'e} vectoriel ad{\'e}lique sur $k$, la
hauteur $h_{\overline{E}}(x)$ d'un {\'e}l{\'e}ment $x\in
E\setminus\{0\}$ est le nombre
r{\'e}el~: $$h_{\overline{E}}(x):=\frac{1}{[k:\mathbf{Q}]}\sum_{v}{[k_{v}:\mathbf{Q}_{v}]\log\Vert
x\Vert_{\overline{E},v}}.$$En notant $\Delta_{k/\mathbf{Q}}$ le
discriminant absolu de $k$, le lemme de Siegel de
Bombieri-Vaaler~\cite{BombieriVaaler} affirme qu'il existe $x\in
E\setminus\{0\}$ tel que $$h_{\overline{E}}(x)\le
-\widehat{\mu}(\overline{E})+\frac{1}{2}\log\dim
E+\frac{1}{2[k:\mathbf{Q}]}\log\vert\Delta_{k/\mathbf{Q}}\vert.$$En appliquant ce
lemme {\`a} $\overline{E}=\overline{H}_{n,\alpha}$ et en utilisant les
propositions~\ref{propositionconstruction} et~\ref{propositionrang},
on a le r{\'e}sultat suivant.
\begin{prop}\label{propsiegel}
Il existe une section $s\in\mathrm{H}^{0}(A,L^{\otimes n})$ non nulle telle
que \begin{equation*}h_{\overline{H}_{n,\alpha}}(s)\le\frac{1}{[k:\mathbf{Q}]}
\sum_{\sigma\in\EuScript{V}}\frac{r(g,\varepsilon_{\sigma})}{\xi_{\sigma}}
\left(\log\alpha_{\sigma}+\frac{\pi}{2}n(1+2S_{\sigma}\updelta_{\sigma})\right)
+\mathrm{o}(\mathsf{n}).\end{equation*}
\end{prop}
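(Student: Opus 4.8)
The plan is to apply the Bombieri--Vaaler version of Siegel's lemma recalled just above to the adelic bundle $\overline E=\overline H_{n,\alpha}$ constructed in the previous paragraphs, and then to feed in the lower bound for $\widehat\mu(\overline H_{n,\alpha})$ furnished by Proposition~\ref{propositionconstruction} together with the asymptotic rank estimate of Proposition~\ref{propositionrang}. First I would note that $\dim_k H_{n,\alpha}=\dim_k H_n=\nu=n^g\mathrm h^0(A,L)$, which is polynomial in $\mathsf n$ (since $n=x\mathsf n$), so $\frac12\log\dim E$ contributes only an $\mathrm O(\log\mathsf n)$ term; likewise the discriminant term $\frac1{2[k:\mathbf Q]}\log|\Delta_{k/\mathbf Q}|$ is a constant independent of $\mathsf n$. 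Hence the Bombieri--Vaaler bound gives a nonzero $s\in\mathrm H^0(A,L^{\otimes n})$ with $h_{\overline H_{n,\alpha}}(s)\le-\widehat\mu(\overline H_{n,\alpha})+\mathrm O(\log\mathsf n)$.

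Next I would substitute the estimate from Proposition~\ref{propositionconstruction}: the right-hand side becomes
$$\sum_{\sigma\in\EuScript V}\frac{\varrho_\sigma}{[k:\mathbf Q]\nu}\left(\log\alpha_\sigma+\frac\pi2 n(1+2S_\sigma\updelta_\sigma)\right)+c'\log\mathsf n$$
for a suitable constant $c'$ independent of $\mathsf n$. The point is now to replace the rank ratio $\varrho_\sigma/\nu$ by its asymptotic upper bound. By Proposition~\ref{propositionrang}, for each $\sigma\in\EuScript V$ we have $\varrho_\sigma/\nu\le r(g,\varepsilon_\sigma)/\xi_\sigma+\mathrm o(1)$ as $\mathsf n\to\infty$. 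Since the parenthetical factor $\log\alpha_\sigma+\frac\pi2 n(1+2S_\sigma\updelta_\sigma)$ is $\mathrm O(\mathsf n)$ — indeed $\log\alpha_\sigma=T_\sigma S_\sigma\log 4$ with $S_\sigma$ fixed and $T_\sigma=[\varepsilon_\sigma\mathsf n]$ linear in $\mathsf n$, while $n=x\mathsf n$ and $S_\sigma\updelta_\sigma$ is bounded — multiplying by the $\mathrm o(1)$ error produces a term that is $\mathrm o(\mathsf n)$. Collecting the $\sigma$-independent constant $c'\log\mathsf n$ and these $\mathrm o(\mathsf n)$ contributions into a single $\mathrm o(\mathsf n)$ term yields exactly the claimed inequality.

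The only genuinely delicate point is bookkeeping: one must check that every auxiliary quantity absorbed into $\mathrm o(\mathsf n)$ is indeed $\mathrm o(\mathsf n)$ and not merely $\mathrm O(\mathsf n)$, which is why it matters that $S_\sigma$, $\varepsilon_\sigma$, $\theta$, $x$, $g$ and the set $\EuScript V$ (hence its cardinality, bounded by $[k:\mathbf Q]$) are all fixed independently of $\mathsf n$, so that the only $\mathsf n$-dependence in the main term comes through the explicit factors $\log\alpha_\sigma$ and $n$, both linear in $\mathsf n$, multiplied by coefficients converging to $r(g,\varepsilon_\sigma)/\xi_\sigma$. Once this is observed the substitution is routine and the proposition follows.
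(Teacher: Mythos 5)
Your proposal is correct and matches the paper's own argument: the paper likewise applies the Bombieri--Vaaler lemma to $\overline{E}=\overline{H}_{n,\alpha}$ and then invokes Propositions~\ref{propositionconstruction} and~\ref{propositionrang}, with the $\frac{1}{2}\log\dim E$ and discriminant terms absorbed into $\mathrm{o}(\mathsf{n})$. Your bookkeeping — that $\log\alpha_\sigma+\frac{\pi}{2}n(1+2S_\sigma\updelta_\sigma)$ is $\mathrm{O}(\mathsf{n})$ while the rank-ratio error is $\mathrm{o}(1)$, so their product is $\mathrm{o}(\mathsf{n})$ — is exactly the verification the paper leaves implicit.
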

\subsection{Extrapolation analytique}
{\`A} partir de maintenant, la section $s$ qui appara{\^{\i}}t est
celle construite dans la proposition~\ref{propsiegel} du paragraphe
pr{\'e}c{\'e}dent. Soit $\ell$ l'ordre d'annulation de $s$ en $0$ (le
long de $t_{A}$). La proposition~\ref{propmultiplicites} 
fournit l'estimation $\ell\le gT$.\par Soit $v$ une place
archim{\'e}dienne de $k$ telle qu'un plongement
$\sigma\colon k\hookrightarrow\mathbf{C}$ induit par cette place appartienne {\`a}
$\EuScript{V}$. Soit $\vartheta:t_{A_{\sigma}}\to\mathbf{C}$ la fonction
th{\^{e}}ta associ{\'e}e {\`a} $s$ dans
$H_{n}\otimes_{\sigma}\mathbf{C}$. Dans ce paragraphe, nous {\'e}tablissons
une majoration fine de la $v$-norme du jet de $s$ d'ordre $\ell$ en
$0$. Ceci est rendu possible par la construction de $s$ et de la norme
tordue sur $\overline{H}_{n,\alpha}$ qui implique que les d{\'e}riv{\'e}es
$\frac{1}{\tau!}\mathrm{D}_{f_{\sigma}}^{\tau}\vartheta(m\omega_{\sigma})$
sont \og{}petites\fg{} pour $(m,\tau)\in\Upsilon_{\sigma}$. {\`A}
cette fin, nous allons utiliser le lemme d'interpolation analytique du
\S~\ref{secextrapolation}.

Soit $\tau=(\tau_{1},\ldots,\tau_{g})\in\mathbf{N}^{g}$ de longueur
$\vert\tau\vert=\ell$ et posons
$\tau':=(\tau_{1},\ldots,\tau_{g-1},0)$. Pour $z\in\mathbf{C}$,
consid{\'e}rons la fonction
enti{\`e}re $$\mathtt{f}(z):=\frac{1}{\tau'!}
\mathrm{D}_{f_{\sigma}}^{\tau'}\vartheta(z\omega_{\sigma})$$
o{\`u} $f_{\sigma}$ est la base orthonorm{\'e}e de $t_{A_{\sigma}}$
introduite au \S~\ref{subsectioncomplements} (avec laquelle a
{\'e}t{\'e} construite la matrice $\mathsf{a}_{\sigma}$). Notons
$\mathrm{D}_{\omega_{\sigma}}=\updelta_{\sigma}\mathrm{D}_{f_{g,\sigma}}$
la d{\'e}riv{\'e}e dans la direction de $\omega_{\sigma}$. Pour
$h\in\mathbf{N}$, la d{\'e}riv{\'e}e divis{\'e}e $h^{\text{{\`e}me}}$ de
$\mathtt{f}$ s'{\'e}crit
$$\frac{1}{h!}\mathtt{f}^{(h)}(z)=\frac{1}{\tau'!h!}\mathrm{D}_{f_{
\sigma}}^{\tau'}\mathrm{D}_{\omega_{\sigma}}^{h}\vartheta(z\omega_{\sigma})
=\updelta_{\sigma}^{h}\cdot\frac{1}{\tau^{(h)}!}\mathrm{D}_{f_{  
\sigma}}^{\tau^{(h)}}\vartheta(z\omega_{\sigma})$$ o\`u
$\tau^{(h)}:=\tau'+(0,\ldots,0,h)$. Lorsque $h<T_{\sigma}$, la
longueur de $\tau^{(h)}$ est plus petite que
$\vert\tau\vert+T_{\sigma}-1\le gT+T_{\sigma}-1$ et la derni{\`e}re
coordonn{\'e}e de ce multiplet est plus petite
que $T_{\sigma}-1$. Par cons{\'e}quent, si $h<T_{\sigma}$, les
nombres \begin{equation}\label{nombre}\frac1{\tau^{(h)}!}
\mathrm{D}_{f_{\sigma}}^{\tau^{(h)}}\vartheta(m\omega_{\sigma})
\exp{\left\{-\frac{\pi}{2}n\Vert m\omega_{\sigma}\Vert_{L,\sigma}^{2}
\right\}}\end{equation}avec $m\in\{1-S_{\sigma},\ldots,S_{\sigma}-1\}$
sont des coordonn{\'e}es du vecteur 
$\mathsf{a}_{\sigma}(\mathtt{x})$ o{\`u} $\mathtt{x}$ est le vecteur
des coordonn{\'e}es de $s$ (voir
d{\'e}finition~\ref{definormesmodif}). La norme hermitienne du vecteur
form{\'e} par les nombres~\eqref{nombre} est donc plus petite que
$\alpha_{\sigma}^{-1}\Vert s\Vert_{\overline{H}_{n,\alpha},v}$. En utilisant la
d{\'e}finition de $\alpha_{\sigma}=4^{T_{\sigma}S_{\sigma}}$, on
trouve ainsi \begin{equation}\label{taupetit}\left\vert\frac{1}
{\updelta_{\sigma}^{h}h!}\mathtt{f}^{(h)}(m)\right\vert=\left\vert
\frac{1}{\tau^{(h)}!}\mathrm{D}_{f_{\sigma}}^{\tau^{(h)}}\vartheta
(m\omega_{\sigma})\right\vert\le4^{-T_{\sigma}S_{\sigma}}\Vert
s\Vert_{\overline{H}_{n,\alpha},v}\exp{\left\{\frac{\pi}2n
(S_{\sigma}\updelta_{\sigma})^{2}\right\}}\end{equation}valide
pour tous $m\in\{1-S_{\sigma},\ldots,S_{\sigma}-1\}$ et
$h\in\{0,\ldots, T_{\sigma}-1\}$.
Par ailleurs, le lemme~\ref{lemmecauchy} donne la
majoration \begin{equation*}\vert\mathtt{f}\vert_{S_{\sigma}}\le\Vert
s\Vert_{\infty,\sigma}\exp{\left\{\frac{\pi}{2}n(S_{\sigma}
\updelta_{\sigma}+1)^{2}\right\}}.\end{equation*}Comme nous l'avons vu avant la
proposition~\ref{propositionconstruction}, la norme $\Vert
s\Vert_{\infty,\sigma}$ peut {\^{e}}tre remplac{\'e}e par $n^{c}\Vert
s\Vert_{\overline{H}_{n},v}\le n^{c}\Vert
s\Vert_{\overline{H}_{n,\alpha},v}$ o{\`u} $c>0$ est une constante qui
ne d{\'e}pend que de $(A,L)$.\par La proposition~\ref{lana}
appliqu{\'e}e {\`a} $\mathtt{f}$ et aux param{\`e}tres $S_{\sigma}$ et
$T_{\sigma}$ et l'in{\'e}galit{\'e} de Cauchy $$\left\vert\frac{1}
{\tau_{g}!}\mathtt{f}^{(\tau_{g})}(0)\right\vert\le\vert\mathtt{f}\vert_{1}$$
donnent
alors \begin{equation*}\updelta_{\sigma}^{\tau_{g}}\left\vert\frac{1}
{\tau!}\mathrm{D}_{f_{\sigma}}^{\tau}\vartheta(0)\right\vert\le\left(
\frac{12}{4^{S_{\sigma}}}\right)^{T_{\sigma}}\mathsf{n}^{2c}\Vert
s\Vert_{\overline{H}_{n,\alpha},v}\max{\{1,\updelta_{\sigma}\}}^{T_{\sigma}}
\exp{\left\{\frac{\pi}{2}n(S_{\sigma}\updelta_{\sigma}+1)^{2}\right\}}.
\end{equation*}Cette estimation est valide pour tout
$\tau=(\tau',\tau_{g})\in\mathbf{N}^{g-1}\times\mathbf{N}$ de longueur
$\ell$. En l'utilisant si $\tau_{g}\ge T_{\sigma}$, mais en prenant
plut\^{o}t~\eqref{taupetit} (avec $m=0$) si $\tau_{g}=h<T_{\sigma}$,
on obtient dans tous les cas la borne \begin{equation}
\label{ineqfondamentale}\left\vert\frac{1}{\tau!}\mathrm{D
}_{f_{\sigma}}^{\tau}\vartheta(0)\right\vert\le\left(\frac{12}{4^{S_{\sigma}}} 
\right)^{T_{\sigma}}\mathsf{n}^{2c}\Vert s\Vert_{\overline{H}_{n,
\alpha},v}\max{\left(1,\frac{1}{\updelta_{\sigma}}\right)}^{\ell}
\exp{\left\{\frac{\pi}{2}n(S_{\sigma}\updelta_{\sigma}+1)^{2}\right\}}.
\end{equation}Nous allons la traduire en termes d'une majoration de la norme
du jet de $s$ d'ordre $\ell$ en $0$. Commen{\c{c}}ons par rappeler la
d{\'e}finition alg{\'e}brique d'un jet de section dans un cadre
g{\'e}n{\'e}ral. Soient $m$ un entier naturel, $\EuScript{A}$ un
sch{\'e}ma sur $\EuScript{S}$ et $\EuScript{L}$ un faisceau inversible
sur $\EuScript{A}$. On suppose qu'il existe une immersion ferm{\'e}e
$\epsilon:\EuScript{S}\hookrightarrow\EuScript{A}$. Notons
$\EuScript{I}$ le faisceau d'id{\'e}aux sur $\EuScript{A}$ d{\'e}fini
par $\epsilon$ et $\Omega_{\EuScript{A}/\EuScript{S}}$ le
$\mathcal{O}_{\EuScript{A}}$-module des diff{\'e}rentielles
relatives. Lorsque $\EuScript{A}\to\EuScript{S}$ est lisse le long de
$\epsilon$ ($\EuScript{I}$ r{\'e}gulier), le quotient
$\epsilon^*(\EuScript{I}^{m}/\EuScript{I}^{m+1})$ est isomorphe {\`a}
la puissance sym{\'e}trique
$S^{m}(\epsilon^*\Omega_{\EuScript{A}/\EuScript{S}})$. Une 
section $s\in\mathrm{H}^{0}(\EuScript{A},\EuScript{L})$ s'annule {\`a}
l'ordre $m$ le long de $\epsilon$ si
$s\in\mathrm{H}^{0}(X,\EuScript{I}^{m}\otimes\EuScript{L})$. Dans ce
cas, le \emph{jet d'ordre $m$ de $s$ en $\epsilon$}, not{\'e}
$\jet^{m}s(\epsilon)$, est l'image de $s$ par l'application
compos{\'e}e $$\mathrm{H}^{0}\left(\EuScript{A},\EuScript{I}^{m}
\otimes\EuScript{L}\right)\to\mathrm{H}^{0}\left(\EuScript{S},\epsilon^*(
\EuScript{I}^{m}/\EuScript{I}^{m+1})\otimes\epsilon^*\EuScript{L}\right)
\longrightarrow\mathrm{H}^{0}\left(\EuScript{S},
S^{m}\left(\epsilon^{*}\Omega_{\EuScript{A}/\EuScript{S}}\right)
\otimes\epsilon^{*}\EuScript{L}\right).$$
Notons $\exp_{\sigma}:t_{A_{\sigma}}\to A_{\sigma}$ l'application
exponentielle de $A_{\sigma}$. L'{\'e}l{\'e}ment $\vartheta $ de
$\mathrm{H}^{0}(t_{A_{\sigma}},\exp_{\sigma}^{*}L_{\sigma}^{\otimes
n})$ a un jet d'ordre $\ell$ en $0$. En consid{\'e}rant la base
duale (orthonorm{\'e}e)
$(f_{1,\sigma}^{\mathsf{v}},\ldots,f_{g,\sigma}^{\mathsf{v}})$ de
$(f_{1,\sigma},\ldots,f_{g,\sigma})$, on a 
\begin{equation*}\mathrm{jet}^{\ell}\vartheta(0)=\sum_{\vert\tau
\vert=\ell}\left(\frac{1}{\tau!}\mathrm{D}_{f_{\sigma}}^{\tau}\vartheta(0)\right)
(f_{1,\sigma}^{\mathsf{v}})^{\tau_{1}}\cdots(f_{g,\sigma}^{\mathsf{v}})^{\tau_{g}}\in
S^{\ell}(t_{A}^{\mathsf{v}})\otimes_{\sigma}\mathbf{C}\end{equation*} 
(dans cette somme, $\tau=(\tau_{1},\ldots,\tau_{g})$). La norme de ce jet est
{\'e}gale {\`a} celle du jet de $s$ car $L_{\sigma}$ est rigidifi{\'e}
isom{\'e}triquement en l'origine. Les normes sur la puissance
sym{\'e}trique $S^{\ell}(t_{A}^{\mathsf{v}})$ sont les normes quotient
de $\overline{t_{A}^{\mathsf{v}}}^{\otimes\ell}$
(voir~\cite[p. $45$]{gaudronsept}). En notant
$S^{\ell}(\overline{t_{A}^{\mathsf{v}}})$ le fibr{\'e} ad{\'e}lique
hermitien obtenu, on a
alors \begin{equation}\label{formulejety}\Vert\mathrm{jet}^{\ell}s(0)
\Vert_{S^{\ell}(\overline{t_{A}^{\mathsf{v}}}),v}\le\binom{g-1+\ell}{g-1}
\max_{\vert\tau\vert=\ell}{\left
\vert\frac{1}{\tau!}\mathrm{D}_{f_{\sigma}}^{\tau}\vartheta(0)\right\vert}
\cdotp\end{equation}De plus, gr{\^{a}}ce {\`a} la
proposition~\ref{propminima}, on a 
$$\max{\left(1,\frac{1}{\updelta_{\sigma}}\right)}\le(\deg_{L_{\sigma}}
B[\sigma])^{2}\max{\left(1,\frac{1}{\rho(A_{\sigma},L_{\sigma})}\right)}.$$
De cette observation et des majorations~\eqref{ineqfondamentale}
et~\eqref{formulejety} d\'ecoule l'{\'e}nonc{\'e} suivant.
\begin{prop}Il existe une constante $c>0$, qui ne d\'epend pas de
$\mathsf{n}$, ayant la propri{\'e}t{\'e} suivante. Soit $v$ une
place archim{\'e}dienne de $k$ telle qu'un plongement
$\sigma\colon k\hookrightarrow\mathbf{C}$ induit par $v$ soit dans
$\EuScript{V}$. Le premier jet non nul $\mathrm{jet}^{\ell}s(0)$ de
$s$ en $0$ est de $v$-norme inf{\'e}rieure {\`a}
\begin{equation}
\label{majorationfine}
12^{T_{\sigma}}
\mathsf{n}^{c}\left((\deg_{L_{\sigma}}B[\sigma])^{2}\max{\left\{1,
\frac{1}{\rho(A_{\sigma},L_{\sigma})}\right\}}\right)^{\ell}\Vert
s\Vert_{\overline{H}_{n,\alpha},v}
\exp{\left\{
\frac{\pi}{2}n(S_{\sigma}\updelta_{\sigma}+1)^{2}-T_{\sigma}S_{\sigma}\log
4\right\}}.
\end{equation}
\end{prop}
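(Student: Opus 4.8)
The plan is to derive~\eqref{majorationfine} directly by combining the three estimates assembled immediately before the statement, with no further analytic input. First I would feed the uniform bound~\eqref{ineqfondamentale} on the divided derivatives $\frac{1}{\tau!}\mathrm{D}_{f_{\sigma}}^{\tau}\vartheta(0)$, valid for every multi-index $\tau$ with $|\tau|=\ell$, into~\eqref{formulejety}, which bounds $\Vert\mathrm{jet}^{\ell}s(0)\Vert_{S^{\ell}(\overline{t_{A}^{\mathsf{v}}}),v}$ by $\binom{g-1+\ell}{g-1}$ times the maximum of these divided derivatives over such $\tau$ (this is where the isometric rigidification of $L_{\sigma}$ at the origin has already been used to pass from $s$ to $\vartheta$). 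This step at once produces the archimedean exponential factor $\exp\{\frac{\pi}{2}n(S_{\sigma}\updelta_{\sigma}+1)^{2}\}$, the norm $\Vert s\Vert_{\overline{H}_{n,\alpha},v}$, and the power $\max(1,1/\updelta_{\sigma})^{\ell}$; it only remains to reshape the prefactors.

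Next I would replace $\max(1,1/\updelta_{\sigma})$ by the larger quantity $(\deg_{L_{\sigma}}B[\sigma])^{2}\max(1,1/\rho(A_{\sigma},L_{\sigma}))$, which is exactly the observation recorded above the proposition: it follows from Proposition~\ref{propminima}, since $\rho(A_{\sigma},L_{\sigma})/(\deg_{L_{\sigma}}B[\sigma])^{2}\le\updelta_{\sigma}$ yields $1/\updelta_{\sigma}\le(\deg_{L_{\sigma}}B[\sigma])^{2}/\rho(A_{\sigma},L_{\sigma})$, while $1\le(\deg_{L_{\sigma}}B[\sigma])^{2}$ is trivial. Raising to the power $\ell$ supplies the factor $((\deg_{L_{\sigma}}B[\sigma])^{2}\max\{1,1/\rho(A_{\sigma},L_{\sigma})\})^{\ell}$ of~\eqref{majorationfine}. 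Then I would rewrite the elementary factor $(12/4^{S_{\sigma}})^{T_{\sigma}}$ as $12^{T_{\sigma}}\exp(-T_{\sigma}S_{\sigma}\log 4)$, which gives both the leading $12^{T_{\sigma}}$ and the term $-T_{\sigma}S_{\sigma}\log 4$ inside the exponential.

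After these substitutions the right-hand side coincides with~\eqref{majorationfine} apart from a residual factor $\binom{g-1+\ell}{g-1}\,\mathsf{n}^{2c}$, where $c$ is the Gromov constant already incorporated in~\eqref{ineqfondamentale} and depends only on $(A,L)$. The last step is to absorb this into a single power $\mathsf{n}^{c}$ with a new constant, again denoted $c$ and still independent of $\mathsf{n}$: by Proposition~\ref{propmultiplicites} we have $\ell\le gT=g([\mathsf{n}]+1)$, so $\binom{g-1+\ell}{g-1}$ is dominated by a polynomial in $\mathsf{n}$ of degree $g-1$, whence $\binom{g-1+\ell}{g-1}\,\mathsf{n}^{2c}\le\mathsf{n}^{c}$ for a suitable exponent once $\mathsf{n}$ is large, the finitely many remaining values of $\mathsf{n}$ being handled by enlarging $c$ once more. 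I do not anticipate any real obstacle: all the analytic content sits in the earlier steps (the sharpened Schwarz lemma of Proposition~\ref{lana}, Cauchy's inequality, the Gromov bound, and the orthogonality of $\omega_{\sigma}$ to $t_{B[\sigma]}$), and the only point needing a little care here is the purely bookkeeping check that every quantity swept into $\mathsf{n}^{c}$ depends on $(A,L)$ and the place $v$ alone, never on the parameter $\mathsf{n}$ that is about to tend to infinity.
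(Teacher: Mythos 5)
Your proposal reproduces exactly the paper's argument: the proposition is obtained by chaining~\eqref{ineqfondamentale},~\eqref{formulejety}, and the bound $\max(1,1/\updelta_\sigma)\le(\deg_{L_\sigma}B[\sigma])^2\max(1,1/\rho(A_\sigma,L_\sigma))$ from Proposition~\ref{propminima}, then rewriting $(12/4^{S_\sigma})^{T_\sigma}$ and absorbing the binomial coefficient $\binom{g-1+\ell}{g-1}$ (polynomial in $\mathsf{n}$ since $\ell\le gT$) together with $\mathsf{n}^{2c}$ into a single $\mathsf{n}^c$. The bookkeeping that all constants are independent of $\mathsf{n}$ is handled correctly and the proof is valid.
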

\subsection{Estimation de la hauteur du premier jet non nul}\label{sixhuit}
Le paragraphe pr{\'e}c{\'e}dent a {\'e}t{\'e} consacr{\'e} {\`a}
majorer la norme du premier jet non nul $\mathrm{jet}^{\ell}s(0)$ de
$s$ en $0$ en certaines places archim{\'e}diennes de $k$. Ces normes
ne sont qu'une partie de la hauteur du jet~: $$h_{S^{\ell}
(\overline{t_{A}^{\mathsf{v}}})}(\mathrm{jet}^{\ell}s(0))=
\frac{1}{[k:\mathbf{Q}]}\sum_{v}{[k_{v}:\mathbf{Q}_{v}]\log\Vert\mathrm{jet}^{\ell}
s(0)\Vert_{S^{\ell}(\overline{t_{A}^{\mathsf{v}}}),v}}.$$Ici nous
estimons les normes restantes en distinguant selon leur caract{\`e}re
archim{\'e}dien ou ultram{\'e}trique.
\subsubsection{Majoration de la norme en une place ultram{\'e}trique}
Soit $v$ une place ultram{\'e}trique de $k$. D'apr{\`e}s la
formule~\eqref{formnormeultra} donnant $\Vert
s\Vert_{\overline{H_{n}},v}$,  la section
$s\in\mathrm{H}^{0}(A,L^{\otimes
n})\hookrightarrow\mathrm{H}^{0}(A,L^{\otimes n})\otimes_{k}k_{v}$
s'{\'e}crit $\lambda s'$ avec
$s'\in\mathrm{H}^{0}(\mathcal{A},\mathcal{L}^{\otimes
n})\otimes_{\mathcal{O}_{k}}\mathcal{O}_{v}$ de norme $1$
($\mathcal{O}_{v}$ est l'anneau de valuation de $k_{v}$). Notons
$\mathcal{A}_{v}=\mathcal{A}\times\spec\mathcal{O}_{v}$,
$\epsilon_{v}$ sa section nulle et $\mathcal{L}_{v}$ le faisceau
inversible sur $\mathcal{A}_{v}$ induit par
$\mathcal{L}$. Vu la d{\'e}finition du jet de $s'$
appliqu{\'e}e au quadruplet
$(\EuScript{A},\EuScript{S},\epsilon,\EuScript{L})
=(\mathcal{A}_{v},\spec\mathcal{O}_{v},\epsilon_{v},\mathcal{L}_{v})$,
l'\'el\'ement $\jet^{\ell}s(0)$ vu dans $S^{\ell}(t_{A}^{\mathsf{v}})$ est
{\'e}gal {\`a} $\lambda.\jet^{\ell}s'(\epsilon_{v})$ avec
$\jet^{\ell}s'(\epsilon_{v})\in
S^{\ell}(t_{\mathcal{A}_{v}}^{\mathsf{v}})$ (par lissit{\'e} de
$\mathcal{A}_{v}\to\spec\mathcal{O}_{v}$\,; nous avons omis
$\epsilon_{v}^{*}\mathcal{L}_{v}$ car $\mathcal{L}$ est rigidifi{\'e}
en l'origine). La norme $v$-adique de
$\mathrm{jet}^{\ell}s(0)$ est calcul{\'e}e relativement au mod{\`e}le
entier $S^{\ell}(t_{\mathcal{A}}^{\mathsf{v}})$ de
$S^{\ell}(t_{A}^{\mathsf{v}})$ et on a
l'estimation \begin{equation}\label{majorationultra}
\Vert\mathrm{jet}^{\ell}s(0)\Vert_{S^{\ell}(
\overline{t_{A}^{\mathsf{v}}}),v}\le\vert\lambda\vert_{v}=\Vert 
s\Vert_{\overline{H_{n}},v}=\Vert
s\Vert_{\overline{H}_{n,\alpha},v}.\end{equation}
On peut reformuler cette majoration en disant que la taille du
sous-sch{\'e}ma formel induit par $\mathcal{A}_{v}$ vaut $1$ (car
$\mathcal{A}_{v}$ est lisse le long de l'origine), ce qui
entra{\^{\i}}ne l'int{\'e}gralit{\'e} des jets
(voir~\cite[lemme~$3.3$]{bost6}).
\subsubsection{Majoration de la norme en une place archim{\'e}dienne}
Soient $v$ une place archim{\'e}dienne de $k$ et
$\sigma\colon k\hookrightarrow\mathbf{C}$ un plongement complexe associ{\'e}. Au
moyen de l'estimation~\eqref{formulejety} du jet de $s$ en $0$ {\`a}
l'ordre $\ell$ et \textit{via} le lemme~\ref{lemmecauchy}, on
a $$\Vert\mathrm{jet}^{\ell}s(0)\Vert_{S^{\ell}(
\overline{t_{A}^{\mathsf{v}}}),v}\le\binom{g-1+gT}{g-1}e^{\pi
n/2}\Vert s\Vert_{\infty,\sigma}.$$Le lemme de Gromov fournit
alors l'existence d'une constante $c>0$, ne d{\'e}pendant que de
$(A,L)$, telle que \begin{equation}\label{majorationarchi}
\Vert\mathrm{jet}^{\ell}s(0)\Vert_{S^{\ell}(\overline{t_{A}^{\mathsf{v}}}),v}\le
e^{\pi n/2}\mathsf{n}^{c}\Vert s\Vert_{\overline{H}_{n,\alpha},v}\
.\end{equation}
\subsubsection{Hauteur du jet}
En regroupant les estimations~\eqref{majorationfine},
\eqref{majorationultra} et~\eqref{majorationarchi}, la hauteur
du premier jet non nul
v{\'e}rifie\begin{equation}\begin{split}\label{majorationduhjet}
h_{S^{\ell}(\overline{t_{A}^{\mathsf{v}}})}(\mathrm{jet}^{\ell}s(0))\le
&\,h_{\overline{H}_{n,\alpha}}(s)+c\log\mathsf{n}+\frac{2gT}{[k:\mathbf{Q}]}
\sum_{\sigma\in\EuScript{V}}{\log\deg_{L_{\sigma}}B[\sigma]}\\
&\quad+\frac{gT}{[k:\mathbf{Q}]}\sum_{\sigma\in\EuScript{V}}{\log\max{
\left\{1,\frac{1}{\rho(A_{\sigma},L_{\sigma})}\right\}}}-\frac{\log
4}{[k:\mathbf{Q}]}\sum_{\sigma\in\EuScript{V}}{T_{\sigma}S_{\sigma}}\\
&\quad+\frac{\pi n}{2}\left(1+\frac{1}{[k:\mathbf{Q}]}
\sum_{\sigma\in\EuScript{V}}{S_{\sigma}\updelta_{\sigma}(S_{\sigma}
\updelta_{\sigma}+2)}\right)+\frac{\log12}{[k:\mathbf{Q}]}\sum_{\sigma\in\EuScript{V}}
{T_{\sigma}}\end{split}\end{equation}pour une certaine constante $c>0$
qui ne d{\'e}pend pas de $\mathsf{n}$.
\subsection{Conclusion}\label{secconcl}
Rappelons que la pente maximale
$\widehat{\mu}_{\mathrm{max}}(\overline{E})$ d'un $k$-fibr{\'e}
ad{\'e}lique $\overline{E}$ est le maximum des pentes des
sous-fibr{\'e}s non nuls de $\overline{E}$. En consid{\'e}rant, pour
$\mathsf{e}\in E\setminus\{0\}$, la droite $k.\mathsf{e}$ munie des
m{\'e}triques de $\overline{E}$, on a $-h_{\overline{E}}(\mathsf{e})=
\widehat{\mu}(k.\mathsf{e},(\Vert.\Vert_{\overline{E},v})_{v})
\le\widehat{\mu}_{\mathrm{max}}(\overline{E})$. En appliquant ce
principe {\`a} $\overline{E}=S^{\ell}(\overline{t_{A}^{\mathsf{v}}})$ et
$\mathsf{e}=\mathrm{jet}^{\ell}s(0)$ on trouve
\begin{equation}\label{ineqliou}h_{S^{\ell}(\overline{t_{A}^{\mathsf{v}}})}
(\mathrm{jet}^{\ell}s(0))\ge-\widehat{\mu}_{\mathrm{max}}\left(S^{\ell}
(\overline{t_{A}^{\mathsf{v}}})\right).\end{equation}
En outre, un th{\'e}or{\`e}me de Bost {\'e}value la pente maximale
de la puissance sym{\'e}trique $\ell^{\text{{\`e}me}}$ d'un fibr{\'e}
ad{\'e}lique hermitien, qui ici
s'{\'e}crit $$\widehat{\mu}_{\mathrm{max}}\left(S^{\ell}(
\overline{t_{A}^{\mathsf{v}}})\right)\le\ell\left(
\widehat{\mu}_{\mathrm{max}}(\overline{t_{A}^{\mathsf{v}}})+(g+1/2)\log
g\right)$$(une d{\'e}monstration est donn{\'e}e au \S~$7$
de~\cite{gaudronsept}). Dans la suite on utilisera le majorant
$gT(\max{\{0,\widehat{\mu}_{\mathrm{max}}(\overline{t_{A}^{\mathsf{v}}})\}}
+(g+1/2)\log g)$. On compare cette majoration {\`a}
l'estimation~\eqref{majorationduhjet} de la hauteur de
$\mathrm{jet}^{\ell}s(0)$. On fait intervenir la majoration de
$h_{\overline{H}_{n,\alpha}}(s)$ de la proposition~\ref{propsiegel}
dans laquelle est int{\'e}gr{\'e}e la valeur de
$\alpha_{\sigma}=4^{T_{\sigma}S_{\sigma}}$. On obtient ainsi la version
\og{}d{\'e}pli{\'e}e\fg{} de l'in{\'e}galit{\'e}~\eqref{ineqliou}, que
l'on divise par $\mathsf{n}$. Puis on fait tendre $\mathsf{n}$ vers
$+\infty$. De plus nous utilisons \`a pr\'esent la valeur de
$r(g,\varepsilon)$ d\'efinie avant la
proposition~\ref{propositionrang}. Le choix explicite de
$\varepsilon_\sigma$ (\S~\ref{secchoixpara}) est fait pour donner
la borne $r(g,\varepsilon_{\sigma})
\xi_{\sigma}^{-1}\le1/2$ : on v\'erifie en effet
$(g+(6\sqrt2-8)g^{-g}\xi_\sigma)^g\le g^g+\xi_\sigma/2$ par calcul
direct si $g=2$ et en utilisant $r(g,\varepsilon)\le
g^g\varepsilon(1-\varepsilon)^{-1}$ si $g\ge3$.
\par Nous pouvons alors {\'e}crire le r{\'e}sultat brut de la
mani{\`e}re suivante. Posons \begin{equation}\begin{split}
\label{defidealephun}\aleph_{1}:=&g\max{\{0,\widehat{\mu}_{\mathrm{max}}
(\overline{t_{A}^{\mathsf{v}}})\}}+g(g+1/2)\log g+\frac{g}{[k:\mathbf{Q}]}
\sum_{\sigma\in\EuScript{V}}{\log\max{\left\{1,\frac{1}{
\rho(A_{\sigma},L_{\sigma})}\right\}}}\\&+\frac{2g}{[k:\mathbf{Q}]}
\sum_{\sigma\in\EuScript{V}}{\log\deg_{L_{\sigma}}B[\sigma]}+\frac{\log
12}{[k:\mathbf{Q}]}\sum_{\sigma\in\EuScript{V}}{\varepsilon_{\sigma}}
\end{split}\end{equation} et \begin{equation*}\label{defialephdeux}
\aleph_{2}:=1+\frac{1}{[k:\mathbf{Q}]}\sum_{\sigma\in\EuScript{V}}{S_{\sigma}
\updelta_{\sigma}(S_{\sigma}\updelta_{\sigma}+2)}+
\frac{1}{[k:\mathbf{Q}]}\sum_{\sigma\in\EuScript{V}}{\frac{1}{2}
(1+2S_{\sigma}\updelta_{\sigma})}.\end{equation*}Alors
on a\begin{equation}\label{ineqbrute}\frac{\log 2}{[k:\mathbf{Q}]}
\sum_{\sigma\in\EuScript{V}}{\varepsilon_{\sigma}S_{\sigma}}\le
\aleph_{1}+\frac{\pi}{2}x\aleph_{2}.\end{equation}
Posons $$M:=\frac{1}{[k:\mathbf{Q}]}\sum_{\sigma\in
\EuScript{V}}{\left(\frac{\varepsilon_{\sigma}}{\updelta_{\sigma}}\right)^{2}}.$$
En utilisant $[a]>a-1$ pour $a\in\mathbf{R}$ et au moyen de la d{\'e}finition de
$S_{\sigma}=[\theta\varepsilon_{\sigma}/(x\updelta_{\sigma}^{2})]$ et de la
borne $2\varepsilon_{\sigma}\le g^{-g}$, on a
\begin{equation}\label{minovingt}\frac{\log
2}{[k:\mathbf{Q}]}\sum_{\sigma\in\EuScript{V}}{\varepsilon_{\sigma}S_{\sigma}}>(\log
2)\left(\frac{\theta M}{x}-\frac1{2g^{g}}\right).\end{equation} En
utilisant l'in{\'e}galit\'e de Cauchy-Schwarz, on obtient une
majoration simple de
$\aleph_{2}$\begin{equation*}\label{majalephdeux}\aleph_{2}\le\frac32
+\frac{3\theta\sqrt{M}}{x}+\left(\frac{\theta}{x}\right)^{2}M.\end{equation*}
En reportant ces estimations dans~\eqref{ineqbrute}, on
trouve  $$\frac{\theta M}{x}\left(\log
2-\frac{\pi\theta}{2}\right)\le\left(\aleph_{1}+\frac{3\pi}{4}x+\frac{\log
2}{2g^{g}}\right)+\frac{3\pi\theta}{2}\sqrt{M},$$puis, avec
le choix de $\theta=(\log 2)/\pi$, on
a \begin{equation}\label{ineqsix}M-\left(\frac{3\pi x}{\log
2}\right)\sqrt{M}\le\frac{2\pi x}{(\log
2)^{2}}\left(\aleph_{1}+\frac{3\pi}{4}x+\frac{\log
2}{2g^{g}}\right).\end{equation}\par\noindent\textbf{Fait}~:
\emph{Soient $\alpha,\beta$ des nombres r{\'e}els positifs. Si
$M-\alpha\sqrt{M}\le\beta$ alors on a $$M\le\beta\left(
\frac{\alpha}{2\sqrt{\beta}}+\sqrt{1+\frac{\alpha^{2}}{4\beta}}
\right)^{2}.$$}\par
Le majorant de $M$ est le carr\'e de la racine positive du
trin\^ome $X^{2}-\alpha X-\beta$, ce qui justifie le fait. Ici,
ce r\'esultat fournit la majoration \begin{equation*}M\le\frac{2\pi x}{(\log
2)^{2}}\left(\max{\left\{105,\aleph_{1}+\frac{3\pi}{4}x+\frac{\log
2}{2g^{g}}\right\}}\right)\left(\sqrt{3\pi x\over280}+\sqrt{1+{3\pi
x\over280}}\right)^{2}.\end{equation*} Nous nous pla\c cons d'abord
dans le cas o\`u $x\le1/2141$ (le cas restant sera \'etudi\'e \`a la
fin de ce paragraphe). La borne ci-dessus devient 
$M\le\mathsf13,2x\max{\left\{105,\aleph_1+\frac{3\pi}{4}x+\frac{\log
2}{2g^{g}}\right\}}$. Par ailleurs, en revenant {\`a} la
d{\'e}finition de $\EuScript{V}$ (\S~\ref{secaux}), si 
$\sigma\not\in\EuScript{V}$, on
a $$\left(\frac{\varepsilon_{\sigma}}{\updelta_{\sigma}}\right)^{2}
\le\frac{x\varepsilon_\sigma}{\theta}\le\frac{x\pi}{2(\log
2)g^{g}}\cdotp$$ Cette majoration coupl{\'e}e avec la borne
obtenue pr{\'e}c{\'e}demment pour $M$
donne\begin{equation*}\begin{split}\frac{1}{[k:\mathbf{Q}]}
\sum_{\sigma\colon k\hookrightarrow\mathbf{C}}{\left(\frac{\varepsilon_{\sigma}}
{\updelta_{\sigma}}\right)^{2}}&\le13,2x\left(
\max{\left\{105,\aleph_{1}+\frac{3\pi 
x}{4}+\frac{\log 2}{2g^{g}}\right\}}+\frac{\pi}{26(\log
2)g^{g}}\right)\\ &\le13,2x\max{\left(106,\aleph_{1}+1/7\right)},
\end{split}\end{equation*} avec \`a nouveau $x\le1/2141$.
On en d{\'e}duit \begin{equation}\label{majsommedeux}
\frac{1}{[k:\mathbf{Q}]}\sum_{\sigma\colon k\hookrightarrow\mathbf{C}}{\left(\frac
{\xi_{\sigma}}{\updelta_{\sigma}}\right)^{2}}\le56,06g^{2g}x
\max\{106,\aleph_{1}+1/7\}.\end{equation}Il ne reste
plus qu'{\`a} estimer $\aleph_1$ pour conclure.
\begin{prop}
On a \begin{equation*}\aleph_{1}\le0,342g^{6}\max{\{1,h(A),
\log\deg_{L}A\}}+\frac{2g}{[k:\mathbf{Q}]}
\sum_{\sigma\colon k\hookrightarrow\mathbf{C}}{\log\deg_{L_{\sigma}}B[\sigma]}.
\end{equation*}\end{prop}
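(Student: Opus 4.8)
The plan is to estimate separately the five summands that make up $\aleph_1$ in~\eqref{defidealephun}. Write throughout $M:=\max\{1,h(A),\log\deg_{L}A\}$, so that $M\ge1$, $h(A)\le M$ and $\tfrac12\log\deg_{L}A\le\tfrac12M$, and recall from~\ref{redell} that we may assume $g\ge2$. The degree summand $\tfrac{2g}{[k:\mathbf{Q}]}\sum_{\sigma\in\EuScript{V}}\log\deg_{L_{\sigma}}B[\sigma]$ will simply be kept: each $\deg_{L_{\sigma}}B[\sigma]$ being a positive integer, $\log\deg_{L_{\sigma}}B[\sigma]\ge0$, hence $\sum_{\sigma\in\EuScript{V}}\le\sum_{\sigma\colon k\hookrightarrow\mathbf{C}}$ and this term is bounded by the second summand of the asserted inequality.

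For the maximal‑slope term I would feed in Proposition~\ref{mumax}: since $\mathrm{h}^{0}(A,L)=\deg_{L}A/g!\le\deg_{L}A$ it gives $\widehat{\mu}_{\mathrm{max}}(\overline{t_{A}^{\mathsf{v}}})\le(g+1)\bigl(h(A)+\tfrac12\log\deg_{L}A\bigr)+2g^{5}\log2\le\bigl(\tfrac32(g+1)+2g^{5}\log2\bigr)M$, whence $g\max\{0,\widehat{\mu}_{\mathrm{max}}(\overline{t_{A}^{\mathsf{v}}})\}\le g\bigl(\tfrac32(g+1)+2g^{5}\log2\bigr)M$. The elementary term $g(g+1/2)\log g$ is at once $\le g(g+1/2)(\log g)M$ since $M\ge1$.

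For the period term I would use the elementary bound $\log t\le t/e$ valid for all $t>0$, which yields $\log\max\{1,1/\rho(A_{\sigma},L_{\sigma})\}=\tfrac12\log^{+}\!\bigl(\rho(A_{\sigma},L_{\sigma})^{-2}\bigr)\le\tfrac1{2e}\,\rho(A_{\sigma},L_{\sigma})^{-2}$. All these summands being nonnegative, $\sum_{\sigma\in\EuScript{V}}\le\sum_{\sigma\colon k\hookrightarrow\mathbf{C}}$, and the Autissier‑type matrix lemma of Proposition~\ref{coromatnet} bounds $\tfrac1{[k:\mathbf{Q}]}\sum_{\sigma}\rho(A_{\sigma},L_{\sigma})^{-2}$ by $11M$; so this term is at most $\tfrac{11g}{2e}M$. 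Finally, since $x\le x(B[\sigma])$ one has $\xi_{\sigma}\le1$, hence $0<\varepsilon_{\sigma}=(6\sqrt2-8)g^{-g}\xi_{\sigma}\le(6\sqrt2-8)g^{-g}$ (see \S~\ref{secchoixpara}); as $\tfrac1{[k:\mathbf{Q}]}\sum_{\sigma\colon k\hookrightarrow\mathbf{C}}1=1$, the term $\tfrac{\log12}{[k:\mathbf{Q}]}\sum_{\sigma\in\EuScript{V}}\varepsilon_{\sigma}$ is at most $(\log12)(6\sqrt2-8)g^{-g}\le M$.

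Summing the four contributions to the $M$‑part gives $\aleph_{1}\le c(g)M+\tfrac{2g}{[k:\mathbf{Q}]}\sum_{\sigma\colon k\hookrightarrow\mathbf{C}}\log\deg_{L_{\sigma}}B[\sigma]$ for an explicit constant $c(g)$ depending only on $g$. The only remaining work — and the sole point requiring any care — is the purely numerical verification that $c(g)\le0,342\,g^{6}$ for every $g\ge2$; this is precisely where the sharpness of the constants in Propositions~\ref{coromatnet} and~\ref{mumax} is used, the check being carried out by a direct estimate for the first few values of $g$ and by monotonicity in $g$ thereafter. I expect this bookkeeping to be the main (and essentially the only) obstacle, the rest of the argument being mechanical.
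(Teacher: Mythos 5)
There is a genuine gap, and it lies exactly at the point you flag as a routine numerical check: the inequality $c(g)\le0{,}342\,g^{6}$ is simply false for your choice of $c(g)$. The dominant contribution to your $c(g)$ comes from the $g\,\widehat{\mu}_{\mathrm{max}}(\overline{t_{A}^{\mathsf{v}}})$ term, which by Proposition~\ref{mumax} already carries a summand $2g^{6}\log 2\approx 1{,}386\,g^{6}$; this alone exceeds $0{,}342\,g^{6}$ for every $g$, and the total for $g=2$ is $c(2)\approx106$ against a target of $0{,}342\cdot 64\approx22$. Bounding each additive constant by $M\ge1$ therefore cannot give the stated coefficient.

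The missing idea is that the proposition is established in the regime $x\le 1/2141$ introduced just before it in \S~\ref{secconcl}, so that $M=\max\{1,h(A),\log\deg_{L}A\}\ge\log\deg_{L}A\ge-\log x\ge\log 2141$. The paper first isolates an $M$-proportional part and a residual constant part,
$$\aleph_1\le\frac{3g(g+1)M+g\log M}{2}+\Bigl(2g^{6}\log2+g(g+1/2)\log g+\tfrac{g}{2}\log 11+\tfrac{\log12}{2g^{g}}\Bigr)+\frac{2g}{[k:\mathbf{Q}]}\sum_{\sigma}\log\deg_{L_{\sigma}}B[\sigma],$$
checks that the first bracket is $\le 0{,}147\,g^{6}M$ (using $g\ge2$ and $\log M\le M/e$) and the second is $\le 1{,}49\,g^{6}$, and only \emph{then} uses $M\ge\log 2141$ to absorb the constant: $1{,}49\,g^{6}\le\frac{1{,}49}{\log2141}\,g^{6}M\le0{,}195\,g^{6}M$, whence $0{,}147+0{,}195\le0{,}342$. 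Your decomposition of $\aleph_1$ and the use of Propositions~\ref{mumax} and~\ref{coromatnet} are on the right track (the paper instead bounds the $\rho$-term by a concavity argument yielding $\tfrac{g}{2}\log(11M)$ rather than your $\tfrac{11g}{2e}M$, which is tighter but not essential here), but without invoking $x\le1/2141$ the bound $0{,}342\,g^{6}M$ is unattainable.
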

\begin{proof} La majoration de $\aleph_{1}$ repose sur la
d{\'e}finition~\eqref{defidealephun}. Le premier terme avec la pente
maximale du cotangent est estim\'e par la proposition~\ref{mumax}
et l'on utilise la majoration $\mathrm{h}^{0}(A,L)\le\deg_{L}A$.
En posant $h=\max(1,h(A),\log\deg_LA)$, on a 
$${1\over[k:\mathbf{Q}]}\sum_{\sigma\colon k\hookrightarrow\mathbf{C}}
\log\max(1,\rho(A_\sigma,L_\sigma)^{-2})\le\max\left(1,
\log{1\over[k:\mathbf{Q}]}\sum_{\sigma\colon
k\hookrightarrow\mathbf{C}}\rho(A_\sigma,L_\sigma)^{-2}\right)\le\log(11h)$$
gr\^ace \`a la proposition~\ref{coromatnet}. On aboutit alors {\`a} la
majoration \begin{equation*}\begin{split}\aleph_{1}\le&\frac{3g(g+1)h+
g\log h}{2}+\frac{2g}{[k:\mathbf{Q}]}
\sum_{\sigma\colon k\hookrightarrow\mathbf{C}}{\log\deg_{L_{\sigma}}B[\sigma]}\\
&\quad+2g^{6}\log 2+g(g+1/2)\log g+{g\over2}\log11+\frac{\log
12}{2g^{g}}.\end{split}\end{equation*} La derni{\`e}re constante est 
major{\'e}e par $1,49g^6$. De plus $(3g(g+1)h+g\log h)/2$ est plus
petit que $0,147g^{6}h$ (les coefficients num{\'e}riques sont obtenus
avec $g=2$ et $\log h\le h/e$). Lorsque $x\le 1/2141$, on
note que $h\ge\log\deg_LA\ge-\log x\ge\log2141$ puis
$0,147+1,49/\log2141\le0,342$ qui donne la proposition.\end{proof}
Le majorant dans cette proposition est toujours sup\'erieur \`a
$0,342\times 2^6\log2141>106-1/7$. \textit{Via}
l'in\'egalit\'e~\eqref{majsommedeux} et la borne
$0,342g^{6}+2g+1/7\le0,41g^{6}$ pour $g\ge
2$, on en d\'eduit le th\'eor\`eme~\ref{thmimportant} (sous
l'hypoth\`ese $x\le 1/2141$) en observant que $0,41\times56,06<23$.\par
Dans le cas o\`u $x>1/2141$, nous utilisons
$x\le(\deg_LA)^{-1/g}\le1/\sqrt2$. Nous menons les calculs exactement
de la m\^eme mani\`ere, seules les constantes num\'eriques \'evoluent
selon le tableau suivant :
$$\begin{array}{|l|c|c|c|c|c|c|}\hline x\le1/2141&13,2&1/7&56,06&0,342&0,41&23
\\\hline x\le1/\sqrt2&17,8&1,8&75,59&1,637&1,73&131\\\hline\end{array}$$
\subsection{Cas de la dimension deux} Dans ce paragraphe, nous
\'etablissons une variante du th{\'e}or{\`e}me~\ref{thmimportant},
dans le cas particulier utile pour l'application aux th{\'e}or{\`e}mes
d'isog{\'e}nies elliptiques.
\begin{prop}\label{propgdeux}Supposons que $g=2$ et que $\deg_{L}A\ge
10^{10}$. Pour chaque plongement $\sigma\colon k\hookrightarrow\mathbf{C}$, soit
$B[\sigma]$ une sous-vari{\'e}t{\'e} ab{\'e}lienne de $A_{\sigma}$
telle que $x(B[\sigma])=x$ et rappelons que $\updelta_{\sigma}$
d{\'e}signe $\min{\{\mathrm{d}_{\sigma}(\omega,t_{B[\sigma]})\,;\
\omega\in\Omega_{A_{\sigma}}\setminus t_{B[\sigma]}\}}$. Alors on 
a \begin{equation*}\begin{split}
\frac{1}{[k:\mathbf{Q}]}\sum_{\sigma\colon k\hookrightarrow\mathbf{C}}{\frac{1}{\updelta_{\sigma}^{2}}}\le
1778x&\left(\max{\{1000,\widehat{\mu}_{\mathrm{max}}
(\overline{t_{A}^{\mathsf{v}}})\}}+1,95+\log\deg_{L}A\right.\\
&\qquad\left.{}+\frac{1}{[k:\mathbf{Q}]}\sum_{\sigma\colon k\hookrightarrow\mathbf{C}}
{\log\max{\left(1,\frac{1}{\rho(A_{\sigma},L_{\sigma})}\right)}}
\right).\end{split}\end{equation*}\end{prop}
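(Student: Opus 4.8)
The statement is exactly the case $g=2$ of the key theorem~\ref{thmimportant} with the choices $B[\sigma]$ realizing $x(B[\sigma])=x$, so $\xi_\sigma=1$, tracked with better bookkeeping of the numerical constants under the extra hypothesis $\deg_LA\ge 10^{10}$ (hence $x\le(\deg_LA)^{-1/2}\le10^{-5}$, which is much smaller than the threshold $1/2141$). The plan is therefore to re-run the end of the proof of theorem~\ref{thmimportant}, i.e. the chain of inequalities from \eqref{ineqbrute} onwards, but instead of bounding $\aleph_1$ through proposition~\ref{mumax}, keeping the maximal slope $\widehat\mu_{\mathrm{max}}(\overline{t_A^{\mathsf v}})$ and the archimedean sums explicit in the final estimate. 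First I would recall that with $g=2$ the choice $\varepsilon_\sigma=(6\sqrt2-8)g^{-g}$ gives $r(g,\varepsilon_\sigma)\xi_\sigma^{-1}\le 1/2$ exactly as in \S\ref{secconcl}, so the rank estimate (proposition~\ref{propositionrang}) and the Siegel step (proposition~\ref{propsiegel}) apply verbatim, and the brute inequality \eqref{ineqbrute} holds with $\aleph_1$ as in \eqref{defidealephun} and $\aleph_2$ as defined there.

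Next I would carry over the manipulations of \S\ref{secconcl} literally: from \eqref{minovingt}, Cauchy--Schwarz on $\aleph_2$, and the substitution $\theta=(\log2)/\pi$, one reaches \eqref{ineqsix}, then applies the elementary \textbf{Fait} to bound $M=[k:\mathbf Q]^{-1}\sum_{\sigma\in\EuScript V}(\varepsilon_\sigma/\updelta_\sigma)^2$. Since here $x\le10^{-5}$ is far below $1/2141$, the prefactor $(\sqrt{3\pi x/280}+\sqrt{1+3\pi x/280})^2$ is extremely close to $1$, so $M\le(\text{const})\,x\max\{\cdots,\aleph_1+\cdots\}$ with a constant only marginally above $2\pi/(\log2)^2\approx13.07$; a small refinement of the tables in \S\ref{secconcl} with $x\le10^{-5}$ in place of $x\le1/2141$ gives a slightly smaller leading constant. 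Completing the sum over $\sigma\notin\EuScript V$ as in \S\ref{secconcl} (each such term is $\le x\pi/(2(\log2)g^g)$) and recalling $\xi_\sigma=1$ and $g=2$ converts the bound on $M$ into $(1/[k:\mathbf Q])\sum_\sigma\updelta_\sigma^{-2}\le C\,x\max\{\cdot,\aleph_1+\cdot\}$ for an explicit $C$.

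The one genuine divergence from the proof of theorem~\ref{thmimportant} is the treatment of $\aleph_1$. In \eqref{defidealephun}, $\aleph_1$ contains $g\max\{0,\widehat\mu_{\mathrm{max}}(\overline{t_A^{\mathsf v}})\}$, the term $g(g+1/2)\log g$, the average of $\log\max(1,\rho(A_\sigma,L_\sigma)^{-2})$, the average of $\log\deg_{L_\sigma}B[\sigma]$, and $(\log12)[k:\mathbf Q]^{-1}\sum\varepsilon_\sigma$. Here I would \emph{not} invoke proposition~\ref{mumax} (which would reintroduce $h(A)$ and degrade the shape of the bound); instead I keep $\widehat\mu_{\mathrm{max}}(\overline{t_A^{\mathsf v}})$ as is, bound $\deg_{L_\sigma}B[\sigma]\le\deg_LA$ so that the $B$-average contributes $\le\log\deg_LA$ (times $2g=4$ from the $2g$ coefficient, but one absorbs the split into the $1778x$ constant together with the $\sqrt2/2$-type factors), note $g(g+1/2)\log g=5\log2$ for $g=2$, and collect $\log12/(2g^g)+(\log12)\varepsilon_\sigma$-type constants and the completion term into the additive constant $1.95$. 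Plugging $g=2$ and the hypothesis $\deg_LA\ge10^{10}$ (which makes $\log\deg_LA$ dominate the genuinely constant pieces, exactly the role $\log2141$ played in \S\ref{secconcl}) then yields the displayed inequality with leading constant $1778$ after the numerical verification $C\cdot(\text{the constant from absorbing }2g\text{ and the }\sqrt2\text{ factors})<1778$.

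The main obstacle is purely the constant-chasing: one must verify numerically that, with $g=2$ fixed and $x\le10^{-5}$, the accumulated factors $2\pi/(\log2)^2$, the near-unit square-root prefactor, the $56.06$-type aggregation of the $\binom{g-1+\ell}{g-1}$ and Hilbert--Schmidt bounds, and the conversion from $\varepsilon_\sigma/\updelta_\sigma$ to $1/\updelta_\sigma$ (here $\varepsilon_\sigma=(6\sqrt2-8)/4$ is a fixed number, so this is just division by a constant) together stay below $1778$, and that the leftover constants $5\log2$, $\log12/8$, $(\log12)(6\sqrt2-8)/4$, the $+1/7$-analogue, etc., sum to at most $1.95$ once $\log\deg_LA\ge10\log10$ is available to soak up anything comparable to a pure constant. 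No new mathematical idea is needed beyond what is already in \S\S\ref{secaux}--\ref{secconcl}; the work is to redo the final tableau with the sharper input $\deg_LA\ge10^{10}$ and with $\widehat\mu_{\mathrm{max}}(\overline{t_A^{\mathsf v}})$ kept explicit rather than bounded via $h(A)$.
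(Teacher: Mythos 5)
Your overall plan matches the paper's proof of Proposition~\ref{propgdeux} exactly: re-run the tail of the proof of Theorem~\ref{thmimportant} from~\eqref{ineqbrute} with $g=2$, $\xi_\sigma=1$, constant $\varepsilon_\sigma=\varepsilon=(3\sqrt2-4)/2$, keep $\widehat\mu_{\mathrm{max}}(\overline{t_A^{\mathsf v}})$ unbounded instead of invoking Proposition~\ref{mumax}, and exploit $x\le(\deg_LA)^{-1/2}\le10^{-5}$ in the final numerics.

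There is, however, one concrete gap that prevents the displayed inequality from coming out. You bound $\deg_{L_\sigma}B[\sigma]\le\deg_LA$, which indeed holds here since $x(B[\sigma])=x\le1$, but it is strictly too weak. The hypothesis $x(B[\sigma])=x$ combined with $x\le(\deg_LA)^{-1/g}=(\deg_LA)^{-1/2}$ gives, for $\mathsf t=\codim B[\sigma]\in\{1,2\}$,
$$\deg_{L_\sigma}B[\sigma]=x^{\mathsf t}\deg_LA\le(\deg_LA)^{1-\mathsf t/2}\le\sqrt{\deg_LA},$$
and this square root is essential. The term in $\aleph_1$ carrying $\deg_{L_\sigma}B[\sigma]$ has coefficient $2g=4$, so the sharp bound contributes $4\cdot\tfrac12\log\deg_LA=2\log\deg_LA$ to $\widetilde\aleph_1$, matching the other terms which all carry a factor $2$; after dividing through by $2$ (that factor is what turns $2x/(\pi(\theta\varepsilon)^2)\approx889x$ into $1778x$), every piece inside the parenthesis ends up with coefficient $1$, as in the statement. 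Your weaker bound gives $4\log\deg_LA$ inside $\aleph_1$, hence coefficient $2$ on $\log\deg_LA$ in the final parenthesis rather than $1$, which you cannot trade against the leading constant $1778x$ because that constant also multiplies $\max\{1000,\widehat\mu_{\mathrm{max}}(\overline{t_A^{\mathsf v}})\}$ and $\log\deg_LA$ can be arbitrarily large. The rest of your constant-chasing (the $5\log2$, the $(\log12)\varepsilon$, the $3\theta\varepsilon\pi/2$ correction, the negligible $3\pi x/4$) does aggregate to the stated $1{,}95$ once divided by $2$, and the near-unit prefactor from the Fait with $x\le10^{-5}$ is handled correctly, so this missing square-root bound on $\deg_{L_\sigma}B[\sigma]$ is the single step to repair.
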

\begin{proof} Nous reprenons la d\'emonstration du
th\'eor\`eme~\ref{thmimportant}. Nous n'apportons aucun changement aux
paragraphes~\ref{subsectionchoix} \`a~\ref{sixhuit}. L'invariance de
l'\'enonc\'e par extension de corps utilis\'ee au
paragraphe~\ref{subsectionchoix} reste valable pour la
pr\'esente proposition en raison de notre convention sur la pente
maximale.\par Nous pouvons alors modifier les estimations du
paragraphe~\ref{secconcl} \`a partir de l'in\'egalit\'e
principale~\eqref{ineqbrute}. Comme $x(B[\sigma])=x$,
tous les $\xi_{\sigma}$ valent $1$ et en particulier
$\varepsilon_\sigma$ est ind\'ependant de $\sigma$. Nous notons ici
simplement $\varepsilon$ cette valeur commune~:
$\varepsilon=(3\sqrt{2}-4)/2$. L'\'egalit\'e
$x(B[\sigma])=x$ fournit aussi 
$\deg_{L_{\sigma}}B[\sigma]\le\sqrt{\deg_{L}A}$. Ces consid{\'e}rations
permettent de voir que \begin{equation*}\begin{split}\aleph_{1}\le
\widetilde{\aleph}_{1}:={}&2\max{\{1000,\widehat{\mu}_{\mathrm{max}}(
\overline{t_{A}^{\mathsf{v}}})\}}+\frac{2}{[k:\mathbf{Q}]}\sum_{\sigma\colon k
\hookrightarrow\mathbf{C}}{\log\max{\left\{1,\frac{1}
{\rho(A_{\sigma},L_{\sigma})}\right\}}}\\&+2\log\deg_{L}A
+3,77\end{split}\end{equation*} (o\`u nous employons $5\log
2+\varepsilon\log12\le3,77$). Notons en outre 
$m:=\left(\sum_{\sigma\in\EuScript{V}}{\updelta_{\sigma}^{-2}}
\right)/[k:\mathbf{Q}]$. Nous avons alors la
majoration \begin{equation*}\label{majalephdeuxbis}\aleph_{2}\le
\frac{3}{2}+\frac{3\theta\varepsilon\sqrt{m}}{x}+
\left(\frac{\theta\varepsilon}{x}\right)^{2}m.\end{equation*}
Le pendant de~\eqref{minovingt} est
$(\sum_{\sigma\in\EuScript{V}}{S_{\sigma}})/[k:\mathbf{Q}]\ge(\theta\varepsilon
m)/x-1$ et l'in{\'e}galit{\'e}~\eqref{ineqsix}
devient \begin{equation*}m-\frac{3x}{\theta\varepsilon}
\sqrt{m}\le\frac{2x}{\pi(\theta\varepsilon)^{2}}
\left(\widetilde{\aleph}_{1}+\theta\varepsilon\pi+\frac{3\pi
x}{4}\right).\end{equation*}Le fait qui suit
l'in{\'e}galit{\'e}~\eqref{ineqsix} et la minoration
$\widetilde{\aleph}_{1}\ge 2000$ conduisent {\`a} la
borne \begin{equation*}m\le\frac{2x}{\pi
(\theta\varepsilon)^{2}}\left(\widetilde{\aleph}_{1}+
\theta\varepsilon\pi+\frac{3\pi x}{4}\right)\times
\left(\frac{3}{2}\sqrt{\frac{\pi x}{4000}}+\sqrt{1+
\frac{9x\pi}{16000}}\right)^{2}.\end{equation*}
Pour passer de $m$ {\`a} la somme sur tous les plongements
$\sigma\colon k\hookrightarrow\mathbf{C}$, nous devons ajouter les termes
$\updelta_{\sigma}^{-2}$, $\sigma\not\in\EuScript{V}$, qui sont plus petits
que $x/(\theta\varepsilon)$. On obtient donc
finalement \begin{equation*}\frac{1}{[k:\mathbf{Q}]}\sum_{\sigma\colon k\hookrightarrow\mathbf{C}}
{\frac{1}{\updelta_{\sigma}^{2}}}\le\frac{2x}{\pi(\theta\varepsilon)^{2}}\left(
\widetilde{\aleph}_{1}+\frac{3\theta\varepsilon\pi}{2}+\frac{3\pi x}{4}
\right)\times\left(\frac{3}{2}\sqrt{\frac{\pi x}{4000}}+\sqrt{1+
\frac{9x\pi}{16000}}\right)^{2}.\end{equation*} Pour conclure, il
reste alors \`a observer que $x$ est plus petit que
$(\deg_{L}A)^{-1/2}\le10^{-5}$ et \`a remplacer les param{\`e}tres
$\theta$ et $\varepsilon$ par leurs valeurs. Apr\`es estimations
num\'eriques, nous aboutissons \`a la formule de l'\'enonc\'e.\end{proof}

\subsection{D{\'e}monstrations des th\'eor\`emes~\ref{perint}
et~\ref{thmintro} de l'introduction} 
\label{dthint}
\subsubsection{}
Le th\'eor\`eme~\ref{perint} d{\'e}coule du
corollaire~\ref{corollaireclef} et de l'estimation 
$g\le2\max(1,\log g!)$ qui montre que
$h(A)=h_F(A)+g\log\sqrt\pi\le\log(\pi e)\max(1,h_F(A),\log\deg_LA)$. Nous
pouvons ainsi remplacer la constante num{\'e}rique $23$ par $50$ et $h(A)$ par
$h_F(A)$ et il ne reste plus qu'\`a majorer $x$ par $(\deg_LA)^{-1/g}$
pour conclure.
\subsubsection{}
D\'eduisons maintenant le th\'eor\`eme~\ref{thmintro} du
th\'eor\`eme~\ref{perint} que nous venons d'\'etablir.

Dans le cas o\`u $g=1$, le lemme matriciel donne (comme au
\S~\ref{redell}) bien mieux. Par exemple avec le
th\'eor\`eme~\ref{matint} nous avons
$\deg_LA_\omega=\deg_LA\le14[k:\mathbf{Q}]\|\omega\|^2_{L,\sigma_0}
\max(1,h_F(A))$. Pour la suite, supposons $g\ge2$, notons
$\mathsf{d}=\deg_LA_\omega$ et montrons dans un premier temps :
$$\mathsf{d}^{1/\dim A_{\omega}}\le50
g^{2g+6}[k':\mathbf{Q}]\|\omega\|_{L,\sigma_0}^2\max(1,h_F(A),\log\mathsf{d}).$$
Il s'agit d'appliquer le th\'eor\`eme~\ref{perint} \`a
$(A_\omega,L_{|A_\omega})$ en remarquant
$\updelta((A_\omega)_{\sigma_0'},L_{\sigma_0'})\le\|\omega\|_{L,\sigma_0}$. Lorsque
$A=A_\omega$ cela nous donne exactement la formule ci-dessus. Sinon
nous utilisons la derni\`ere in\'egalit\'e du \S~\ref{sva} pour
\'ecrire $h_F(A_\omega)\le
h_F(A)+g\log(\sqrt{2\pi}\mathrm h^0(A_\omega,L)^{2})\le
(3g+1)\max(1,h_F(A),\log\mathsf{d})$ et l'on conclut avec $(\dim
A_\omega)^{2\dim A_\omega+6}(3g+1)\le g^{2g+6}$.

Pour passer \`a l'\'enonc\'e de notre th\'eor\`eme, \'ecrivons pour
all\'eger $C=50g^{2g+6}[k':\mathbf{Q}]\|\omega\|_{L,\sigma_0}^2$. Si
$\mathsf{d}^{1/\dim A_\omega}\le3,9g^3C$ alors le th\'eor\`eme est
acquis par $3,9\times50=195$. Sinon $$\log\mathsf{d}=(\dim
A_\omega)\log{\mathsf{d}^{1/\dim
A_\omega}\over C}+(\dim A_\omega)\log C\le0,221{\mathsf{d}^{1/\dim
A_\omega}\over C}+g\log C$$ (o\`u l'on utilise
$\log(3,9g^3)\le0,221\times3,9g^2$).  Nous en d\'eduisons donc $$0,779
\mathsf{d}^{1/\dim A_\omega}\le C\max(1,h_F(A),g\log
C)\le3g^3C\max(1,h_F(A),\log([k':\mathbf{Q}]\|\omega\|_{L,\sigma_0}^2))$$
et l'on conclut par $(3/0,779)\times50\le195$.

\section{Degr\'es minimaux d'isog\'enies elliptiques}

Dans cette partie, nous \'etablissons le th\'eor\`eme~\ref{isoprin} et
son corollaire.

\subsection{Rappels sur les isog\'enies de courbes elliptiques}

Soient $E_1$ et $E_2$ deux courbes elliptiques sur un m{\^e}me corps de
nombres $k$. On note $\Hom(E_1,E_2)$ l'ensemble des morphismes de
groupes alg\'ebriques $\varphi\colon E_1\to E_2$ sur une cl{\^{o}}ture
alg{\'e}brique $\overline{k}$ de $k$. Un \'el\'ement non nul de ce groupe
est une \emph{isog\'enie}. Le \emph{degr\'e} d'une
isog\'enie est le cardinal de son noyau. On pose aussi
$\deg(0)=0$. Pour $n\in\mathbf{Z}$ on note $[n]\in\Hom(E_i,E_i)$ ($1\le
i\le 2$) le morphisme de multiplication par $n$. On a $\deg[n]=n^2$.

\begin{lemma}\label{lemmeseptun} Il existe une unique bijection
$\Hom(E_1,E_2)\to\Hom(E_2,E_1), \varphi\mapsto\widehat\varphi$ telle
que \begin{itemize}\item[$(1)$]
$\widehat{\varphi}\circ\varphi=[\deg\varphi]$
\item[$(2)$] $\deg\widehat{\varphi}=\deg\varphi$ \item[$(3)$] $\varphi$ et
$\widehat\varphi$ sont d\'efinis sur les m\^emes extensions de
$k$.\end{itemize}\end{lemma}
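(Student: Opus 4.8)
This is the classical dual isogeny statement, so the strategy is to construct $\widehat\varphi$ explicitly, check the three properties, and then prove uniqueness. First I would recall that any isogeny $\varphi\colon E_1\to E_2$ of degree $d$ has kernel $\ker\varphi\subset E_1[d]$, since every point of the kernel is killed by $d$; indeed $\ker\varphi$ is a finite subgroup of order $d$ and annihilated by its exponent, which divides $d$, hence $\ker\varphi\subset\ker[d]$. By the universal property of the quotient isogeny $E_1\to E_1/\ker\varphi\simeq E_2$, the map $[d]\colon E_1\to E_1$ factors as $[d]=\psi\circ\varphi$ for a unique isogeny $\psi\colon E_2\to E_1$; I set $\widehat\varphi:=\psi$. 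For $\varphi=0$ one sets $\widehat 0=0$.

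The verification of the properties then goes as follows. Property $(1)$, $\widehat\varphi\circ\varphi=[\deg\varphi]$, is the defining factorization just described (with $d=\deg\varphi$). Property $(2)$ follows by taking degrees in $(1)$: the degree is multiplicative on composition, so $\deg\widehat\varphi\cdot\deg\varphi=\deg[d]=d^2$, whence $\deg\widehat\varphi=d=\deg\varphi$. For the map $\varphi\mapsto\widehat\varphi$ to be a bijection $\Hom(E_1,E_2)\to\Hom(E_2,E_1)$, the cleanest route is to show it is an involution in the appropriate sense: applying the construction to $\widehat\varphi$ gives $\widehat{\widehat\varphi}$ with $\widehat{\widehat\varphi}\circ\widehat\varphi=[d]$ on $E_2$; composing $(1)$ on the right by $\widehat\varphi$ and using $\varphi\circ\widehat\varphi\circ\varphi=\varphi\circ[d]=[d]\circ\varphi$ together with the fact that $\varphi$ is an epimorphism (for $\varphi\neq0$) yields $\varphi\circ\widehat\varphi=[d]$ on $E_2$; comparing with $\widehat{\widehat\varphi}\circ\widehat\varphi=[d]$ and cancelling the epimorphism $\widehat\varphi$ gives $\widehat{\widehat\varphi}=\varphi$. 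Hence the construction applied twice is the identity on each of $\Hom(E_1,E_2)$ and $\Hom(E_2,E_1)$, so it is a bijection. Property $(3)$ is immediate from the construction: $\widehat\varphi$ is obtained from $\varphi$ and $[d]$ by the universal property of a quotient, a purely functorial operation that commutes with base change, so $\widehat\varphi$ is defined over any field over which $\varphi$ (and $E_1$, $E_2$, hence $[d]$) is defined, and conversely by the involution property.

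For uniqueness, suppose $\varphi\mapsto\varphi^\dagger$ is another bijection satisfying $(1)$--$(3)$. From $(1)$ for both, $\widehat\varphi\circ\varphi=[d]=\varphi^\dagger\circ\varphi$, and cancelling the epimorphism $\varphi$ (valid when $\varphi\neq0$) gives $\varphi^\dagger=\widehat\varphi$; the case $\varphi=0$ forces $0^\dagger=0=\widehat 0$ since $0$ is the only element of degree $0$ and $(2)$ preserves degrees (alternatively $0^\dagger$ must be the image of $0$ under a bijection whose inverse also sends $0$ to $0$). The only genuinely delicate point in this whole argument is the legitimacy of cancelling $\varphi$ on the right, i.e.\ that a nonzero isogeny is an epimorphism of commutative algebraic groups and that $g\circ\varphi=h\circ\varphi$ with $\varphi$ surjective forces $g=h$ on the reduced target; this is standard (surjectivity of isogenies, reducedness of elliptic curves over a field) but is the step I would be most careful to state precisely. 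Everything else is bookkeeping with the universal property of $[d]$ and the multiplicativity of the degree.
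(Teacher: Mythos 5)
Your proof is correct and follows essentially the same route as the paper's: define $\widehat\varphi$ by factoring $[\deg\varphi]$ through $\varphi$ (possible since $\Ker\varphi\subset\Ker[\deg\varphi]$), obtain (2) by multiplicativity of degrees, show bijectivity via the involution $\widehat{\widehat\varphi}=\varphi$ after deducing $\varphi\circ\widehat\varphi=[\deg\varphi]$ by right-cancelling the epimorphism $\varphi$, and get (3) from the canonicity of the factorization (you phrase it as compatibility with base change, the paper phrases it via Galois conjugates realizing the same factorization, which is the same point). The only cosmetic slip is the phrase ``composing $(1)$ on the right by $\widehat\varphi$'' where you in fact compose on the left by $\varphi$, but the displayed equations are the right ones.
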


\begin{proof} On pose $\widehat{0}=0$. Si $\varphi$ est une
isog\'enie, le groupe fini
$\Ker\varphi$ est de cardinal $\deg\varphi$ donc son exposant
divise $\deg\varphi$ donc $\Ker\varphi\subset\Ker[\deg\varphi]$. Par
suite il existe une
unique factorisation de $[\deg\varphi]$ \`a travers $\varphi$ que l'on
\'ecrit $\widehat{\varphi}\circ\varphi=[\deg\varphi]$. Si $\varphi$ est
d\'efini sur une extension $k'$ de $k$ alors il en va de m{\^e}me de
$\widehat{\varphi}$ puisque ses conjugu\'es au-dessus de $k'$ r\'ealisent la
m\^eme factorisation. De plus on a $(\deg\varphi)^2=\deg[\deg\varphi]
=\deg\widehat\varphi\circ\varphi=(\deg\widehat\varphi)(\deg\varphi)$
qui donne (2). Par ailleurs, $\varphi\circ\widehat\varphi\circ\varphi=
\varphi\circ[\deg\varphi]=[\deg\varphi]\circ\varphi$ donc, par
surjectivit\'e de $\varphi$, l'on trouve
$\varphi\circ\widehat\varphi=[\deg\varphi]$ (dans
$\Hom(E_2,E_1)$). Ceci montre $\widehat{\widehat\varphi}=\varphi$ et donc que
l'on a une bijection. De plus $\varphi$ est \'egalement d\'efini sur
toute extension o\`u $\widehat\varphi$ est d\'efini, ce qui donne
(3). L'unicit\'e est assur\'ee par (1) si $\varphi\ne0$ et par (2)
sinon.\end{proof}

Dans la suite, nous supposerons toujours que $E_1$ et $E_2$ sont
isog\`enes c'est-\`a-dire que $\Hom(E_1,E_2)\ne\{0\}$. Dans ce cas,
l'ensemble $\{\deg\varphi\,;\ \varphi\in\Hom(E_1,E_2)\setminus\{0\}\}
\subset\mathbf{N}\setminus\{0\}$ admet un \'el\'ement minimal
$\Delta$ (qui est aussi $\min\{\deg\psi\,;\
\psi\in\Hom(E_2,E_1)\setminus\{0\}\}$). Une isog\'enie
$\varphi\in\Hom(E_1,E_2)$ de degr\'e $\Delta$ sera dite
\emph{minimale}. Une isog\'enie est dite \emph{cyclique} si son noyau
est un groupe cyclique.

\begin{lemma} Une isog\'enie minimale est cyclique.\end{lemma}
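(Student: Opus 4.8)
The plan is to argue by contradiction: if $\varphi\in\Hom(E_1,E_2)$ is minimal but not cyclic, I will factor out a non-trivial multiplication map to produce an isogeny of strictly smaller degree, contradicting minimality. First I would recall the structure of a finite abelian group: $\Ker\varphi$ is finite abelian, hence isomorphic to $\mathbf{Z}/d_1\mathbf{Z}\times\cdots\times\mathbf{Z}/d_r\mathbf{Z}$ with $d_1\mid d_2\mid\cdots\mid d_r$, and it fails to be cyclic precisely when $r\ge2$, equivalently when its exponent $d_r$ is strictly smaller than its order $\deg\varphi$. In that case there is a prime $p$ dividing $\gcd$ of the two largest invariant factors, so $E_1[p]\subset\Ker\varphi$ (since $E_1[p]\cong(\mathbf{Z}/p\mathbf{Z})^2$ and a non-cyclic kernel contains a copy of $(\mathbf{Z}/p\mathbf{Z})^2$, which must be all of $E_1[p]$ as both have order $p^2$).

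Next I would use that $E_1[p]=\Ker[p]$ to factor $\varphi$ through the multiplication-by-$p$ map: since $\Ker[p]\subset\Ker\varphi$, there is a unique morphism $\psi\colon E_1\to E_2$ with $\varphi=\psi\circ[p]$. This $\psi$ is again an isogeny (it is surjective because $\varphi$ is) defined over the same field as $\varphi$, and from multiplicativity of degrees, $\deg\varphi=\deg\psi\cdot\deg[p]=p^2\deg\psi$, so $\deg\psi=\deg\varphi/p^2<\deg\varphi$. This contradicts the minimality of $\deg\varphi=\Delta$ among degrees of non-zero elements of $\Hom(E_1,E_2)$. Hence no such $\varphi$ exists, i.e.\ every minimal isogeny has cyclic kernel.

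The only mildly delicate point — and the one I would spell out carefully — is the passage from "$\Ker\varphi$ non-cyclic" to "$E_1[p]\subset\Ker\varphi$ for some prime $p$"; everything else is the universal property of quotients (already used in the proof of Lemma~\ref{lemmeseptun}) together with the elementary divisor theorem. This step itself is pure finite-group theory: a finite abelian $p$-group that is not cyclic surjects onto $(\mathbf{Z}/p\mathbf{Z})^2$, but here one needs the reverse containment, which follows because inside $E_1$ the only subgroup isomorphic to $(\mathbf{Z}/p\mathbf{Z})^2$ is $E_1[p]$ itself. I do not anticipate any real obstacle; the argument is short and self-contained once one has the factorization lemma for $[n]$ already in hand.
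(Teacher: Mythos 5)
Your proof is correct and takes essentially the same route as the paper's: both identify a full $p$-torsion (or $a$-torsion) subgroup inside $\Ker\varphi$ using the structure theorem, factor $\varphi$ through the corresponding multiplication map, and derive a contradiction with minimality from $\deg\varphi=p^2\deg\psi$. The only cosmetic difference is that the paper factors out $[a]$ for the full smaller invariant factor $a$ directly and concludes $a=1$, whereas you argue by contradiction via a single prime $p\mid a$; the content is identical.
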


\begin{proof} Soit $\varphi$ une isog\'enie minimale. Le groupe
$\Ker\varphi$ est un sous-groupe de
$\Ker[\deg\varphi]\simeq(\mathbf{Z}/\deg\varphi\mathbf{Z})^2$ donc
isomorphe \`a $\mathbf{Z}/a\mathbf{Z}\times\mathbf{Z}/b\mathbf{Z}$
avec $a\mid b\mid\deg\varphi$. Il contient donc un sous-groupe
isomorphe \`a $(\mathbf{Z}/a\mathbf{Z})^2$ qui est n\'ecessairement
$\Ker[a]$. Ainsi $\Ker[a]\subset\Ker\varphi$ donc il existe
une factorisation $\varphi=\varphi'\circ[a]$ avec
$\varphi'\in\Hom(E_1,E_2)$. On a $\deg\varphi=a^2\deg\varphi'$
donc, par minimalit\'e, $a=1$. Ceci montre bien que $\Ker\varphi$
est cyclique d'ordre $b$.\end{proof}

Pour la suite, nous distinguons deux cas :
\begin{itemize}\item[$(1)$] $E_1$ et $E_2$ sont sans multiplications
complexes. Ici $\Hom(E_1,E_2)$ est un $\mathbf{Z}$-module libre de rang
1.\item[$(2)$] $E_1$ et $E_2$ sont \`a multiplications complexes. Ici
$\Hom(E_1,E_2)$ est un $\mathbf{Z}$-module libre de rang 2.\end{itemize}

\begin{prop}\label{degre} Dans le cas (1), toute isog\'enie cyclique est
minimale. Il n'y a que deux isog\'enies minimales $\varphi$ et
$-\varphi$. Il existe une extension $k'$ de $k$ de degr\'e 1 ou 2
telle que : toute isog\'enie $\varphi\colon E_1\to E_2$ est d\'efinie
sur $k'$ ; si $k'\ne k$ aucune isog\'enie n'est d\'efinie sur $k$. Dans
le cas (2) il existe une extension $k'$ de $k$ de degr\'e dans
$\{1,2,3,4,6,8,12\}$ telle que toute isog\'enie $\varphi\colon
E_1\to E_2$ est d\'efinie sur $k'$.\end{prop}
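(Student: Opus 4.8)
The statement of Proposition~\ref{degre} is essentially an exercise in the structure of $\Hom(E_1,E_2)$ together with a Galois-descent argument. In case (1) the endomorphism rings are $\mathbf{Z}$, so $\Hom(E_1,E_2)$ is free of rank $1$; fix a generator $\varphi_0$. Every isogeny is $\pm n\varphi_0$ for some $n\ge1$, and $\deg(n\varphi_0)=n^2\deg\varphi_0$, so the minimal isogenies are exactly $\pm\varphi_0$, which forces $\Delta=\deg\varphi_0$. The claim "every cyclic isogeny is minimal" then amounts to: if $n\ge2$ then $n\varphi_0$ has kernel containing $\Ker[n]\simeq(\mathbf Z/n\mathbf Z)^2$, hence is not cyclic; conversely $\varphi_0$ itself is cyclic (this we get from the previous lemma, since any minimal isogeny is cyclic, or directly). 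So cyclic $\iff$ $n=1$ $\iff$ minimal. For case (2), $\Hom(E_1,E_2)$ is free of rank $2$ as a module over the CM order, and there is no longer a single minimal isogeny up to sign, but that is fine — the proposition only asks for the field-of-definition statement in that case.

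\textbf{The Galois descent.} The heart of the proof is controlling the field over which isogenies are defined. The Galois group $G=\gal(\overline k/k)$ acts on the finitely generated $\mathbf Z$-module $M=\Hom(E_1,E_2)$ by ring (or module) automorphisms fixing the lattice structure; concretely it acts $\mathbf Z$-linearly preserving degrees, i.e.\ preserving the positive-definite quadratic form $\varphi\mapsto\deg\varphi$ (this is the norm form). So the image of $G\to\mathrm{GL}(M)\cap\mathrm O(\deg)$ is a finite subgroup of the orthogonal group of a rank-$1$ (case 1) or rank-$2$ (case 2) positive-definite quadratic lattice. In case (1) the form is $n\mapsto \Delta n^2$ on $\mathbf Z$, whose isometry group is $\{\pm1\}$; hence $G$ acts through a quotient of order $\le2$, and $k'=\overline k^{\,\ker}$ has $[k':k]\le2$; every isogeny is then defined over $k'$, and if the action is nontrivial no isogeny is defined over $k$ (since $\varphi_0$ would be $G$-fixed, contradicting $\sigma\varphi_0=-\varphi_0$). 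In case (2) the relevant form is the norm form of a rank-$2$ lattice over an imaginary quadratic order; the isometry group of a positive-definite binary quadratic lattice is one of the dihedral groups of order $2,4,6,8,12$ (the classical list of finite subgroups of $\mathrm{O}_2$ that can occur for integral forms), giving $[k':k]\in\{1,2,3,4,6,8,12\}$ as claimed. I would either cite this classical classification or derive it quickly: a finite subgroup of $\mathrm O_2(\mathbf R)$ is cyclic or dihedral, and preserving a lattice limits the rotation part to orders $1,2,3,4,6$ (crystallographic restriction), so the group has order in $\{1,2,3,4,6,8,12\}$.

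\textbf{Main obstacle.} The only genuinely delicate point is \emph{why} $G$ acts through a finite group preserving the degree form — i.e.\ that the Galois action on $\Hom(E_1,E_2)$ is by isometries of the norm form and factors through a finite quotient. Finiteness is automatic once one knows $M$ is a finitely generated $\mathbf Z$-module and the action preserves a positive-definite form (a discrete cocompact-stabilizer phenomenon: $\mathrm{GL}(M)\cap\mathrm O(q)$ is finite). That the action preserves degrees follows because $\deg$ is defined purely algebraically (cardinality of the kernel scheme, or $\widehat\varphi\circ\varphi=[\deg\varphi]$ from Lemma~\ref{lemmeseptun}) and Galois conjugation is a ring automorphism commuting with $\varphi\mapsto\widehat\varphi$, hence $\deg(\sigma\varphi)=\deg\varphi$. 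I would spell this out carefully and then let the orthogonal-group classification do the rest; the numerology $\{1,2,3,4,6,8,12\}$ is exactly the list of orders of finite subgroups of $\mathrm{GL}_2(\mathbf Z)$, which is the cleanest way to phrase the case-(2) bound.
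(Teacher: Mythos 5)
Your proof follows the paper's approach closely: same classification of minimal and cyclic isogenies in case (1) via a generator of $\Hom(E_1,E_2)\simeq\mathbf{Z}$, same Galois-action morphism $\alpha\colon\gal(\overline k/k)\to\GL_m(\mathbf{Z})$, same argument (in case (1) with $k'\ne k$) that the nontrivial element acts by $-1$, and the same numerology $\{1,2,3,4,6,8,12\}$ from finite subgroups of $\GL_2(\mathbf{Z})$ in case (2). The only real variant is how finiteness of the image of $\alpha$ is secured: the paper gets it from continuity (the kernel is open, hence $k'$ is finite) and proves the $\GL_2(\mathbf{Z})$ classification in a footnote by averaging to produce an invariant positive-definite form, whereas you observe that the degree form on $\Hom(E_1,E_2)$ is already a canonical Galois-invariant positive-definite form, which lands the image directly in the finite automorphism group of the lattice — a mild streamlining, not a different route, and still requiring the same classification of finite subgroups of $\GL_2(\mathbf{Z})$ that the footnote supplies.
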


\begin{proof} Dans le cas (1) soit $\varphi$ minimale. On a
$\Hom(E_1,E_2)=\mathbf{Z}\varphi$ et $\deg n\varphi=n^2\deg\varphi$ pour
$n\in\mathbf{Z}$ donc $\varphi$ et $-\varphi$ sont les seules isog\'enies
minimales. Si $\vert n\vert>1$, $\Ker(n\varphi)$ contient
$\Ker[n]\simeq(\mathbf{Z}/n\mathbf{Z})^2$ qui n'est pas cyclique donc
$n\varphi$ n'est pas cyclique. En g\'en\'eral, le groupe de Galois
$\gal(\overline{k}/k)$ agit sur $\Hom(E_1,E_2)\simeq\mathbf{Z}^m$ dans
le cas $(m)$, $m\in\{1,2\}$. Ceci donne un 
morphisme $\alpha\colon\gal(\overline{k}/k)\to\GL_m(\mathbf{Z})$ dont le
noyau est de la forme $\gal(\overline{k}/k')$ avec $k'$ extension
galoisienne finie de $k$. Le groupe $\gal(k'/k)$ est isomorphe \`a
l'image de $\alpha$. On sait qu'un sous-groupe de
$\GL_1(\mathbf{Z})\simeq\mathbf{Z}^\times$ est de cardinal $1$ ou $2$
tandis qu'un sous-groupe fini de $\GL_2(\mathbf{Z})$ est de cardinal dans
$\{1,2,3,4,6,8,12\}$ (voir \cite{Serre}\footnote{Si $G\subset\GL_2(\mathbf{Z})$
est fini alors $H=G\cap\SL_2(\mathbf{Z})$ est d'indice 1 ou 2 dans $G$ ;
il suffit donc de voir que $H$ est cyclique d'ordre 1,2,3,4 ou 6 et
m\^eme, en consid\'erant les valeurs propres (racines de l'unit\'e de
degr\'e 1 ou 2 sur $\mathbf{Q}$) qu'il est cyclique ; on pose $A=\sum_{B\in
H}{}^tBB$ ; si $B\in H$ alors ${}^tBAB=A$ ; on \'ecrit $A={}^tCC$ avec
$C\in\GL_2(\mathbf{R})$ donc $CBC^{-1}\in\SO_2(\mathbf{R})$ pour $B\in H$ ;
ceci montre que $H$ est isomorphe \`a un sous-groupe fini de
$\SO_2(\mathbf{R})\simeq\mathbf{R}/2\pi\mathbf{Z}$ donc il est
cyclique. On v\'erifie
que les valeurs de $\card G$ sont toutes atteintes \`a l'aide de
sous-groupes des groupes suivants : $G_1$ de cardinal 8 est engendr\'e
par $\begin{pmatrix}0&1\\ 1&0\end{pmatrix}$ et $\begin{pmatrix}1&0\\
0&-1\end{pmatrix}$ et $G_2$ de cardinal 12 est engendr\'e par
$\begin{pmatrix}0&1\\ 1&0\end{pmatrix}$ et
$\begin{pmatrix}1&1\\-1&0\end{pmatrix}$.}). Ceci donne l'assertion sur
le degr\'e de
$k'$. Dans le cas (1), si $k'\ne k$, alors l'\'el\'ement
$\sigma\in\gal(k'/k)\setminus\{\mathrm{id}\}$ agit par
$\sigma(\varphi)=-\varphi$ sur $\Hom(E_1,E_2)$. Par suite, si
$\varphi\ne0$, on a $\sigma(\varphi)\ne\varphi$ donc $\varphi$ n'est
pas d\'efini sur $k$.\end{proof}

\begin{rema} Pellarin (voir \cite[Remarque fondamentale,
p.~211]{pellarin}) affirme que dans le cas (1) l'on a toujours
$k'=k$. C'est faux car en fait sur tout corps de nombres $k$
il existe deux courbes elliptiques sans multiplications complexes qui
ne sont pas isog\`enes sur $k$ mais le sont sur $\overline{k}$. Pour le
voir, choisir $E_1$ sans multiplications complexes donn\'ee par une
\'equation de Weierstrass $y^2=x^3+ax+b$ ; choisir $c\in k\setminus
k^2$ et d{\'e}finir $E_2$ par $cy^2=x^3+ax+b$. L'application
$(x,y)\mapsto(x,c^{-1/2}y)$ d\'ecrit un isomorphisme $E_1\to E_2$
d\'efini sur $k'=k(c^{1/2})$ mais pas sur $k$ (il diff\`ere de son
conjugu\'e $(x,y)\mapsto(x,-c^{-1/2}y)$) donc d'apr\`es la proposition
aucune isog\'enie $E_1\to E_2$ n'est d\'efinie sur $k$. Si l'on veut
des exemples o\`u le degr\'e minimal d'isog\'enie $\Delta$ soit arbitraire,
on \'etend $k$ pour que $E_1$ ait un point de torsion $P$ d'ordre $\Delta$
rationnel (et l'on choisit $c$ ensuite). Alors $E_1'=E_1/\mathbf{Z} P$ et
$E_2$ sont d\'efinies sur $k$, isog\`enes sur $\overline{k}$. Le degr\'e
minimal est $\Delta$ car $E_2\to E_1\to E_1'$ est cyclique de degr\'e $\Delta$
mais elles ne sont pas isog\`enes sur $k$ car sinon $E_1$ et $E_2$ le
seraient aussi (car $E_1$ et $E_1'$ sont isog\`enes sur $k$).\end{rema}

\subsection{Cas non CM : lien avec les p\'eriodes}
\label{parsept}
Dans la situation pr\'ec\'edente, on choisit un plongement
$\sigma_0\colon k\hookrightarrow\mathbf{C}$. On abr\`ege
$\Omega_i=\Omega_{(E_i)_{\sigma_0}}$ pour $i\in\{1,2\}$.
Nous posons $A=E_1^2\times E_2^2$. L'espace tangent de $A_{\sigma_0}$
s'\'ecrit $t_{A_{\sigma_0}}=t_{(E_1)_{\sigma_0}}\oplus t_{(E_1)_{\sigma_0}}\oplus
t_{(E_2)_{\sigma_0}}\oplus t_{(E_2)_{\sigma_0}}$ et contient le r\'eseau des
p\'eriodes $\Omega_{A_{\sigma_0}}=\Omega_1^{\oplus2}\oplus\Omega_2^{\oplus2}$. Si
$\omega=(\omega_{11},\omega_{12},\omega_{21},\omega_{22})\in\Omega_{A_{\sigma_0}}$
on note $A_\omega$ la plus petite sous-vari\'et\'e ab\'elienne de $A_{\sigma_0}$
dont l'espace tangent contient $\omega$. Cette vari\'et\'e ab\'elienne
complexe est d\'efinie sur un corps de nombres : si nous notons $k'$
le plus petit sous-corps de $\mathbf{C}$ contenant $\sigma_0(k)$ sur lequel
$A_\omega$ est d\'efinie alors nous voyons $A_\omega$ comme une
vari\'et\'e ab\'elienne sur $k'$ et la vari\'et\'e ab\'elienne
complexe de d\'epart (sous-vari\'et\'e de $A_{\sigma_0}$) s'\'ecrit
$(A_\omega)_{\sigma'_0}$ si $\sigma'_0$ d\'esigne le plongement de
$k'$ dans $\mathbf{C}$ donn\'e par la d\'efinition (il \'etend $\sigma_0$
lorsque l'on voit $k'$ comme une extension de $k$).\par Rappelons que
$\Delta$ d{\'e}signe le degr\'e minimal d'isog\'enie entre $E_1$ et
$E_2$. Si nous supposons que $E_1$ et $E_2$ sont sans multiplications
complexes (cas (1) du paragraphe pr\'ec\'edent), alors
le lien entre $\Delta$ et $A_\omega$ est donn\'e par l'\'enonc\'e suivant.

\begin{theo}\label{lien} On suppose que $(\omega_{11},\omega_{12})$ est une base
de $\Omega_1$ et $(\omega_{21},\omega_{22})$ une base de
$\Omega_2$. Alors il existe une extension $k'$ de $k$ de degr\'e 1 ou
2 telle que :\begin{itemize}\item[$(1)$] $A_\omega$ est d\'efinie sur
$k'$.\item[$(2)$] $A_\omega$ est isomorphe sur $k'$ \`a $E_1\times
E_2$.\item[$(3)$] $A_\omega\cap(\{0\}^2\times E_2^2)$ est fini de
cardinal $\Delta$.\end{itemize}\end{theo}

\begin{proof} On choisit pour $k'$ l'extension sur laquelle sont d\'efinies
toutes les isog\'enies $E_1\to E_2$ et $E_2\to E_1$. La
sous-vari\'et\'e ab\'elienne $A_\omega$ est l'image d'un endomorphisme
de $A$. Celui-ci est donn\'e par 16 morphismes $E_i\to E_j$ avec $1\le
i,j\le2$ donc est d\'efini sur $k'$ et, par suite, il en va de m\^eme
de $A_\omega$. Ceci assure (1). En ce qui concerne (2) et (3)
voyons d'abord qu'il suffit de les \'etablir pour une seule p\'eriode
$\omega$. En effet, si $\omega$ et $\omega'$ satisfont les
hypoth\`eses du th\'eor\`eme, il existe deux isomorphismes $f_i\colon
E_i^2\to E_i^2$ ($1\le i\le2$) tels que l'application tangente $\mathrm{d}f$
\`a $f=f_1\times f_2$ envoie $\omega$ sur $\omega'$ :
$\mathrm{d}f(\omega)=\omega'$ (l'isomorphisme $f_i$ r\'ealise simplement le
changement de base de $(\omega_{i1},\omega_{i2})$ \`a
$(\omega_{i1}',\omega_{i2}')$). Ainsi l'espace tangent \`a une
sous-vari\'et\'e ab\'elienne $B$ de $A$ contient $\omega$ si et
seulement si l'espace tangent de $f(B)$ contient $\omega'$. Ceci
montre $f(A_\omega)=A_{\omega'}$ et, en particulier, $A_\omega$ et
$A_{\omega'}$ sont isomorphes sur leur corps de d\'efinition commun
$k'$ ($f\in\End(A)$ est lui d\'efini sur $k$). D'autre part on a
\'evidemment $f(\{0\}^2\times E_2^2)=\{0\}^2\times E_2^2$ donc les
ensembles $A_\omega\cap\{0\}^2\times E_2^2$ et
$A_{\omega'}\cap\{0\}^2\times E_2^2$ sont en bijection. Tout ceci
montre bien que (2) et (3) sont vraies pour $\omega$ si et seulement
si elles le sont pour $\omega'$. Nous allons donc les \'etablir pour
un $\omega$ particulier de fa{\c{c}}on \`a ce que $A_\omega$ admette une
description tr{\`e}s simple. Soient pour cela $\varphi\colon E_1\to E_2$
une isog\'enie minimale (sur $k'$) donc avec $\Delta=\deg\varphi$ et
$\widehat{\varphi}\colon E_2\to E_1$ telle que
$\widehat{\varphi}\circ\varphi=[\Delta]$. Soit $\psi$ le morphisme $E_1\times
E_2\to A$ d\'ecrit par $\psi(x,y)=(\widehat{\varphi}(y),x,y,\varphi(x))$. Il
est patent que l'image $\IM\psi$ est une sous-vari\'et\'e
ab\'elienne de $A$ isomorphe \`a $E_1\times E_2$ et que l'intersection
$\IM\psi\cap(\{0\}^2\times E_2^2)$ est en bijection avec le groupe
$\Ker\widehat{\varphi}$ de cardinal $\Delta$. Il nous suffit donc seulement
pour conclure de trouver $\omega$ tel que $A_\omega=\IM\psi$. En
fait, il suffit m{\^e}me de trouver $\omega$ comme dans l'{\'e}nonc{\'e}
dans l'espace tangent de $\IM\psi$ (ce qui assure
$A_{\omega}\subset\IM\psi$ par minimalit\'e) car nous avons toujours
$\dim A_\omega\ge2$ : dans le cas contraire, la projection $B$ de
$A_\omega$ sur $E_1^2$ serait une sous-vari\'et\'e ab\'elienne de
dimension 0 ou 1 dont l'espace tangent contiendrait
$(\omega_{11},\omega_{12})$. Or, en l'absence de multiplications
complexes, un tel $B$ est contenu dans un sous-groupe de la forme
$\{(x,y)\in E_1^2\,;\ nx=my\}$ pour
$(n,m)\in\mathbf{Z}^2\setminus\{(0,0)\}$. En passant \`a l'espace tangent,
on aurait $n\omega_{11}=m\omega_{12}$ qui contredirait la libert\'e de
$(\omega_{11},\omega_{12})$. Finalement il reste \`a trouver $\omega$
dans l'espace tangent de $\IM\psi$ c'est-\`a-dire tel que
$\omega_{11}=\mathrm{d}\widehat{\varphi}(\omega_{21})$ et
$\omega_{22}=\mathrm{d}\varphi(\omega_{12})$. Puisque $\mathrm{d}
\varphi\circ\mathrm{d}\widehat{\varphi}=\Delta\mathrm{id}$, la
premi\`ere condition s'\'ecrit
$\omega_{21}=\Delta\mathrm{d}\varphi(\omega_{11})$. L'existence des
deux bases adapt\'ees $(\omega_{11},\omega_{12})$ et $(\omega_{21},\omega_{22})$
d\'ecoule donc du fait que $\Omega_2/\mathrm{d}\varphi(\Omega_1)$ est un groupe
cyclique de cardinal $\Delta$ car $\varphi$ est cyclique.\end{proof}

Cet \'enonc\'e est plus ou moins classique (voir
\cite{masserwustholz1990,pellarin}) \`a part peut-\^etre l'assertion
(2) qui ne semble pas avoir \'et\'e not\'ee explicitement.

\subsection{Cas non CM : estimations}
\label{parhuit}
Nous d\'emontrons la premi\`ere assertion du
th\'eor\`eme~\ref{isoprin}. Comme nous traitons plus loin
diff\'eremment le cas o\`u $k$ poss\`ede une place r\'eelle, nous
pouvons supposer ici que toutes les places de $k$ sont complexes. Il
en va alors bien s\^ur de m\^eme des places de l'extension $k'$.

Avec les notations ci-dessus, nous imposons maintenant que
$(\omega_{11},\omega_{12})$ forme une base minimale de
$\Omega_1$. Ceci signifie que
$\Vert\omega_{11}\Vert_{L_1,\sigma_0}=\rho((E_1)_{\sigma_0},(L_1)_{\sigma_0})$
o\`u $L_1$ est l'unique polarisation principale sur $E_1$ et que
$\omega_{12}=\tau\omega_{11}$ o\`u $\tau$ appartient au domaine
fondamental de Siegel : $|\tau|\ge1$ et $\vert\RE\tau\vert\le1/2$. On \'ecrit
$y=\IM\tau$. On sait alors que
$y=\rho((E_1)_{\sigma_0},(L_1)_{\sigma_0})^{-2}$ (voir remarque~\ref{yrho}).

Nous fixons \`a pr\'esent le choix de $\sigma_0$ jusqu'ici
arbitraire, en demandant que $y$ soit minimal pour ce choix. En vertu
de la proposition~\ref{ell}, cela nous fournit $y\le1,92H$ o\`u nous
notons, ici et dans toute la suite, $H=\max(h(E_1),1000)$. Ce petit
raffinement all\`ege quelque peu les calculs qui suivent mais ne
modifie que le terme logarithmique de l'estimation finale.\par Soient
$p_{1}$ et $p_{2}$ les deux projections $E_{1}^{2}\to E_{1}$. Posons
$n=[\vert\tau\vert^2]\in\mathbf{N}\setminus\{0\}$ et
consid\'erons la polarisation $L'=p_1^*L_1^{\otimes n}\otimes
p_2^*L_1$ sur $E_1^2$ et $p$ la compos\'ee $A_\omega\to A\to
E_1^2$. D'apr\`es l'assertion (3) du th\'eo\-r\`eme~\ref{lien}, $p$
est une isog\'enie (de degr\'e $\Delta$) donc $L=p^*L'$ est ample sur
$A_\omega$ et $$\deg_{L}A_\omega=(\deg p)\deg_{L'}E_1^2=2n\Delta.$$
Ceci nous permet d'appliquer la proposition~\ref{propgdeux} au couple
$(A_\omega,L)$ sur le corps de nombres $k'$. En effet, si
$\deg_LA_\omega<10^{10}$ alors $\Delta<10^{10}$ et la
majoration~\eqref{racinedelta} que
nous allons d\'emontrer plus bas est tr\`es largement vraie.
Nous majorons $x\le(\deg_{L}A_\omega)^{-1/2}=(2n\Delta)^{-1/2}$
tandis que nous avons $\updelta_{\sigma_0'}\le\|\omega\|_{L,\sigma_0'}$
puisque $\omega\in t_{(A_{\omega})_{\sigma_0'}}$ mais $\omega\not\in
t_{B[\sigma_0']}$ par minimalit\'e. Comme
$\updelta_{\sigma_0'}=\updelta_{\overline{\sigma_0'}}$ et
$\sigma_0'\ne\overline{\sigma_0'}$, la proposition~\ref{propgdeux}
donne donc \begin{equation*}\begin{split}{2\sqrt{2n\Delta}\over
D\|\omega\|_{L,\sigma'_0}^2}\le 1778 &
\Big(\max(1000,\widehat{\mu}_{\mathrm{max}}
(\overline{t_{A_{\omega}}^{\mathsf{v}}}))
+\log(2n\Delta)+1,95\\ &\quad+{1\over D}\sum_{\sigma'\colon
k'\hookrightarrow\mathbf{C}}\log\max(1,\rho((A_{\omega})_{\sigma'},
L_{\sigma'})^{-1})\Big)\end{split}\end{equation*}o\`u
$D=[k':\mathbf{Q}]$. Nous allons maintenant estimer les termes qui
apparaissent dans cette majoration. En premier lieu, on a
\begin{equation*}\begin{split}\Vert\omega\Vert^2_{L,\sigma_0'}&
=\|(\omega_{11},\omega_{12})\|^2_{L',\sigma_0}=n\|\omega_{11}
\|^2_{L_1,\sigma_0}+\|\omega_{12}\|^2_{L_1,\sigma_0}\\
&=(n+|\tau|^2)\Vert\omega_{11}\Vert^2_{L_1,\sigma_0}=(n+|\tau|^2)
\rho((E_1)_{\sigma_0},(L_1)_{\sigma_0})^2\\ &={n+|\tau|^2\over y}
\le{n+|\tau|^2\over\sqrt{|\tau|^2-{1\over4}}}
\le{2n\over\sqrt{n-{1\over4}}}\end{split}\end{equation*}(la
derni\`ere in\'egalit\'e vient de ce que la fonction
$t\mapsto(n+t)/\sqrt{t-1/4}$ d\'ecro\^\i t sur
$[n,n+1/2]$, cro\^\i t sur $[n+1/2,n+1]$ et d'une comparaison entre
les valeurs en $n$ et $n+1$). Par un calcul analogue, si
$\omega'=(\omega'_{11},\omega'_{12},\omega'_{21},\omega'_{22})$ est
une p\'eriode de $(A_\omega)_{\sigma'}$ pour $\sigma'\colon
k'\hookrightarrow\mathbf{C}$ quelconque, nous avons $$\|\omega'\|^2_{L,\sigma'}
=n\|\omega'_{11}\|^2_{L_1,\sigma'}+\|\omega'_{12}\|^2_{L_1,\sigma'}
\ge\max(\|\omega'_{11}\|_{L_1,\sigma'},\|\omega'_{12}\|_{L_1,\sigma'})^2.$$
Si $\omega'\ne0$ on a $\omega_{11}'\ne0$ ou $\omega'_{12}\ne0$
(toujours car $p$ est une isog\'enie) donc
$$\rho((A_\omega)_{\sigma'},L_{\sigma'})
\ge\rho((E_1)_{\sigma'},(L_1)_{\sigma'}).$$
Ainsi \begin{equation*}\begin{split}\!\frac{1}{D}\!\sum_{\sigma'\colon
k'\hookrightarrow\mathbf{C}}{\!\!\log\max(1,
\rho((A_{\omega})_{\sigma'},L_{\sigma'})^{-1})}&
\le\frac{1}{2D}\!\sum_{\sigma'\colon k'\hookrightarrow\mathbf{C}}{\!\!
\log\max(1,\rho((E_1)_{\sigma'},(L_1)_{\sigma'})^{-2})}\\
&\le\frac{1}{2}\max{\left(1,\log\frac{1}{D}\sum_{\sigma'\colon
k'\hookrightarrow\mathbf{C}}{\rho((E_1)_{\sigma'},(L_1)_{\sigma'})^{-2}}\right)}\\
&\le\frac{1}{2}\max(1,\log(1,92H))
={1\over2}\log(1,92H)\end{split}\end{equation*}{\`a} nouveau avec la
proposition~\ref{ell}
(appliqu\'ee \`a $(E_1)_{k'}$). Nous avons \`a ce stade
\begin{equation*}\sqrt{\Delta}\le1778D\left(2
-\frac{1}{2n}\right)^{-1/2}\left(\max(1000,\widehat{\mu}_{\mathrm{max}}
(\overline{t_{A_{\omega}}^{\mathsf{v}}}))+\log(2n\Delta)+1,95+\frac12
\log(1,92H)\right)\end{equation*} et il nous reste \`a estimer la
pente maximale. 

\begin{lemma}\label{pentedeux} Nous avons
$$\widehat{\mu}_{\mathrm{max}}(\overline{t_{A_{\omega}}^{\mathsf{v}}})\le
h(E_1)+\log\Delta+\frac12\log\frac n\pi.$$\end{lemma}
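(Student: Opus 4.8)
Le plan est de transporter la situation polaris\'ee $(A_\omega,L)$ sur le produit $E_1\times E_2$ gr\^ace \`a l'isomorphisme explicite fourni par la d\'emonstration du th\'eor\`eme~\ref{lien}~(2), de la d\'ecomposer en somme directe orthogonale, puis d'appliquer le lemme~\ref{pente} \`a chaque facteur. Le point \`a ne pas manquer est que, pour majorer la pente maximale du \emph{dual} $\overline{t_{A_\omega}^{\mathsf{v}}}$, on ne peut pas se contenter de l'isog\'enie $p$ (une isog\'enie fait cro\^itre la pente maximale du dual, donc dans le mauvais sens)~: c'est pourquoi on travaille avec l'isomorphisme, et non avec $p$ seul. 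Rappelons donc que $A_\omega$ est l'image de $\psi\colon E_1\times E_2\to A$, $(x,y)\mapsto(\widehat{\varphi}(y),x,y,\varphi(x))$, et que $\psi$ induit un isomorphisme $\psi_0$ de $E_1\times E_2$ sur $A_\omega$ d\'efini sur $k'$ ; rappelons aussi que $L=p^*L'$ o\`u $p\colon A_\omega\to E_1^2$ est la projection (de degr\'e $\Delta$ d'apr\`es le th\'eor\`eme~\ref{lien}~(3)) et $L'=p_1^*L_1^{\otimes n}\otimes p_2^*L_1$. En notant $q_1,q_2$ les deux projections de $E_1\times E_2$, on a $p\circ\psi_0=(\widehat{\varphi}\circ q_2,q_1)$, donc $\psi_0^*L=(p\circ\psi_0)^*L'=q_2^*(\widehat{\varphi}^*L_1)^{\otimes n}\otimes q_1^*L_1$. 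Comme $\widehat{\varphi}^*L_1$ est de degr\'e $(\deg\widehat{\varphi})(\deg_{L_1}E_1)=\Delta$ sur la courbe $E_2$ et que $\NS(E_2)=\mathbf{Z}$, on a $\widehat{\varphi}^*L_1\equiv L_2^{\otimes\Delta}$ dans $\NS(E_2)$, o\`u $L_2$ d\'esigne la polarisation principale de $E_2$ ; par suite $\psi_0^*L\equiv q_1^*L_1\otimes q_2^*L_2^{\otimes n\Delta}$ dans $\NS(E_1\times E_2)$.

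Ensuite, quitte \`a \'etendre $k'$ de sorte que les courbes en jeu aient r\'eduction semi-stable (ce qui, par la convention du \S~\ref{subsecpentemax}, ne modifie pas la pente maximale), l'isomorphisme $\psi_0$ identifie les mod\`eles de N\'eron, donc $\mathrm{d}\psi_0$ pr\'eserve les structures enti\`eres de $t_{A_\omega}$ et de $t_{E_1}\oplus t_{E_2}$ ; et en chaque place archim\'edienne la m\'etrique induite par $L$ sur $t_{A_\omega}$ ne d\'epend que de la classe de $L$ dans $\NS$ (voir \S~\ref{formeR}), \`a savoir la classe produit calcul\'ee ci-dessus. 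Ainsi $\overline{t_{(A_\omega,L)}}$ est isom\'etrique \`a la somme directe orthogonale $\overline{t_{(E_1,L_1)}}\oplus\overline{t_{(E_2,L_2^{\otimes n\Delta})}}$, et de m\^eme pour les duaux $\overline{t_{(A_\omega,L)}^{\mathsf{v}}}\cong\overline{t_{(E_1,L_1)}^{\mathsf{v}}}\oplus\overline{t_{(E_2,L_2^{\otimes n\Delta})}^{\mathsf{v}}}$. La pente maximale d'une somme directe \'etant le maximum des pentes maximales, et chaque facteur \'etant un fibr\'e en droites (donc de pente maximale \'egale \`a sa pente), il ne reste plus qu'\`a \'evaluer ces deux pentes.

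Le lemme~\ref{pente} appliqu\'e \`a $(E_1,L_1)$ puis \`a $(E_2,L_2^{\otimes n\Delta})$ (avec $g=1$, $\mathrm{h}^{0}(E_1,L_1)=1$ et $\mathrm{h}^{0}(E_2,L_2^{\otimes n\Delta})=n\Delta$) donne $\widehat{\mu}(\overline{t_{(E_1,L_1)}})=-h(E_1)+\frac12\log\pi$ et $\widehat{\mu}(\overline{t_{(E_2,L_2^{\otimes n\Delta})}})=-h(E_2)-\frac12\log(n\Delta)+\frac12\log\pi$. En passant aux duaux et en utilisant l'in\'egalit\'e de Faltings $h(E_2)\le h(E_1)+\frac12\log\Delta$ (appliqu\'ee \`a l'isog\'enie $\varphi$ de degr\'e $\Delta$), le facteur relatif \`a $E_2$ vaut $h(E_2)+\frac12\log(n\Delta)-\frac12\log\pi\le h(E_1)+\log\Delta+\frac12\log\frac{n}{\pi}$, tandis que celui relatif \`a $E_1$ vaut $h(E_1)-\frac12\log\pi$, qui lui est inf\'erieur puisque $\log\Delta+\frac12\log n\ge0$. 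Le maximum des deux est donc major\'e par $h(E_1)+\log\Delta+\frac12\log\frac{n}{\pi}$, ce qui est la borne voulue.

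Il n'y a pas de difficult\'e de fond dans cet argument~: les seuls endroits demandant un peu de soin sont le choix de $\psi_0$ (plut\^ot que d'une isog\'enie quelconque) afin d'orienter correctement l'in\'egalit\'e de pentes, l'identification de la classe de $\psi_0^*L$ dans $\NS(E_1\times E_2)$, et la comptabilit\'e des degr\'es et des signes dans le dernier calcul.
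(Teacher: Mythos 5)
Vous identifiez correctement le pi\`ege~: l'in\'egalit\'e du lemme~\ref{varmu} appliqu\'ee \`a l'isog\'enie $p$ va dans le mauvais sens pour le dual. Mais la solution que vous proposez --- remplacer l'isog\'enie par l'isomorphisme explicite $\psi_0$ et d\'ecomposer $\psi_0^*L$ en produit dans $\NS(E_1\times E_2)$ --- repose sur une hypoth\`ese que le contexte du lemme ne garantit pas. La formule $p\circ\psi_0=(\widehat{\varphi}\circ q_2,q_1)$, et donc la conclusion $\psi_0^*L\equiv q_1^*L_1\otimes q_2^*L_2^{\otimes n\Delta}$ dans $\NS$, ne vaut que pour la p\'eriode $\omega_0$ particuli\`ere construite dans la \emph{preuve} du th\'eor\`eme~\ref{lien}, pour laquelle $\omega_{11}=\mathrm{d}\widehat{\varphi}(\omega_{21})$. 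Or au \S~\ref{parhuit}, o\`u le lemme~\ref{pentedeux} est utilis\'e, on impose que $(\omega_{11},\omega_{12})$ soit une base \emph{minimale} de $\Omega_1$, et rien n'assure que $\omega_{11}$ appartienne au sous-r\'eseau $\mathrm{d}\widehat{\varphi}(\Omega_2)$, d'indice $\Delta$ dans $\Omega_1$. Pour un $\omega$ g\'en\'eral on a seulement $A_\omega=f^{-1}(\IM\psi)$ avec $f=f_1\times f_2$ un automorphisme de $A$~; alors $p\circ\psi_0=f_1^{-1}\circ(\widehat{\varphi}\circ q_2,q_1)$ et $(f_1^{-1})^*L'$ peut acqu\'erir une composante \emph{mixte} dans $\NS(E_1^2)$ (puisque $f_1\in\GL_2(\mathbf{Z})$ est arbitraire), ce qui d\'etruit la d\'ecomposition orthogonale $\overline{t_{(A_\omega,L)}^{\mathsf{v}}}\cong\overline{t_{(E_1,L_1)}^{\mathsf{v}}}\oplus\overline{t_{(E_2,L_2^{\otimes n\Delta})}^{\mathsf{v}}}$ sur laquelle repose tout votre calcul. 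L'isomorphisme $\psi_0$ \'etant essentiellement unique (pas de CM, donc $\mathrm{Aut}(E_1\times E_2)=\{\pm1\}^2$), il n'y a pas non plus de libert\'e pour l'ajuster.

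Le texte contourne cette difficult\'e autrement~: il majore d'abord $\widehat{\mu}_{\mathrm{max}}(\overline{t_{(A_\omega,L)}})$ --- le \emph{primal}, pour lequel l'isog\'enie $p$ va dans le bon sens via le lemme~\ref{varmu} --- par $\widehat{\mu}_{\mathrm{max}}(\overline{t_{(E_1^2,L')}})=\max(\widehat{\mu}(\overline{t_{(E_1,L_1^{\otimes n})}}),\widehat{\mu}(\overline{t_{(E_1,L_1)}}))$, puis passe au dual au moyen de l'identit\'e, sp\'ecifique au rang~$2$, $\widehat{\mu}_{\mathrm{max}}(\overline{t_{A_\omega}^{\mathsf{v}}})=\widehat{\mu}_{\mathrm{max}}(\overline{t_{A_\omega}})-2\widehat{\mu}(\overline{t_{A_\omega}})$~; la pente $\widehat{\mu}(\overline{t_{A_\omega}})$ se calcule par le lemme~\ref{pente}, et seules interviennent $h(A_\omega)=h(E_1)+h(E_2)$ (assertion~(2) du th\'eor\`eme~\ref{lien}, utilis\'ee de fa\c con purement abstraite) et $\deg_LA_\omega=2n\Delta$. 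Pour r\'eparer votre preuve, il faudrait soit \'etablir que le choix de $\omega$ au \S~\ref{parhuit} peut toujours \^etre pris de la forme $\omega_0$ (ce qui est faux en g\'en\'eral), soit retrouver d'une mani\`ere ou d'une autre l'identit\'e de rang~$2$.
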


\begin{proof}Comme on a $L=p^*L'$ pour l'isog\'enie $p$, le lemme~\ref{varmu}
donne $\widehat{\mu}_{\mathrm{max}}(\overline{t_{(A_{\omega},L)}})
\le\widehat{\mu}_{\mathrm{max}}(\overline{t_{(E_1^2,L')}})$ et, par
propri\'et\'e des pentes maximales, ce majorant vaut
$\widehat{\mu}_{\mathrm{max}}(\overline{t_{(E_1,L_1^{\otimes n})}}\oplus
\overline{t_{(E_1,L_1)}})=\max(\widehat{\mu}(
\overline{t_{(E_1,L_1^{\otimes n})}}),\widehat{\mu}(
\overline{t_{(E_1,L_1)}}))$. Par ailleurs, comme $\dim A_\omega=2$,
nous avons aussi $\widehat{\mu}_{\mathrm{max}}(
\overline{t_{A_{\omega}}^{\mathsf{v}}})=
\widehat{\mu}_{\mathrm{max}}(\overline{t_{A_{\omega}}})
-2\widehat{\mu}(\overline{t_{A_{\omega}}})$. Nous
\'evaluons les diff\'erentes pentes par le lemme~\ref{pente}. En
particulier $\max(\widehat{\mu}(
\overline{t_{(E_1,L_1^{\otimes n})}}),\widehat{\mu}(
\overline{t_{(E_1,L_1)}}))=\widehat{\mu}(
\overline{t_{(E_1,L_1)}})=-h(E_1)+(1/2)\log\pi$ et
$2\widehat{\mu}(\overline{t_{A_{\omega}}})
=-h(A_\omega)-(1/2)\log\mathrm{h}^{0}(A_\omega,L)+\log\pi$. Nous
obtenons
donc $$\widehat{\mu}_{\mathrm{max}}(\overline{t_{A_{\omega}}^{\mathsf{v}}})\le
h(A_\omega)-h(E_1)+{1\over2}\log{\deg_LA_\omega\over2\pi}.$$ On
conclut alors avec $\deg_LA_\omega=2n\Delta$,
$h(A_\omega)=h(E_1)+h(E_2)$ (d'apr\`es l'assertion (2) du
th\'eor\`eme~\ref{lien}) et $h(E_2)\le h(E_1)+(1/2)\log\Delta$.\end{proof}

Avec $\max(1000,\widehat{\mu}_{\mathrm{max}}
(\overline{t_{A_{\omega}}^{\mathsf{v}}}))\le
H+\log\Delta+\frac12\log\frac n\pi$ et quelques calculs num\'eriques
nous aboutissons \`a
$$\sqrt\Delta\le1778D\left(2-{1\over2n}\right)^{-1/2}(H+{1\over2}\log
H+\frac32\log n+2\log\Delta+2,4).$$
Si $n=1$ ceci
s'\'ecrit \begin{equation}\label{racinedelta}
\sqrt\Delta\le1778D\sqrt{2\over3}(H+{1\over2}\log
H+2\log\Delta+2,4).\end{equation}
Voyons que cette formule vaut aussi si $n\ge2$. Dans ce cas, on majore
$n\le|\tau|^2\le y^2+1/4\le(1,92H)^2+1/4\le4H^2$ donc
$$\sqrt\Delta\le1778D\sqrt{4\over7}(H+3,5\log
H+2\log\Delta+4,5).$$Avec $H\ge1000$ on a $3,5\log H+4,5\le0,03H$ et
l'estimation $1,03\sqrt{4/7}\le\sqrt{2/3}$ montre
que~\eqref{racinedelta} est encore valable (largement). \par
Nous utiliserons~\eqref{racinedelta} plus bas. Ici nous pouvons encore
simplifier cette forme brute. Toujours avec $H\ge1000$ nous \'ecrivons
$0,5\log H+2,4\le0,006H$ et donc, en employant
$1,006\times1778\sqrt{2/3}\le1461$, nous trouvons
$$\sqrt\Delta\le1461D(H+2\log\Delta).$$
Ceci entra\^\i ne \`a son tour
$$\sqrt\Delta\le 1545D(H+4\log D).$$
En effet, c'est clair si $\sqrt\Delta\le1,545.10^6D$. Sinon $$\log\Delta=2\log
D+2\log{\sqrt\Delta\over D}\le2\log D+{2\log(
1,545.10^6)\over1,545.10^6}{\sqrt\Delta\over D}\le2\log
D+1,85.10^{-5}{\sqrt\Delta\over D}$$ et l'on conclut par
$1461/(1-1461\times 3,7.10^{-5})\le1545$. Maintenant nous pouvons
encore utiliser $H+4\log D\le1,02(H+4\log D-19)$ et \'eventuellement
$H+4\log D-19\le1000\max(h(E_1)-1,\log(D/2),1)$ et, comme
$1545\times1,02\le(2,5\times10^6)^{1/2}$, nous aboutissons
\`a $$\Delta\le2,5\times10^6D^2(H+4\log D-19)^2$$
ou $$\Delta\le2,5\times10^{12}D^2\max\left(h(E_1)-1,\log\frac D2,1\right)^2.$$
Ceci donne (dans le cas sans multiplications complexes et sans place
r\'eelle) les deux premi\`eres assertions du
th\'eor\`eme~\ref{isoprin} avec $h(E_1)-1\le h_F(E_1)$ et
$D\le2[k:\mathbf{Q}]$ (th{\'e}or{\`e}me~\ref{lien}).

\subsection{Cas CM}

Soient $E_1$ et $E_2$ deux courbes elliptiques \`a multiplications
complexes isog\`enes. On les suppose d\'efinies sur un corps de
nombres $k$, on choisit un plongement $\sigma_0$ de $k$ dans $\mathbf{C}$. On
consid\`ere les extensions \`a $\mathbf{C}$ {\em via} $\sigma_0$ de $E_1$ et
$E_2$ et $\omega_1$, $\omega_2$ des p\'eriodes minimales. On forme
$A=E_1\times E_2$ et $\omega=(\omega_1,\omega_2)$ et l'on
s'int\'eresse \`a $A_\omega$. Comme plus haut $A_\omega$ est d\'efinie
sur une extension $k'$ de $k$ munie d'un plongement $\sigma_0'$
\'etendant $\sigma_0$. Ici on a $[k':k]\le12$ par la proposition~\ref{degre}.
Soient $\Delta_1=\card A_\omega\cap\{0\}\times E_2$ et
$\Delta_2=\card A_\omega\cap E_1\times\{0\}$.

\begin{lemma}\label{lsepthuit} La vari\'et\'e ab\'elienne $A_\omega$
est de dimension 1 et $\Delta_1$ et $\Delta_2$ sont finis.\end{lemma}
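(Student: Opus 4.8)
The plan is to exploit the complex-multiplication hypothesis to show that $\omega=(\omega_1,\omega_2)$ already lies in the tangent space of some $1$-dimensional abelian subvariety of $A=E_1\times E_2$; by minimality this forces $A_\omega$ to be contained in it, so $\dim A_\omega\le 1$. Since $\omega_1$ and $\omega_2$ are nonzero minimal periods, $\omega\neq 0$ and $\dim A_\omega\ge 1$, whence $\dim A_\omega=1$. The finiteness of $\Delta_1,\Delta_2$ will then follow formally.

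First I would fix an isogeny $\varphi_0\colon E_1\to E_2$ (one exists by hypothesis) and study the evaluation map $\varphi\mapsto\mathrm{d}\varphi(\omega_1)$ from $\Hom(E_1,E_2)\otimes\mathbf{Q}$ to the tangent line $t_{(E_2)_{\sigma_0}}\cong\mathbf{C}$. This map is injective, and it is linear over the CM field $K=\End(E_2)\otimes\mathbf{Q}$, where $K$ acts on the source by post-composition and on $\mathbf{C}$ through the CM embedding $\chi$, because $\mathrm{d}(\alpha\circ\varphi)(\omega_1)=\chi(\alpha)\,\mathrm{d}\varphi(\omega_1)$. In the CM case $\Hom(E_1,E_2)\otimes\mathbf{Q}$ is $2$-dimensional over $\mathbf{Q}$, hence of $\chi(K)$-dimension $1$, so the image is a one-dimensional $\chi(K)$-subspace of $\mathbf{C}$. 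Now $\mathrm{d}\varphi_0$ sends the period lattice $\Omega_1$ of $E_1$ into that $\Omega_2$ of $E_2$, so $\mathrm{d}\varphi_0(\omega_1)$ is a nonzero element of $\Omega_2$; and $\Omega_2\otimes\mathbf{Q}$, being a $2$-dimensional $\mathbf{Q}$-space stable under $\End(E_2)$, is likewise one-dimensional over $\chi(K)$, so it equals $\chi(K)\cdot\mathrm{d}\varphi_0(\omega_1)$. Two one-dimensional $\chi(K)$-subspaces of $\mathbf{C}$ sharing the nonzero vector $\mathrm{d}\varphi_0(\omega_1)$ coincide, so the image of the evaluation map is exactly $\Omega_2\otimes\mathbf{Q}$; in particular it contains $\omega_2\in\Omega_2$. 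Thus there is $\varphi\in\Hom(E_1,E_2)\otimes\mathbf{Q}$ with $\omega_2=\mathrm{d}\varphi(\omega_1)$. This is precisely the step where CM is indispensable: in the non-CM case $\Hom(E_1,E_2)\otimes\mathbf{Q}$ is only $1$-dimensional and $\omega_2$ need not be reached, which is why $E_1^2\times E_2^2$ had to be used in \S~\ref{parsept}.

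Writing $\varphi=\psi/N$ with $\psi\in\Hom(E_1,E_2)$ and $N\ge 1$ an integer, so that $\mathrm{d}\psi(\omega_1)=N\omega_2$, I would then consider the homomorphism $u\colon E_1\to A$, $x\mapsto(Nx,\psi(x))$. Since $u\neq 0$, its image $B:=\IM u$ is a $1$-dimensional abelian subvariety of $A$, and $\mathrm{d}u(\omega_1)=(N\omega_1,\mathrm{d}\psi(\omega_1))=(N\omega_1,N\omega_2)=N\omega$ lies in $t_B$; as $t_B$ is a $\mathbf{C}$-subspace, $\omega\in t_B$. Minimality of $A_\omega$ gives $A_\omega\subseteq B$, hence $\dim A_\omega=1$.

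Finally, if $A_\omega\cap(\{0\}\times E_2)$ were infinite it would contain a $1$-dimensional abelian subvariety, and since both $A_\omega$ and $\{0\}\times E_2$ are irreducible of dimension $1$ this would force $A_\omega=\{0\}\times E_2$, hence $t_{A_\omega}=\{0\}\times t_{(E_2)_{\sigma_0}}$, contradicting $\omega_1\neq 0$; so $\Delta_1$ is finite, and symmetrically $\Delta_2$ is finite using $\omega_2\neq 0$. The only mildly delicate point is the module-theoretic bookkeeping in the first step — checking that $\Hom(E_1,E_2)\otimes\mathbf{Q}$ and $\Omega_2\otimes\mathbf{Q}$ are both of rank one over the CM field and that the evaluation map is $K$-equivariant — but everything else is routine.
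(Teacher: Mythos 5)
Your proof is correct and follows essentially the same route as the paper: both use the CM hypothesis to produce an integer $N$ and a morphism $\psi\circ\varphi$ with $N\omega_2=\mathrm{d}(\psi\circ\varphi)(\omega_1)$, which places $\omega$ in the tangent space of a proper one-dimensional abelian subvariety of $E_1\times E_2$, and both then conclude by a dimension count together with $\omega_1,\omega_2\ne 0$. Your module-theoretic bookkeeping over the CM field $K$ is a mild elaboration of the paper's one-line observation that $\End(E_2)\cdot\mathrm{d}\varphi(\omega_1)$ has finite index in $\Omega_2$, but the argument is the same in substance.
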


\begin{proof} Il suffit de v\'erifier que $A_\omega$ n'est ni $\{0\}$, ni
$E_1\times\{0\}$, ni $\{0\}\times E_2$, ni $E_1\times E_2$. Les trois
premi\`eres ne contiennent pas $\omega$ dans leur espace tangent. Si
$\varphi\colon E_1\to E_2$ est une isog\'enie alors
$\mathrm{d}\varphi(\Omega_1)\subset\Omega_2$ et donc $\End(E_2)\cdot
\mathrm{d}\varphi(\omega_1)\subset\Omega_2$ est de conoyau fini donc il
existe $N\in\mathbf{N}\setminus\{0\}$ avec
$N\Omega_2\subset\End(E_2)\cdot\mathrm{d}\varphi(\omega_1)$ donc il existe
$\psi\in\End(E_2)$ tel que
$N\omega_2=\mathrm{d}\psi\circ\mathrm{d}\varphi(\omega_1)$ ce qui
montre $A_\omega\subset\{(P_1,P_2)\in E_1\times
E_2\mid\psi\circ\varphi(P_1)=[N]P_2\}\ne E_1\times E_2$.\end{proof}

\begin{lemma}\label{septneuf} Il existe des isog\'enies
$A_\omega\rightleftharpoons E_1$, $A_\omega\rightleftharpoons E_2$ et
$E_1\rightleftharpoons E_2$ de degr\'es respectifs $\Delta_1$,
$\Delta_2$ et $\Delta_1\Delta_2$.\end{lemma}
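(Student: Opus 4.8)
Le plan est d'exploiter les deux projections $p_1\colon A=E_1\times E_2\to E_1$ et $p_2\colon A\to E_2$, restreintes à $A_\omega$, en s'appuyant sur le lemme~\ref{lsepthuit} qui assure que $A_\omega$ est une courbe elliptique : un morphisme de groupes algébriques entre deux courbes elliptiques est alors soit nul, soit une isogénie, son degré étant dans ce cas le cardinal de son noyau.

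Première étape : montrer que $p_1$ induit une isogénie $A_\omega\to E_1$ de degré $\Delta_1$. Cette restriction n'est pas nulle, car $\omega=(\omega_1,\omega_2)$ appartient à l'espace tangent de $A_\omega$ et $\omega_1\neq0$ (période minimale de $E_1$), d'où $\mathrm{d}p_1(\omega)=\omega_1\neq0$. Son noyau est $A_\omega\cap(\{0\}\times E_2)$, de cardinal $\Delta_1$ ; c'est donc une isogénie de degré $\Delta_1$, et le lemme~\ref{lemmeseptun} en fournit la duale $E_1\to A_\omega$, elle aussi de degré $\Delta_1$. En répétant l'argument avec $p_2$ (de noyau $A_\omega\cap(E_1\times\{0\})$ sur $A_\omega$), on obtient une isogénie $A_\omega\to E_2$ de degré $\Delta_2$ et sa duale $E_2\to A_\omega$ de degré $\Delta_2$.

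Seconde étape : composer la duale $E_1\to A_\omega$ (degré $\Delta_1$) avec l'isogénie $A_\omega\to E_2$ (degré $\Delta_2$) pour obtenir une isogénie $E_1\to E_2$ de degré $\Delta_1\Delta_2$, puis en prendre la duale (lemme~\ref{lemmeseptun}) pour disposer de $E_2\to E_1$ de degré $\Delta_1\Delta_2$. On a ainsi les six isogénies annoncées. Il n'y a pas ici de difficulté réelle : le seul point méritant attention est la non-nullité des restrictions des projections, qui découle aussitôt de ce que les périodes minimales $\omega_1$ et $\omega_2$ sont non nulles ; tout le reste n'est qu'application directe du lemme~\ref{lemmeseptun} et de la multiplicativité du degré par composition (que l'on peut au besoin contourner en comparant les degrés de $E_1\to E_2$ et de sa duale à l'aide de l'unicité dans le lemme~\ref{lemmeseptun}).
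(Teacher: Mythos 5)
Your proof is correct and follows the same route as the paper: restrict the projections $p_1,p_2$ to $A_\omega$, identify their degrees with $\Delta_1,\Delta_2$ via the kernels, invoke the lemme~\ref{lemmeseptun} for the dual isogenies, and compose. You have merely spelled out the small verifications (non-triviality of the restricted projections, identification of the kernel) that the paper leaves implicit.
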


\begin{proof} La projection ${p_1}_{|A_\omega}\colon A_\omega\to E_1$ est de
degr\'e $\Delta_1$ et $\widehat{{p_1}_{|A_\omega}}$ aussi (voir
lemme~\ref{lemmeseptun}). Il en va de m\^eme pour
$A_\omega\rightleftharpoons E_2$ puis l'on compose.\end{proof}

Nous nous int\'eressons \`a
$\rho((A_\omega)_{\sigma_0'},(L_\omega)_{\sigma_0'})$ o\`u $L_\omega$
est la polarisation principale sur la courbe elliptique
$A_\omega$. Par d\'efinition 
$\omega\in\Omega_{(A_\omega)_{\sigma_0'}}$. \'Evaluons
$\|\omega\|_{L_\omega,\sigma'_0}$. Comme $L_\omega^{\otimes
\Delta_1}=p_1^*L_1$ (o\`u $L_1$ est la polarisation principale sur $E_1$)
on a $$\|\omega\|_{L_\omega,\sigma'_0}^2={1\over
\Delta_1}\|\omega\|_{p_1^*L_1,\sigma'_0}^2={1\over
\Delta_1}\|\mathrm{d}p_1(\omega)\|_{L_1,\sigma'_0}^2={1\over
\Delta_1}\|\omega_1\|_{L_1,\sigma_0}^2.$$ De la m\^eme fa\c con, on a aussi
$\|\omega\|_{L_\omega,\sigma'_0}^2={1\over
\Delta_2}\|\omega_2\|_{L_2,\sigma_0}^2$. Par suite, on a $$
\rho((A_\omega)_{\sigma'_0},(L_\omega)_{\sigma'_0})^2\le
{1\over \Delta_1}\rho((E_1)_{\sigma_0},(L_1)_{\sigma_0})^2={1\over
\Delta_2}\rho((E_2)_{\sigma_0},(L_2)_{\sigma_0})^2.$$
Comme $1/\rho^2\ge\sqrt3/2$ sur une courbe elliptique
(remarque~\ref{yrho}), nous trouvons 
$$\rho((A_\omega)_{\sigma'_0},(L_\omega)_{\sigma'_0})^2
\le{2\over\sqrt3\max(\Delta_1,\Delta_2)}$$
et donc, en posant $D=[k':\mathbf{Q}]$ et $\delta=\max(\Delta_1,\Delta_2)/D$, $$T_\omega:={1\over[k':\mathbf{Q}]}\sum_{\sigma'\colon
k'\hookrightarrow\mathbf{C}}\rho((A_\omega)_{\sigma'},(L_\omega)_{\sigma'})^{-2}
\ge\frac{\sqrt{3}\delta}{2}.$$Notons $H=\max(1,h(E_1)+(1/2)\log(D/12\pi))$ et montrons que $\delta\le\sqrt{233}H$. Pour cela, on peut supposer $\delta\ge\sqrt{233}$ et l'on sait alors par la proposition~\ref{ell} appliqu{\'e}e avec $\sqrt{3}\delta/2$ que $$\pi\sqrt{3}\delta/2\le3\log(\sqrt{3}\delta/2)+6h(A_\omega)+8,66.$$Ici on a $h(A_\omega)\le h(E_1)+(1/2)\log
\Delta_1$ en utilisant l'isog\'enie entre $A_\omega$ et $E_1$ (voir
lemme~\ref{septneuf}). Par suite il vient $${\pi\sqrt3\over2}\delta\le3\log{\sqrt3\over2}\delta^2+6H+19,55\le6\log\delta+25,12H.$$L'in{\'e}galit{\'e} $\delta\le\sqrt{233}H$ s'obtient alors en remarquant que  $\log\delta\le((\log\sqrt{233})/\sqrt{233})\delta$ et $$\frac{25,12}{\frac{\pi\sqrt{3}}{2}-\frac{6\log\sqrt{233}}{\sqrt{233}}}<\sqrt{233}.$$On en d\'eduit $\Delta_1\Delta_2\le
D^2\delta^2\le233D^2H^2$ et nous avons donc bien prouv\'e qu'il existe
une isog\'enie entre $E_1$ et $E_2$ de degr\'e au
plus $$233[k':\mathbf{Q}]^2\max\left(1,h_F(E_1)+{1\over2}
\log{[k':\mathbf{Q}]\over12}\right)^2.$$ Avec
$[k':\mathbf{Q}]\le12[k:\mathbf{Q}]$ et $233\times144\le3,4\times10^4$,
ceci montre l'assertion du th\'eor\`eme~\ref{isoprin} dans le cas avec
multiplications complexes.

\subsection{Cas non CM avec une place r{\'e}elle}\label{secreelle}
Soit $k$ un corps de nombres qui poss{\`e}de au moins une place
r{\'e}elle. On note $\sigma_{0}:k\hookrightarrow\mathbf{C}$ un
plongement complexe induit par cette place. Soient $E_{1}$ et $E_{2}$
deux courbes elliptiques sans multiplications complexes, 
d{\'e}finies sur $k$ et isog{\`e}nes.
Pour chaque $j\in\{1,2\}$ le r{\'e}seau des
p{\'e}riodes de $(E_{j})_{\sigma_{0}}$ est de la forme
$\Omega_{j}:=\mathbf{Z}\omega_{j}\oplus\mathbf{Z}\tau_{j}\omega_{j}$
avec $\tau_{j}$ {\'e}l{\'e}ment du domaine fondamental de Siegel (on
notera $y_{j}$ sa partie imaginaire). Le caract{\`e}re r{\'e}el de la
place attach{\'e}e {\`a} $\sigma_{0}$ se traduit par l'{\'e}galit{\'e}
$\overline{(E_{j})_{\sigma_{0}}}=(E_{j})_{\overline{\sigma_{0}}}=(E_{j})_{\sigma_{0}}$,
et en particulier les r{\'e}seaux des p{\'e}riodes sont
identiques. Soit
$\mathsf{f}_{j}:\overline{(E_{j})_{\sigma_{0}}}\to(E_{j})_{\sigma_{0}}$
l'application de conjugaison complexe du
\S~\ref{subsecconju}. D'apr{\`e}s la proposition~\ref{conjugue}, la base
$(\mathrm{d}\mathsf{f}_{j}(\omega_{j}),\mathrm{d}\mathsf{f}_{j}
(\tau_{j}\omega_{j}))$ est une base minimale de $\Omega_j$.
Par antilin{\'e}arit{\'e}, on a $\mathrm{d}\mathsf{f}_{j}(\tau_{j}
\omega_{j})=\overline{\tau_{j}}\mathrm{d}\mathsf{f}_{j}(\omega_{j})$. Ainsi 
$\overline{\tau_{j}}$ et $\tau_{j}$ sont conjugu\'es par
$\SL_{2}(\mathbf{Z})$. En utilisant que $\tau_{j}$ appartient au 
domaine fondamental, on trouve 
$\vert\mathrm{Re}(\tau_{j})\vert\in\{0,1/2\}$ et le r{\'e}seau
$\Omega_{j}':=\mathbf{Z}\omega_{j}\oplus\mathbf{Z}(2\tau_{j}\omega_{j})
=\mathbf{Z}\omega_{j}\oplus\mathbf{Z}(2iy_{j}\omega_{j})$ 
est un sous-r{\'e}seau de $\Omega_{j}$ d'indice $2$. Ainsi, en
consid{\'e}rant une isog{\'e}nie $\varphi:E_{1}\to E_{2}$, on a l'inclusion
$2\mathrm{d}\varphi(\Omega_{1}')\subset
2\mathrm{d}\varphi(\Omega_{1})\subset 2\Omega_{2}\subset\Omega_{2}'$
qui entra{\^{\i}}ne l'existence d'entiers $a,b$ tels
que \begin{equation}\label{relationun}\mathrm{d}\varphi(\omega_{1})=
a\omega_{2}/2+biy_{2}\omega_{2}.\end{equation}Le  
fait que $4iy_{1}\mathrm{d}\varphi(\omega_{1})$ appartienne aussi
{\`a} $\Omega_{2}'$ se traduit par les conditions
$4by_{1}y_{2}\in\mathbf{Z}$ et $ay_{1}y_{2}^{-1}\in\mathbf{Z}$, qui
induisent par produit $4aby_{1}^{2}\in\mathbf{Z}$. Or l'on ne peut
avoir $y_{1}^{2}\in\mathbf{Q}$ car sinon $\tau_{1}$ serait quadratique
et la courbe $E_{1}$ aurait de la multiplication complexe. Ainsi, on a
n{\'e}cessairement $ab=0$ et la relation~\eqref{relationun} montre
que, pour au moins une p{\'e}riode $\omega$ dans l'ensemble
$\{(\omega_{1},\omega_{2}),(\omega_{1},2iy_{2}\omega_{2})\}$, la
sous-vari{\'e}t{\'e} ab{\'e}lienne $A_{\omega}$ de $A=E_{1}\times E_{2}$
est de dimension $1$. Par un raisonnement similaire en permutant
$\Omega_{1}'$ et $\Omega_{2}'$, on peut remplacer
$(\omega_{1},2iy_{2}\omega_{2})$ dans cette paire par
$(2iy_{1}\omega_{1},\omega_{2})$. Pour une p{\'e}riode $\omega$
idoine, la courbe elliptique $A_{\omega}$ est d{\'e}finie sur une
extension $k'$ de $k$ de degr{\'e} $D=[k':\mathbf{Q}]\le
2[k:\mathbf{Q}]$ (th{\'e}or{\`e}me~\ref{lien}) et l'on note
$\sigma_{0}'$ un plongement complexe de $k'$ prolongeant $\sigma_{0}$. Posons
$\Delta_{1}=\card A_{\omega}\cap\{0\}\times E_{2}$ et
$\Delta_{2}=\card A_{\omega}\cap E_{1}\times\{0\}$. Comme dans le cas
CM, nous disposons des lemmes~\ref{lsepthuit} (choix de $\omega$)
et~\ref{septneuf} (d{\'e}monstration inchang{\'e}e). L'obtention d'une
borne pour le degr{\'e} minimal d'isog{\'e}nie $\Delta$ repose alors sur une
majoration du produit $\Delta_{1}\Delta_{2}$ {\`a} partir de notre
lemme matriciel pour les courbes elliptiques, analogue au cas CM. Pour
l'analyse, nous allons distinguer deux cas selon la valeur prise par
$\dim A_{(\omega_{1},\omega_{2})}$.

\subsubsection{Premier cas~: $\dim A_{(\omega_{1},\omega_{2})}=1$}
C'est le cas le plus simple. Il suffit de reprendre la
d{\'e}monstration du cas CM (qui suit le lemme~\ref{septneuf}), avec
$(\omega_{1},\omega_{2})$, en changeant le $12$ au d{\'e}nominateur
dans la d{\'e}finition de $H$ par $2$ (car $D=[k':\mathbf{Q}]\le
2[k:\mathbf{Q}]$). La constante $19,55$ peut {\^{e}}tre remplac{\'e}e
par $14,18$, la valeur $25,12$ par $19,75$ et $233$ par $167$. On trouve
alors $\Delta_{1}\Delta_{2}\le167D^{2}H^{2}$. Dans ce cas, on
a\begin{equation*}\begin{split}\Delta&\le668[k:\mathbf{Q}]^{2}
\max{\left(1,h_{F}(E_{1})+\frac{1}{2}\log[k:\mathbf{Q}]\right)}^{2}\\
&\le1503[k:\mathbf{Q}]^{2}\max{\left(1,h_{F}(E_{1}),\log[k:\mathbf{Q}]
\right)}^{2}.\end{split}\end{equation*} 
\subsubsection{Deuxi{\`e}me cas~: $\dim
A_{(\omega_{1},\omega_{2})}=2$} Nous avons vu que si
$\omega\in\{(\omega_{1},2iy_{2}\omega_{2}),(2iy_{1}\omega_{1},\omega_{2})\}$
alors $\dim A_{\omega}=1$. {\'E}tudions les deux possibilit\'es. 

\paragraph{$\bullet$ $\omega=(\omega_{1},2iy_{2}\omega_{2})$.} On
a $$\rho((A_\omega)_{\sigma_0'},(L_\omega)_{\sigma_0'})^{2}\le\Vert
\omega\Vert_{L_{\omega},\sigma_{0}'}^{2}=\frac{1}{\Delta_{1}y_{1}}
=\frac{4y_{2}}{\Delta_{2}}.$$ Soient $\delta=\Delta_{1}y_{1}/D=
\Delta_{2}/(4Dy_{2})$ et \begin{equation*}H':=\max{\left(1,h_{F}(E_{1}),
\log\frac{D}{2}\right)}.\end{equation*} Nous allons montrer que  
$\delta\le 12,31H'$, ce qui permettra d'obtenir une premi{\`e}re majoration de
$\Delta_{1}\Delta_{2}$ car $\Delta_{1}\Delta_{2}=(2D\delta)^{2}y_{2}/y_{1}$.
On peut supposer $\delta\ge 12,31$. Consid{\'e}rons $T_{\omega}$ la
moyenne des $\rho((A_\omega)_{\sigma},(L_\omega)_{\sigma})^{-2}$. On
dispose de l'in{\'e}galit{\'e}  $T_{\omega}\ge\delta$, qui d\'ecoule
du calcul de la norme de $\omega$ en la place $\sigma_{0}'$. Gr{\^{a}}ce
{\`a} la proposition~\ref{ell} et {\`a} l'isog{\'e}nie entre
$A_\omega$ et $E_1$ de degr{\'e} $\Delta_{1}$ (voir
lemme~\ref{septneuf}), qui donne $h(A_\omega)\le
h(E_1)+(1/2)\log\Delta_1$, on a
\begin{equation}\label{ineqwho}\pi\delta\le3\log\delta+6h(E_{1})+ 
3\log\Delta_{1}+8,66.\end{equation} En observant que
$\Delta_{1}\le(2/\sqrt{3})D\delta$ (car $y_{1}\ge \sqrt{3}/2$) et en
se rappelant que $h_{F}(E_{1})=h(E_{1})-(\log\pi)/2$, on en d{\'e}duit
$\pi\delta\le 6\log\delta+23,61H'$. Comme $\delta\ge12,31$ on a 
$\log\delta\le(\log(12,31)/12,31)\delta$
puis $$\delta\le\frac{23,61H'}{\pi-6\times\frac{\log(12,31)}{12,31}}\le
12,31H',$$qui est le r{\'e}sultat voulu. On en
d{\'e}duit \begin{equation}\label{majorationuni}\sqrt{\Delta}\le
\sqrt{\Delta_{1}\Delta_{2}}\le24,62DH'\sqrt{\frac{y_{2}}{y_{1}}}.\end{equation}

\paragraph{$\bullet$ $\omega=(2iy_{1}\omega_{1},\omega_{2})$.} On
a  $$\rho((A_\omega)_{\sigma_0'},(L_\omega)_{\sigma_0'})^{2}\le\Vert\omega
\Vert_{L_{\omega},\sigma_{0}'}^{2}=\frac{4y_{1}}{\Delta_{1}}=\frac{1}
{\Delta_{2}y_{2}}.$$ Soient
$\delta'=\Delta_{1}/(4Dy_{1})=\Delta_{2}y_{2}/D$, $H'=\max{(1,h_{F}(E_{1}),\log(D/2))}$ et $T_{\omega}$ comme ci-dessus. Nous allons montrer que $\delta'\le18,19H'$ en proc{\'e}dant comme dans le cas pr{\'e}c{\'e}dent. On part de l'in{\'e}galit{\'e}~\eqref{ineqwho} qui reste valide ici avec $\delta'$ (que l'on peut supposer $\ge 18,19$). On majore
$\Delta_{1}=4y_{1}D\delta'$ par $25,8D^{2}\max{(1,h(E_{1}))}\delta'$
gr{\^{a}}ce {\`a} la proposition~\ref{ell} et \`a la
remarque~\ref{yrho}. En rempla{\c{c}}ant dans~\eqref{ineqwho} et en
utilisant $\log a\le a-1$ pour $a>0$, on d{\'e}duit
alors \begin{equation*}\label{treer}\pi\delta'\le6\log\delta'+6\log
D+6h(E_{1})+3\max{(1,h(E_{1}))}+15,42.\end{equation*} En distinguant
les cas $h(E_{1})\le1$ et $h(E_{1})>1$, cette majoration implique
$\pi\delta'\le6\log\delta'+39,74H'$. Comme $\delta'\ge 18,19$ on a
$\log\delta'\le(\log(18,19)/18,19)\delta'$
puis $$\delta'\le\frac{39,74H'}{\pi-6\times\frac{\log(18,19)}{18,19}}\le
18,19H'.$$ On a
alors \begin{equation*}\label{majorationdeuxi}\sqrt{\Delta}\le
\sqrt{\Delta_{1}\Delta_{2}}\le
36,38DH'\sqrt{\frac{y_{1}}{y_{2}}}.\end{equation*} Pour conclure, on
multiplie cette in\'egalit\'e par~\eqref{majorationuni}
:\begin{equation*}\Delta\le895,7
\times[k':\mathbf{Q}]^{2}\max{\left(1,h_{F}(E_{1}),\log
\frac{[k':\mathbf{Q}]}{2}\right)}^{2}\end{equation*}et 
l'on utilise $[k':\mathbf{Q}]\le 2[k:\mathbf{Q}]$ et
$895,7\times4<3583$. Ceci termine la d\'emonstration du
th\'eor\`eme~\ref{isoprin}.

\subsection{Hauteur et invariant modulaire}

Le lemme suivant se trouve dans \cite{silverman} sans explicitation de la
constante. C'est aussi une version plus fine de l'une des
in\'egalit\'es de l'encadrement (51) de \cite{pellarin}.

\begin{lemma}\label{hetj} Pour toute courbe elliptique $E$ d'invariant
$j$ nous avons $$h(E)\le{1\over12}h(j)+2,95.$$\end{lemma}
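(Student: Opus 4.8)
The plan is to compare the Faltings height $h(E)$ with the (semistable, stable) height attached to a minimal model, and then to relate the latter to the naive height $h(j)$ of the $j$-invariant via the standard explicit formula coming from the Weierstrass equation. First I would recall the relation between $h(E)$ and the height $h_{\mathrm{ss}}(E)$ defined by Silverman (or Faltings--Silverman) through the local contributions at archimedean and non-archimedean places: at a finite place the discriminant of a minimal model controls the contribution, at an archimedean place one has the classical estimate involving the modular discriminant $\Delta(\tau)$ and the imaginary part $\mathrm{Im}(\tau)$ of a point in the fundamental domain, normalised so that $E_\sigma \simeq \mathbf{C}/(\mathbf{Z}+\tau\mathbf{Z})$. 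The key classical inequality here is that $\frac{1}{12}\log|\Delta(\tau)|(\mathrm{Im}\,\tau)^6$ together with $\log^+ j$ governs the archimedean term; concretely one uses the product expansion of $\Delta$ and the $q$-expansion of $j$, with $q=e^{2i\pi\tau}$, to get explicit bounds on $|j(\tau)|$ in terms of $\mathrm{Im}\,\tau$ on the fundamental domain (e.g.\ $|j(\tau)| \le e^{2\pi\,\mathrm{Im}\,\tau}+C$ for an explicit small $C$, and $\mathrm{Im}\,\tau \le \frac{1}{2\pi}\log^+|j(\tau)| + C'$ for an explicit $C'$).

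Next I would assemble these pieces. Write $h(E) = h_F(E) + \frac{1}{2}\log\pi$ (the convention of the paper) and start from the known identity or inequality expressing $h_F(E)$ as $\frac{1}{12}$ of a sum of local terms: $12\,h_F(E) = \frac{1}{[K:\mathbf{Q}]}\sum_v n_v \log\|\Delta\|_v + (\text{archimedean correction})$, where the minimal discriminant enters at finite places and the Petersson-type norm of $\Delta$ at the infinite ones. The essential observation is that $\log\|\Delta\|_v \le \log\max(1,|j|_v) + (\text{bounded})$ at each place, because $j = c_4^3/\Delta$ and $c_4$ is integral at finite places of good-enough reduction; summing over $v$ and using the product formula / height machinery turns $\frac{1}{12}\sum_v n_v \log\|\Delta\|_v$ into $\frac{1}{12}h(j)$ up to an explicit additive constant. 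The archimedean correction and the passage between $h(E)$ and $h_F(E)$ then contribute the numerical constant, which after careful bookkeeping of all the $\log\pi$, $\log 2$, and the constants from the $q$-expansion estimates comes out below $2{,}95$.

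The main obstacle will be the bookkeeping of the explicit constant: one must track simultaneously the normalisation discrepancy $\frac{g}{2}\log\pi = \frac{1}{2}\log\pi$ between $h$ and $h_F$, the constants in the two-sided bound relating $\mathrm{Im}\,\tau$ and $\log^+|j(\tau)|$ on the fundamental domain, and the constant in the comparison of the Faltings height with the stable height of a minimal model (this last comparison is where one invokes results on semi-stable reduction and the Néron model, already available through the references in the excerpt). The numerical target $2{,}95$ is not tight, so a somewhat crude but careful estimate of each term — in particular using $\log(1+x)\le x$ and the rapid decay of the non-leading terms in the $q$-expansions of $\Delta$ and $j$ — should suffice; the delicate point is simply to make sure no term is dropped and that all inequalities point the right way, so that the accumulated additive error stays under the stated bound.
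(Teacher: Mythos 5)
Your overall strategy coincides with the paper's: pass to a semi-stable model, use Silverman's expressions for $h(j)$ (his formula (10)) and for $h_F(E)$ (his Proposition 1.1), let the $\log|N_{k/\mathbf Q}\Delta_{E/k}|$ terms cancel (in the semi-stable case these contributions are \emph{equal}, so the per-place inequality via $j=c_4^3/\Delta$ you sketch is not required), and bound the remaining archimedean quantity. What you defer to ``bookkeeping'' is, however, the substantive step. After the cancellation one has
$$h(E)-\tfrac1{12}h(j)=\tfrac12\log\pi-\frac1{12[k:\mathbf Q]}\sum_{\sigma}\log\bigl(\max(1,|j(\tau_\sigma)|)\,|\Delta(\tau_\sigma)|\,y_\sigma^6\bigr),$$
so what is needed is a uniform \emph{lower} bound on $\max(1,|j(\tau)|)\,|\Delta(\tau)|\,(\mathrm{Im}\,\tau)^6$ over the Siegel fundamental domain; only lower bounds on $|j(\tau)|$ and $|\Delta(\tau)|$ enter, and the upper bound $|j(\tau)|\le e^{2\pi\,\mathrm{Im}\,\tau}+C$ you mention plays no role. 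The paper obtains the lower bound from Silverman's estimate $|\Delta(\tau)|\ge e^{-1/9-2\pi y}(2\pi)^{-12}$ together with the Faisant--Philibert estimate $|j(\tau)|\ge e^{2\pi y}-1193$, and this reduces the question to minimizing $f(y)=\max\bigl(y^6e^{-2\pi y},\,y^6(1-1193e^{-2\pi y})\bigr)$ over $y\ge\sqrt3/2$. That function is not monotone: it increases, then decreases, then increases again, and its minimum over the fundamental domain is attained at $y=(\log 1194)/(2\pi)$ rather than at the boundary $y=\sqrt3/2$. Pinning down that minimum is exactly what produces the constant $2{,}95$; until this minimization is carried out, the numerical claim is not established, and a ``crude'' estimate that simply stacks the constants from the $q$-expansions, without locating where the archimedean term is smallest, will not obviously land under the stated bound.
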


\begin{proof} Les deux membres sont invariants par extension de corps,
donc nous pouvons supposer $E/k$ semi-stable. Si nous appliquons la
formule (10) donn\'ee par Silverman~\cite{silverman} dans le cas
semi-stable, nous avons : 
$$h(j)={1\over[k:\mathbf{Q}]}\log|N_{k/\mathbf{Q}}\Delta_{E/k}|+{1\over
[k:\mathbf{Q}]}\sum_{\sigma\colon k\hookrightarrow\mathbf{C}}
\log\max(1,|j(\tau_\sigma)|)$$ o\`u nous notons $\tau_\sigma$
l'\'el\'ement du domaine fondamental de Siegel correspondant \`a
$E_\sigma$ (et dans le premier terme 
appara\^\i t la norme du discriminant minimal de $E/k$). Par ailleurs,
en posant $y_\sigma=\mathrm{Im}\tau_\sigma$, la proposition 1.1 de
\cite{silverman} fournit $$h_{F}(E)={1\over12[k:\mathbf{Q}]}
\log|N_{k/\mathbf{Q}}\Delta_{E/k}|-{1\over12[k:\mathbf{Q}]}\sum_{\sigma
\colon k\hookrightarrow\mathbf{C}}\log(|\Delta(\tau_\sigma)|y_\sigma^6).$$
En combinant et en rappelant que $h_{F}(E)=h(E)-(1/2)\log\pi$, nous avons
$$h(E)-{1\over12}h(j)={1\over2}\log\pi-{1\over12[k:\mathbf{Q}]}
\sum_{\sigma\colon k\hookrightarrow\mathbf{C}}{\log\left(\max{(1,\vert
j(\tau_\sigma)\vert)}|\Delta(\tau_\sigma)|y_\sigma^6\right)}.$$
D'apr\`es l'estimation situ\'ee au 
bas de la page 256 de \cite{silverman}, nous pouvons \'ecrire
$$|\Delta(\tau_\sigma)|\ge e^{-1/9-2\pi y_\sigma}(2\pi)^{-12}.$$
Par ailleurs Faisant et Philibert donnent la minoration
$|j(\tau_\sigma)|\ge e^{2\pi y_\sigma}-1193$ (lemme 1, (iii)
de~\cite[p. 187]{faisantphilibertJNT}; la preuve est dans le
texte~\cite[(3) p. 2.6]{upmc}). Par suite, nous
avons $$U_\sigma=\max{(1,\vert j(\tau_\sigma)\vert)}
|\Delta(\tau_\sigma)|y_\sigma^6\ge\max(1,e^{2\pi y_\sigma}-1193)
e^{-1/9-2\pi y_\sigma}(2\pi)^{-12}y_\sigma^6$$
$$\ge e^{-1/9}(2\pi)^{-12}f(y_\sigma)$$ o\`u $f$ est la fonction
donn\'ee par $$f(y)=\max{(y^6e^{-2\pi y},y^6(1-1193e^{-2\pi y}))}.$$
Une rapide \'etude de fonction montre que $f$ est croissante sur
$[\sqrt{3}/2,3/\pi]$ et sur $[(\log 1194)/2\pi,+\infty[$ tandis qu'elle
est d\'ecroissante sur $[3/\pi,(\log1194)/2\pi]$. Comme de plus le
calcul montre que $f((\log 1194)/2\pi)<f(\sqrt{3}/2)$ nous avons pour
tout $y\ge\sqrt{3}/2$ la minoration $f(y)\ge f((\log 1194)/2\pi)$ et
donc pour tout $\sigma$ la quantit\'e $U_\sigma$ est minor\'ee par $1/B$
o\`u $B$ est la
constante $$B=1194\left(2\pi\over\log(1194)\right)^6e^{1/9}(2\pi)^{12}.$$
En revenant au calcul de hauteur nous
avons $$h(E)-{1\over12}h(j)\le{1\over2}\log\pi+{1\over12}\log
B\le2,95$$apr\`es estimation num\'erique.\end{proof}

\subsection{Cas non CM : application}

Nous d\'emontrons le corollaire~\ref{pabi}. Soient $p$ et $E$ comme dans
l'\'enonc\'e. Nous raisonnons par l'absurde en supposant que l'image
de la repr\'esentation galoisienne est contenue dans le normalisateur
d'un sous-groupe de Cartan d\'eploy\'e. Ceci entra{\^\i}ne
notamment que l'invariant modulaire $j$ de $E$
est entier : $j\in\mathbf{Z}$ (voir~\cite[appendice]{parentbilu}). Alors 
le th\'eor\`eme 2.1 de  \cite{parentbilurebolledo} (version explicite
du r\'esultat principal de \cite{parentbilu}) montre
$$\log|j|\le2\pi\sqrt p+6\log p+21{(\log p)^2\over\sqrt p}.$$
De plus, dans la partie 5 de \cite{parentbilu} (voir aussi la partie 3
de \cite{parentbilurebolledo}), on construit deux courbes $E_1$
et $E_2$ de sorte que d'une part $E$ et $E_1$ sont reli\'ees par une
isog\'enie de degr\'e $p$ donc $h(E_1)\le h(E)+(1/2)\log p$ et d'autre part
$E_1$ et $E_2$ sont reli\'ees par une isog\'enie cyclique de degr\'e
$p^2$. Ceci fait que dans les notations des paragraphes~\ref{parsept}
et~\ref{parhuit}, on 
a $\sqrt\Delta=p$ (nos courbes sont toutes sans multiplications
complexes comme $E$). En outre la construction montre que $E_1$ et
$E_2$ ainsi que l'isog\'enie cyclique sont d\'efinies sur un corps $k$
quadratique. Ceci assure que toutes les isog\'enies entre $E_1$ et
$E_2$ sont d\'efinies sur $k$ et donc il en va de m\^eme de
$A_\omega$. Par suite $k'=k$ et $D=2$. Si $k$ est imaginaire, nous
avons d'apr\`es la majoration~\eqref{racinedelta} (qui suit le
lemme~\ref{pentedeux}) $$p\le2\sqrt{2\over3}1778(H+4\log
p+0,5\log H+2,4)$$ o\`u $H=\max(h(E_1),10^3)\le\max(h(E)+(1/2)\log
p,10^3)$. Dans le cas r\'eel, cette estimation est tr\`es largement
vraie (le th\'eor\`eme~\ref{isoprin} montre $p\le2\sqrt{3583}H$).
En combinant le lemme~\ref{hetj} et la majoration de
$\log|j|=h(j)$ donn\'ee ci-dessus, nous trouvons
$$H\le\max\left(1000,{\pi\over6}\sqrt p+\log p+{7(\log p)^2\over4\sqrt
p}+2,95\right)$$ puis 
\begin{equation*}\begin{split}p\le2\sqrt{2\over3}1778&
\left(\max\left(1000,{\pi\over6}\sqrt p+\log p+{7(\log p)^2\over4\sqrt
p}+2,95\right)+4\log p+2,4\right.\\
&\quad+\left.0,5\log\max\left(1000,{\pi\over6}\sqrt p+\log p+{7(\log
p)^2\over4\sqrt p}+2,95\right)\right).\end{split}\end{equation*} Si
l'on divise par 
$p$ de chaque c\^ot\'e on obtient une majoration de la forme 
$1\le f(p)$ pour une fonction $f$ d\'ecroissante sur
$[1,+\infty[$. Le calcul montre que $f(3\,094\,028)<1<f(3\,094\,027)$
et nous en d\'eduisons que l'on a $p\le3\,094\,027$.

\section{Appendice}

L'objectif de cet appendice est de d\'emontrer le th\'eor\`eme
de Bost utilis\'e dans le travail d'Autissier~\cite{autissier}. Il est
\'enonc\'e dans les notes \cite[p. 5]{bost4} et repris dans
\cite[p. 100]{graftieaux} (voir l'in\'egalit\'e (13) et la derni\`ere
\'egalit\'e de la page o\`u l'on corrige l'exposant $g/4$ en $1/4$) mais
aucune d\'emonstration ne semble avoir \'et\'e publi\'ee \`a ce jour.

Soit donc une vari\'et\'e ab\'elienne $A$ d\'efinie sur un corps de
nombres $k$ et munie d'une polarisation principale $L$. Pour tout plongement
$\sigma\colon k\hookrightarrow\mathbf{C}$, la vari\'et\'e ab\'elienne
complexe $A_\sigma$ obtenue par extension des scalaires est
principalement polaris\'ee et donc isomorphe \`a un unique
$\mathbf{C}^g/(\mathbf{Z}^g+\tau_\sigma\mathbf{Z}^g)$ avec $\tau_\sigma$
dans le domaine fondamental de Siegel. 

Notons $y_{\sigma}:=\IM\tau_{\sigma}$. Soit $F_{\sigma}
\colon\mathbf{C}^g\to\mathbf C$ d\'efinie par, si
$z=\tau_{\sigma}p+q\in\mathbf{C}^g$ avec $p,q\in\mathbf{R}^{g}$,
$$F_\sigma(z)=\det(2y_\sigma)^{1/4}\sum_{n\in\mathbf Z^g}\exp
\left(i\pi{}^{\mathrm t}(n+p)\tau_\sigma(n+p)+2i\pi{}^{\mathrm t}nq\right).$$

Le th\'eor\`eme de Bost s'\'ecrit alors sous la forme
suivante. 

\begin{theo}\label{thBost} Soit $a:=-(h(A)+(g/2)\log(2\pi))/2$. On a
$$a\le\frac{1}{[k:\mathbf{Q}]}\sum_{\sigma\colon
k\hookrightarrow\mathbf{C}}{\int_{(\mathbf{R}^{g}/\mathbf{Z}^{g})^{2}}{\log
|F_{\sigma}(\tau_{\sigma}p+q)|\,\mathrm{d}p\mathrm{d}q}}.$$\end{theo} 

Nous commen\c cons par quelques propri\'et\'es de la fonction $F_\sigma$.
Elles font intervenir la donn\'ee d'Appell-Humbert
$(H_\sigma,\chi_\sigma)$ sur $\mathbf{C}^g/\mathbf{Z}^g
+\tau_\sigma\mathbf{Z}^g$ d\'efinie par $$H_\sigma(z,z')={}^\mathrm t\overline
zy_\sigma^{-1}z'\qquad\mathrm{et}\qquad\chi(\tau_\sigma
m+n)=(-1)^{{}^\mathrm tmn}$$ o\`u $z,z'\in\mathbf{C}^g$ et $m,n\in\mathbf{Z}^g$.

\begin{lemma}\label{tht} Soit $\sigma$ un plongement complexe de $k$.
\begin{itemize}\item[$(1)$] Nous
avons $$\int_{(\mathbf{R}^g/\mathbf{Z}^g)^2}|F_\sigma(\tau_\sigma
p+q)|^2\,\mathrm{d}p\mathrm{d}q=1.$$\item[$(2)$]
Si nous posons $\vartheta_\sigma(z)=F_\sigma(z)\exp((\pi/2){}^\mathrm
tzy_\sigma^{-1}z-i\pi{}^\mathrm tp\tau_\sigma p)$ pour $z=\tau_\sigma
p+q$ avec $p,q\in\mathbf{R}^g$ alors
$\vartheta_\sigma\colon\mathbf{C}^g\to\mathbf{C}$ est une fonction th\^eta 
associ\'ee \`a la donn\'ee d'Appell-Humbert $(H_\sigma,\chi_\sigma)$.
\item[$(3)$] Nous avons $|\vartheta_\sigma(z)|=|F_\sigma(z)|
\exp((\pi/2)H_\sigma(z,z))$ pour tout $z\in\mathbf{C}^g$.\end{itemize}\end{lemma}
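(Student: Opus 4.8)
The plan is to reduce everything to the classical Riemann theta function $\theta(z,\tau_\sigma):=\sum_{n\in\mathbf{Z}^{g}}\exp\bigl(i\pi\,{}^{\mathrm t}n\tau_\sigma n+2i\pi\,{}^{\mathrm t}nz\bigr)$, which is holomorphic on $\mathbf{C}^{g}$. Expanding ${}^{\mathrm t}(n+p)\tau_\sigma(n+p)={}^{\mathrm t}n\tau_\sigma n+2\,{}^{\mathrm t}n\tau_\sigma p+{}^{\mathrm t}p\tau_\sigma p$ (the symmetry of $\tau_\sigma$ is used here) inside the definition of $F_\sigma$ factors out $\exp(i\pi\,{}^{\mathrm t}p\tau_\sigma p)$ and gives, for $z=\tau_\sigma p+q$ with $p,q\in\mathbf{R}^{g}$,
$$F_\sigma(z)=\det(2y_\sigma)^{1/4}\exp\bigl(i\pi\,{}^{\mathrm t}p\tau_\sigma p\bigr)\,\theta(z,\tau_\sigma).$$
Substituting this into the definition of $\vartheta_\sigma$, the factor $\exp(i\pi\,{}^{\mathrm t}p\tau_\sigma p)$ cancels and
$$\vartheta_\sigma(z)=\det(2y_\sigma)^{1/4}\exp\bigl(\tfrac{\pi}{2}\,{}^{\mathrm t}zy_\sigma^{-1}z\bigr)\theta(z,\tau_\sigma),$$
which is holomorphic on $\mathbf{C}^{g}$ since ${}^{\mathrm t}zy_\sigma^{-1}z$ is a holomorphic quadratic form; this settles the holomorphy part of $(2)$.

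For $(3)$ I would use the elementary identity $z-\bar z=(\tau_\sigma-\bar\tau_\sigma)p=2iy_\sigma p$, which yields ${}^{\mathrm t}zy_\sigma^{-1}z-{}^{\mathrm t}\bar zy_\sigma^{-1}z=2i\,{}^{\mathrm t}pz$ and hence $\tfrac{\pi}{2}\,{}^{\mathrm t}zy_\sigma^{-1}z-i\pi\,{}^{\mathrm t}p\tau_\sigma p=\tfrac{\pi}{2}H_\sigma(z,z)+i\pi\,{}^{\mathrm t}pq$ (using $z-\tau_\sigma p=q$). Since $H_\sigma(z,z)={}^{\mathrm t}\bar zy_\sigma^{-1}z$ is real and $p,q\in\mathbf{R}^{g}$, taking moduli in $\vartheta_\sigma=F_\sigma\exp(\tfrac{\pi}{2}\,{}^{\mathrm t}zy_\sigma^{-1}z-i\pi\,{}^{\mathrm t}p\tau_\sigma p)$ gives exactly $|\vartheta_\sigma(z)|=|F_\sigma(z)|\exp(\tfrac{\pi}{2}H_\sigma(z,z))$.

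To complete $(2)$ I would invoke the classical transformation law $\theta(z+n+\tau_\sigma m,\tau_\sigma)=\exp\bigl(-i\pi\,{}^{\mathrm t}m\tau_\sigma m-2i\pi\,{}^{\mathrm t}mz\bigr)\theta(z,\tau_\sigma)$ for $m,n\in\mathbf{Z}^{g}$ (a routine re-indexing of the series), substitute it into the formula for $\vartheta_\sigma$ above, and expand ${}^{\mathrm t}(z+\lambda)y_\sigma^{-1}(z+\lambda)-{}^{\mathrm t}zy_\sigma^{-1}z=2\,{}^{\mathrm t}\lambda y_\sigma^{-1}z+{}^{\mathrm t}\lambda y_\sigma^{-1}\lambda$ for $\lambda=n+\tau_\sigma m$. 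Writing $\lambda=\bar\lambda+2iy_\sigma m$ converts ${}^{\mathrm t}\lambda y_\sigma^{-1}z$ and ${}^{\mathrm t}\lambda y_\sigma^{-1}\lambda$ into $H_\sigma(\lambda,z)$ and $H_\sigma(\lambda,\lambda)$ up to corrections with integer exponents, and a short bookkeeping shows that all the spurious exponentials collapse to $(-1)^{{}^{\mathrm t}mn}=\chi_\sigma(\lambda)$. One thus obtains $\vartheta_\sigma(z+\lambda)=\chi_\sigma(\lambda)\exp\bigl(\pi H_\sigma(\lambda,z)+\tfrac{\pi}{2}H_\sigma(\lambda,\lambda)\bigr)\vartheta_\sigma(z)$, i.e. the functional equation of a theta function attached to $(H_\sigma,\chi_\sigma)$ (which is a legitimate Appell--Humbert datum: $H_\sigma$ is positive definite, and $\IM H_\sigma$ is $\mathbf{Z}$-valued on $\mathbf{Z}^{g}+\tau_\sigma\mathbf{Z}^{g}$ and matches the coboundary relation of $\chi_\sigma$, both verified directly from $\tau_\sigma$ lying in Siegel's domain).

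Finally, for $(1)$, squaring the defining series gives $|F_\sigma(\tau_\sigma p+q)|^{2}=\det(2y_\sigma)^{1/2}\sum_{n,n'}\exp\bigl(i\pi\,{}^{\mathrm t}(n+p)\tau_\sigma(n+p)-i\pi\,{}^{\mathrm t}(n'+p)\bar\tau_\sigma(n'+p)+2i\pi\,{}^{\mathrm t}(n-n')q\bigr)$; integrating over $q\in\mathbf{R}^{g}/\mathbf{Z}^{g}$ annihilates the terms with $n\neq n'$ and, using $\tau_\sigma-\bar\tau_\sigma=2iy_\sigma$, leaves $\det(2y_\sigma)^{1/2}\sum_{n}\exp\bigl(-2\pi\,{}^{\mathrm t}(n+p)y_\sigma(n+p)\bigr)$. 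Integrating over $p\in\mathbf{R}^{g}/\mathbf{Z}^{g}$ then unfolds the lattice sum into the Gaussian integral $\int_{\mathbf{R}^{g}}\exp(-2\pi\,{}^{\mathrm t}xy_\sigma x)\,\mathrm{d}x=\det(2y_\sigma)^{-1/2}$, so the two powers of $\det(2y_\sigma)$ cancel and the integral equals $1$. The only points requiring care are the conjugate- and factor-of-two bookkeeping in the automorphy computation of $(2)$ and the Gaussian normalization in $(1)$; I do not expect any genuine obstacle.
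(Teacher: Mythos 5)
Your proof is correct and essentially follows the same route as the paper. The only organisational difference is that you factor $F_\sigma$ through the classical Riemann theta series $\theta(z,\tau_\sigma)=\sum_{n}\exp(i\pi\,{}^{\mathrm t}n\tau_\sigma n+2i\pi\,{}^{\mathrm t}nz)$ and invoke its transformation law, whereas the paper carries out the same re-indexing and exponential bookkeeping in place, phrased as the intermediate relation $F_\sigma(z+\omega)=F_\sigma(z)\exp(-2i\pi\,{}^{\mathrm t}mq)$; the holomorphy step and the arrival at the factor $\chi_\sigma(\omega)\exp(\pi H_\sigma(\omega,z)+\tfrac{\pi}{2}H_\sigma(\omega,\omega))$ are computationally identical, as are your arguments for $(1)$ (kill cross terms in $q$, unfold in $p$, Gaussian integral) and $(3)$ (the exponent difference equals $i\pi\,{}^{\mathrm t}pq$).
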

\begin{proof} Pour (1), en \'evaluant $|F_\sigma|^2$, l'int\'egrale
 \`a calculer vaut \begin{equation*}\begin{split}
&\int_{(\mathbf{R}^g/\mathbf{Z}^g)^2}{\det(2
y_\sigma)^{1/2}\sum_{n,m\in\mathbf{Z}^g}e^{2i\pi{}^\mathrm t(n-m)
q+i\pi{}^\mathrm t(n+p)\tau_\sigma(n+p)-i\pi{}^\mathrm t(m+p)\overline
{\tau_\sigma}(m+p)}\mathrm{d}p\,\mathrm{d}q}\\&=\det(2y_\sigma)^{1/2}
\sum_{n,m\in\mathbf{Z}^g}\int_{\mathbf{R}^g/\mathbf{Z}^g}e^{i\pi
{}^\mathrm t(n+p)\tau_\sigma(n+p)-i\pi{}^\mathrm t(m+p)\overline
{\tau_\sigma}(m+p)}\left(\int_{\mathbf{R}^g/\mathbf{Z}^g}
e^{2i\pi{}^\mathrm t(n-m)q}\mathrm{d}q\right)\mathrm{d}p\\&=\det(2
y_\sigma)^{1/2}\sum_{n\in\mathbf{Z}^g}{\int_{\mathbf{R}^g/\mathbf{Z}^g}
{e^{-2\pi{}^\mathrm t(n+p)y_\sigma(n+p)}\mathrm{d}p}}\\&=\det(2
y_\sigma)^{1/2}\int_{\mathbf{R}^g}
{e^{-2\pi{}^\mathrm tpy_\sigma p}\mathrm{d}p}\\&=1\end{split}
\end{equation*}(pour d{\'e}montrer la derni{\`e}re {\'e}galit{\'e}, on
peut remplacer $2y_{\sigma}$ par l'identit{\'e} \textit{via} un
changement de variables lin{\'e}aire; elle se r{\'e}duit alors {\`a}
$\int_{\mathbf{R}}{e^{-x^{2}}\,\mathrm{d}x}=\sqrt{\pi}$).  Pour (2),
un premier calcul donne pour $z=\tau_\sigma p+q$ et
$\omega=\tau_\sigma m+n$ la relation $F_\sigma(z+\omega)=
F_\sigma(z)\exp(-2i\pi{}^\mathrm tmq)$ o\`u $m,n\in\mathbf{Z}^g$ et
$p,q\in\mathbf{R}^g$. Avec les m\^emes notations ceci montre que
$\vartheta_\sigma(z+\omega)\vartheta_\sigma(z)^{-1}$
est l'exponentielle du nombre complexe $${\pi\over2}\left({}^\mathrm
t(z+\omega)y_\sigma^{-1}(z+\omega)-{}^\mathrm tzy_\sigma^{-1}z\right)
-i\pi{}^\mathrm t(p+m)\tau_\sigma(p+m)
+i\pi{}^\mathrm tp\tau_\sigma p-2i\pi{}^\mathrm tmq.$$ Apr\`es un
calcul \'el\'ementaire, cette quantit\'e se
transforme en $${\pi\over2}{}^\mathrm t\overline\omega y_\sigma^{-1}\omega
+\pi{}^\mathrm t\overline\omega y_\sigma^{-1}z+i\pi{}^\mathrm tmn.$$
Ceci nous fournit la
relation $$\vartheta_\sigma(z+\omega)=\vartheta_\sigma(z)\chi_\sigma(\omega)
\exp\left(\pi H_\sigma(\omega,z)+{\pi\over2}H_\sigma(\omega,\omega)\right)$$ qui
montre bien que $\vartheta_\sigma$ est une fonction th\^eta pour le facteur
d'automorphie introduit au \S~\ref{fnth} (voir aussi~\cite[lemme
3.2.4]{lange} pour un r\'esultat semblable). Le caract\`ere
holomorphe de $\vartheta_\sigma$ se lit sur la relation 
$$e^{-i\pi{}^\mathrm tp\tau_\sigma p}\sum_{n\in\mathbf Z^g}\exp
\left(i\pi{}^{\mathrm t}(n+p)\tau_\sigma(n+p)+2i\pi{}^{\mathrm
t}nq\right)=\sum_{n\in\mathbf Z^g}\exp
\left(i\pi{}^{\mathrm t}n\tau_\sigma n+2i\pi{}^{\mathrm
t}nz\right).$$ Enfin pour (3) il s'agit de voir que le nombre
$(\pi/2){}^\mathrm tzy_\sigma^{-1}z-i\pi{}^\mathrm tp\tau_\sigma
p-(\pi/2)H_\sigma(z,z)$ est un imaginaire pur. On constate alors
simplement qu'il vaut $i\pi{}^\mathrm tpq$.\end{proof} 

Nous en venons maintenant au lemme-clef en vue de la d\'emonstration du
th\'eor\`eme~\ref{thBost} qui relie la hauteur de N\'eron-Tate aux
fonctions $F_\sigma$. Pour pouvoir l'exprimer, nous avons besoin de
pr\'eciser le choix d'isomorphisme entre $A_\sigma$ et $\mathbf{C}^g
/\mathbf{Z}^g+\tau_\sigma\mathbf{Z}^g$. Il est li\'e \`a un choix de
repr\'esentant pour la polarisation $L$. Tout d'abord nous pouvons faire, 
dans l'\'enonc\'e du th\'eor\`eme~\ref{thBost}, une
extension finie du corps de base de mani\`ere transparente. Pour ne
pas alourdir les notations, ici et ci-dessous, nous conservons les
notations $F_\sigma$, $\tau_\sigma$ et $y_\sigma$ pour un plongement
$\sigma$ d'un sur-corps de $k$ : il est entendu que l'on parle en fait
de $F_{\sigma_{|k}}$ et ainsi de suite.

Nous profitons de cette libert\'e pour supposer que $L$ admet sur $k$
un repr\'esentant sym\'etrique et nous le fixons une fois pour
toutes. Nous notons aussi $E$ le diviseur effectif de $A$
associ\'e. Nous pouvons alors fixer de mani\`ere unique l'isomorphisme
entre $A_\sigma$ et
$\mathbf{C}^g/\mathbf{Z}^g+\tau_\sigma\mathbf{Z}^g$ en exigeant que
$L_\sigma$ corresponde au faisceau inversible sym\'etrique de donn\'ee
d'Appell-Humbert $(H_\sigma,\chi_\sigma)$ introduite plus haut. Pour
all\'eger les notations, nous {\em identifions} les vari\'et\'es
ab\'eliennes $A_\sigma$ et
$\mathbf{C}^g/\mathbf{Z}^g+\tau_\sigma\mathbf{Z}^g$. En particulier
$t_{A_\sigma}$ est identifi\'e \`a $\mathbf{C}^g$ et la fonction
$\vartheta_\sigma$ du lemme pr\'ec\'edent est une fonction th\^eta
associ\'ee \`a $L_\sigma$.

\begin{lemma}\label{lemmeappendice} Soient $K$ une extension finie de
$k$ et $x\in A(K)$. Notons $\widehat{h}_{L}(x)$ la hauteur de
N\'eron-Tate de $x$ relative {\`a} $L$. Pour tout plongement
$\sigma\colon K\hookrightarrow\mathbf{C}$, consid\'erons un
logarithme $z_{\sigma}$ de $x$ dans $t_{A_{\sigma}}$~:
$x=\exp_{A_{\sigma}}(z_{\sigma})$. Supposons que $x$ ne
soit pas dans le support du diviseur $E$ associ\'e \`a $L$. Alors on
a $$a\le\widehat{h}_{L}(x)+\frac{1}{[K:\mathbf{Q}]}\sum_{\sigma\colon
K\hookrightarrow\mathbf{C}}{\log\vert F_{\sigma}(z_{\sigma})\vert}.$$\end{lemma}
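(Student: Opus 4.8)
The plan is to read both $\widehat h_L(x)$ and $a$ as normalized Arakelov degrees of rank-one adelic line bundles built from the fixed symmetric section, and then to compare them place by place. Let $s_E\in\mathrm H^0(A,L)$ be a nonzero section with divisor $E$; since $x\notin\mathrm{Supp}(E)$, the vector $x^*s_E$ of the $K$-line $x^*L$ is nonzero. By the Néron property (here one uses the reduction to semi-abelian reduction over $k$, so that $\mathcal A\otimes_{\mathcal O_k}\mathcal O_K$ is the Néron model of $A_K$), the point $x$ extends to a section $\spec\mathcal O_K\to\mathcal A$, and $x^*L$ inherits the integral structure of $x^*\mathcal L$ together with the archimedean norms $\Vert\cdot\Vert_{\mathrm{cub},\sigma}$; call the resulting hermitian line bundle over $K$ simply $x^*\overline{\mathcal L}$. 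The main external input—the cube structure of $\overline{\mathcal L}$ together with the symmetry of $L$ and the rigidification at the origin—gives that the cubist metric computes the canonical height, i.e.
$$\widehat h_L(x)=\widehat{\deg}\,(x^*\overline{\mathcal L})=-h_{x^*\overline{\mathcal L}}(x^*s_E),$$
the last equality being the product formula for a rank-one bundle (this is part of the theory of the cubist model; see \cite{bostduke}, \S4.3 and Théorème~4.10). On the other hand, applying the pente formula \eqref{formulepenterestriction} with $n=1$ and $\mathrm h^0(A,L)=1$ shows that $a=\widehat\mu(\overline{H_1})=-h_{\overline{H_1}}(s_E)$, where $\overline{H_1}=\mathrm H^0(A,L)$ carries the integral structure $\mathrm H^0(\mathcal A,\mathcal L)$ and the $L^2$ cubist norms of \S\ref{subsectionchoix}.

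Subtracting the two identities gives $\widehat h_L(x)-a=h_{\overline{H_1}}(s_E)-h_{x^*\overline{\mathcal L}}(x^*s_E)$, a difference of sums over places. At the archimedean place attached to $\sigma$ I first establish
$$\log\Vert s_E(x)\Vert_{\mathrm{cub},\sigma}=\log\Vert s_E\Vert_{\overline{H_1},\sigma}+\log\vert F_\sigma(z_\sigma)\vert.$$
Indeed, writing $\vartheta$ for the theta function of $s_E$, formula \eqref{formulesectiontheta} with $n=1$ yields $\Vert s_E(x)\Vert_{\mathrm{cub},\sigma}=\vert\vartheta(z_\sigma)\vert\exp(-\tfrac{\pi}{2}H_\sigma(z_\sigma,z_\sigma))$ and $\Vert s_E\Vert_{\overline{H_1},\sigma}^2=\int_{A_\sigma}\vert\vartheta(y)\vert^2\exp(-\pi H_\sigma(y,y))\,\mathrm dy$; since $\mathrm H^0(A_\sigma,L_\sigma)$ is one-dimensional, $\vartheta$ is a scalar multiple of the function $\vartheta_\sigma$ of Lemma~\ref{tht}, and Lemma~\ref{tht}(3) replaces both integrands by $\vert F_\sigma\vert^2$ while Lemma~\ref{tht}(1) evaluates the $L^2$ integral to $1$, the scalar cancelling in the quotient. (The $\Omega_{A_\sigma}$-periodicity of $\vert F_\sigma\vert$, read off from the proof of Lemma~\ref{tht}, makes the right-hand side independent of the chosen logarithm.) Summing over the archimedean places and matching the $[K:\mathbf{Q}]$ and $[k:\mathbf{Q}]$ normalisations, the total archimedean contribution to $\widehat h_L(x)-a$ equals $-\frac{1}{[K:\mathbf{Q}]}\sum_\sigma\log\vert F_\sigma(z_\sigma)\vert$.

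It remains to show the ultrametric contribution is $\ge0$. For a finite place $w$ of $K$ above $v$, write $s_E=\lambda_v s'_v$ with $s'_v$ a local generator of $\mathcal L$ at $v$ and $\vert\lambda_v\vert_v=\Vert s_E\Vert_{\overline{H_1},v}$ as in \eqref{formnormeultra}; then $x^*s'_v$ is an integral section of $x^*\mathcal L$ over $\mathcal O_w$, so $\Vert x^*s_E\Vert_w\le\vert\lambda_v\vert_w=\Vert s_E\Vert_{\overline{H_1},v}$, whence each term $\log\Vert s_E\Vert_{\overline{H_1},v}-\log\Vert x^*s_E\Vert_w\ge0$ and so is their sum. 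Combining the two parts gives $\widehat h_L(x)-a\ge-\frac{1}{[K:\mathbf{Q}]}\sum_\sigma\log\vert F_\sigma(z_\sigma)\vert$, which is the claimed inequality. The only genuinely delicate point is the single input $\widehat h_L(x)=\widehat{\deg}\,(x^*\overline{\mathcal L})$ identifying the Arakelov degree of the cubist pullback with the Néron–Tate height; granting it, the rest is a formal manipulation with the product formula based on Lemma~\ref{tht}.
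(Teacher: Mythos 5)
Your proposal is correct and follows the same route as the paper: both reduce the claim to a place-by-place comparison of norms of a nonzero section of the rank-one bundle $\mathrm{H}^0(A,L)$ against its evaluation at $x$, use formula \eqref{formulepenterestriction} and Bost's identification of $\widehat{h}_L(x)$ with the Arakelov degree of the cubist pullback $x^*\overline{\mathcal{L}}$, treat the ultrametric places by integrality of the model, and compute the archimedean ratio via Lemma~\ref{tht}. The only difference is cosmetic — you phrase the comparison as a difference of heights of $s_E$ and $x^*s_E$ via the product formula, while the paper writes the same identity as a slope equation relating $\widehat\mu(\overline{H})$ and $\widehat\mu(\overline{\epsilon_x^*\mathcal{L}})$ — and your use of one-dimensionality of $\mathrm{H}^0$ to replace $s_E$'s theta function by $\vartheta_\sigma$ in the ratio is exactly the paper's step of choosing $s_\sigma$ corresponding to $\vartheta_\sigma$ with unit $L^2$-norm.
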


\begin{proof} Consid\'erons un mod\`ele de Moret-Bailly
$(\mathcal{A},\overline{\mathcal{L}},\epsilon_{x})$ de $(A,L,x)$ sur
une extension finie $K'$ de $K$ (voir \S~\ref{subsectionchoix} et
\cite[\S~4.3]{bostduke})~: 
\begin{enumerate}\item[(i)]$\mathcal{A}\to\spec\mathcal{O}_{K'}$  
est un sch\'ema en groupes lisse, de fibre g\'en\'erique
$A_{K'}$,\item[(ii)] $\overline{\mathcal{L}}$ est un faisceau
inversible hermitien cubiste sur $\mathcal{A}$, de fibre
g\'en\'erique $L_{K'}$, \item[(iii)]
$\epsilon_{x}\colon\spec\mathcal{O}_{K'}\to\mathcal{A}$ est une
section qui rel\`eve $x\in A(K')$.\end{enumerate}Comme dans le
paragraphe~\ref{subsectionchoix}, ce mod\`ele conf\`ere {\`a}
l'espace $H:=\mathrm{H}^{0}(A_{K'},L_{K'})$ une structure de
fibr\'e hermitien ad\'elique $\overline{H}$ sur $K'$. Par
hypoth\`ese, une section $s\in H\setminus\{0\}$ ne s'annule pas en
$x$ et, puisque $\mathrm{h}^{0}(A,L)=1$, on dispose de la
formule $$\widehat{\mu}(\overline{H})=\widehat{\mu}(
\overline{\epsilon_{x}^{*}\mathcal{L}})
+\frac{1}{[K':\mathbf{Q}]}\sum_{v}{[(K')_{v}:
\mathbf{Q}_{v}]\log\frac{\| s(x)\|_{\overline{
\epsilon_{x}^{*}\mathcal{L}},v}}{\| s\|_{\overline{H},v}}}$$
(dans la somme, $v$ parcourt les places de $K'$). La
pente $\widehat{\mu}(\overline{\epsilon_{x}^{*}\mathcal{L}})$
est \'egale \`a $\widehat{h}_{L}(x)$~\cite[th\'eor\`eme $4.10$,
(ii)]{bostduke} tandis que $\widehat{\mu}(\overline{H})=a$
(voir~\eqref{formulepenterestriction}). Dans la somme, on s\'epare les
places ultram\'etriques des places archim\'ediennes. Si $v$ est
ultram\'etrique, consid\'erons une base $s_{v}$ du
$\mathcal{O}_{v}$-module libre (de rang $1$)
$\mathrm{H}^{0}(\mathcal{A},\mathcal{L})\otimes_{\mathcal{O}_{K'}}\mathcal{O}_{v}$
($\mathcal{O}_{v}$ est l'anneau de valuation du compl\'et\'e de
$K'$ en la place $v$). On a alors $$\frac{\|
s(x)\|_{\overline{\epsilon_{x}^{*}\mathcal{L}},v}}{\|
s\|_{\overline{H},v}}=\|
s_{v}(\epsilon_{x})\|_{\overline{\epsilon_{x}^{*}\mathcal{L}},v}\le1.$$
Si $v$ est archim\'edienne et si $\sigma\colon
K'\hookrightarrow\mathbf{C}$ est un plongement complexe associ\'e,
la fonction th\^eta $\vartheta_\sigma\colon t_{A_\sigma}\to\mathbf{C}$ du
lemme~\ref{tht} (2) correspond \`a un \'el\'ement $s_\sigma\in
H\otimes_{\sigma}\mathbf{C}$ avec lequel nous pouvons calculer le
quotient des normes, en utilisant la relation~\eqref{formulesectiontheta}
du~\S~\ref{subsectionchoix} puis le lemme~\ref{tht} (3)~: $$\|
s_\sigma(x)\|_{\overline{\epsilon_{x}^{*}\mathcal{L}},v}=\left|
\vartheta_\sigma(z_{\sigma})\right|e^{-\frac{\pi}{2}\|
z_{\sigma}\|_{L,\sigma}^{2}}=|F_{\sigma}(z_{\sigma})|.$$ Le calcul de la norme $\|
s_\sigma\|_{\overline{H},v}$ se fait en \'elevant cette derni\`ere
formule au carr\'e et en int\'egrant. On trouve donc (lemme~\ref{tht} (1))
$\|s_\sigma\|_{\overline{H},v}=1$
puis $\|s(x)\|_{\overline{\epsilon_x^*\mathcal{L}},v}/\|s\|_{\overline{H},v}
=\|s_\sigma(x)\|_{\overline{\epsilon_x^*\mathcal{L}},v}=|F_{\sigma}(z_{\sigma})|.$
En regroupant toutes ces informations, nous avons la formule voulue
car $F_{\sigma}(z_{\sigma})$ ne d\'epend que de la restriction de
$\sigma$ \`a $K$.\end{proof}

\begin{proof}[D\'emonstration du th\'eor\`eme~\ref{thBost}] 
Soit $X$ un nombre r\'eel. Sur le compact
$(\mathbf{R}^{g}/\mathbf{Z}^{g})^{2}$, la fonction $f_{X,\sigma}$
d\'efinie par $f_{X,\sigma}(p,q)=\max{\{-X,\log|F_{\sigma}(\tau_\sigma
p+q)|\}}$ est continue (\`a valeurs r\'eelles). \'Etant donn\'e un
entier $N\ge 1$, posons $I_{N}:=\{0,1,\ldots,N-1\}^{g}$ et, pour $i\in
I_{N}$, notons $p_{i}$ l'image de $i/N$ dans
$\mathbf{R}^{g}/\mathbf{Z}^{g}$. Alors, pour tout nombre r\'eel
$\varepsilon>0$, il existe $N_{0}(X,\varepsilon)\in\mathbf{N}$ tel
que, pour tout entier $N\ge N_{0}(X,\varepsilon)$, pour tout
plongement $\sigma\colon k\hookrightarrow\mathbf{C}$, l'on
ait \begin{equation*}\frac{1}{N^{2g}}\sum_{(i,j)\in
I_{N}^{2}}{f_{X,\sigma}(p_{i},p_{j})}\le\varepsilon+
\int_{(\mathbf{R}^{g}/\mathbf{Z}^{g})^{2}}{f_{X,\sigma}(p,q)
\,\mathrm{d}p\mathrm{d}q}.\end{equation*}
Faisons alors la moyenne sur les plongements $\sigma$ : 
$$\frac{1}{N^{2g}}\frac{1}{[k:\mathbf{Q}]}
\sum_{\sigma\colon k\hookrightarrow\mathbf{C}}\left(\sum_{(i,j)\in I_{N}^{2}}
f_{X,\sigma}(p_{i},p_{j})\right)\le\varepsilon+\frac{1}
{[k:\mathbf{Q}]}\sum_{\sigma\colon k\hookrightarrow\mathbf{C}}
{\int_{(\mathbf{R}^{g}/\mathbf{Z}^{g})^{2}}{f_{X,\sigma}(p,q)
\,\mathrm{d}p\mathrm{d}q}}.$$
Dans le membre de gauche l'on peut librement remplacer $k$ par une
extension finie. Nous consid\'erons ainsi le corps $K_N$ o\`u sont
rationnels tous les points de $N$-torsion de $A$, not\'es $A[N]$. Pour
$x\in A[N]$ et $\sigma$ un plongement de $K_N$ nous notons $u_{x,\sigma}$
le couple $(p,q)$ correspondant \`a un logarithme de $x$ dans
$A_\sigma$. Lorsque $x$ parcourt $A[N]$, \`a $\sigma$ fix\'e,
$u_{x,\sigma}$ parcourt exactement $I_N^2$. Par suite, nous avons
$$\frac{1}{N^{2g}}\sum_{x\in A[N]}\left(\frac{1}{[k:\mathbf{Q}]}
\sum_{\sigma\colon K_N\hookrightarrow\mathbf{C}}f_{X,\sigma}
(u_{x,\sigma})\right)\le\varepsilon+\frac{1}{[k:\mathbf{Q}]}
\sum_{\sigma\colon k\hookrightarrow\mathbf{C}}
{\int_{(\mathbf{R}^{g}/\mathbf{Z}^{g})^{2}}{f_{X,\sigma}(p,q)
\,\mathrm{d}p\mathrm{d}q}}.$$
Le lemme~\ref{lemmeappendice} montre que, si $x\in A[N]$ n'appartient
pas au diviseur $E$, la parenth\`ese du membre de gauche est plus
grande que $a$. Elle est par ailleurs toujours plus grande que $-X$. 
Notre membre de gauche est donc sup\'erieur \`a
$a(1-t_{N}/N^{2g})-Xt_{N}/N^{2g}$ o\`u $t_{N}=\card(A[N]\cap E)$. Par
le th\'eor\`eme de Raynaud (ex-conjecture de Manin-Mumford,
voir~\cite{raymm}) les points de torsion de $E$ sont contenus dans un
nombre fini de translat\'es de sous-vari\'et\'es ab\'eliennes
strictes de $A$. Comme dans chaque tel translat\'e il y a au plus
$N^{2g-2}$ points de $N$-torsion, nous avons $t_N=O(N^{2g-2})$. En
faisant alors tendre $N$ vers l'infini puis $\varepsilon$ vers $0$, on
obtient $$a\le\frac{1}{[k:\mathbf{Q}]}\sum_{\sigma\colon k\hookrightarrow\mathbf{C}}
{\int_{(\mathbf{R}^{g}/\mathbf{Z}^{g})^{2}}{f_{X,\sigma}(p,q)\,\mathrm{d}p
\mathrm{d}q}}.$$ Pour chaque $\sigma$, la suite d\'ecroissante
$(f_{X,\sigma})_{X\in\mathbf{N}}$ de fonctions mesurables converge vers
$(p,q)\mapsto\log|F_{\sigma}(\tau_\sigma p+q)|$. Par convergence monotone, on
a \begin{equation*}\lim_{X\to\infty}
{\int_{(\mathbf{R}^{g}/\mathbf{Z}^{g})^{2}}{f_{X,\sigma}(p,q)\,
\mathrm{d}p\mathrm{d}q}}=\int_{(\mathbf{R}^{g}/\mathbf{Z}^{g})^{2}}{\log
|F_{\sigma}(\tau_{\sigma}p+q)|\,\mathrm{d}p\mathrm{d}q},\end{equation*}
d'o\`u le r\'esultat.\end{proof}

Pour conclure, rappelons que ce r\'esultat a permis \`a Bost
de d\'emontrer une minoration uniforme de la hauteur d'une vari\'et\'e
ab\'elienne (sans hypoth\`ese de polarisation).

\begin{coro}\label{minB} Pour toute vari\'et\'e ab\'elienne $A$
d{\'e}finie sur  un corps de nombres, on a $h(A)\ge-(1/2)(\dim
A)\log(2\pi)$.\end{coro}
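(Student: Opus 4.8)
The plan is to deduce the corollary directly from Bost's theorem~\ref{thBost}, which already carries all the analytic content; what will remain is an elementary convexity step and the removal of the hypothesis of principal polarization.

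First I would settle the principally polarized case. Assume $(A,L)$ is principally polarized over a number field $k$ and keep the notation of Theorem~\ref{thBost}. By Lemma~\ref{tht}$(1)$, for each embedding $\sigma$ the function $(p,q)\mapsto|F_\sigma(\tau_\sigma p+q)|^2$ has integral $1$ over the torus $(\mathbf{R}^g/\mathbf{Z}^g)^2$, so the latter is a probability space for this purpose. Applying Jensen's inequality to the concave function $\log$ then gives $\int_{(\mathbf{R}^g/\mathbf{Z}^g)^2}\log|F_\sigma(\tau_\sigma p+q)|\,\mathrm{d}p\mathrm{d}q\le(1/2)\log\int_{(\mathbf{R}^g/\mathbf{Z}^g)^2}|F_\sigma(\tau_\sigma p+q)|^2\,\mathrm{d}p\mathrm{d}q=0$ for every $\sigma$. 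Substituting this into the inequality of Theorem~\ref{thBost} yields $a\le0$, that is $h(A)+(g/2)\log(2\pi)\ge0$, which is precisely the desired bound since $g=\dim A$.

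Next I would pass to an arbitrary abelian variety $A$ over a number field by Zarhin's trick: the abelian variety $B:=(A\times\widehat A)^4$ admits a principal polarization (over the same base field). Using the properties $h(A_1\times A_2)=h(A_1)+h(A_2)$ and $h(\widehat A)=h(A)$ recalled in the section on the Faltings height, one gets $h(B)=4\bigl(h(A)+h(\widehat A)\bigr)=8h(A)$, while $\dim B=8\dim A$. The principally polarized case applied to $B$ then reads $8h(A)\ge-4(\dim A)\log(2\pi)$, and dividing by $8$ gives the statement of the corollary.

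I do not expect a genuine obstacle here: the substance lies entirely in Theorem~\ref{thBost} (itself resting on Lemma~\ref{lemmeappendice} and the Manin--Mumford theorem of Raynaud), and the only points deserving care are the normalization of the torus as a probability space when invoking concavity, and the bookkeeping of the Faltings height through Zarhin's trick.
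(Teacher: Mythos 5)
Your proposal matches the paper's proof in every essential point: you deduce $a\le0$ from Theorem~\ref{thBost} via Jensen's inequality (concavity of $\log$) together with Lemma~\ref{tht}(1), and you pass to the general case by Zarhin's trick applied to $(A\times\widehat A)^4\cong A^4\times(\widehat A)^4$, using $h(A_1\times A_2)=h(A_1)+h(A_2)$ and $h(\widehat A)=h(A)$. No gaps; this is the same argument.
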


\begin{proof} Dans le cas principalement polaris\'e, il suffit de voir
dans les notations ci-dessus $a\le0$ ou m\^eme, par le
th\'eor\`eme, $$\int_{(\mathbf{R}^g/\mathbf{Z}^g)^2}\log|F_\sigma|\le0.$$
Or, par concavit\'e du logarithme, on a 
$$\int_{(\mathbf{R}^g/\mathbf{Z}^g)^2}\log|F_\sigma|=
\frac{1}{2}\int_{(\mathbf{R}^g/\mathbf{Z}^g)^2}\log|F_\sigma|^2
\le\frac{1}{2}\log\int_{(\mathbf{R}^g/\mathbf{Z}^g)^2}|
F_\sigma|^2=0$$ par le lemme~\ref{tht}. Dans le cas g\'en\'eral on
applique la minoration \`a la vari\'et\'e $A^4\times(\widehat{A})^4$ qui
est principalement polaris\'ee de hauteur $8h(A)$ et de dimension
$8\dim A$ (astuce de Zarhin).\end{proof}

\end{document}